\title{A Structure Theorem for Poorly Anticoncentrated Gaussian Chaoses and Applications to the Study of Polynomial Threshold Functions}
\author{Daniel M. Kane\\ Department of Mathematics \\ Stanford University\\ dankane@math.stanford.edu}
\newcommand{\R}{\mathbb{R}}
\newcommand{\jac}{\textrm{Jac}}
\newcommand{\var}{\textrm{Var}}
\newcommand{\pr}{\textrm{Pr}}
\newcommand{\sgn}{\textrm{sgn}}
\newcommand{\E}{\mathbb{E}}
\newcommand{\dotp}[2]{\left\langle #1,#2\right\rangle}
\newcommand{\dlots}{\ldots}
\newcommand{\etc}{\textrm{etc}}
\newcommand{\Inf}{\textrm{Inf}}
\newcommand{\gns}{\mathbb{GNS}}
\newcommand{\as}{\mathbb{AS}}
\newcommand{\ns}{\mathbb{NS}}
\newcommand{\gas}{\mathbb{GAS}}
\newcommand{\cov}{\textrm{Cov}}
\newcommand{\tth}{\textrm{th}}
\newtheorem{thm}{Theorem}
\newtheorem{prop}[thm]{Proposition}
\newtheorem{cor}[thm]{Corollary}
\newtheorem{lem}[thm]{Lemma}
\newtheorem{conj}[thm]{Conjecture}
\newtheorem*{defn}{Definition}
\newtheorem*{rmk}{Remark}
\begin{document}
\maketitle

\begin{abstract}
We prove a structural result for degree-$d$ polynomials.  In particular, we show that any degree-$d$ polynomial, $p$ can be approximated by another polynomial, $p_0$, which can be decomposed as some function of polynomials $q_1,\ldots,q_m$ with $q_i$ normalized and $m=O_d(1)$, so that if $X$ is a Gaussian random variable, the probability distribution on $(q_1(X),\ldots,q_m(X))$ does not have too much mass in any small box.

Using this result, we prove improved versions of a number of results about polynomial threshold functions, including producing better pseudorandom generators, obtaining a better invariance principle, and proving improved bounds on noise sensitivity.
\end{abstract}

\section{Introduction}\label{IntroSec}

\subsection{Polynomial Threshold Functions}\label{PTFSec}

A polynomial threshold function (PTF) is a function of the form $f(X)=\sgn(p(X))$ for some polynomial $p(X)$.  We say that $f$ is a degree-$d$ polynomial threshold function of $p$ is of degree at most $d$.  Polynomial threshold functions are a fundamental class of functions with applications to many fields such as circuit complexity \cite{curciutApp}, communication complexity \cite{commApp} and learning theory \cite{learningApp}.

We present a new structural result for degree-$d$ polynomials that allows us to obtain improved versions of a number of results relating to polynomial threshold functions. Our result allows us to define a new notion of regularity for polynomials for which we can prove an improved version of the Invariance Principle of \cite{MOO}.  We also obtain a regularity lemma (along the lines of the main theorem of \cite{reg}) for this new notion of regularity.  Although neither of these theorems will be directly comparable to their classical versions (due to the different notions of regularity), the combination of our regularity lemma and invariance principle produces a marked improvement over previous work.  These results in turn allow us to prove better bounds on the noise sensitivity of polynomial threshold functions (improving on the bounds of \cite{sensitivity} for fixed $d\geq 3$) and provide us with an improved analysis of the pseudorandom generators of \cite{MZ} and \cite{GPRG}.

\subsection{Anticoncentration and Diffuse Decompositions}\label{ADDSec}

Many of the analytic techniques for dealing with polynomial threshold functions (most notably the replacement method (see \cite{Freplacement,LReplacement})) work well for dealing with smooth functions of polynomials.  In order to get these techniques to yield useful results for threshold functions, it is often necessary to approximate the threshold function by a smooth one.  In order to obtain one needs to know that with high probability that the value of $p(X)$ does not lie too close to zero.  Results of this form have become known as a \emph{anticoncentration} results. Such a result was proved by Carbery and Wright in \cite{anticoncentration}.  They prove that for $p$ a degree-$d$ polynomial and $X$ a random Gaussian that
\begin{equation}\label{CWEqn}
\pr(|p(X)| \leq \epsilon|p|_2) = O(d\epsilon^{1/d}).
\end{equation}

This bound has proved to be an essential component of many theorems about polynomial threshold functions.  Unfortunately, presence of the $\epsilon^{1/d}$ term above often leads to results that have poor $\epsilon$-dependence for moderately large values of $d$, and the lack of a stronger form of Equation (\ref{CWEqn}) has proved to be a bottleneck for a number of results on polynomial threshold functions.  One might hope to overcome this difficulty by proving an improved version of Equation (\ref{CWEqn}).  In particular, a generic polynomial $p$ can be thought of as a sum of largely uncorrelated monomials, and thus, one might expect that $p(X)$ be Gaussian distributed.  Thus, while Equation (\ref{CWEqn}) tells us little more than the fact that the distribution of $p(X)$ has no point masses, one might expect the stronger condition that $p(X)$ has bounded probability density function to hold.  Unfortunately, this is not the case in general.  For example, if $p$ is the $d^{\tth}$ power of a linear polynomial, the probability that $|p(X)|<\epsilon$ will in fact be proportional to $\epsilon^{1/d}$.  On the other hand, this counterexample is not as great an obstacle as it first appears to be.  While, in this case, the probability distribution of $p(X)$ does have poor analytic properties, this is because $p$ can be written as a composition of a well-behaved (in this case linear) polynomial, and a simple, yet poorly-behaved polynomial (the $d^{\textrm{th}}$ power).  The fact that the value of $p$ is governed by the value of this linear polynomial will allow one to overcome the difficulties posed by poor anticoncentration in most applications.

In fact, this principle applies more generally.  In particular, as we shall show, any polynomial $p$ may be approximately represented as the composition of a simple polynomial (i.e. a polynomial dependent on few input variables) and an analytically well-behaved polynomial (i.e. one with good anticoncentration properties).  In order to make this claim rigorous, we provide the following definitions:

\begin{defn}
Given a degree-$d$ polynomial $p:\R^n\rightarrow\R$, we say that a set of polynomials $(h,q_1,\ldots,q_m)$ is a \emph{decomposition of $p$ of size $m$} if $q_i:\R^n\rightarrow \R$, and $h:\R^m\rightarrow\R$ are polynomials so that
\begin{itemize}
\item $p(x) = h(q_1(x),\ldots,q_m(x))$
\item For every monomial $c\prod x_i^{a_i}$ appearing in $h$, we have that $\sum a_1 \deg(q_i) \leq d$
\end{itemize}
\end{defn}

In other words, a decomposition of $p$ is a way of writing $p$ as a composition of a simple polynomial, $h$, with another polynomial $Q=(q_1,\ldots,q_m)$.  The second condition above tells us that if we expanded out the polynomial $h(q_1(x),\ldots,q_m(x))$, we would never have to write any terms of degree more than $d$.

\begin{defn}
We say that a tuple of polynomials $(q_1,\ldots,q_m):\R^n\rightarrow\R^m$, is an \emph{$(\epsilon,N)$-diffuse set} if for every $(a_1,\ldots,a_m)\in \R^m$ and Gaussian random variable $X$ we have that
$$
\pr_X(|q_i(X)-a_i| \leq \epsilon \textrm{ for all }i) \leq \epsilon^m N,
$$
and $\E[|q_i(X)|^2] \leq 1$ for all $i$.
\end{defn}

We note that while an anticoncentration result need only tell us that the probability distribution of $p(X)$ contains no point masses, an $(\epsilon,N)$-diffuse set of polynomials will have the probability density function of the vector $(q_1(X),\ldots,q_m(X))$ average no more than $N$ on any small box.  This is a much stronger notion of ``analytically well-behaved''.  Combining the two definitions above, we define the notion of a \emph{diffuse decomposition}.

\begin{defn}
Given a polynomial $p$ we say that $(h,q_1,\ldots,q_m)$ is an \emph{$(\epsilon,N)$-diffuse decomposition of $p$ of size $m$} if $(h,q_1,\ldots,q_m)$ is a decomposition of $p$ of size $m$ and if $(q_1,\ldots,q_m)$ is an $(\epsilon,N)$-diffuse set.
\end{defn}

It is not obvious that diffuse decompositions should exist in any useful cases.  The main result of this paper will be to show that not only can any polynomial be approximated by a polynomial with a diffuse decomposition, but that the parameters of this decomposition are sufficient for use in a wide variety of applications.

\begin{thm}[The Diffuse Decomposition Theorem]\label{DDTheorem}
Let $\epsilon,c$ and $N$ be positive real numbers and $d$ a positive integer.  Let $p(X)$ be a degree-$d$ polynomial.  Then there exists a degree-$d$ polynomial $p_0$ with $|p-p_0|_2 < O_{c,d,N}(\epsilon^N)|p|_2$ so that $p_0$ has an $(\epsilon,\epsilon^{-c})$-diffuse decomposition of size at most $O_{c,d,N}(1)$.
\end{thm}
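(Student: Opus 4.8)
The plan is to build $p_0$ by an iterated refinement procedure. We start from the trivial size-one decomposition $h(y)=|p|_2\,y$, $q_1 = p/|p|_2$, and maintain as an invariant a decomposition $(h,q_1,\ldots,q_m)$ of a polynomial $p_0'$ that still approximates $p$ to within the budgeted $L^2$ error. If $(q_1,\ldots,q_m)$ is already an $(\epsilon,\epsilon^{-c})$-diffuse set we stop. Otherwise there is a box $(a_1,\ldots,a_m)$ with $\pr_X(|q_i(X)-a_i|\le\epsilon\textrm{ for all }i) > \epsilon^{m-c}$, and (after noting that for the box to be bad each $a_i$ must be $O_{d,m}(1)$) a conditioning/restriction argument shows that a single polynomial formed from the $q_i$ — for concreteness one may take a $q_i$ of maximal degree, after pinning the remaining coordinates to their sub-boxes — is anti-concentrated far more poorly than the Carbery--Wright bound \eqref{CWEqn} permits for a polynomial of its $L^2$ norm. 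Converting the failure of \emph{joint} diffuseness into the failure of anti-concentration of \emph{one} polynomial is the first real step.

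The engine of the argument is then a structure theorem for such a poorly anti-concentrated polynomial: if a degree-$e$ polynomial is anti-concentrated far worse than \eqref{CWEqn} would predict, it is, up to a small $L^2$ error, a polynomial function of $O_e(1)$ polynomials each of degree strictly less than $e$ (the guiding example being a perfect power $\ell^e$, which is the map $t\mapsto t^e$ applied to the single linear polynomial $\ell$). Applying this to the offending $q_i$ yields polynomials $s_1,\ldots,s_k$ with $k=O_d(1)$, each of degree strictly smaller than $\deg q_i$, so that $q_i$ agrees in $L^2$, up to a controllably small error, with a polynomial in $s_1,\ldots,s_k$ (and the surviving $q_j$'s). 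Replacing $q_i$ by $s_1,\ldots,s_k$ and folding the resulting polynomial identity into $h$ gives a new decomposition of a new polynomial $p_0''$; one renormalizes each $s_j$ so that $\E[|s_j(X)|^2]\le 1$, absorbing the scalars into $h$, and checks that the degree condition in the definition of a decomposition is preserved precisely because the new pieces have smaller degree. Since each step only composes polynomials respecting that degree condition, $p_0$ stays a degree-$d$ polynomial throughout.

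To see the process terminates, attach to a decomposition $(h,q_1,\ldots,q_m)$ the vector $(n_d,n_{d-1},\ldots,n_1)$, where $n_j$ counts the $q_i$ of degree exactly $j$, ordered lexicographically with $n_d$ most significant. Each refinement removes at least one polynomial of some degree $e$ and inserts only polynomials of degree $<e$, so this potential strictly decreases; moreover $n_d$ starts at $1$ and is non-increasing, hence is touched once, after which $n_{d-1}$ has been increased by $O_d(1)$ and can only decrease, and so on down the degrees, so the whole procedure halts after $O_d(1)$ steps with a final size $O_{c,d,N}(1)$. For the error budget, each refinement introduces an $L^2$ error which is amplified when substituted into the outer polynomial $h$; this amplification is controlled by Gaussian hypercontractivity together with the uniform bounds $\E[|q_i(X)|^2]\le 1$ and the bounded-degree condition on $h$, giving an $L^2$-Lipschitz estimate $\|h(Q)-h(Q')\|_2 \le C_{d,m}\sum_i \|q_i-q_i'\|_2^{\Omega(1)}$. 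Because the number of steps is $O_{c,d,N}(1)$, it suffices to invoke the structure theorem at an internal scale $\epsilon^{N'}$ for $N'=N'(c,d,N)$ large enough that every individual error, after all downstream amplification, is below $O_{c,d,N}(\epsilon^N)|p|_2$ divided by the number of steps — which is exactly why the final bound takes the quantifier form stated.

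The step I expect to be the genuine obstacle is the structure theorem for a single poorly anti-concentrated polynomial, together with the passage from the joint box back to one constrained $q_i$ with errors that telescope correctly. The naive hope that poor joint anti-concentration forces some $q_i$ to be literally close to a low-degree polynomial is false, so one must identify the right low-complexity data — roughly, a few low-degree polynomials on whose common level sets the offending $q_i$ is essentially constant — show that poor anti-concentration produces such data, and show that having it allows the decomposition to be rewritten without blowing up the size or the degree. Making "essentially constant on level sets'' quantitative in $L^2$, with errors small enough to survive the composition estimates across all $O_{c,d,N}(1)$ iterations, is where the real work lies; the well-ordering argument, the hypercontractive Lipschitz bound, and the normalization bookkeeping are comparatively routine.
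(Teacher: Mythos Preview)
Your high-level strategy matches the paper's, but the termination argument has a genuine gap. The structure theorem you invoke (the paper's Proposition~\ref{DecompositionProp}) does \emph{not} express a poorly anticoncentrated degree-$e$ polynomial as a polynomial in strictly lower-degree pieces up to an error you may take as small as you like. Its input is poor anticoncentration at the scale $\epsilon$ at which diffuseness failed, and its output error on the top-degree harmonic part is only $O(\epsilon^{1-c'})$; that scale is forced on you by the failed box and cannot be traded for an ``internal scale $\epsilon^{N'}$.'' Hence after one refinement the offending $q_i$ is not eliminated: it is replaced by the lower-degree pieces \emph{plus} a new degree-$e$ residual of smaller (but not negligible) norm. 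Your lexicographic monovariant $(n_d,\ldots,n_1)$ therefore does not decrease, and the ``$O_d(1)$ steps'' claim is false --- indeed it is inconsistent with your own final size $O_{c,d,N}(1)$ and with the paper's remark that the size bound may be Ackermann-type in $N/c$.

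The paper repairs this by attaching to each $q_i$ an integer counter $a_i$ recording how many shrinkings its top part has undergone; one refinement increments some $a_j$, and only once $a_j$ passes a threshold $\Theta(N/c)$ can that $q_j$ be absorbed into the global $O(\epsilon^N)$ error. The correct potential is then an ordinal $\sum_i \omega^{\deg q_i}(\text{const}-a_i)$, and termination is by transfinite induction. You should also be aware that converting the joint box-hitting event into the single-polynomial hypothesis of the structure theorem is not a conditioning/restriction: the paper uses the strong anticoncentration inequality (Proposition~\ref{StrongAnticoncentrationProp}), which lower-bounds $\prod_i|q_i(X)-a_i|$ by $\bigl|\bigwedge_i Dq_i(X)\bigr|_2$ with high probability, so that landing in the box forces the wedge of derivatives to be small; a Gram--Schmidt argument on the $Dq_i$ then isolates a specific linear combination $q=q_i+\sum_{j>i}C_jq_j$ with $\pr(|Dq(X)|_2<\epsilon')>\epsilon'^{O(1)}$, which is exactly the hypothesis Proposition~\ref{DecompositionProp} needs.
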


It should be noted that if $p$ is a polynomial with a diffuse decomposition, $(h,q_1,\ldots,q_m)$, then the distribution of $p(X)$ will be determined in large part by the polynomial $h$, as the distribution for $(q_1(X),\ldots,q_m(X))$ is controlled by the diffuse property.  Thus, Theorem \ref{DDTheorem} may be thought of as a structural result for Gaussian chaoses.  Theorem \ref{DDTheorem} may also be thought of as a continuous analogue of theorems of Green-Tao (\cite{GT}) and Kaufman-Lovett (\cite{KL}) which say that a polynomial over a finite field can be decomposed in terms of lower degree polynomials whose output distributions on random inputs are close to uniform.

\begin{rmk}
The bound on the size of the decomposition in Theorem \ref{DDTheorem} is effective, but may be quite large.  Working through the details of the proof would lead to a bound of $A(d+O(1),N/c)$, where $A(m,n)$ is the Ackermann function.  The author believes that a polynomial in $(dN/c)$ should be sufficient, but does not know of a proof for this improved bound.
\end{rmk}

\subsection{Applications of the Main Theorem}\label{ApplicationsSec}

Theorem \ref{DDTheorem} has several applications that we will discuss.  The existence of diffuse decompositions allows us to make better use of the replacement method and achieve a tighter analysis of the pseudorandom generators for polynomial threshold functions presented in \cite{GPRG} and \cite{MZ}.  We can also use this theory to improve on the Invariance Principle of \cite{MOO}.  In particular, we come up with a new notion of regularity for a polynomial, so that for highly regular polynomials their evaluation at random Gaussian variables and at random Bernoulli variables are close in cdf distance.  We then show that an arbitrary polynomial can be written as a decision tree of small depth almost all of whose leaves are either regular or have constant sign with high probability.  These theorems of ours will produce a qualitative improvement over the analogous theorems of \cite{MOO} and \cite{reg}.  Finally, we make use of this technology to prove new bounds on the noise sensitivity of polynomial threshold functions. Each of these applications will be discussed in more detail in the relevant section of this paper.

\subsection{Overview of the Paper}\label{OutlineSec}

In Section \ref{BasicStuffSec}, we introduce a number of basic concepts that will be used throughout the paper.  Section \ref{MainSec} will contain the proof of Theorem \ref{DDTheorem} along with some associated lemmas.  In Section \ref{DDFactsSec}, we discuss some basic facts about diffuse decompositions that will prove useful to us later on.  In Section \ref{GaussianPRGSec}, we discuss our application to pseudorandom generators for polynomial threshold functions of Gaussians.  In Section \ref{InvarianceSec}, we state and prove our versions of the invariance principle and regularity lemma.  In Section \ref{GLSec}, we discuss our results relating to noise sensitivity problems.  In Section \ref{BernoulliPRGSec}, we discuss our results for pseudorandom generators for polynomial threshold functions with Bernoulli inputs.  Finally in Section \ref{ConclusionSec}, we provide some closing remarks.

\section{Basic Results and Notation}\label{BasicStuffSec}

\subsection{Basic Notation}

We will use the notation $O_a(N)$ to denote a quantity whose absolute value is bounded above by $N$ times some constant depending only on $a$.

Throughout this paper, the variables $G,X,Y,Z,X^i,Y^i,Z^i,\textrm{etc.}$ will be used to denote multidimensional Gaussian random variables unless stated otherwise.  The coordinates of these variables will be denoted using subscripts.  Thus, $X^i_j$ will denote the $j^{\tth}$ coordinate of the variable $X^i$.

We also recall here the definition of a polynomial threshold function
\begin{defn}
A function $f:\R^n\rightarrow\{\pm 1 \}$ is a (degree-$d$) \emph{polynomial threshold function} (or PTF) if it is of the form $f(x) = \sgn(p(x))$ for some (degree-$d$) polynomial $p$.
\end{defn}

\subsection{Basic Facts about Polynomials of Gaussians}

We recall some basic facts about polynomials of Gaussians.  We begin by recalling the $L^t$-norm of a function.

\begin{defn}
For a function $p:\R^n\rightarrow\R$, we let
$$
|p|_t = \left( \E_X[|p(X)|^t ] \right)^{1/t}.
$$
\end{defn}

We now recall some basic distributional results about polynomials evaluated at random Gaussians.

\begin{lem}[Carbery and Wright]\label{anticoncentrationLem}
If $p$ is a degree-$d$ polynomial then
$$
\pr(|p(X)| \leq \epsilon|p|_2) = O(d\epsilon^{1/d}),
$$
where the probability is over $X$, a standard $n$-dimensional Gaussian.
\end{lem}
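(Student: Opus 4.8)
The plan is to follow Carbery and Wright and reduce the $n$-dimensional estimate to a one-dimensional one. After rescaling we may assume $|p|_2 = 1$, so we must bound the Gaussian measure of the sublevel set $\{x \in \R^n : |p(x)| \le \epsilon\}$. The case $d = 1$ is elementary: $p(X)$ is then a one-dimensional Gaussian, and one checks directly that either its density is $O(|p|_2^{-1})$ near $0$, or its constant term dominates and $|p(X)|$ is within $\epsilon|p|_2$ of $0$ only with exponentially small probability; either way $\pr(|p(X)| \le \epsilon|p|_2) = O(\epsilon)$.

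For the one-dimensional statement in general, let $\mu$ be an absolutely continuous log-concave probability measure on $\R$ and $q$ a degree-$d$ polynomial on $\R$. Factoring $q(t) = c\prod_{j=1}^d (t - z_j)$ over $\mathbb{C}$ and using $|t - z_j| \ge |t - \Re z_j|$ for real $t$, we get $|q(t)| \ge |c|\prod_j |t - \Re z_j|$, hence
$$
\{t : |q(t)| \le \alpha\} \subseteq \Big\{\prod\nolimits_{j} |t - \Re z_j| \le \alpha/|c|\Big\} \subseteq \bigcup_{j=1}^d \Big\{ |t - \Re z_j| \le (\alpha/|c|)^{1/d} \Big\},
$$
a union of at most $d$ intervals of radius $\rho := (\alpha/|c|)^{1/d}$. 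It then remains to bound $\sum_j \mu\big(\{|t - \Re z_j| \le \rho\}\big)$ by a multiple of $d\,\alpha^{1/d}/\|q\|_{L^1(\mu)}$. This is the heart of the one-dimensional argument: one estimates the $\mu$-measure of each of these small intervals and plays it against $\|q\|_{L^1(\mu)} = |c|\int \prod_j |t - z_j|\, d\mu$, the point being that roots far from the bulk of $\mu$ force $|q|$, and hence $\|q\|_{L^1(\mu)}$, to be large, which is precisely what pays for their otherwise unbounded contribution; log-concavity of $\mu$ makes this quantitative. The conclusion is the scale-invariant estimate $\mu(\{|q| \le \alpha\}) \le C d\, (\alpha/\|q\|_{L^1(\mu)})^{1/d}$, uniformly over all such $\mu$.

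To reduce from dimension $n$ to this case, I would apply the localization lemma of Lov\'asz and Simonovits. The inequality $\pr(|p(X)| \le \alpha) \le C d\,(\alpha/\|p\|_{L^1(\gamma_n)})^{1/d}$ (with $\gamma_n$ the standard Gaussian) compares $\int \mathbf{1}[|p| \le \alpha]\, d\gamma_n$ against a power of $\int |p|\, d\gamma_n$, which is exactly the shape localization handles: it suffices to establish it when $\gamma_n$ is replaced by an arbitrary one-dimensional log-concave measure supported on a segment. Restricting the Gaussian weight to a line segment yields a truncated Gaussian, which is absolutely continuous and log-concave, and $p$ restricted to the segment has degree at most $d$, so the one-dimensional estimate applies; the localization is arranged so that the one-dimensional $L^1$-normalizers recombine into $\|p\|_{L^1(\gamma_n)}$. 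Finally, to obtain the normalization in the statement, invoke Gaussian hypercontractivity, $|p|_4 \le 3^{d/2}|p|_2$ for degree-$d$ $p$, together with the H\"older interpolation $\tfrac{1}{2} = \tfrac{1}{3}\cdot 1 + \tfrac{2}{3}\cdot\tfrac{1}{4}$, which give $|p|_2 \le |p|_1^{1/3}|p|_4^{2/3} \le 3^d |p|_1$, i.e. $\|p\|_{L^1(\gamma_n)} \ge 3^{-d}|p|_2$. Taking $\alpha = \epsilon|p|_2$ then yields $\pr(|p(X)| \le \epsilon|p|_2) \le Cd(3^d\epsilon)^{1/d} = 3Cd\,\epsilon^{1/d} = O(d\,\epsilon^{1/d})$.

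The main difficulty lies in the one-dimensional estimate with the sharp exponent $1/d$. The naive bound controls the sublevel set only through the leading coefficient $c$, but a degree-$d$ polynomial with tiny leading coefficient can have an enormous sublevel set, so one must track the locations of all the roots relative to $\mu$ simultaneously and show, using log-concavity, that $|c|$ is bounded below by the appropriate power of $\|q\|_{L^1(\mu)}$. Once the one-dimensional bound is secured, the localization reduction and the hypercontractive renormalization are essentially routine.
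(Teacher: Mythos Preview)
The paper does not prove this lemma; it is simply quoted from Carbery and Wright \cite{anticoncentration} as a known result, with no argument given. So there is no ``paper's own proof'' to compare against.

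Your sketch follows a localization route (reduce to one-dimensional log-concave measures via Lov\'asz--Simonovits, then handle the one-dimensional case by factoring the polynomial), which is a legitimate alternative approach in the literature and not the argument in Carbery--Wright's original paper, which proceeds by more direct weighted-inequality methods. The hypercontractive step you use to pass from the $L^1$ normalization to the $L^2$ normalization is correct as written.

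Two places in your outline remain genuinely incomplete rather than routine. First, the one-dimensional estimate: you correctly cover the sublevel set by $d$ intervals of radius $(\alpha/|c|)^{1/d}$, but the passage from this to a bound in terms of $\|q\|_{L^1(\mu)}$ is exactly the nontrivial content, and you have only described what needs to happen rather than carried it out; controlling $|c|$ from below via $\|q\|_{L^1(\mu)}$ and the root locations, uniformly over log-concave $\mu$, requires a real argument. Second, the localization step is more delicate than you indicate: the inequality you want compares $\int \mathbf{1}[|p|\le\alpha]\,d\mu$ against a \emph{nonlinear} functional $(\int |p|\,d\mu)^{-1/d}$, so the basic two-linear-constraints form of Lov\'asz--Simonovits does not apply directly. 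One typically fixes the normalization $\|p\|_{L^1(\mu)}$ as a linear constraint and then argues that the extremal $\mu$ for the remaining linear functional is a needle (as in the Fradelizi--Gu\'edon refinements), but this step should be made explicit rather than asserted.
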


We will make use of the hypercontractive inequality.  The proof follows from Theorem 2 of \cite{hypercontractivity}.

\begin{lem}\label{hypercontractiveLem}
If $p$ is a degree-$d$ polynomial and $t>2$, then
$$
|p|_t \leq \sqrt{t-1}^d |p|_2.
$$
\end{lem}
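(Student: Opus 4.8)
The plan is to deduce this from the standard Gaussian hypercontractive inequality, i.e.\ the contractivity of the Ornstein--Uhlenbeck semigroup, which is exactly the content of Theorem 2 of \cite{hypercontractivity}. First I would recall the Wiener chaos decomposition: writing $p = \sum_{k=0}^d p_k$, where $p_k$ is the projection of $p$ onto the span of products of Hermite polynomials of total degree $k$, the components $p_k$ are pairwise orthogonal in $L^2$, so $|p|_2^2 = \sum_{k=0}^d |p_k|_2^2$. The Ornstein--Uhlenbeck operator $T_\rho$ (for $0 < \rho < 1$) acts on this decomposition by $T_\rho p_k = \rho^k p_k$, and Nelson's theorem tells us that $T_\rho$ is a contraction from $L^2$ to $L^t$ precisely when $\rho \leq (t-1)^{-1/2}$.

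Next I would set $\rho = (t-1)^{-1/2}$, which satisfies $0 < \rho < 1$ since $t > 2$, and define the polynomial $q = \sum_{k=0}^d \rho^{-k} p_k$. This is a finite sum, with no convergence issue, since $p$ has degree $d$, and by construction $T_\rho q = \sum_{k=0}^d \rho^{-k}\rho^k p_k = \sum_{k=0}^d p_k = p$. Applying hypercontractivity gives $|p|_t = |T_\rho q|_t \leq |q|_2$. Finally I would bound $|q|_2$: by orthogonality again, $|q|_2^2 = \sum_{k=0}^d \rho^{-2k} |p_k|_2^2 \leq \rho^{-2d} \sum_{k=0}^d |p_k|_2^2 = \rho^{-2d} |p|_2^2$, where the inequality uses $\rho < 1$ together with $k \leq d$. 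Combining the two estimates, $|p|_t \leq \rho^{-d}|p|_2 = (t-1)^{d/2}|p|_2 = \sqrt{t-1}^d |p|_2$.

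There is no serious obstacle here: the entire analytic content --- that $T_\rho\colon L^2 \to L^t$ has norm $1$ for $\rho \leq (t-1)^{-1/2}$ --- is imported wholesale from \cite{hypercontractivity}, and everything else is bookkeeping with the orthogonal chaos decomposition. The only points requiring a moment's care are (i) checking that $\rho < 1$ strictly, so that the exponent bound $\rho^{-2k} \leq \rho^{-2d}$ for $k \leq d$ points the right way, and (ii) noting that inverting $T_\rho$ on a degree-$d$ polynomial is legitimate because only the finitely many chaoses $k = 0, \ldots, d$ are involved. One could alternatively reduce to even integer $t$ by log-convexity of the $L^t$-norms and then invoke a two-point inequality plus tensorization, but routing through the Ornstein--Uhlenbeck semigroup handles all real $t > 2$ uniformly and is shorter.
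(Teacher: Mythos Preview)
Your proposal is correct and is exactly the approach the paper takes: the paper simply states that the lemma ``follows from Theorem 2 of \cite{hypercontractivity}'' (Nelson's hypercontractive inequality for the Ornstein--Uhlenbeck semigroup), and you have supplied precisely the standard details of that deduction via the chaos decomposition and the inversion $q = T_\rho^{-1} p$.
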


In particular this implies the following Corollary:

\begin{cor}[Weak Anticoncentration]\label{WeakAnticoncentrationCor}
Let $p$ be a degree-$d$ polynomial in $n$ variables.  Let $X$ be a family of standard Gaussians.  Then
$$
\pr\left(|p(X)|\geq |p|_2/2\right) \geq 9^{-d}/2.
$$
\end{cor}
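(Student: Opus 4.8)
The plan is to use the Paley--Zygmund-type argument, extracting a lower bound on the probability that $|p(X)|$ is large from an upper bound on a high moment of $p$. First I would reduce to the normalized case $|p|_2 = 1$, so that the goal becomes $\pr(|p(X)| \geq 1/2) \geq 9^{-d}/2$. The starting point is the identity
\begin{equation*}
1 = \E[p(X)^2] = \E[p(X)^2 \mathbf{1}_{|p(X)| < 1/2}] + \E[p(X)^2 \mathbf{1}_{|p(X)| \geq 1/2}].
\end{equation*}
The first term is bounded above by $1/4$. For the second term, I would apply the Cauchy--Schwarz inequality to split off the indicator: $\E[p(X)^2 \mathbf{1}_{|p(X)| \geq 1/2}] \leq \E[p(X)^4]^{1/2} \, \pr(|p(X)| \geq 1/2)^{1/2}$.

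Now the fourth moment is controlled by hypercontractivity: Lemma \ref{hypercontractiveLem} with $t = 4$ gives $|p|_4 \leq \sqrt{3}^{\, d} |p|_2 = 3^{d/2}$, hence $\E[p(X)^4] = |p|_4^4 \leq 3^{2d} = 9^d$. Combining the three displays,
\begin{equation*}
1 \leq \frac{1}{4} + 9^{d/2} \, \pr(|p(X)| \geq 1/2)^{1/2},
\end{equation*}
so $\pr(|p(X)| \geq 1/2)^{1/2} \geq \frac{3}{4} \cdot 9^{-d/2}$, and squaring yields $\pr(|p(X)| \geq 1/2) \geq \frac{9}{16} \cdot 9^{-d} \geq 9^{-d}/2$, as claimed.

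There is no real obstacle here; the only thing to be a little careful about is the direction of the Cauchy--Schwarz step (one wants the probability on the side being lower-bounded, which is why we isolate the indicator against the square of $p$ rather than against $p$ itself), and the bookkeeping of constants so that $\frac{9}{16} \geq \frac{1}{2}$ closes the bound with room to spare. One could equally run the argument with any exponent $t > 2$ in place of $4$, trading the constant $9^{-d}$ against $(t-1)^{-d}$ and a worse leading factor; the choice $t=4$ is what produces the clean $9^{-d}$ in the statement.
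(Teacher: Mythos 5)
Your argument is correct and is essentially the paper's proof: the paper invokes the Paley--Zygmund inequality applied to $p^2$ together with the hypercontractive bound $\E[p(X)^4]\leq 9^d|p|_2^4$, and your splitting of $\E[p^2]$ plus Cauchy--Schwarz is exactly the standard proof of Paley--Zygmund unpacked. The constants check out ($\tfrac{9}{16}\geq\tfrac{1}{2}$), so nothing further is needed.
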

\begin{proof}
This follows immediately from the Paley–Zygmund inequality (\cite{PZ}) applied to $p^2$.
\end{proof}

We also have the following concentration bound.

\begin{cor}\label{ConcentrationCor}
If $p$ is a degree-$d$ polynomial and $N>0$, then
$$
\pr_X(|p(X)| > N|p|_2) = O\left(2^{-(N/2)^{2/d}} \right).
$$
\end{cor}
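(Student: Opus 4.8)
The plan is to derive the concentration bound from the hypercontractive inequality (Lemma~\ref{hypercontractiveLem}) via a standard moment (Markov) argument, optimizing the choice of moment $t$. First I would assume WLOG that $|p|_2 = 1$. For any even integer $t \geq 2$ (or any real $t > 2$, using $\E[|p(X)|^t]$), Markov's inequality gives $\pr_X(|p(X)| > N) \leq N^{-t} \E[|p(X)|^t] = N^{-t} |p|_t^t$. By Lemma~\ref{hypercontractiveLem}, $|p|_t \leq (t-1)^{d/2}$, so $|p|_t^t \leq (t-1)^{td/2} \leq t^{td/2}$, and hence
\begin{equation*}
\pr_X(|p(X)| > N) \leq \left( \frac{t^{d/2}}{N} \right)^{t}.
\end{equation*}
The task is then to choose $t$ to make the right-hand side small. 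If we set $t = (N/2)^{2/d}$ (assuming this is $\geq 2$; the case of small $N$ is handled trivially since the claimed bound is $\Omega(1)$ there and the probability is always $\leq 1$), then $t^{d/2} = (N/2)$, so $t^{d/2}/N = 1/2$, and the bound becomes $2^{-t} = 2^{-(N/2)^{2/d}}$, which is exactly the desired form up to the implied constant absorbing rounding of $t$ to an admissible value and the regime $N = O(1)$.

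The only mild technical points to address are: (i) $t$ must be chosen in the range where Lemma~\ref{hypercontractiveLem} applies, i.e. $t > 2$, which corresponds to $N > 2 \cdot 2^{d/2}$ — for smaller $N$ the stated conclusion holds vacuously since $2^{-(N/2)^{2/d}} = \Omega_d(1)$ and probabilities are bounded by $1$, so the $O(\cdot)$ constant (depending on $d$) soaks this up; and (ii) if one insists on integer moments, rounding $t$ up to the nearest even integer changes $t^{d/2}/N$ by a bounded factor and $2^{-t}$ by a bounded factor, again absorbed into the $O(\cdot)$. Neither of these is a real obstacle.

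There is no hard step here — this is a routine Markov-plus-hypercontractivity estimate, and the main (trivial) point is simply to pick the moment $t \asymp (N/2)^{2/d}$ that balances the growth of $|p|_t$ against the $N^{-t}$ decay. I would present it in three or four lines: reduce to $|p|_2=1$, apply Markov at moment $t$, bound $|p|_t$ by hypercontractivity, substitute $t=(N/2)^{2/d}$, and note the small-$N$ case separately.
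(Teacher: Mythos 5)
Your proposal is correct and is exactly the paper's argument: the paper's proof reads, in full, ``Apply the Markov inequality and Lemma \ref{hypercontractiveLem} with $t=(N/2)^{2/d}$.'' Your additional remarks about the small-$N$ regime and the admissible range of $t$ are fine and merely make explicit what the paper leaves implicit.
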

\begin{proof}
Apply the Markov inequality and Lemma \ref{hypercontractiveLem} with $t = (N/2)^{2/d}$.
\end{proof}

\subsection{Multilinear Algebra}

The conventions and results discussed in the remainder of this section will be used primarily in Section \ref{MainSec}, and sparingly in the rest of the paper.

We will later need to make some fairly complicated constructions making use of multilinear algebra.  We take this time to review some of the basic definitions and go over some of the notation that we will be using.  We recall that a $k$-tensor is an element of a $k$-fold tensor product of vector spaces $A\in V_1\otimes\cdots\otimes V_k$.  Equivalently, it may be thought of as the $k$-linear form $V_1\times\cdots \times V_k \rightarrow \R$ given by $(v_1,\ldots,v_k) \rightarrow \dotp{A}{v_1\otimes\cdots\otimes v_k}$ (assuming that each of the $V_i$ come equipped with an inner product).  If the $V_i$ come with isomorphisms to $\R^{n_i}$, then we can associate $A$ with the sequence of coordinates $A_{i_1\cdots i_k} = A(e_{i_1},\ldots,e_{i_k})$.

We recall Einstein summation notation which says that if we are given a product of tensors with stated indices that it is implied that we sum over any shared indices.  In particular if $A$ is a $k_1$-tensor and $B$ a $k_2$-tensor than the expression
$$
A_{i_1,i_2,\ldots,i_m,j_1,j_2,\ldots,j_{k_1-m}}B_{i_1,i_2,\ldots,i_m,j_{k_1-m+1},j_{k_1-m+2},\ldots,j_{k_1+k_2-2m}}
$$
denotes the $(k_1+k_2-2m)$-tensor $C$ with coordinates
\begin{align*}
C  _{j_1,j_2, \ldots,j_{k_1+k_2-2m}} = \sum_{i_1,i_2,\ldots,i_m}A_{i_1,i_2,\ldots,i_m,j_1,j_2,\ldots,j_{k_1-m}}\cdot B_{i_1,i_2,\ldots,i_m,j_{k_1-m+1},j_{k_1-m+2},\ldots,j_{k_1+k_2-2m}}.
\end{align*}
Note that if there are no overlapping indices that this product simply denotes the tensor product of $A$ and $B$.  If on the other hand, all indices overlap, this denotes the dot product of $A$ and $B$.  We will also sometimes group several coordinates into a single coordinate of larger dimension.  We will try to use upper case letters for indices to indicate that this is happening.

We define the $L^2$ norm of a tensor $A$ to be the square root of the sum of the squares of its coordinates.  If $A$ is a $k$-tensor we have the equivalent definitions:
\begin{align*}
|A|_2^2 & = \dotp{A}{A}\\
& = \sum_{i_1,\ldots,i_k} |A_{i_1,\ldots,i_k}|^2\\
& = \E_{X^1,\ldots,X^k}[ |A_{i_1,\ldots,i_k} X^1_{i_1} X^2_{i_2}\cdots X^k_{i_k}|^2].
\end{align*}
For tensor-valued functions $A(X)$ we define the $L^2$-norm by
$$
|A|_2^2 := \E_X[|A(X)|_2^2].
$$

We will also need the notion of a wedge product of tensors over some subset of their coordinates.  In particular, if $A$ is a rank-$(k+m)$ tensor with its first $k$ indices corresponding to spaces of the same dimension, we define
$$
\bigwedge_{i_1,\ldots,i_k} A_{i_1,\ldots,i_k,j_1,\ldots,j_m} := \sum_{\sigma \in S_k} (-1)^\sigma A_{i_{\sigma(1)},\ldots,i_{\sigma(k)},j_1,\ldots,j_m}.
$$
Note the important special case here where $A$ is a tensor product of $k$ different 1-tensors $A_{i_1,\ldots,i_k}=A^1_{i_1}\cdots A^k_{i_k}$.  It is then the case that
$$
\left(\bigwedge_{i_1,\ldots,i_k} A^1_{i_1}\cdots A^k_{i_k}\right) B^1_{i_1}\cdots B^k_{i_k} = \det\left(\dotp{A^i}{B^j} \right).
$$

We will think of the derivative operator as taking functions on $\R^n$ whose values are $k$-tensors to functions on $\R^n$ whose values are $(k+1)$-tensors.  In particular, given a tensor valued function $A_S(x)$, we define the tensor $D_iA_S(x)$ to have $(i,S)$-coordinate $\frac{\partial A_S(x)}{\partial x_i}$.  Note that this implies that for a vector $X$ that $D_i X_i A_S$ is simply the standard directional derivative $D_X A_S$.

Lastly, note that if $p$ is a homogeneous, degree-$d$  polynomial that it has an associated $d$-tensor $A$ given by $A_{i_1,\dlots,i_d}:= D_{i_1}\cdots D_{i_d} p$ (note that this $d^{\tth}$ order derivative is independent of the point at which it is being evaluated).  Note that $A$ is determined by the property that it is a symmetric tensor (it is invariant under any permutation of coordinates) so that for any vector $X$, $A(X,X,\ldots,X) = d! p(X)$.

\subsection{Strong Anticoncentration}

Strong anticoncentration was an idea first exposed by the author in \cite{GPRG}. It is a heuristic which states that a polynomial is generally not much smaller than its derivative. We will need to make use of a generalization of this to sets of several tensor-valued polynomials.  In particular we will prove the following proposition:

\begin{prop}[Strong Anticoncentration]\label{StrongAnticoncentrationProp}
For $1\leq i \leq k$ let $A^i_{S_i}(x)$ be a degree-$d_i$, tensor-valued polynomial on $\R^n$ (i.e. a tensor whose coefficients are degree-$d_i$ polynomials on $\R^n$).  Let $1/2>\epsilon > 0$.  We have that
\begin{align*}
\pr & \left(\prod_{j=1}^k |A^j_{S_j}(X)|_2  < \epsilon \left|\bigwedge_{i_1,\ldots,i_k} \prod_{j=1}^k D_{i_j}A^j_{S_j}(X) \right|_2 \right)
\leq \epsilon 2^{O(d_1+d_2+\cdots + d_k)}O(\sqrt{k})^{k+1} \log(\epsilon^{-1})^k.
\end{align*}
\end{prop}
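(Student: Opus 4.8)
The plan is to induct on $k$. The base case $k=1$ is the strong anticoncentration estimate of \cite{GPRG}: $\pr(|A_S(X)|_2 < \epsilon\,|D_iA_S(X)|_2) \leq \epsilon\,2^{O(d)}O(1)\log(\epsilon^{-1})$ for a single degree-$d$ tensor-valued polynomial $A_S$. A self-contained derivation rotates $X$ along the Ornstein--Uhlenbeck flow $X_\theta := X\cos\theta + V\sin\theta$ ($V$ an independent Gaussian), observes that $\theta\mapsto A_S(X_\theta)$ is a tensor-valued trigonometric polynomial of degree $d$ in $\theta$, and bounds the measure of $\{\theta: |A_S(X_\theta)|_2 < \epsilon\,|D_iA_S(X_\theta)|_2\}$ by $O(d\epsilon\log\epsilon^{-1})$ — a degree-$d$ trigonometric polynomial has $O(d)$ zeros, and near each one the projection of $A_S(X_\theta)$ onto the direction of its $\theta$-derivative is a low-degree polynomial in $\theta$; stationarity of the flow identifies the average of this measure over $(X,V)$ with the probability we want, and a Paley--Zygmund step (Corollary \ref{WeakAnticoncentrationCor}) recovers $|D_iA_S|_2$ from the realized directional derivative.

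For the inductive step I would first record a multilinear identity. Let $W_m(x)$ denote the wedge product of the first $m$ of the $A^j$'s, and for a fixed multi-index $S = (S_1,\dots,S_k)$ let $M^S(x)$ be the Gram matrix of the gradient vectors, $M^S_{jj'}(x) := \dotp{D_iA^j_{S_j}(x)}{D_iA^{j'}_{S_{j'}}(x)}$. The wedge--determinant identity from the excerpt gives
$$
|W_k(x)|_2^2 \;=\; k!\sum_{S_1,\dots,S_k}\det\big(M^S(x)\big) \;=\; k!\sum_{S_1,\dots,S_k}\ \prod_{j=1}^{k}\big\|D_iA^j_{S_j}(x)\big\|^2_{\perp\,V^S_{<j}(x)},
$$
where $V^S_{<j}(x) = \mathrm{span}\{D_iA^{j'}_{S_{j'}}(x): j'<j\}$ and $\|\cdot\|_{\perp W}$ is the norm of the component orthogonal to $W$ (Gram--Schmidt). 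Since the factor for $j<k$ depends only on $S' = (S_1,\dots,S_{k-1})$, this rearranges to $|W_k(x)|_2^2 = k!\sum_{S'}\det({M'}^{S'}(x))\sum_{S_k}\|D_iA^k_{S_k}(x)\|^2_{\perp V^{S'}_{<k}(x)} \leq k\,|W_{k-1}(x)|_2^2\,B_k(x)^2$, where $B_k(x)^2 := \max_{S'}\sum_{S_k}\|D_iA^k_{S_k}(x)\|^2_{\perp V^{S'}_{<k}(x)} \leq |D_iA^k_{S_k}(x)|_2^2$ measures the size of the gradients of $A^k$ transverse to those of $A^1,\dots,A^{k-1}$. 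The point of using $B_k$ rather than the crude Hadamard bound $|W_k|_2 \leq \sqrt{k!}\prod_j|D_iA^j_{S_j}|_2$ is that $B_k$ correctly shrinks when the gradients of the $A^j$ become collinear (for instance when several $A^j$ coincide), which is exactly where the crude bound would lose a power of $\epsilon$.

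Granting the proposition for $k-1$, the event in question forces $\big(\prod_{j<k}|A^j_{S_j}(X)|_2 / |W_{k-1}(X)|_2\big)\cdot\big(|A^k_{S_k}(X)|_2/B_k(X)\big) < \epsilon\sqrt{k}$. I would bound its probability by a dyadic decomposition over the scale of the first factor: when it lies in $[2^{-m-1},2^{-m}]$ — of probability $\leq 2^{-m}\,2^{O(\sum_{j<k}d_j)}O(\sqrt{k})^{k}\,m^{k-1}$ by the inductive hypothesis applied with parameter $2^{-m}$ — the event additionally requires $|A^k_{S_k}(X)|_2 < \epsilon\sqrt{k}\,2^{m+1}B_k(X)$. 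The product of the two bounds has the $2^{m}$ cancel the $2^{-m}$, leaving a term summable over the $O(\log\epsilon^{-1})$ relevant scales (the tail $m \gtrsim \log\epsilon^{-1}$, on which the second requirement is vacuous, being controlled by $2^{-m}$ alone); collecting $\sqrt{k!}$, the per-level factors $2^{O(d_j)}$ coming from Carbery--Wright and hypercontractivity, and the dyadic counting produces $\epsilon\,2^{O(\sum_j d_j)}O(\sqrt{k})^{k+1}\log(\epsilon^{-1})^{k}$, roughly one power of the logarithm per level of the recursion.

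The crux — and the step I expect to be the main obstacle — is the estimate needed above: conditioned on the first factor being at scale $2^{-m}$, the event $|A^k_{S_k}(X)|_2 < t\,B_k(X)$ should still have probability $\lesssim t\,2^{O(d_k)}$, essentially a (near) log-free conditional form of the $k=1$ bound (any residual logarithmic slack here feeds directly into the power of $\log(\epsilon^{-1})$ in the conclusion, so it must be controlled carefully). One cannot merely cite the base case, because $B_k$ is built from the gradients of $A^1,\dots,A^{k-1}$ — hence from the very Gaussian on which we are conditioning. I would handle this by rotating $X$ in a fresh independent Gaussian direction: to leading order this freezes the configuration of the other gradients (and therefore the subspaces $V^{S'}_{<k}$ and the scale of the first factor) while $A^k$ still varies as a degree-$d_k$ trigonometric polynomial along a direction transverse to all of them, so the one-dimensional strong-anticoncentration argument applies on that slice and is converted back to the conditional probability by stationarity. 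An alternative that avoids the conditioning altogether is to run the coarea computation in all $k$ parameters simultaneously: with independent Gaussian directions $V^1,\dots,V^k$ one has $|W_k(X)|_2^2 = \E_V\big[\sum_S J_S(X,V)^2\big]$, where $J_S(X,V)$ is the Jacobian determinant at $\vec\theta = 0$ of the map $\vec\theta \mapsto (A^j_{S_j}(X_{\vec\theta}))_{j=1}^k$, after which the change-of-variables formula together with stationarity of the flow bounds how large a subset of parameter space can have $\sum_S J_S^2$ exceed $\epsilon^{-2}\prod_j|A^j_{S_j}(X)|_2^2$, the $\log(\epsilon^{-1})^k$ and $O(\sqrt{k})^{k+1}$ then coming from the dyadic bookkeeping carried out in all $k$ coordinates at once.
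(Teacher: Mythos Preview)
Your primary approach---induction on $k$ with a dyadic decomposition of the ratio $\prod_{j<k}|A^j|_2/|W_{k-1}|_2$---has a genuine gap at exactly the place you flag. The conditional estimate $\pr(|A^k_{S_k}(X)|_2 < t\,B_k(X)\mid\text{first factor}\approx 2^{-m})\lesssim t\,2^{O(d_k)}$ is the whole content of the proposition at level $k$, and your proposed fix does not close it: rotating $X$ along a fresh Ornstein--Uhlenbeck direction moves \emph{all} of the $A^j$ simultaneously, so the subspaces $V^{S'}_{<k}(X)$, the quantity $B_k(X)$, and the conditioning event all drift together with $A^k$ along the flow. ``To leading order freezes'' is not a bound; the gradients of the $A^{j'}$ for $j'<k$ are degree-$(d_{j'}-1)$ polynomials and vary at the same rate as everything else. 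There is no obvious way to decouple the last coordinate from the first $k-1$ once you have committed to an inductive scheme that conditions on a functional of all of them.

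The paper sidesteps this entirely, and in a way that your closing ``alternative'' gestures toward but does not quite reach. The key move is a reduction from tensor-valued to \emph{scalar}-valued polynomials via random projection: set $f^j_Z(x):=\dotp{A^j(x)}{Z^j}$ for independent Gaussian tensors $Z^j$, and observe that $\prod_j|A^j(X)|_2^2=\E_Z\big[\prod_j f^j_{Z^j}(X)^2\big]$ and $|W_k(X)|_2^2=\E_{Y,Z}\big[\det(D_{Y^i}f^j_{Z^j}(X))^2\big]$. A Paley--Zygmund / Markov step then shows that whenever the bad event holds for $X$, with probability $2^{-O(k)}$ over $(Y,Z)$ one has $\prod_j|f^j_{Z^j}(X)|<\epsilon\,2^{O(k)}|\det(D_{Y^i}f^j_{Z^j}(X))|$. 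This last is a statement about $k$ scalar polynomials, and for those the paper proves the bound directly---no induction---by a single coarea argument on $S^k$: the map $a\mapsto(p^i(a_0X+a_1Y^1+\cdots+a_kY^k))_i$ has Jacobian exactly $\det(D_{Y^j}p^i(X))$ at the pole, rotational invariance replaces conditioning by averaging over the sphere, and Bezout bounds the fiber count. The dyadic bookkeeping is then carried out on the \emph{individual} scales $|p^i(X)|\in[\epsilon_i,2\epsilon_i]$, which requires no conditional probability estimate at all since the coarea lemma controls the joint event $\{|p^i(X)|<\epsilon_i\ \forall i,\ |\det|>\delta\}$ in one shot.

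Your alternative paragraph is morally on the same track as this, but it leaves the combination over tensor indices $S$ unhandled: change-of-variables controls each $J_S$ against the box $\prod_j|A^j_{S_j}|$, not $\sum_S J_S^2$ against $\prod_j|A^j|_2^2$, and bridging that gap is precisely what the random-projection-plus-Paley--Zygmund step buys. Lead with that reduction and drop the induction.
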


In order to prove Proposition \ref{StrongAnticoncentrationProp} we will need to following lemma:

\begin{lem}\label{SALem}
For $1\leq i\leq k$ let $p^i$ be a degree $d_i$ polynomial on $\R^n$ and let $\delta,\epsilon_i>0$.  Then
\begin{align*}
\pr_{X,Y^1,\ldots,Y^k} & \left(|p^i(X)| < \epsilon_i \textrm{ for all }i, \textrm{ and } |\det(D_{Y^j}p^i(X))|>\delta \right) \leq \frac{2^{k+1}\prod_{i=1}^k d_i \prod_{i=1}^k \epsilon_i}{\delta V_k}
\end{align*}
where $V_k = \frac{2\pi^{k+1/2}}{\Gamma((k+1)/2)}$ is the volume of the unit $k$-sphere.
\end{lem}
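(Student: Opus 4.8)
The plan is to condition on $X$ and analyze the event over the independent Gaussians $Y^1,\ldots,Y^k$ first, then integrate over $X$. Fix a point $x$ at which $|p^i(x)| < \epsilon_i$ for all $i$ — call this the ``bad set'' $B \subseteq \R^n$. For such an $x$, consider the linear forms $\ell^i(y) = D_y p^i(x) = \dotp{\nabla p^i(x)}{y}$. We want to bound the probability over $Y^1,\ldots,Y^k$ that $|\det(\ell^i(Y^j))_{i,j}| > \delta$. Writing $g^i = \nabla p^i(x)$, the matrix $(\ell^i(Y^j))_{ij}$ has determinant equal to $\det(\dotp{g^i}{Y^j})$, which by the wedge-product identity recalled in the multilinear algebra subsection equals $\left(\bigwedge_{i_1,\ldots,i_k} g^1_{i_1}\cdots g^k_{i_k}\right) Y^1_{i_1}\cdots Y^k_{i_k}$. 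So I need a lower-tail/anticoncentration-flavored \emph{upper} bound: the probability that a product-type multilinear form in independent Gaussians exceeds $\delta$ is small when the $g^i$ are short. But note $|p^i(x)| < \epsilon_i$ does not directly bound $|g^i| = |\nabla p^i(x)|$; instead I should use that $p^i$ is degree $d_i$, so near a small value of $p^i$ the gradient cannot be too large \emph{relative to} the distance to the zero set, and more usefully, the size of the gradient is what makes $\det$ large, so a large determinant forces $x$ to be a point where $p^i$ is small but its gradient is large, which is rare.

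The cleaner route, which I expect the paper takes: integrate in a different order. For fixed $Y^1,\ldots,Y^{k}$, one can think of the event as a statement about $X$ lying near the intersection of the varieties $\{p^i = 0\}$ \emph{and} having the Jacobian of the map $x \mapsto (p^1(x),\ldots,p^k(x))$ in the directions $Y^1,\ldots,Y^k$ be large. This is exactly a change-of-variables setup: the map $\Phi(x) = (p^1(x),\ldots,p^k(x))$ restricted to the $k$-plane spanned by the $Y^j$ through $x$ has Jacobian determinant $\det(D_{Y^j}p^i(x))$, so the pushforward of Lebesgue/Gaussian measure under $\Phi$ along those directions has density controlled by $1/|\det|$. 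Concretely, Fubini over $X$ in the affine decomposition $\R^n = \mathrm{span}(Y^1,\ldots,Y^k) \oplus (\text{orthocomplement})$, then on each $k$-dimensional slice apply the change of variables $u = \Phi(x)$; the region $\{|u_i| < \epsilon_i\}$ has volume $2^k \prod \epsilon_i$, and each fiber of $\Phi$ on the slice has at most $\prod d_i$ points (Bézout), so the slice-measure of the bad set is at most $\frac{2^k \prod d_i \prod \epsilon_i}{|\det(D_{Y^j}p^i(x))|} < \frac{2^k \prod d_i \prod \epsilon_i}{\delta}$ on the event in question. Finally I have to account for the Gaussian density on the $Y^j$'s and the Gaussian density on the slice directions — this is where the sphere volume $V_k$ enters: averaging the reciprocal Jacobian against the Gaussian law of the $Y^j$ amounts to integrating over directions, and the normalization of the uniform measure on $S^k$ (or equivalently the Gaussian normalization in $k+1$-ish dimensions) produces the $1/V_k$ and the extra factor of $2$.

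The main obstacle, and the step requiring genuine care, is making the change-of-variables/Fubini argument rigorous when $\Phi$ is not a submersion everywhere and when we must simultaneously keep the Gaussian weights on $X$ and on the $Y^j$. The honest way is: write the joint density of $(X, Y^1, \ldots, Y^k)$ as a Gaussian, slice $\R^{n(k+1)}$ appropriately, and on the good event perform the substitution in the $k$ coordinates of $X$ that are ``transverse'' (the ones along the $Y^j$ directions). One must check that the coarea/Bézout bound of ``at most $\prod d_i$ preimages'' is uniform and that the leftover Gaussian factors integrate to the claimed constant; the $V_k$ appears precisely from integrating out the magnitude of the transverse part against the standard Gaussian, since $\int_{\R^{k}} e^{-|z|^2/2}\,dz$ relates to the sphere volume via $\Gamma$-function identities. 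A secondary subtlety is that the $Y^j$ need not be orthonormal, so $\det(\dotp{g^i}{Y^j})$ differs from the Jacobian of $\Phi$ on the exact orthonormal frame by the Gram determinant of the $Y^j$; this discrepancy is harmless because it gets absorbed when we integrate over the Gaussian law of the $Y^j$, but it must be tracked to land the constant $2^{k+1}/V_k$ exactly rather than up to an unspecified factor.
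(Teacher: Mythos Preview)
Your slicing approach has a genuine gap. When you fix $Y^1,\ldots,Y^k$, decompose $\R^n=\mathrm{span}(Y^j)\oplus(\mathrm{span}(Y^j))^\perp$, and change variables on each affine slice, the Jacobian of $\Phi=(p^1,\ldots,p^k)$ in the \emph{orthonormal} coordinates on the slice is $\det(D_{e_l}p^i)$, not $\det(D_{Y^j}p^i)$; the two differ by the factor $|\det A|=|Y^1\wedge\cdots\wedge Y^k|$, the volume of the parallelepiped spanned by the $Y^j$. Carrying the change-of-variables and B\'ezout bound through, the conditional probability (given the $Y^j$) picks up exactly this factor $|Y^1\wedge\cdots\wedge Y^k|$. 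Your claim that ``this discrepancy is harmless because it gets absorbed when we integrate over the Gaussian law of the $Y^j$'' is false: for $n$-dimensional standard Gaussians, $\E\bigl[|Y^1\wedge\cdots\wedge Y^k|\bigr]$ is of order $n^{k/2}$ (e.g.\ by Gram--Schmidt it is a product of $\chi$-expectations), so your bound degrades with the ambient dimension $n$, whereas the lemma is dimension-free.

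The paper avoids this entirely with a different idea: exploit that $(X,Y^1,\ldots,Y^k)$ is invariant under the $O(k+1)$ action that mixes these $k+1$ vectors among themselves. Concretely, define $f:S^k\to\R^k$ by $f(a_0,\ldots,a_k)_i=p^i(a_0X+a_1Y^1+\cdots+a_kY^k)$; then the Jacobian of $f$ at $(1,0,\ldots,0)$ is exactly $(D_{Y^j}p^i(X))_{i,j}$, and by rotational invariance the probability in question equals, for \emph{fixed} $X,Y^j$, the probability over a uniformly random point of $S^k$ that $f$ lands in the box and has large Jacobian. Now the change-of-variables/B\'ezout argument runs on the compact $k$-manifold $S^k$ (independent of $n$): $\int_T|\det\mathrm{Jac}(f)|\,dx=\int_R|f^{-1}(y)|\,dy\leq 2\prod d_i\cdot 2^k\prod\epsilon_i$, while the left side is at least $\delta\cdot V_k\cdot\pr_{S^k}(T)$. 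This is where $V_k$ genuinely enters, as the total surface measure of $S^k$, not from any Gaussian normalization in the $X$-slice as you suggested.
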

\begin{proof}
Define the function $f:S^k\rightarrow \R^k$ by letting
$$f(a_0,a_1,\ldots,a_k)_i := p^i(a_0 X + a_1 Y^1 + a_2 Y^2 + \ldots + a_k Y^k).$$
Notice that the matrix with coefficients $D_{Y^j}p^i(X)$ is simply the Jacobian of $f$ at the point $(1,0,0,\ldots,0)$.  Notice that if we replace the random variables $X,Y^1,\ldots,Y^k$ by linear combinations of each other by making an orthonormal change of coordinates, that they are still independent Gaussians and thus,the probability in question is unchanged.  We claim that for any fixed values of $X,Y^i$ that the probability over a random such change of variables that $$|p^i(X)| < \epsilon_i \textrm{ for all }i, \textrm{ and } |\det(D_{Y_j}p^i(X))|>\delta$$ is at most $\frac{2^k\prod_{i=1}^k d_i \prod_{i=1}^k \epsilon_i}{\delta V_k}.$  Such a statement would clearly imply our lemma.

Note that making such a random change of variables is equivalent to precomposing $f$ with a random element of the orthogonal group $O(k+1)$.  Thus, it suffices to bound
$$
\pr_{x\in S^k} \left( f(x) \in R, \textrm{ and } |\det(\jac(f(x)))|>\delta\right),
$$
where $R\subset \R^k$ is given by $\prod_i [-\epsilon_i,\epsilon_i]$.
Let $T$ be the set of $x\in S^k$ so that $f(x) \in R, \textrm{ and } |\det(\jac(f(x)))|>\delta.$  We know by the change of variables formula for integration that
\begin{equation}\label{integrationEqn}
\int_T |\det(\jac(f(x)))|dx = \int_{\R^n} |f^{-1}(y)| dy.
\end{equation}
We note that the right hand side of Equation (\ref{integrationEqn}) is $\int_R |f^{-1}(y)| dy$.  By Bezout's Theorem, the integrand is at most $2\prod_{i=1}^k d_i$ except on a set of measure 0.  Thus, $\int_{\R^n} |f^{-1}(y)| dy \leq 2^{k+1}\prod_{i=1}^k d_i \prod_{i=1}^k \epsilon_i.$  On the other hand the left hand side of Equation (\ref{integrationEqn}) is at least $\delta \textrm{Vol}(T)=\delta V_k \pr_{x\in S^k}(x\in T)$.  Thus, $$
\pr_{x\in S^k}(x\in T) \leq \frac{2^{k+1}\prod_{i=1}^k d_i \prod_{i=1}^k \epsilon_i}{\delta V_k}.
$$
\end{proof}

\begin{cor}\label{SACor}
For polynomials $p^i:\R^n\rightarrow\R$ of degree $d_i$ for $1\leq i \leq k$ and for $1/2>\epsilon>0$,
\begin{align*}
\pr_{X,Y^1,\ldots,Y^k}&\left( \prod_{i=1}^k |p^i(X)| < \epsilon \left|\det \left( D_{Y^i} p^j \right) \right| \right)  \leq \epsilon 2^{O(d_1+d_2+\cdots + d_k)}O(\sqrt{k})^{k+1} \log(\epsilon^{-1})^k.
\end{align*}
\end{cor}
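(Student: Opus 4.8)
The plan is to combine the box-type estimate of Lemma~\ref{SALem} with a dyadic decomposition of each $|p^i(X)|$, truncated at both ends: the small end is handled by Carbery--Wright (Lemma~\ref{anticoncentrationLem}) and the large end by the concentration bound of Corollary~\ref{ConcentrationCor}.

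First observe that the event $\prod_{i=1}^k|p^i(X)| < \epsilon\,|\det(D_{Y^i}p^j(X))|$ is unchanged if each $p^j$ is multiplied by a nonzero scalar, since this scales the left side and the $j$-th column of the matrix $(D_{Y^i}p^j(X))_{i,j}$ by the same amount; if some $p^j$ is identically zero the matrix has a zero column, the event is empty, and there is nothing to prove. So I may assume $|p^i|_2 = 1$ for all $i$, and also that $\epsilon$ is at most a small absolute constant, since otherwise the asserted bound already exceeds $1$ (for a large enough choice of the implied constants).

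For $a = (a_1,\ldots,a_k) \in \mathbb{Z}^k$ let $C_a$ be the event that $2^{a_i} \leq |p^i(X)| < 2^{a_i+1}$ for every $i$; up to a null set these partition the probability space. On $C_a$ intersected with the event $E$ whose probability we want to bound, we have $|p^i(X)| < 2^{a_i+1}$ for all $i$ and $|\det(D_{Y^i}p^j(X))| > \prod_i |p^i(X)|/\epsilon \geq 2^{\sum_i a_i}/\epsilon$, so Lemma~\ref{SALem} with $\epsilon_i = 2^{a_i+1}$ and $\delta = 2^{\sum_i a_i}/\epsilon$ gives
\[
\pr(E \cap C_a) \;\leq\; \frac{2^{k+1}\bigl(\prod_i d_i\bigr)\,\prod_i 2^{a_i+1}}{(2^{\sum_i a_i}/\epsilon)\,V_k} \;=\; \frac{2^{2k+1}\,\epsilon\,\prod_i d_i}{V_k},
\]
a per-cell bound that crucially does not depend on $a$. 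Because of this one cannot simply sum over all cells, so the truncation is forced and is where the real work lies. Using Corollary~\ref{ConcentrationCor} I choose $N_j = O(\log(\epsilon^{-1})^{d_j/2})$ with $\pr(|p^j(X)| > N_j) \leq \epsilon$, and from Lemma~\ref{anticoncentrationLem} I note $\pr(|p^j(X)| < \epsilon^{d_j}) = O(d_j\epsilon)$. Then I split $\mathbb{Z}^k$ into: the ``large'' cells, with $a_j \geq \log_2 N_j$ for some $j$ (these are contained in $\bigcup_j\{|p^j(X)| \geq N_j\}$, total probability $O(k\epsilon)$); the ``small'' cells, with $2^{a_j+1} \leq \epsilon^{d_j}$ for some $j$ (contained in $\bigcup_j\{|p^j(X)| < \epsilon^{d_j}\}$, total probability $O(\epsilon \sum_j d_j)$); and the remaining ``middle'' cells, for which each $a_j$ ranges over an interval of length $O(\log_2 N_j + d_j\log_2(\epsilon^{-1})) = O(d_j\log_2(\epsilon^{-1}))$, so there are at most $\prod_j O(d_j\log_2(\epsilon^{-1})) = 2^{O(d_1+\cdots+d_k)}\log(\epsilon^{-1})^k$ of them.

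Multiplying the number of middle cells by the per-cell bound and using $\prod_i d_i \leq 2^{\sum_i d_i}$, $2^{O(k)} \leq 2^{O(\sum_i d_i)}$ (since $k \leq \sum_i d_i$), and the Stirling estimate $V_k^{-1} = O(\sqrt{k})^{k}$, the middle cells contribute at most $\epsilon\,2^{O(d_1+\cdots+d_k)}\,O(\sqrt{k})^{k+1}\log(\epsilon^{-1})^k$; this dominates the two tail contributions and gives the claim. The main obstacle is precisely that the estimate from Lemma~\ref{SALem} is uniform over cells, so everything hinges on choosing the truncation thresholds so that (i) the discarded tails have probability $O(\epsilon)$ times something harmless and (ii) the number of surviving cells is $2^{O(\sum_i d_i)}\log(\epsilon^{-1})^k$. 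A less careful choice---say, cutting the small end at the crossover point of the two competing bounds, which involves $V_k^{-1}$, or losing a factor $k$ in each $N_j$---would inject spurious factors such as $(\log k)^k$ or $(\max_i d_i)^k$ into the count.
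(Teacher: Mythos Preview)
Your proof is correct and follows essentially the same approach as the paper: normalize to $|p^i|_2=1$, discard the tails where some $|p^i(X)|$ is too small (via Carbery--Wright) or too large (via the concentration bound), dyadically partition the remaining range, and apply Lemma~\ref{SALem} on each cell. The only cosmetic difference is your upper truncation at $N_j=O(\log(\epsilon^{-1})^{d_j/2})$ rather than the paper's simpler $\epsilon^{-1}$; both yield $O(d_j\log(\epsilon^{-1}))$ dyadic intervals per coordinate and hence the same final bound.
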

\begin{proof}
We note that the problem in question is invariant under scalings of the $p^i$, and therefore we may assume that $|p^i|_2=1$ for all $i$.  We note by Lemma \ref{anticoncentrationLem} and Corollary \ref{ConcentrationCor} that we may ignore the case where some $|p^i(X)| < \epsilon^{d_i}$ or where some $|p^i(X)|>\epsilon^{-1}$ (as the probability that such an event happens for any $i$ is at most $O(\sum_i d_i\epsilon)$).  For each $i$ we may partition the interval $[\epsilon^{d_i},\epsilon^{-1}]$ into $O(d_i\log(\epsilon^{-1}))$ many intervals each of whose endpoints differ by at most a factor of 2.  Up to a factor of $O(\log(\epsilon^{-1}))^k \prod_i d_i$, it suffices to bound the probability that each of the $|p^i(X)|$ lies in a specified such interval and that $\prod_{i=1}^k |p^i(X)| < \epsilon \left|\det \left( D_{Y^i} p^j \right) \right|.$  If the upper endpoints of these intervals are $\epsilon_i$, then this probability, is at most the probability that
$$
|p^i(X)| < \epsilon_i \textrm{ for all }i, \textrm{ and } |\det(D_{Y^j}p^i(X))|>2^k\epsilon \prod \epsilon_i.
$$
By Lemma \ref{SALem}, the above probability is at most $\epsilon 2^{O(d_1+d_2+\cdots + d_k)}O(\sqrt{k})^{k+1}$.  Multiplying by $O(\log(\epsilon^{-1}))^k \prod_i d_i$, yields our bound.
\end{proof}

\begin{proof}[Proof of Proposition \ref{StrongAnticoncentrationProp}]
For $Z$ a tensor of the same dimension as $A^j$, let $f^j_Z:\R^n\rightarrow \R$ be the function $f^j_Z(x) = \dotp{A^j(x)}{Z}$.
Note that
$$
\left|\bigwedge_{i_1,\ldots,i_k} \prod_{j=1}^k D_{i_j}A^j_{S_j}(X) \right|_2^2
=
\E_{Y^1,\ldots,Y^k,Z^1,\ldots,Z^k}\left[\left|\det\left(D_{Y^i}f^j_{Z^j}(X)\right)\right|^2\right].
$$
Furthermore,
$$
\left(\prod_{j=1}^k |A^j_{S_j}(X)|_2\right)^2 = \E_{Z^1,\ldots,Z^k}\left[\left|\prod_{j=1}^k f^j_{Z^j}(X) \right|^2 \right].
$$

Now suppose that for some choice of $X$ that
\begin{equation}\label{SAFailEqn}
\prod_{j=1}^k |A^j_{S_j}(X)|_2^2  < \epsilon^2 \left|\bigwedge_{i_1,\ldots,i_k} \prod_{j=1}^k D_{i_j}A^j_{S_j}(X) \right|_2^2.
\end{equation}
We have by Corollary \ref{WeakAnticoncentrationCor} that with probability at least $2^{O(k)}$ over the random Gaussians $Y^1,\ldots,Y^k$ and $Z^1,\ldots,Z^k$ that the left hand side of Equation (\ref{SAFailEqn}) is at least
$$
\left|\prod_{j=1}^k f^j_{Z^j}(X) \right|^2/2.
$$
By the Markov bound, we have that except for a probability of at most $2^{O(k)}$
the right hand side of Equation (\ref{SAFailEqn}) is at most
$$
\epsilon^2 2^{O(k)}\left|\det\left(D_{Y^i}f^j_{Z^j}(X)\right)\right|^2.
$$
Thus, whenever Equation (\ref{SAFailEqn}) holds, with probability at least $2^{O(k)}$ over $Y^i$ and $Z^i$ we have that
$$
\left|\prod_{j=1}^k f^j_{Z^j}(X) \right| \leq \epsilon 2^{O(k)}\left|\det\left(D_{Y^i}f^j_{Z^j}(X)\right)\right|.
$$
But by Corollary \ref{SACor} the probability of this happening (even for fixed $Z^i$) is at most
$$
\epsilon 2^{O(d_1+d_2+\cdots + d_k)}O(\sqrt{k})^{k+1} \log(\epsilon^{-1})^k.
$$
Thus, the probability of Equation (\ref{SAFailEqn}) holding is at most $2^{O(k)}$ times as much, which is still
$$
\epsilon 2^{O(d_1+d_2+\cdots + d_k)}O(\sqrt{k})^{k+1} \log(\epsilon^{-1})^k.
$$
\end{proof}

\subsection{Orthogonal Polynomials}

Here we review some basic facts about orthogonal polynomials.  Recall that the Hermite polynomials are an orthonormal basis for polynomials in one variable with respect to the Gaussian inner product.  In particular, they are defined by the properties that
\begin{itemize}
\item $H_n:\R\rightarrow \R$ is a degree-$n$ polynomial
\item $\E[H_n(X)H_m(X)] = \delta_{n,m}$ where $X$ is a one-dimensional Gaussian random variable
\end{itemize}
Furthermore, we have the relation that $H_n'(x) = \sqrt{n} H_{n-1}(x)$.  We can extend this theory to polynomials in $n$ variables as follows.  For $a=(a_1,\ldots,a_n)$ a vector of non-negative integers, we define the corresponding polynomial $H_a(x) = \prod_{i=1}^n H_{a_i}(x_i)$ on $\R^n$.  It is easy to check that the total degree of $H_a$ is $|a|_1 := \sum_{i=1}^n a_i$ and that $\E[H_a(X)H_b(X)]=\delta_{a,b}$.

Given a polynomial $p$ in $n$ variables, we can always write $p$ as a linear combination of Hermite polynomials.  In fact, it is easy to check that
$$
p(X) = \sum_{|a|_1 \leq \deg(p)} c_a(p) H_a(X)
$$
where
$$
c_a(p) = \E[p(X)H_a(X)].
$$
We define the $k^{\tth}$ harmonic component of $p$ to be
$$
p^{[k]}:=\sum_{|a|_1 = k} c_a(p) H_a(X).
$$
We say that $p$ is harmonic of degree $k$ if it equals its $k^{\tth}$ harmonic part.

Note that we can compute the derivative of $H_a$ as
$$
D_i H_a(X) = \sqrt{a_i} H_{a-e_i}(X).
$$
This is clearly a vector of polynomials that are harmonic of degree $|a|_1-1$.  Furthermore, we have that
\begin{align*}
\E[(D_i H_a(X)) (D_i H_b(X))] & = \sum_i \E[\sqrt{a_i b_i} H_{a-e_i}(X) H_{b-e_i}(X)]\\
& = \sum_i \sqrt{a_i b_i} \delta_{a-e_i,b-e_i}\\
& = \delta_{a,b} \sum_i \sqrt{a_ib_i}\\
& = \delta_{a,b} \sum_i a_i\\
& = |a|_1 \delta_{a,b}.
\end{align*}
Additionally, for $a\neq b$ each of the components of $D_i H_a$ is a Hermite polynomial orthogonal to the corresponding component of $D_i H_b$.  Iterating this, we can see that
\begin{align*}
\E[(D_{i_1}D_{i_2}\cdots D_{i_k} H_a(X)) & (D_{i_1}D_{i_2}\cdots D_{i_k} H_b(X))]  = |a|_1 (|a|_1-1) \cdots (|a|_1-k+1) \delta_{a,b}.
\end{align*}
Hence we have

\begin{lem}\label{DerivativeSizeLem}
For $p$ a polynomial of degree $d$,
$$
|D_{i_1}\cdots D_{i_k} p(X)|_2^2 \leq d(d-1)\cdots (d-k+1) |p|_2^2
$$
with equality if and only if $p$ is harmonic of degree $d$.
\end{lem}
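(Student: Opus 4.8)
The plan is to reduce everything to the Hermite-coefficient identities derived immediately above; in fact those already do essentially all the work, and the only genuinely new ingredient is an elementary monotonicity property of falling factorials. Throughout, write $[m]_k := m(m-1)\cdots(m-k+1)$, so that the claimed bound reads $|D_{i_1}\cdots D_{i_k}p(X)|_2^2 \leq [d]_k\,|p|_2^2$ with equality exactly when $p = p^{[d]}$. If $k > d$ both sides are $0$ (the iterated derivative vanishes identically and $[d]_k$ contains a zero factor), so I may assume $k \leq d$.

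First I would expand $p$ in the Hermite basis, $p = \sum_{|a|_1 \leq d} c_a(p) H_a$, and differentiate term by term to get $D_{i_1}\cdots D_{i_k} p = \sum_a c_a(p)\, D_{i_1}\cdots D_{i_k} H_a$. Squaring, taking $\E_X$, and summing over $i_1,\ldots,i_k$ as required by the definition of the $L^2$-norm of a tensor-valued function, the cross terms with $a\neq b$ vanish by the orthogonality remarked on just above, and the diagonal terms each contribute $[|a|_1]_k$ by the identity $\E[(D_{i_1}\cdots D_{i_k}H_a)(D_{i_1}\cdots D_{i_k}H_b)] = [|a|_1]_k\,\delta_{a,b}$. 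This gives the clean formula $|D_{i_1}\cdots D_{i_k}p(X)|_2^2 = \sum_{|a|_1\leq d}|c_a(p)|^2\,[|a|_1]_k$, to be compared with $|p|_2^2 = \sum_{|a|_1\leq d}|c_a(p)|^2$, which holds by orthonormality of the $H_a$.

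The inequality is then immediate from $[m]_k \leq [d]_k$ for every integer $0 \leq m \leq d$: for $m < k$ the left side is $0$, while for $k \leq m \leq d$ it is a product of nonnegative integers and $[m+1]_k/[m]_k = (m+1)/(m+1-k) \geq 1$, so it is nondecreasing in $m$. For the equality clause I would observe that, since $d \geq k$ forces $[d]_k > 0$ and the falling factorial is strictly increasing on $\{k-1, k, k+1,\ldots\}$ (and vanishes for $m < k$), one has $[m]_k = [d]_k$ for $m \in \{0,\ldots,d\}$ only when $m = d$; hence equality holds in the lemma iff $c_a(p) = 0$ for every $a$ with $|a|_1 \neq d$, i.e. iff $p$ equals its $d^{\tth}$ harmonic part. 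I do not anticipate a real obstacle here: the only things to watch are keeping the two Hermite sums over a common index set and invoking the falling-factorial monotonicity only on the range where all factors are nonnegative; the substantive content, namely the orthogonality of the iterated derivatives of distinct $H_a$, has already been established.
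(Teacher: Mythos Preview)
Your proof is correct and follows exactly the route the paper intends: expand in the Hermite basis, use the identity $\E[(D_{i_1}\cdots D_{i_k}H_a)(D_{i_1}\cdots D_{i_k}H_b)]=[|a|_1]_k\,\delta_{a,b}$ derived just above the lemma, and compare coefficients. The paper simply writes ``Hence we have'' in place of your explicit falling-factorial monotonicity and equality-case analysis, so you have merely filled in the details it leaves implicit.
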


\subsection{Ordinal Numbers}

A few of our proofs are going to use some basic facts about ordinal numbers that can be written as polynomials in $\omega$ to show that certain recursive procedures terminate.  If $p$ is a polynomial with non-negative integer coefficients we consider the ordinal number $p(\omega)$.  Recall that these numbers have a comparison operation given by $p(\omega)>q(\omega)$ if and only if the leading coefficient of $p-q$ is positive.  We will need the following lemma:

\begin{lem}\label{ordLem}
There is no infinite sequence of polynomials with non-negative integer coefficients, $p_i$ so that $p_1(\omega)>p_2(\omega)>p_3(\omega)>\ldots$.
\end{lem}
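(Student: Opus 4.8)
The plan is to recognize this statement as an instance of the well-foundedness of the ordinals: every ordinal of the form $p(\omega)$ with $p$ having non-negative integer coefficients is an ordinal below $\omega^\omega$ (its Cantor normal form is read off directly from the coefficients of $p$), and the ordinals are well-ordered, hence contain no infinite descending chain. Rather than invoke this machinery wholesale, I would give a short self-contained argument by induction on the degree, which also makes transparent exactly how the stated comparison rule is being used.

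First I would reduce to the bounded-degree case. Given an infinite sequence $p_1(\omega) > p_2(\omega) > \cdots$, note that if $\deg p_i > \deg p_1$ for some $i$, then $p_i - p_1$ has leading coefficient equal to the leading coefficient of $p_i$, which is positive, so $p_i(\omega) > p_1(\omega)$, a contradiction. Hence $\deg p_i \le \deg p_1 =: d$ for all $i$, and it suffices to prove: for each fixed $d \ge 0$ there is no infinite sequence $p_1(\omega) > p_2(\omega) > \cdots$ with all $\deg p_i \le d$. This I would prove by induction on $d$. The base case $d = 0$ is immediate, since then each $p_i(\omega) = p_i$ is a non-negative integer and a strictly decreasing sequence of non-negative integers is finite.

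For the inductive step I would peel off the top coefficient: write $p_i(\omega) = a_i\omega^d + r_i(\omega)$ with $a_i \ge 0$ the coefficient of $\omega^d$ in $p_i$ and $\deg r_i \le d-1$. Since $p_i - p_{i+1}$ has degree at most $d$, the inequality $p_i(\omega) > p_{i+1}(\omega)$ forces the degree-$d$ coefficient $a_i - a_{i+1}$ to be non-negative (if it were negative it would be the leading coefficient of $p_i - p_{i+1}$, which must be positive). Thus $a_1 \ge a_2 \ge \cdots \ge 0$ is a non-increasing sequence of non-negative integers, so it is eventually constant, say $a_i = a_M$ for all $i \ge M$. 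For $i \ge M$, the comparison $p_i(\omega) > p_{i+1}(\omega)$ then reduces to $r_i(\omega) > r_{i+1}(\omega)$, so $(r_i)_{i \ge M}$ is an infinite strictly descending sequence of the same form with all degrees at most $d-1$, contradicting the inductive hypothesis. This completes the induction.

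The argument is entirely elementary, so there is no serious obstacle; the one place that requires care is the bookkeeping around the comparison rule, since $p - q$ must be treated as a polynomial with possibly negative coefficients and one must correctly identify its leading term when $\deg p = \deg q$. Handling this correctly is precisely what licenses the ``peel off the leading coefficient'' step that drives the induction from degree $d$ down to degree $d-1$.
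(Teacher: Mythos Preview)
Your proof is correct and follows essentially the same approach as the paper: induction on the degree, observing that the top coefficient is a non-increasing sequence of non-negative integers that must stabilize, then applying the inductive hypothesis to the remaining lower-degree part. Your version is slightly more explicit about the bookkeeping (the initial reduction to bounded degree and the justification that $a_i \ge a_{i+1}$ from the comparison rule), but the argument is the same.
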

\begin{proof}
We prove that all such sequences are finite by induction on $\deg(p_1)$.  If $\deg(p_1)=0$, this sequence is just a decreasing sequence of non-negative integers and must therefore be finite.  Next suppose for sake of contradiction that we have such an infinite decreasing sequence where $\deg(p_1)=d$, and that we know that no such infinite sequences exist with $\deg(p_1)<d$.  Note that the $\omega^d$ coefficient of the $p_i$ is a non-increasing sequence of non-negative integers and therefore must eventually stabilize.  Hence there is some $a$ and $N$ so that for all $n>N$, $p_n(\omega) = a \omega^d + q_n(\omega)$ where $q_n$ is a polynomial with non-negative coefficients of degree at most $d-1$.  It is clear that $q_n(\omega)>q_{n+1}(\omega)>\ldots$, so by the inductive hypothesis, this sequence must be finite.
\end{proof}

Lemma \ref{ordLem} allows us to perform transfinite induction.  In particular, if we have some sequence of statements $S(p)$ indexed by polynomials $p$ in one variable with non-negative integer coefficients, and if furthermore we have that for any $p$,
$$
[S(q) \textrm{ for all } q \textrm{ so that }q(\omega) < p(\omega)] \Rightarrow S(p)
$$
then $S(p)$ will hold for all $p$.  This is true for the following reason.  Suppose for sake of contradiction that $S(p)$ were false for some $p=p_1$.  This would imply by the given property that there was some $p_2$ with $p_2(\omega)<p_1(\omega)$ for which $S(p_2)$ was false.  Similarly, given any $p_i$ for which $S(p_i)$ was false we could find a $p_{i+1}$ with $p_{i+1}(\omega) < p_i(\omega)$ for which $S(p_{i+1})$ was false.  This would give us an infinite sequence of polynomials $p_i$ so that $p_1(\omega)>p_2(\omega)>\ldots$, which would contradict Lemma \ref{ordLem}.

\section{Proof of the Decomposition Theorem}\label{MainSec}

\subsection{Overview of the Proof}

The proof of Theorem \ref{DDTheorem} comes in two steps.  The first is Proposition \ref{DecompositionProp} (below), which states roughly that if $p$ is a degree-$d$ polynomial so that for a random Gaussian $X$ $|p'(X)|$ is small with non-negligible probability, then $p$ can be decomposed as a polynomial with smaller $L^2$ norm, plus a sum of products of lower degree polynomials.  Given this proposition, the proof of Theorem \ref{DDTheorem} is relatively straightforward.  We begin by writing a trivial decomposition of $p$ as $p(x) = \textrm{Id}(p(x))$.  If this is a diffuse decomposition, we are done.  Otherwise, by Proposition \ref{StrongAnticoncentrationProp}, there must be a reasonable probably that $|p'(X)|$ is small.  Thus, Proposition \ref{DecompositionProp} allows us to decompose $p$ in terms of lower-degree polynomials.  This gives us a new decomposition of $p$.  If it is diffuse, we are done, otherwise it is not hard to show that at least one of the polynomials in this decomposition can be decomposed further.  We show that this procedure will eventually terminate by demonstrating an ordinal monovariant which decreases with each step.

In Section \ref{DecompSec}, we state and prove Proposition \ref{DecompositionProp}, and in Section \ref{DDpfSec} complete the proof of Theorem \ref{DDTheorem}.

\subsection{The Decomposition Lemma}\label{DecompSec}

In this section, we will prove the following important proposition that will allow us to write a non-diffuse polynomial in terms of lower-degree polynomials.

\begin{prop}\label{DecompositionProp}
Let $p(X)$ be a degree $d$ polynomial with $|p|_2 \leq 1$ and let $\epsilon,c,N>0$ be real numbers so that
$$
\pr_X(|D_i p(X)|_2 < \epsilon) > \epsilon^N.
$$
Then there exist polynomials $a_i(X)$,$b_i(X)$ of degree strictly less than $d$ with $|a_i(X)|_2|b_i(X)|_2\leq O_{N,c,d}(\epsilon^{-c})|p^{[d]}|_2$ and so that
$$
\left| \left(p(X) - \sum_{i=1}^k a_i(X) b_i(X)\right)^{[d]} \right|_2 < O_{N,c,d}(\epsilon^{1-c}),
$$
where $k = O_{N,c,d}(1)$.  Furthermore, this can be done in such a way that for each $i$, $\deg(a_i)+\deg(b_i)=d$.
\end{prop}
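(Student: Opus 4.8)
The plan is to decompose the top harmonic part $q:=p^{[d]}$ in terms of a bounded number of the dominant eigendirections of the ``gradient covariance'' matrices of $p$ and of $q$. First I dispose of the trivial cases: if $d=1$ the hypothesis forces $|\nabla p|_2<\epsilon$, so $k=0$ works; I may assume $\epsilon$ is below any needed constant (otherwise $\epsilon^{-c},\epsilon^{1-c}\gg 1$ and the statement is vacuous); and if $|p^{[d]}|_2<c_0\epsilon^{1-c}$ for a suitable $c_0$ then again $k=0$ works. So assume $d\geq 2$.

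Next I set up the linear algebra. Let $T_{ij}:=\E_X[\partial_i q(X)\partial_j q(X)]$ and $T'_{ij}:=\E_X[\partial_i p(X)\partial_j p(X)]$, both positive semidefinite, with eigenvalues $\lambda_1\geq\lambda_2\geq\cdots$ (resp.\ $\lambda'_1\geq\lambda'_2\geq\cdots$) and orthonormal eigenvectors $u_l$ (resp.\ $u'_l$). Put $r_l:=D_{u_l}q$ and $r'_l:=D_{u'_l}p$. Then $r_l$ is harmonic of degree $d-1$ when $\lambda_l>0$, $\E_X[r_l r_{l'}]=\lambda_l\delta_{l l'}$, $\sum_l r_l(X)^2=|\nabla q(X)|_2^2$, and $\sum_l\lambda_l=\textrm{tr}(T)=|\nabla q|_2^2=d|q|_2^2$ (the last by Lemma \ref{DerivativeSizeLem}); likewise each $r'_l$ has degree $\leq d-1$, the $r'_l$ are $L^2$-orthogonal with $|r'_l|_2^2=\lambda'_l$, and $\sum_l r'_l(X)^2=|\nabla p(X)|_2^2$. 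Since $\partial_i q$ is harmonic of degree $d-1$ while $\partial_i(p-q)$ has degree $\leq d-2$, the cross terms vanish and $T'=T+\E_X[\nabla(p-q)\otimes\nabla(p-q)]\succeq T$; by the minimax characterisation of eigenvalues this gives $\lambda_l\leq\lambda'_l$ and $\sum_{l>r}\lambda_l\leq\sum_{l>r}\lambda'_l$ for every $r$.

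The heart of the argument is to produce an $r_0=O_{N,c,d}(1)$ with $\lambda'_{r_0+1}<\epsilon^{2-2c}$ and $\sum_{l>r_0}\lambda'_l<O_{N,c,d}(\epsilon^{2-2c})$; the comparison $T\preceq T'$ then transfers both facts to $T$. For the first (``few large eigenvalues''): if $\lambda'_{l_1},\dots,\lambda'_{l_k}$ were all $\geq\eta$, then $r'_{l_1},\dots,r'_{l_k}$ are $L^2$-orthogonal of degree $\leq d-1$ and norm $\geq\sqrt\eta$, and on the event $\{|\nabla p(X)|_2<\epsilon\}$ (of probability $>\epsilon^N$) the arithmetic--geometric mean inequality gives $\prod_i|r'_{l_i}(X)|\leq(\epsilon^2/k)^{k/2}$. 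I then apply Proposition \ref{StrongAnticoncentrationProp} with $A^i=r'_{l_i}$, noting, via the wedge and tensor-norm identities recalled above, that $\big|\bigwedge_{i_1,\dots,i_k}\prod_j D_{i_j}r'_{l_j}(X)\big|_2^2=k!\det\big(\dotp{\nabla r'_{l_a}(X)}{\nabla r'_{l_b}(X)}\big)_{a,b}$, the Gram determinant of the $k$ gradients. This determinant is a nonnegative polynomial whose $X$-expectation is bounded below by a positive quantity depending only on $\eta,k,d$ (the diagonal entries $\E_X[|\nabla r'_{l_a}(X)|^2]$ being comparable to $\lambda'_{l_a}$ up to a factor $O(d)$ and the off-diagonal ones being controlled), so by Lemma \ref{anticoncentrationLem} it is small only on a set of small probability; combining this with Proposition \ref{StrongAnticoncentrationProp} and Lemma \ref{SALem}, the hypothesis $\pr(|\nabla p(X)|_2<\epsilon)>\epsilon^N$ becomes impossible once $k$ exceeds a bound of size $O_{N,c,d}(1)$, which with $\eta=\epsilon^{2-2c}$ is what we want. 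For the tail I partition $\{l>r_0\}$ into $O(\log(1/\epsilon))$ dyadic bands $[\eta/2,\eta)$ (all with $\eta<\epsilon^{2-2c}$) together with the eigenvalues below $\epsilon^2$; in a band of size $m$ the polynomial $\sum_{l\in\textrm{band}}r'_l(X)^2\leq|\nabla p(X)|_2^2$ has degree $2(d-1)$ and mean $\Theta(m\eta)$, so Lemma \ref{anticoncentrationLem}, together with $\pr(|\nabla p|_2<\epsilon)>\epsilon^N$ and (when $m$ is large) a hypercontractive concentration estimate from Lemma \ref{hypercontractiveLem}, bounds the band's total mass by $O_{N,c,d}(\epsilon^{2-2c})$ up to a logarithmic factor, and likewise for the sub-$\epsilon^2$ mass; summing over bands and absorbing the logarithms by running the whole argument with an internal exponent slightly below $c$ gives the tail bound.

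Finally I assemble the decomposition. The Hermite form of Euler's identity, $\big(\sum_i x_i\partial_i f\big)^{[d]}=d\,f^{[d]}$ for $\deg f\leq d$, together with $\sum_i x_i\partial_i q=\sum_l\dotp{u_l}{X}r_l(X)$, gives $q=\tfrac1d\big(\sum_l\dotp{u_l}{X}r_l(X)\big)^{[d]}$. Take $a_l:=\dotp{u_l}{X}$ and $b_l:=\tfrac1d r_l$ for the $l\leq r_0$ with $\lambda_l>0$: then $\deg a_l=1$ and $\deg b_l=d-1$, so $\deg a_l+\deg b_l=d$ with both $<d$, and $|a_l|_2|b_l|_2=\tfrac1d\sqrt{\lambda_l}\leq\tfrac1{\sqrt d}\sqrt{\textrm{tr}(T)}=\tfrac1{\sqrt d}|p^{[d]}|_2\leq O(\epsilon^{-c})|p^{[d]}|_2$. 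Moreover $\big(p-\sum_{l\leq r_0}a_lb_l\big)^{[d]}=\tfrac1d\big(\dotp{X}{h(X)}\big)^{[d]}$ where $h(X):=\sum_{l>r_0}r_l(X)u_l$ satisfies $|h|_2^2=\sum_{l>r_0}\lambda_l$; since each coordinate $h_j$ is harmonic of degree $d-1$, a short Hermite computation bounds $\big|\big(\dotp{X}{h(X)}\big)^{[d]}\big|_2\leq\sqrt d\,|h|_2$, so the error has $L^2$-norm at most $\tfrac1{\sqrt d}\sqrt{\sum_{l>r_0}\lambda_l}<O_{N,c,d}(\epsilon^{1-c})$. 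Thus $k:=r_0=O_{N,c,d}(1)$ works. The step I expect to be the main obstacle is ``few large eigenvalues'': extracting the sharp $\epsilon^{1}$ dependence (rather than the $\epsilon^{1/d}$ of Lemma \ref{anticoncentrationLem}) forces one to use Proposition \ref{StrongAnticoncentrationProp}, and the delicate point there is securing a usable lower bound on the expected Gram determinant of the gradients of the dominant $r'_{l_i}$ --- equivalently, ruling out that these gradients are forced to be nearly linearly dependent while $|\nabla p|_2$ is frequently small; by contrast the tail estimate is a comparatively routine combination of Lemma \ref{anticoncentrationLem} with hypercontractivity.
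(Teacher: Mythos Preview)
Your approach is genuinely different from the paper's, and unfortunately it has a real gap at precisely the step you flag as the main obstacle. The claim that $T'$ (and hence $T$) has only $O_{N,c,d}(1)$ eigenvalues above $\epsilon^{2-2c}$ is false in general. Take $d=4$, let $n=\lfloor\epsilon^{-1}\rfloor$, set $q(X)=\frac{1}{\sqrt{2n}}\sum_{i=1}^n(X_i^2-1)$ (so $|q|_2=1$), and put $p=q^2/3$. Then $\nabla p=\tfrac{2}{3}q\nabla q$ with $|\nabla q|=\sqrt{2/n}\,|X|\approx\sqrt{2}$, so $|\nabla p|\approx\tfrac{2}{3}|q|$; since $q$ is approximately standard Gaussian for large $n$, $\pr(|\nabla p|<\epsilon)\gtrsim\epsilon$, and the hypothesis holds with $N=2$. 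But a short moment computation shows $T'$ is diagonal with every entry $\approx 8/(9n)\approx 8\epsilon/9$, so \emph{all} $n\approx\epsilon^{-1}$ eigenvalues exceed $\epsilon^{2-2c}$ for any $c<1/2$. Equivalently, to make the tail $\sum_{l>r_0}\lambda'_l$ smaller than $\epsilon^{2-2c}$ you would need $r_0\approx n$, not $O(1)$. Your Gram-determinant lower bound cannot be rescued here: the gradients $\nabla r'_l(X)=Hp(X)u'_l$ all lie in the range of the Hessian $Hp(X)=\tfrac{2}{3}(q\,Hq+\nabla q\otimes\nabla q)$, which in this example is essentially a rank-one perturbation of a multiple of the identity and does not give any useful quantitative independence among $\Theta(\epsilon^{-1})$ of them.

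The deeper reason the strategy fails is that you have constrained yourself to a decomposition with $a_l$ \emph{linear} and $b_l$ of degree $d-1$. The proposition only asks for $\deg a_i+\deg b_i=d$ with both strictly below $d$, and in the counterexample the correct decomposition is simply $p=\tfrac{1}{3}q\cdot q$ with both factors of degree $2$---one term, exact. The paper's proof is organised around exactly this point: it does not try to read off a decomposition from the first-derivative covariance alone. Instead it iterates Proposition~\ref{SmallDerProp} to pass from $\pr(|D_ip(X)|_2<\epsilon)>\epsilon^N$ all the way down to $\pr(|D_iD_{Y^1}\cdots D_{Y^{d-1}}p(X)|_2<\epsilon^{1-c'})>\epsilon^{O(1)}$, which is a statement purely about the symmetric $d$-tensor $A=D^{\,d}p$; then Proposition~\ref{TensorDecompositionProp} (proved by a separate induction on the rank of $A$) shows that such an $A$ is $O(\epsilon^{1-c})$-close to a bounded sum $\sum_\ell U^\ell_{i_{S(\ell)}}V^\ell_{i_{\overline{S(\ell)}}}$ with arbitrary nontrivial splits $S(\ell)\subsetneq\{1,\dots,d\}$. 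Translating back to polynomials gives the $a_\ell,b_\ell$ with general degree split. Each of these two propositions already relies on the Lemma~\ref{SDImpliesLRLem}/Lemma~\ref{LowRankLem} pair, which is where the strong-anticoncentration input enters, but applied to objects that \emph{do} admit the required rank control---unlike the raw gradient covariance of $p$.
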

\begin{rmk}
Unlike the constants implied in Theorem \ref{DDTheorem}, the implied constants in Proposition \ref{DecompositionProp} are primitive recursive functions of the parameters. Although we do not bound them explicitly, our techniques show that they are at worst an iterated exponential.
\end{rmk}

Our proof of Proposition \ref{DecompositionProp} will proceed in stages.  First we will show that for such polynomials $p$, there is a reasonable probability (over $X,Y^i$) that $D_i D_{Y^1} D_{Y^2} \cdots D_{Y^{d-1}} p(X)$ will be small.  This is easily seen to reduce to a statement about the rank-$d$ tensor, $A_{i_1\cdots i_d} = D_{i_1}\cdots D_{i_d} p$.  In particular, we know that $A_{i_1\cdots i_d} Y^1_{i_1}\cdots Y^{d-1}_{i_{d-1}}$ has a reasonable probability of being small.  We then prove a structure theorem telling us that such tensors can be approximated as a sum of tensor products of lower-rank tensors.  This in turn will translate into our being able to approximate the degree-$d$ part of $p$ by a sum of products of lower degree polynomials.

We begin with the following proposition:
\begin{prop}\label{SmallDerProp}
Let $c,N>0$ be real numbers and $d$ a positive integer.  Let $\epsilon>0$ be a real number that is sufficiently small given $c,d$ and $N$.  Suppose that $p$ is a degree-$d$ polynomial so that
$$
\pr_X(|D_i p(X)|_2 < \epsilon) > \epsilon^N.
$$
Then we have that
$$
\pr_{X,Y}(|D_i D_Y p(X)|_2 < \epsilon^{1-c}) > \epsilon^{O_{N,c,d}(1)}.
$$
\end{prop}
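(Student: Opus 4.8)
The statement concerns the quantity $|D_i D_Y p(X)|_2 = |H(X)\,Y|$, where $H(X)=(\partial_i\partial_j p(X))$ is the Hessian of $p$ at $X$: I want to show that the Hessian applied to a random direction $Y$ has norm at most $\epsilon^{1-c}$ with probability $\epsilon^{O_{N,c,d}(1)}$ over $(X,Y)$. Since the problem is scale‑invariant, assume $|p|_2=1$, so Lemma \ref{DerivativeSizeLem} gives $|\nabla p|_2^2\le d$ and $\E_X\|H(X)\|_F^2 = |\nabla^2 p|_2^2 \le d(d-1)$. A cheap preliminary reduction: on the event $B:=\{|\nabla p(X)|<\epsilon\}$, which has mass $>\epsilon^N$, the scalar $D_Y p(X)=\langle\nabla p(X),Y\rangle$ is, as a function of $Y$, a centred Gaussian of standard deviation $|\nabla p(X)|<\epsilon$, so $\pr_{X,Y}\bigl(X\in B,\ |D_Y p(X)|<2\epsilon\bigr)\ge\tfrac{9}{10}\epsilon^N$. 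It therefore suffices to pass from ``$D_Y p(X)$ is $O(\epsilon)$‑small with probability $\gtrsim\epsilon^N$'' to ``$\nabla_X D_Y p(X)=H(X)Y$ is $\epsilon^{1-c}$‑small with probability $\epsilon^{O(1)}$.''

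The argument proceeds by a dichotomy on the size of the (constant‑in‑$X$, degree‑$(d-2)$) tensor‑valued object $\nabla^2 p$. If $|\nabla^2 p|_2\le\epsilon^{1-c}/\sqrt2$, then $\E_{X,Y}|H(X)Y|^2=|\nabla^2 p|_2^2\le\tfrac{1}{2}\epsilon^{2-2c}$ and Markov already yields $\pr_{X,Y}(|H(X)Y|<\epsilon^{1-c})\ge\tfrac{1}{2}$. In the opposite case the Hessian is large on average, and the content is that this is incompatible with $B$ being very spread out unless $H(X)$ is ``stiff in only $O_{N,c,d}(1)$ directions'' on a sub‑event of $B$ of mass still $\epsilon^{O_{N,c,d}(1)}$. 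For $d=2$ this is clean: $H\equiv M$ is a constant symmetric matrix with $\|M\|_F\le\sqrt2$ and $\nabla p=MX+b$, and since $MY\sim N(0,M^2)$ while $MX+b\sim N(b,M^2)$, Anderson's inequality gives $\pr_Y(|MY|<\epsilon^{1-c})\ge\pr_Y(|MY|<\epsilon)=\pr_X(|MX|<\epsilon)\ge\pr_X(|MX+b|<\epsilon)>\epsilon^N$ with no loss and no $c$‑dependence (one can also read the ``few stiff directions'' claim off the explicit volume estimate $\pr_X(|MX+b|<\epsilon)\le\prod_i\min(1,\sqrt{2/\pi}\,\epsilon/\sigma_i)$, which forces at most $O_{N,c}(1)$ singular values of $M$ to exceed $\epsilon^{1-c/4}$). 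For $d\ge3$ the Hessian varies with $X$, and the plan is an induction on $d$: $H(X)=\nabla^2 p(X)$ is itself a tensor‑valued polynomial of degree $d-2$, so one can feed the lower‑degree instances of the proposition — and of Carbery–Wright (Lemma \ref{anticoncentrationLem}) — into an analysis of how $H(X)$ behaves as $X$ ranges over $B$; the $O_{N,c,d}(1)$ large directions are then killed by conditioning $Y$ to be $\epsilon^{1-c}$‑small along them (an $O_{N,c,d}(1)$‑dimensional event, costing only a factor $\epsilon^{O_{N,c,d}(1)}$), while the remaining directions contribute at most $\epsilon^{2-c}\|H(X)\|_F^2=o(\epsilon^{2-2c})$.

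The genuine obstacle, I expect, is this amplification step — in particular keeping the final exponent independent of the ambient dimension $n$, and covering the regime $c\ll N$. A single application of Strong Anticoncentration (Proposition \ref{StrongAnticoncentrationProp}) to $D_Y p$ on $\R^{2n}$ only yields $|H(X)Y|\lesssim\epsilon^{1-O(N)}$, which is vacuous once $N\ge1$, precisely because the $\epsilon^N$ of probability one starts with is exactly consumed against the slack lost in such an argument; so one cannot be wasteful, and essentially all of the work lives there. By contrast the preliminary reduction, the Markov branch, and the $d=2$ base case are routine.
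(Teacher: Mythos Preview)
Your structural intuition is right --- on a large sub-event of $B=\{|\nabla p(X)|<\epsilon\}$ the Hessian $H(X)$ should be approximately of rank $O_{N,c,d}(1)$, and once you have that, conditioning $Y$ to be $\epsilon^{1-c/4}$-small along those $O(1)$ stiff directions costs only $\epsilon^{O_{N,c,d}(1)}$ probability and finishes the argument. The $d=2$ base case and the Markov branch are fine. But the core claim --- that $H(X)$ has at most $O_{N,c,d}(1)$ singular values above $\epsilon^{1-c/4}$ on a sub-event of $B$ of mass $\epsilon^{O(1)}$ --- is asserted, not proved, and your proposed induction on $d$ does not establish it.

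Concretely: the inductive hypothesis you would have is ``for degree-$(d-1)$ polynomials $q$ with $\pr(|\nabla q|<\epsilon)>\epsilon^{N}$, the Hessian of $q$ applied to a random direction is small with probability $\epsilon^{O(1)}$.'' To invoke it you need a degree-$(d-1)$ polynomial with \emph{small gradient} on a set of mass $\epsilon^{O(1)}$. The natural candidate is $q(X)=D_Y p(X)$, but what you have established is that $q(X)$ itself is small; that $|\nabla_X q(X)|=|H(X)Y|$ is small is precisely the conclusion you are after, so the induction loops. Feeding the degree-$(d-2)$ entries of $H(X)$ into anything also does not help directly: you have no hypothesis on their gradients, and Carbery--Wright points in the wrong direction. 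You are, as you yourself suspect, missing the mechanism that converts ``$|\nabla p(X)|<\epsilon$ with probability $\epsilon^N$'' into ``$H(X)$ is approximately low rank with probability $\epsilon^{O(1)}$,'' and this is not a wrinkle but the entire content of the proposition.

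The paper supplies exactly this mechanism, and there is no induction on $d$ in it. One applies Proposition~\ref{StrongAnticoncentrationProp} (packaged as Lemma~\ref{SDImpliesLRLem}) to the tensor-valued polynomial $A_i(X)=D_i p(X)$: strong anticoncentration says that the $k$-fold self-wedge $\bigl|\bigwedge_{i_1,\ldots,i_k}\prod_\ell D_{j_\ell}A_{i_\ell}(X)\bigr|_2$ is at most $O_{d,k}(\epsilon^{-N})\log^k(\epsilon^{-1})\prod_\ell|A_{i_\ell}(X)|$ off a set of probability $\ll\epsilon^N$, so on at least half of $B$ one gets $\bigl|\bigwedge^k H(X)\bigr|_2\le O_{d,k,N}(\epsilon^{k-N})$. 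Choosing $k>2N/c$ and then applying a pure linear-algebra fact (Lemma~\ref{LowRankLem}: if the $k$-fold self-wedge of a $2$-tensor is at most $\epsilon^k$, the tensor is $O_k(\epsilon)$-close to a rank-$(k-1)$ matrix) delivers the low-rank approximation $H(X)\approx\sum_{\ell<k}V^\ell\otimes W^\ell$ with error $O_{c,d,N}(\epsilon^{1-c/2})$, after which the ``kill the $k-1$ directions by conditioning $Y$'' step goes through exactly as you described. The wedge-product/strong-anticoncentration device is the missing idea.
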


We begin with the following Lemma:
\begin{lem}\label{SDImpliesLRLem}
Let $N>0$ be a real number and let $d$ and $k$ be positive integers.  Suppose that $A_i(X)$ is a degree-$d$, tensor-valued polynomial so that for some $1/2>\epsilon >0$,
$$
\pr_X(|A_i(X)|_2 < \epsilon) \geq \epsilon^N.
$$
Then the probability over Gaussian $X$ that $|A_i(X)|_2<\epsilon$ and
\begin{align*}
\left|\bigwedge_{i_1\cdots i_k}(D_{j_1}A_{i_1}(X))\cdots (D_{j_k}A_{i_k}(X)) \right|_2 < O_{d,k,N}(\epsilon^{k-N})\log(\epsilon^{-1})^{k}
\end{align*}
is at least $\epsilon^N/2.$
\end{lem}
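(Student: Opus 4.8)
The plan is to deduce Lemma \ref{SDImpliesLRLem} from the Strong Anticoncentration Proposition \ref{StrongAnticoncentrationProp} by a short Markov-type argument. Write $W(X) := \bigwedge_{i_1\cdots i_k}(D_{j_1}A_{i_1}(X))\cdots(D_{j_k}A_{i_k}(X))$ for the wedge in the statement. The key observation is that $|W(X)|_2$ is exactly the $L^2$-norm of the wedge appearing in Proposition \ref{StrongAnticoncentrationProp} when one takes $A^1 = \cdots = A^k = A$. Indeed, setting $f_Z(x) := \dotp{A(x)}{Z}$ (a scalar degree-$d$ polynomial for each tensor $Z$ of the shape of $A$) and contracting the indices $j_1,\ldots,j_k$ of $W$ against independent Gaussians $Y^1,\ldots,Y^k$ and the indices $i_1,\ldots,i_k$ against independent Gaussians $Z^1,\ldots,Z^k$, one obtains $\det(D_{Y^\ell}f_{Z^m}(X))_{\ell,m}$, exactly as in the proof of Proposition \ref{StrongAnticoncentrationProp}; hence $|W(X)|_2^2 = \E_{Y^1,\ldots,Y^k,Z^1,\ldots,Z^k}[|\det(D_{Y^\ell}f_{Z^m}(X))|^2]$, while $|A(X)|_2^{2k} = \E_{Z^1,\ldots,Z^k}[\prod_{m=1}^k f_{Z^m}(X)^2]$. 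Therefore Proposition \ref{StrongAnticoncentrationProp}, applied with all $A^j = A$ and all $d_j = d$, gives, for every $1/2 > \delta > 0$,
$$
\pr_X\left( |A(X)|_2^k < \delta\, |W(X)|_2 \right) \leq \delta\, 2^{O(dk)}\, O(\sqrt{k})^{k+1}\, \log(\delta^{-1})^k .
$$

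Next I would choose the parameters. Put $\delta := \epsilon^N / \left(C_{d,k,N}\log(\epsilon^{-1})^k\right)$ and $T := \epsilon^k/\delta = C_{d,k,N}\, \epsilon^{k-N}\log(\epsilon^{-1})^k$, where $C_{d,k,N}$ is a large constant to be fixed. For any $0 < \epsilon < 1/2$ we have $\log(\epsilon^{-1}) > \log 2$, so $\log(\delta^{-1}) = O_{d,k,N}(\log(\epsilon^{-1}))$, and hence the right-hand side of the displayed inequality is at most $2^{O(dk)}O(\sqrt{k})^{k+1}O_{d,k,N}(1)\cdot\epsilon^N / C_{d,k,N}$; taking $C_{d,k,N}$ large enough (the mild circularity between $C_{d,k,N}$ and $\log(\delta^{-1})$ is resolved by a routine two-step estimate, first fixing a provisional value of $T$ and then enlarging it) makes this at most $\epsilon^N/2$ and also forces $\delta < 1/2$. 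So with these choices,
$$
\pr_X\left( |A(X)|_2^k < \delta\, |W(X)|_2 \right) \leq \epsilon^N/2 .
$$

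Finally I assemble the pieces. On the event $\{|A(X)|_2 < \epsilon\}\cap\{|W(X)|_2 \geq T\}$ we have $|A(X)|_2^k < \epsilon^k = \delta T \leq \delta\,|W(X)|_2$, so this event is contained in the one just bounded and thus has probability at most $\epsilon^N/2$. Since by hypothesis $\pr_X(|A(X)|_2 < \epsilon) \geq \epsilon^N$, subtracting gives
$$
\pr_X\left( |A(X)|_2 < \epsilon \ \text{ and }\ |W(X)|_2 < T \right) \geq \epsilon^N - \epsilon^N/2 = \epsilon^N/2 ,
$$
and since $T = O_{d,k,N}(\epsilon^{k-N})\log(\epsilon^{-1})^k$ this is precisely the claim.

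The only real content is the identification in the first paragraph: that the wedge of Lemma \ref{SDImpliesLRLem}, in which one antisymmetrizes over the \emph{indices of $A$}, and the wedge of Proposition \ref{StrongAnticoncentrationProp}, in which one antisymmetrizes over the \emph{derivative indices}, have the same $L^2$-norm (both being, up to sign, the square root of the sum of squares of the $k\times k$ minors of the matrix $(D_j A_i)(X)$ — which the Gaussian-contraction identity above makes transparent). Once this is in place, the remainder is a routine Markov/containment argument with some bookkeeping of logarithmic factors and no new ideas. As with Proposition \ref{DecompositionProp}, the implied constants obtained this way are at worst iterated exponentials, inherited from the $2^{O(dk)}$ factor in Proposition \ref{StrongAnticoncentrationProp}.
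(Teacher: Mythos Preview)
Your proof is correct and rests on the same two ingredients as the paper's: the wedge symmetry identity $\bigwedge_{i_1,\ldots,i_k} B_{i_1j_1}\cdots B_{i_kj_k} = \bigwedge_{j_1,\ldots,j_k} B_{i_1j_1}\cdots B_{i_kj_k}$ and Proposition~\ref{StrongAnticoncentrationProp}. Your route is in fact slightly more direct than the paper's. The paper first contracts the tensor indices of $A$ against auxiliary Gaussians $Y^1,\ldots,Y^k$ to produce \emph{scalar} polynomials $Y^\ell_i A_i(X)$, and then applies Proposition~\ref{StrongAnticoncentrationProp} to those; this forces two extra steps, namely an application of Carbery--Wright (Lemma~\ref{anticoncentrationLem}) to pass from $|W(X)|_2$ to its contraction with the $Y^\ell$, and an application of Corollary~\ref{ConcentrationCor} to control $|Y^\ell_i A_i(X)|$ in terms of $|A_i(X)|_2$. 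You instead invoke Proposition~\ref{StrongAnticoncentrationProp} directly with the tensor-valued choice $A^1=\cdots=A^k=A$, which is permitted by the statement of that proposition and immediately gives $\pr(|A(X)|_2^k<\delta\,|W(X)|_2)\le \delta\,2^{O(dk)}O(\sqrt{k})^{k+1}\log(\delta^{-1})^k$; the containment argument then finishes. The gain is purely cosmetic (both arguments yield the same bound with the same dependence on parameters), but your packaging is cleaner.
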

\begin{proof}
Note that by decreasing $N$, we may assume that
$$
\pr_X(|A_i(X)|_2 < \epsilon) = \epsilon^N.
$$

Note that for any tensor $B_{ij}$
\begin{align*}
\bigwedge_{i_1,\ldots, i_k} B_{i_1j_1}\cdots B_{i_kj_k} & = \sum_{\sigma\in S_k} (-1)^\sigma B_{i_{\sigma(1)}j_1}\cdots B_{i_{\sigma(k)}j_k}\\
& = \sum_{\sigma\in S_k} (-1)^\sigma B_{i_1j_{\sigma^{-1}(1)}}\cdots B_{i_kj_{\sigma^{-1}(k)}}\\
& = \bigwedge_{j_1,\ldots, j_k} B_{i_1j_1}\cdots B_{i_kj_k}.
\end{align*}

Thus, for fixed $X$, we have by Lemma \ref{anticoncentrationLem} that with a probability of at most $1/10$ over $Y^\ell$ we have that
\begin{align*}
\left|\bigwedge_{i_1,\ldots, i_k}(D_{j_1}A_{i_1}(X))\cdots (D_{j_k}A_{i_k}(X)) \right|_2
= & \left|\bigwedge_{j_1,\ldots, j_k}(D_{j_1}A_{i_1}(X))\cdots (D_{j_k}A_{i_k}(X)) \right|_2  \\
> &  \Omega(1/kd)^{kd} \left|Y^1_{i_1}\cdots Y^k_{i_k}\bigwedge_{j_1,\ldots, j_k}(D_{j_1}A_{i_1}(X))\cdots (D_{j_k}A_{i_k}(X)) \right|_2.
\end{align*}
Therefore, it suffices to show that with probability at least $3\epsilon^N/5$ that $|A_i(X)|_2<\epsilon$ and
$$
\left|Y^1_{i_1}\cdots Y^k_{i_k}\bigwedge_{j_1,\ldots, j_k}(D_{j_1}A_{i_1}(X))\cdots (D_{j_k}A_{i_k}(X)) \right|_2< O_{d,k,N}(\epsilon^{k-N})\log(\epsilon^{-1})^{k}.
$$

For fixed $X$, by Corollary \ref{ConcentrationCor} we have that with probability at least $9/10$ that for random $Y^1,\ldots,Y^k$ that $|Y^j_i A_i(X)| < O_k(1) |A_i(X)|_2$ for all $1\leq j \leq k$.  Thus, with probability at least $9\epsilon^N/10$ over $X$ and the $Y^j$, we have that $|Y^j_i A_i(X)| < O_k(\epsilon)$ for all $j$.

On the other hand, Proposition \ref{StrongAnticoncentrationProp} implies that with probability at least $1-\epsilon^N/10$ that
\begin{equation}\label{Z1Eqn}
\left| \bigwedge_{j_1,\ldots,j_k} \prod_{\ell=1}^k D_{j_\ell} Y^\ell_{i_\ell} A_{i_\ell}(X) \right|_2 \leq O_{k,d}(1) \epsilon^{-N} (\log(\epsilon^{-1}))^k \prod_{\ell=1}^k |Y^\ell_i A_i(X)|.
\end{equation}
Recall that with probability at least $9\epsilon^N/10$ we have that $|A_i(X)|_2<\epsilon$ and $|Y^j_i A_i(X)| < O_k(\epsilon)$.  When this holds, the right hand side of Equation (\ref{Z1Eqn}) is at most
$$
O_{k,d}(1) \epsilon^{k-N} \log^k(\epsilon^{-1}).
$$
Hence with probability at least $4\epsilon^N/5$, we have that $|A_i(X)|_2 < \epsilon$ and
$$
\left| \bigwedge_{j_1,\ldots,j_k} \prod_{\ell=1}^k D_{j_\ell} Y^\ell_{i_\ell} A_{i_\ell}(X) \right|_2 < O_{k,d}(1) \epsilon^{k-N} \log^k(\epsilon^{-1}),
$$
as desired.
\end{proof}

Lemma \ref{SDImpliesLRLem} tells us some very strong information about the tensor $D_j A_i(X)$.  In order to understand this better, we will study what it means for a 2-tensor $B_{ij}$ to have $\left|\bigwedge_{i_1,\ldots,i_k} B_{i_1,j_1}\cdots B_{i_k,j_k}\right|_2$ small.  Recall that a 2-tensor can be thought of as a matrix.  We will show that this condition implies that $B_{ij}$ is approximately a matrix of rank at most $k$.

\begin{lem}\label{LowRankLem}
Suppose that $B_{ij}$ is a tensor and suppose that for some integer $k$ and some $\epsilon>0$ that
$$
\left|\bigwedge_{i_1,\ldots,i_k} B_{i_1,j_1}\cdots B_{i_k,j_k}\right|_2 < \epsilon^k.
$$
Then there exist some vectors $V^\ell_i,W^\ell_j$ so that
$$
\left|B_{ij} - \sum_{\ell=1}^{k-1} V^\ell_i W^\ell_j\right|_2 < O_k(\epsilon).
$$
\end{lem}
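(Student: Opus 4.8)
The plan is to reduce everything to the singular values of $B$ and one elementary symmetric‑function inequality. Regard $B$ as a matrix and let $\sigma_1\ge \sigma_2\ge\cdots\ge 0$ be its singular values (padded by zeros). First I would observe that the hypothesis is really a statement about an exterior power. Expanding the wedge, the $(i_1,\ldots,i_k;j_1,\ldots,j_k)$‑entry of $\bigwedge_{i_1,\ldots,i_k}B_{i_1,j_1}\cdots B_{i_k,j_k}$ equals the $k\times k$ determinant $\det(B_{i_a,j_b})_{a,b}$, which is (a signed) $k\times k$ minor of $B$; it vanishes unless the $i$'s are distinct and the $j$'s are distinct, and for fixed $k$‑subsets $I$ of rows and $J$ of columns the $(k!)^2$ orderings all give the same square. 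Hence $\left|\bigwedge_{i_1,\ldots,i_k}B_{i_1,j_1}\cdots B_{i_k,j_k}\right|_2^2 = (k!)^2\sum_{|I|=|J|=k}(\text{minor}_{I,J})^2 = (k!)^2\, e_k(\sigma_1^2,\sigma_2^2,\ldots)$, where $e_k$ is the $k$‑th elementary symmetric polynomial and the last equality is the standard $\Lambda^k$/Cauchy–Binet identity ($\|\Lambda^kB\|_F^2=\operatorname{tr}\Lambda^k(B^\top B)=e_k$ of the eigenvalues of $B^\top B$). So the hypothesis says $e_k(\sigma_1^2,\sigma_2^2,\ldots) < \epsilon^{2k}/(k!)^2$. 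Writing a singular value decomposition $B=\sum_\ell \sigma_\ell u_\ell v_\ell^\top$ and truncating after the first $k-1$ terms (padding with zero vectors if the rank is smaller) gives a matrix $\sum_{\ell=1}^{k-1}V^\ell_i W^\ell_j$ whose distance from $B$ in $|\cdot|_2$ has square at most $\sum_{m\ge k}\sigma_m^2$. Thus the whole lemma reduces to the bound $\sum_{m\ge k}\sigma_m^2 = O_k(\epsilon^2)$.

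The heart of the matter is the following elementary inequality: if $u_1\ge u_2\ge\cdots\ge 0$ and $S=\sum_j u_j$ with $u_1\le S/k^2$, then $e_k(u_1,u_2,\ldots)\ge S^k/(2\,k!)$. To prove it, expand $S^k=\big(\sum_j u_j\big)^k=\sum_{(j_1,\ldots,j_k)}u_{j_1}\cdots u_{j_k}$: the tuples with all entries distinct contribute exactly $k!\,e_k(u)$, while the tuples with at least one repeated entry contribute, by a union bound over which pair of positions coincides, at most $\binom k2\big(\sum_j u_j^2\big)\big(\sum_j u_j\big)^{k-2}\le \binom k2 u_1 S^{k-1}\le \tfrac12 S^k$, using $\sum_j u_j^2\le u_1 S$ and $u_1\le S/k^2$. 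Hence $k!\,e_k(u)\ge S^k-\tfrac12 S^k=\tfrac12 S^k$, as claimed.

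Finally I would apply this with $u_j:=\sigma_{k-1+j}^2$, so that $S=\sum_{j\ge 1}u_j=\sum_{m\ge k}\sigma_m^2$ and $u_1=\sigma_k^2$, and split into two cases. If $\sigma_k^2\le S/k^2$, the inequality above together with $e_k(u_1,u_2,\ldots)=e_k(\sigma_k^2,\sigma_{k+1}^2,\ldots)\le e_k(\sigma_1^2,\sigma_2^2,\ldots) < \epsilon^{2k}/(k!)^2$ (deleting nonnegative variables only decreases $e_k$) gives $S^k < 2\epsilon^{2k}/k!$, hence $S=O_k(\epsilon^2)$. If instead $\sigma_k^2 > S/k^2$, then since $\sigma_k^{2k}\le \sigma_1^2\cdots\sigma_k^2\le e_k(\sigma_1^2,\sigma_2^2,\ldots) < \epsilon^{2k}/(k!)^2$ we get $\sigma_k^2 < \epsilon^2/(k!)^{2/k}$, so $S < k^2\sigma_k^2 = O_k(\epsilon^2)$. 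Either way $\sum_{m\ge k}\sigma_m^2=O_k(\epsilon^2)$, completing the proof. The step I expect to be the main obstacle is exactly the elementary inequality of the second paragraph: the naive route of bounding each $\sigma_m^2$ individually (via $\binom mk\sigma_m^{2k}\le e_k$) and summing loses a spurious $\log n$ factor, and it is the inclusion–exclusion estimate that shows a ``spread‑out tail'' of small singular values cannot in fact have large total mass.
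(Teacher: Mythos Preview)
Your argument is correct and gives a genuinely different proof from the paper's. The paper proceeds by induction on $k$: assuming the $(k-1)$-wedge is not already small, it uses Carbery--Wright anticoncentration and a Markov bound to find explicit Gaussian vectors $X^1,\ldots,X^{k-1}$ for which $V^\ell:=BX^\ell$ have large $(k-1)$-fold wedge but such that $V^1\wedge\cdots\wedge V^{k-1}\wedge (BX^k)$ is small in expectation over $X^k$; from this it extracts that the projection of each row of $B$ onto $\langle V^1,\ldots,V^{k-1}\rangle^\perp$ has small norm. Your route is more direct: you identify the hypothesis as $(k!)^2 e_k(\sigma_1^2,\sigma_2^2,\ldots)<\epsilon^{2k}$ via Cauchy--Binet, take the $(k-1)$-term SVD truncation, and control the tail $\sum_{m\ge k}\sigma_m^2$ by the clean dichotomy on whether $\sigma_k^2$ dominates the tail, together with the elementary bound $k!\,e_k(u)\ge \tfrac12 S^k$ when $u_1\le S/k^2$. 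This avoids any appeal to anticoncentration and yields explicit constants (e.g.\ $\sum_{m\ge k}\sigma_m^2\le k^2\epsilon^2/(k!)^{2/k}$). The paper's inductive probing argument, by contrast, is closer in spirit to the surrounding machinery of the section and produces the $V^\ell$ concretely as $BX^\ell$ for specific vectors, which dovetails with the random-Gaussian techniques used elsewhere; but for this lemma in isolation your SVD argument is both shorter and more transparent.
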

\begin{proof}
We proceed by induction on $k$.  If $k=1$, we have by assumption that $|B_{ij}|_2<\epsilon$, so we are done.

For larger values of $k$, we may assume that
$$
\left|\bigwedge_{i_1,\ldots,i_{k-1}} B_{i_1,j_1}\cdots B_{i_{k-1},j_{k-1}}\right|_2 \geq \epsilon^{k-1},
$$
or otherwise we would be done by the inductive hypothesis.

Consider random Gaussians $X^1,\ldots,X^k$.  We have that
\begin{align*}
\E\left[\left|\bigwedge_{i_1,\ldots,i_{k-1}} B_{i_1,j_1}\cdots B_{i_{k-1},j_{k-1}}X^1_{j_1}\cdots X^{k-1}_{j_{k-1}}\right|_2^2\right] & =
\left|\bigwedge_{i_1,\ldots,i_{k-1}} B_{i_1,j_1}\cdots B_{i_{k-1},j_{k-1}}\right|_2^2  \geq \epsilon^{2k-2}.
\end{align*}
Similarly,
\begin{align*}
\E\left[\left|\bigwedge_{i_1,\ldots,i_{k}} B_{i_1,j_1}\cdots B_{i_{k},j_{k}}X^1_{j_1}\cdots X^{k}_{j_{k}}\right|_2^2\right] & =
\left|\bigwedge_{i_1,\ldots,i_{k}} B_{i_1,j_1}\cdots B_{i_{k},j_{k}}\right|_2^2  \leq \epsilon^{2k}.
\end{align*}
By Lemma \ref{anticoncentrationLem}, we have that with probability at least $1/2$ that
$$
\left|\bigwedge_{i_1,\ldots,i_{k-1}} B_{i_1,j_1}\cdots B_{i_{k-1},j_{k-1}}X^1_{j_1}\cdots X^{k-1}_{j_{k-1}}\right|_2 \geq \Omega_k(\epsilon^{k-1}).
$$
Furthermore, by the Markov bound, we can find such $X^1,\ldots,X^{k-1}$ so that
$$
\E_{X^k}\left[\left|\bigwedge_{i_1,\ldots,i_{k}} B_{i_1,j_1}\cdots B_{i_{k},j_{k}}X^1_{j_1}\cdots X^{k}_{j_{k}}\right|_2^2\right] \leq 2 \epsilon^{2k}.
$$
Let $V^\ell_i$ be the vector $B_{ij}X^\ell_j$.  We have that
$$
\left| \bigwedge_{i_1,\ldots,i_{k-1}} V^1_{i_1}\cdots V^{k-1}_{i_{k-1}}\right|_2^2 = \Omega_k(\epsilon^{2k-2})
$$
and
$$
\E_{X^k}\left[\left| \bigwedge_{i_1,\ldots,i_{k}} V^1_{i_1}\cdots V^{k}_{i_{k}}\right|_2^2 \right] \leq 2 \epsilon^{2k}.
$$

Notice that the wedge products above are simply standard wedges of vectors.  Note that if we have vectors $u^1,\ldots,u^k$ that
$$
u^1\wedge u^2 \wedge \cdots \wedge u^k = u^1\wedge u^2 \wedge \cdots \wedge u^{k-1} \wedge u^{k,\perp}
$$
where $u^{k\perp}$ is the projection of $u^k$ onto the space perpendicular to $\langle u^1,u^2,\ldots,u^{k-1} \rangle$.  From here, it is easy to see that we have
$$
\frac{|u^1\wedge u^2 \wedge \cdots \wedge u^k|_2}{|u^1\wedge u^2 \wedge \cdots \wedge u^{k-1}|_2} = |u^{k,\perp}|_2.
$$
Therefore, we have that
$$
\E_{X^k}[|V^{k,\perp}_i|_2^2] = O_k(\epsilon^2).
$$
On the other hand, we have that
$$
V^{k,\perp}_i = B^\perp_{ij} X^k_j
$$
where $B^\perp$ is the tensor obtained from $B$ by replacing each row $B_{ij}e_j$ with its projection onto $\langle V^1,V^2,\ldots,V^{k-1}\rangle^\perp$.  In particular, this means that each row of $B^\perp$ can be written as the corresponding row of $B$ plus an element of $\langle V^1,V^2,\ldots,V^{k-1}\rangle$.  This means that for some appropriate vectors $U^\ell$, we have that $B^\perp_{ij} = B_{ij} - \sum_{\ell=1}^{k-1} V^\ell_i W^\ell_j$.  On the other hand, we note that
\begin{align*}
|B^\perp|_2^2 & = \E[|B^\perp_{ij} X_j|_2^2]\\
& = \E[|V^\perp_i|_2^2]\\
& = O_k(\epsilon^2).
\end{align*}
Thus, $|B^\perp|_2 = O_k(\epsilon)$, completing our proof.
\end{proof}

We are now prepared to prove Proposition \ref{SmallDerProp}.
\begin{proof}
Suppose we are given $c,d,N$ and $\epsilon>0$ sufficiently small.  Suppose that we have a degree-$d$ polynomial $P$ so that
$$
\pr_X(|D_ip(X)|_2 < \epsilon) > \epsilon^N.
$$
Note that by Lemma \ref{anticoncentrationLem} that this implies that $\E_X[|D_i p(X)|_2^2] \leq O_{d} (\epsilon^{-2dN})$.  And hence that $\E_X[|D_i D_j p(X)|_2^2] \leq O_d(\epsilon^{-2dN}).$

Let $k$ be an integer so that $k > 2N/ c$.  By Lemma \ref{SDImpliesLRLem} applied to $D_ip(X)$ we have that with probability at least $\epsilon^N/2$ that
$$
\left| \bigwedge_{i_1,\ldots,i_k} \prod_{\ell=1}^k D_{i_\ell}D_{j_\ell} p(X) \right|_2 < O_{c,d,N} (\epsilon^{k(1-c/2)}).
$$
Let $B_{ij}(X)$ be the tensor $D_i D_j p(X)$.  By the above and Corollary \ref{ConcentrationCor} we have that with probability at least $\epsilon^N/3$ over $X$ that $|B(X)|_2 < O_d(\epsilon^{-2dN})$ and
$$
\left| \bigwedge_{i_1,\ldots,i_k} \prod_{\ell=1}^k B_{i_\ell j_\ell}  \right|_2 < O_{c,d,N} (\epsilon^{k(1-c/2)}).
$$

Applying Lemma \ref{LowRankLem} to $B$ at such values of $X$, we have that there are vectors $V^\ell,W^\ell$ so that
$$
\left|B_{ij} - \sum_{\ell=1}^{k-1} V^\ell_i W^\ell_j\right|_2 = O_{c,d,N}(\epsilon^{1-c/2}).
$$
We note that we can replace the $V^\ell$ in such a decomposition with an orthonormal basis for the space that they span by adjusting the $W^\ell$ accordingly.  We then have that
\begin{align*}
\sum_{\ell=1}^k |W^\ell_j |_2^2 & = \left|\sum_{\ell=1}^{k-1} V^\ell_i W^\ell_i\right|_2^2\\
& \leq (|B|_2 + O_{c,d,N}(1))^2 \\
& \leq O_{c,d,N}(\epsilon^{-4dN}).
\end{align*}
Therefore $|W^\ell_j|_2 \leq O_{c,d,N}(\epsilon^{-2dN})$ for each $\ell$.

Now given a random Gaussian vector $Y$, there is a probability of at least $\Omega_{k}(\epsilon^{2dkN+k})$ that $|Y_i V^\ell_i| \leq \epsilon^{2dN+1}$ for each $\ell$.  Furthermore, by Lemma \ref{ConcentrationCor}, for $\epsilon$ sufficiently small the probability that
$$
\left|(B_{ij} - \sum_{\ell=1}^{k-1} V^\ell_i W^\ell_j)Y_i\right|_2 < \epsilon^{1-3c/4}
$$
is much less than this.  Hence for such $X$ (which occur with probability at least $\epsilon^{N}/2$), there is a probability of at least $\epsilon^{O_{c,d,N}(1)}$ over $Y$ that
$$
\left|(B_{ij} - \sum_{\ell=1}^{k-1} V^\ell_i W^\ell_j)Y_i\right|_2 < \epsilon^{1-3c/4}
$$
and
$$
|Y_i V^\ell_i| \leq \epsilon^{2dN+1}
$$
for each $\ell$.  The latter implies that $|Y_i V^\ell_i W^\ell_j|_2 \leq \epsilon$ for each $\ell$, and thus,\begin{align*}
|B_{ij}Y_i|_2 & \leq \left|(B_{ij} - \sum_{\ell=1}^{k-1} V^\ell_i W^\ell_j)Y_i\right|_2 + \sum_{\ell=1}^{k-1} |Y_i V^\ell_i W^\ell_j|_2 < \epsilon^{1-c}.
\end{align*}
Thus, with probability at least $\epsilon^{O_{c,d,N}(1)}$,
$$
|D_i D_Y p(X)|_2 < \epsilon^{1-c}.
$$
\end{proof}

Iterating Proposition \ref{SmallDerProp} will tell us that a polynomial with a reasonable chance of having a small derivative will also have partial higher order derivatives that are small.  Considering the $d^{\tth}$ order derivatives, this reduces to a statement about the rank-$d$ tensor corresponding to our polynomial.  We would like to claim that such tensors can be approximately decomposed as a sum of products of lower rank tensors.  In order to conveniently talk about such products we introduce some notation.  If $S=\{a_1,\ldots,a_k\}$ is a set of natural numbers, we let $U_{i_S}$ denote a tensor on the indices $i_{a_1},i_{a_2},\ldots,i_{a_k}$.

\begin{prop}\label{TensorDecompositionProp}
Let $d$ be an integer, and let $c,N,\epsilon>0$ be real numbers.  Then for all rank-$d$ tensors $A$ with $|A|_2\leq 1$ and
$$
\pr_{X^1,\ldots,X^{d-1}}(|A_{i_1,\ldots,i_d}X^1_{i_1}\cdots X^{d-1}_{i_{d-1}}|_2<\epsilon) > \epsilon^N
$$
Then there exist tensors $U^\ell,V^\ell$, $1\leq \ell \leq k =O_{c,d,N}(1)$ and sets $$\emptyset \subsetneq S(\ell) \subsetneq \{1,2,\ldots,d\},\overline{S(\ell)} = \{1,2,\ldots,d\} - S(\ell)$$ such that $|U^\ell|_2|V^\ell|_2 \leq O_{c,d,N}(|A|_2\epsilon^{-c})$ for all $\ell$ and
$$
\left| A_{i_1\ldots i_d} - \sum_{\ell=1}^k U^\ell_{i_{S(\ell)}} V^\ell_{i_{\overline{S(\ell)}}}\right|_2 = O_{c,d,N}(\epsilon^{1-c}).
$$
\end{prop}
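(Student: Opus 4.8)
The plan is to use the wedge-product machinery of Lemmas~\ref{SDImpliesLRLem} and~\ref{LowRankLem} to show that, under the hypothesis, there is a bipartition $\{1,\ldots,d\}=S\sqcup\overline S$ into nonempty parts together with a subspace $W$ of bounded dimension in the coordinates indexed by $S$, such that $A$, once that group of indices is projected onto $W$, has bounded rank across the bipartition; a decomposition of the required form then follows immediately. The bipartition cannot be fixed in advance (the ``degenerate'' directions may be concentrated in indices that the hypothesis contracts rather than in $i_d$), so identifying it is part of the work. As preliminary reductions: scaling, we may take $|A|_2=1$; we may assume $\epsilon$ is below a threshold depending on $c,d,N$ (the complementary range being trivial, as $\epsilon^{1-c}$ then exceeds a constant); and since decreasing $c$ strengthens the conclusion while the hypothesis for a value $N<1$ implies it for $N=1$ (because $\epsilon^N>\epsilon$), we may assume $c\le\tfrac12$ and $N\ge1$. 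Finally, set $g_{i_d}(X^1,\ldots,X^{d-1}):=A_{i_1\cdots i_d}X^1_{i_1}\cdots X^{d-1}_{i_{d-1}}$, a degree-$(d-1)$, $1$-tensor-valued (index $i_d$) polynomial on $\R^{n(d-1)}$ with $|g|_2=|A|_2=1$; the hypothesis becomes $\pr_{\vec X}(|g_{i_d}(\vec X)|_2<\epsilon)>\epsilon^N$.

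Fix a large integer $k=O_{c,d,N}(1)$ (taking $k>4N/c$ suffices) and apply Lemma~\ref{SDImpliesLRLem} to $g$; for $\epsilon$ small this gives, with probability at least $\epsilon^N/2$ over $\vec X$,
\[
\left|\bigwedge_{i_d^{(1)},\ldots,i_d^{(k)}}\prod_{\ell=1}^k D_{J_\ell}g_{i_d^{(\ell)}}(\vec X)\right|_2<\epsilon^{k(1-c/3)},
\]
having absorbed the $\log(\epsilon^{-1})^k$ factor into the exponent. For a fixed $s\in\{1,\ldots,d-1\}$, restricting every $J_\ell$ to the coordinate block of $X^s$ can only decrease the left side; moreover $D_{(s,j)}g_{i_d}(\vec X)=B^s_{i_d,j}(\vec X_{\widehat s})$, where $B^s_{i_d,i_s}(\vec X_{\widehat s})$ is the matrix-valued polynomial obtained by contracting $A$, over all of its indices except $i_s$ and $i_d$, against the Gaussians $\vec X_{\widehat s}=(X^t:t\le d-1,\ t\ne s)$, so the restricted wedge is exactly the $k$-fold antisymmetrization over the $i_d$-index of $B^s(\vec X_{\widehat s})$. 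Hence, by Lemma~\ref{LowRankLem}: for every $s$, with probability at least $\epsilon^N/2$ over $\vec X_{\widehat s}$, the matrix $B^s(\vec X_{\widehat s})$ lies within $O_{c,d,N}(\epsilon^{1-c/3})$ in Frobenius norm of a matrix of rank at most $k-1$.

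The remaining step, which I expect to be the heart of the argument, is to convert this collection of conditional low-rank statements (one for each $s$, each valid only on an $\epsilon^N$-fraction of the inputs) into an unconditional decomposition of $A$; averaging over the auxiliary Gaussians is useless precisely because of the $\epsilon^N$. The plan is to argue by induction on $d$: each $B^s$ is itself a tensor-valued polynomial of lower degree for which the same ``approximately rank $\le k-1$ with probability $\ge\epsilon^N/2$'' property has just been established, so one strips the contracted Gaussians off one at a time, re-entering the machinery of Lemmas~\ref{SDImpliesLRLem}--\ref{LowRankLem} at the lower degree and using Lemma~\ref{anticoncentrationLem} (to keep the relevant families of column vectors from degenerating) and Corollary~\ref{ConcentrationCor} (to control the accumulating Frobenius errors); the point to be verified is that the low-rank subspace may be chosen essentially independently of the Gaussian being removed, so that the approximate low-rank structure is already present before that contraction. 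Carrying this out, while tracking which indices are responsible, should produce the bipartition $S\sqcup\overline S$ and a fixed subspace $W$ of dimension $m=O_{c,d,N}(1)$ with $|A-P_W A|_2=O_{c,d,N}(\epsilon^{1-c})$, $P_W$ denoting projection of the $S$-indices onto $W$. It will be essential here to carry a scale-invariant form of the inductive statement, with errors and mass bounds expressed as multiples of the $L^2$-norm of the tensor being decomposed, since the intermediate tensors are not normalized.

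Finally, write $P_W A=\sum_{\ell=1}^m E^\ell_{i_S}F^\ell_{i_{\overline S}}$, where $E^1,\ldots,E^m$ is an orthonormal basis of $W$ and $F^\ell_{i_{\overline S}}$ is the contraction of $A$'s $S$-indices against $E^\ell$; by Cauchy--Schwarz $|F^\ell|_2\le|A|_2=1$. Taking $U^\ell:=E^\ell$, $V^\ell:=F^\ell$ and $S(\ell):=S$ then yields a decomposition of $A$ of the asserted form: each $S(\ell)$ is a proper nonempty subset of $\{1,\ldots,d\}$, there are $m=O_{c,d,N}(1)$ terms, $\bigl|A-\sum_\ell U^\ell_{i_{S(\ell)}}V^\ell_{i_{\overline{S(\ell)}}}\bigr|_2=|A-P_W A|_2=O_{c,d,N}(\epsilon^{1-c})$, and $|U^\ell|_2|V^\ell|_2=|F^\ell|_2\le 1=O_{c,d,N}(\epsilon^{-c}|A|_2)$.
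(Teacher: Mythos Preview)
Your plan aims for a conclusion strictly stronger than the proposition, and that stronger conclusion is false: you seek a \emph{single} bipartition $S\sqcup\overline S$ and a bounded-dimensional subspace $W$ with $|A-P_WA|_2=O_{c,d,N}(\epsilon^{1-c})$, i.e.\ approximate low rank of $A$ across one fixed flattening. For a counterexample take $d=3$, $n$ large, unit vectors $u,v\in\R^n$, and $M=O/\sqrt n$, $M'=O'/\sqrt n$ for independent orthogonal $O,O'$; set
\[
A_{i_1i_2i_3}=\tfrac1{\sqrt2}\bigl(u_{i_1}M_{i_2i_3}+M'_{i_1i_2}v_{i_3}\bigr).
\]
Then $|A|_2\approx1$, and a short computation gives $|AX^1X^2|_2^2\approx\tfrac12\bigl[(u\cdot X^1)^2+((X^1)^TM'X^2)^2\bigr]$, so the hypothesis holds with $N$ just above $2$. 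But across each of the three bipartitions of $\{1,2,3\}$, one of the two summands is (in that flattening) a matrix all of whose singular values equal $1/\sqrt n$, and one checks that $A^TA=\tfrac1n I+O(1/n)$ in each case; hence every rank-$O(1)$ approximation of $A$ has Frobenius error $\Omega(1)$. The proposition's conclusion is immediate here with two terms and $S(1)=\{1\}$, $S(2)=\{1,2\}$, so the varying $S(\ell)$ are essential.

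This also explains why the step you flag as ``the heart of the argument'' cannot be completed as written: the approximate column space of $B^s(\vec X_{\widehat s})$ genuinely moves with the auxiliary Gaussians, and there is no single $W$ reconciling them. The paper's proof embraces varying $S(\ell)$ from the outset. It runs an induction on $d$ in a randomized form (the $U^\ell,V^\ell$ are drawn from a distribution of polynomials in the entries of $A$, which is what lets one swap quantifiers over $X^1$ and the inductive choice). One contracts a single Gaussian $X^1$ and applies the inductive hypothesis to $AX^1$, obtaining pieces $U'^\ell(X^1),V'^\ell(X^1)$ that are polynomials in $X^1$; Lemmas~\ref{SDImpliesLRLem} and~\ref{LowRankLem} are then applied not to $A$ but to the tensor-valued polynomial that concatenates the residual $AX^1-\sum_\ell U'^\ell V'^\ell$ with the scaled $U'^\ell,V'^\ell$, forcing the $X^1$-derivative of the whole package to be approximately low rank. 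A fairly delicate construction (random probes $Y^\ell$ and guessed scales ${C'}^\ell$) then lifts this to a decomposition of $A$, and the lifted terms naturally land on several different index sets ($\{1\}$, $S(\ell)$, $\{1\}\cup S(\ell)$ for the inductive $S(\ell)\subsetneq\{2,\ldots,d\}$).
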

\begin{proof}
We note that it suffices to prove this result for $|A|_2=1$, since the general case would follow from applying this specialized result to $A/|A|_2$.

We will instead prove the stronger claim that given $c,d,N,\epsilon$ that there exists a probability distribution over sequences of tensor-valued polynomials $U^\ell$,$V^\ell$ of degree $O_{c,d,N}(1)$ in the coefficients of $A$, so that for any tensor $A$ satisfying the hypothesis of the proposition that with probability at least $\epsilon^{O_{c,d,N}(1)}$ over our choice of $U^\ell,V^\ell$ in this family that
$$
|U^\ell(A)|_2,|V^\ell(A)|_2 \leq O_{c,d,N}(\epsilon^{-c})
$$
for all $\ell$, and
$$
\left| A_{i_1\ldots i_d} - \sum_{\ell=1}^k U^\ell_{i_{S(\ell)}}(A) V^\ell_{i_{\overline{S(\ell)}}}(A)\right|_2 = O_{c,d,N}(\epsilon^{1-c}).
$$
Given this statement, our proposition can be recovered by picking an appropriate set of $U^\ell$ and $V^\ell$ for our $A$.  We assume throughout this proof that $\epsilon$ is at most a sufficiently small function of $c,d$ and $N$, since otherwise there would be nothing to prove.

We prove this statement by induction on $d$.  For $d=1$, we already have that $|A_{i_1}|_2<\epsilon$, and there is nothing to prove.  Hence we assume that our statement holds for rank-$(d-1)$ tensors.  The basic idea of our proof will be as follows.  By assumption with reasonable probability over $X$, $AX$ will satisfy the inductive hypothesis for a rank-$(d-1)$ tensor.  This means that we can write $U^\ell$ and $V^\ell$ as polynomials in $X$ so that with reasonable probability over $X$, $|AX-\sum U^\ell(X) V^\ell(X)|_2$ is small.  Applying Lemmas \ref{SDImpliesLRLem} and \ref{LowRankLem}, we can show that the derivative of this tensor with respect to $X$ is approximately low-rank.  This means that the tensor
$$
A - \sum_\ell (D_{i_1} U^\ell(X)) V^\ell(X) + U^\ell(X) (D_{i_1} V^\ell(X))
$$
is approximated by a small sum of products of rank-$1$ tensors with rank-$(d-1)$ tensors.  By making some random guesses, these remaining tensors can be written as polynomials in the coefficients of $A$ with reasonable probability.

Suppose that $A$ is a rank-$d$ tensor satisfying the hypothesis of our proposition.  Then with probability at least $\epsilon^{N}$ over a choice of $X^1$, there is a probability of at least $\epsilon^N$ over our choice of $X^2,\ldots,X^{d-1}$ that
$$
|A_{i_1,\ldots,i_d}X^1_{i_1}\cdots X^{d-1}_{i_{d-1}}|_2<\epsilon.
$$
Furthermore, by Corollary \ref{ConcentrationCor}, with probability at least $1-\epsilon^N/2$ we have that $|A_{i_1,\ldots,i_d}X^1_{i_1}|_2 < \epsilon^{c/20}$.  Hence with probability at least $\epsilon^N/2$ over our choice of $X^1$, $\epsilon^{c/20}A_{i_1,\ldots,i_d}X^1_{i_1}$ satisfies the hypotheses of our proposition as a rank-$(d-1)$ tensor.  For each such $X^1$, the induction hypothesis implies that there is a probability of $\epsilon^{O_{c,d,N}(1)}$ over our choice of $U^\ell$,$V^\ell$ that the appropriate conclusion holds.  Therefore, there must be some particular choice of $U^\ell,V^\ell$ so that with probability at least $\epsilon^{O_{c,d,N}(1)}$ over our choice of $X^1$ we have that
$$
|U^\ell(\epsilon^{c/20}AX^1_{i_1})|_2,|V^\ell(\epsilon^{c/20}AX^1_{i_1})|_2 \leq O_{c,d,N}(\epsilon^{-c/20})
$$
and
$$
\left|\epsilon^{c/20}AX^1_{i_1} - \sum_{\ell=1}^k U^\ell_{i_{S(\ell)}}(\epsilon^{c/20}AX^1_{i_1}) V^\ell_{i_{\overline{S(\ell)}}}(\epsilon^{c/20}AX^1_{i_1})\right|_2 = O_{c,d,N}(\epsilon^{1-c/20}).
$$
Letting ${U'}^\ell(X^1):= \epsilon^{-c/40}U^\ell(\epsilon^{c/20}AX^1)$ and ${V'}^\ell(X^1):= \epsilon^{-c/40}V^\ell(\epsilon^{c/20}AX^1)$, we can rephrase the last two equations as
$$
|{U'}^\ell(X^1)|_2,|{V'}^\ell(X')|_2 \leq O_{c,d,N}(\epsilon^{-c/10})
$$
and
$$
\left|AX^1_{i_1} - \sum_{\ell=1}^k {U'}^\ell_{i_{S(\ell)}}(X^1) {V'}^\ell_{i_{\overline{S(\ell)}}}(X^1)\right|_2 = O_{c,d,N}(\epsilon^{1-c/10}).
$$
We will demonstrate that given a correct choice of such ${U'}^\ell$ and ${V'}^\ell$ we can construct new polynomials $U^\ell(A)$, $V^\ell(A)$ that satisfy the necessary conditions with probability at least $\epsilon^{O_{c,d,N}(1)}$.

Let $T_i(X^1)$ be the tensor-valued polynomial whose coefficients are the concatenation of the coefficients of
$$
AX^1_{i_1} - \sum_{\ell=1}^k {U'}^\ell_{i_{S(\ell)}}(X^1) {V'}^\ell_{i_{\overline{S(\ell)}}}(X^1)
$$
and the coefficients of the $\epsilon {U'}^\ell(X^1)$ and $\epsilon {V'}^\ell(X^1)$.  We have that for some $N_1 = O_{c,d,N}(1)$ that with probability at least $\epsilon^{N_1}$ that $|T_i(X^1)|_2 < O_{c,d,N}(\epsilon^{1-c/10})$.  We apply Lemma \ref{SDImpliesLRLem} with $k' > 10 N_1/c$ and then Lemma \ref{LowRankLem} (as in the proof of Proposition \ref{SmallDerProp}) to show that there exist tensors $W^\ell,Z^\ell$ so that
$$
\left| D_j T_i(X^1) - \sum_{\ell=1}^{k'-1} W^\ell_i Z^\ell_j \right|_2 \leq O_{c,d,N}(\epsilon^{1-3c/20}).
$$
We alter $W^\ell$ and $Z^\ell$ to maintain the same sum $\sum_{\ell=1}^{k'-1} W^\ell_i Z^\ell_j$.  This sum can be thought of as a rank $k'-1$ matrix.  Note that by the theory of singular values it can always be expressed in the form $\sum_{\ell=1}^{k'-1} C^\ell W^\ell_i Z^\ell_j$ where $C^\ell$ are positive real numbers and $\{W^\ell\},\{Z^\ell\}$ are orthonormal sets.  Note that the $|C_\ell|$ are no more than $|D_j T_i(X^1)|_2$.  The expectation of this (over $X^1$) is bounded in terms of the sizes of the ${U'}^\ell$ and ${V'}^\ell$.  These in turn can be no larger than $\epsilon^{-O_{d,c,N}(1)}$ by Lemma \ref{anticoncentrationLem} since they are small with reasonable probability.  Thus, by Corollary \ref{ConcentrationCor}, with probability at least $\epsilon^{O_{c,d,N}(1)}$ over our choice of $X^1$, we have
$$
\left| D_j T_i(X^1) - \sum_{\ell=1}^{k'-1} C^\ell W^\ell_i Z^\ell_j \right|\leq O_{c,d,N}(\epsilon^{1-3c/20})
$$
with $\{W^\ell\},\{Z^\ell\}$ are orthonormal sets, and  $C_\ell<\epsilon^{-N_2}$ for all $\ell$ and some $N_2=O_{c,d,N}(1)$.  Furthermore, we may assume that each $C_\ell$ is at least $\epsilon$, since otherwise we could remove the corresponding term in $\sum_{\ell=1}^{k'-1} C^\ell W^\ell_i Z^\ell_j$ without affecting the required properties.

Let
$$
S_{ij} = D_j T_i(X^1) - \sum_{\ell=1}^{k'-1} C^\ell W^\ell_i Z^\ell_j.
$$
Note that if $S_{ij}W^\ell_i$ is non-zero for some $i$, we can add $S_{ij}W^\ell_i$ to $Z^\ell_j$, obtaining a new decomposition of the form specified above with $|S_{ij}|_2$ smaller than it was before.  By taking $|S_{ij}|$ minimal, we can assume that $S_{ij}W^\ell_i=0$ and similarly that $S_{ij}Z^\ell_j=0$ for all $\ell$.

Let $M$ be a sufficiently large constant depending only on $c,d,N$.  Let ${C'}^\ell$ be random real numbers in the range $[\epsilon,\epsilon^{-N_2}]$ and let $Y^\ell$ be random vectors for $1\leq \ell < k'$.  We have that with probability at least $\epsilon^{O_{c,d,N}(1)}$ that for all $a,b$ that $|Y^{a}_i Z^b_i - \delta_{a,b}|< O(\epsilon^M)$, $|S_{ij}Y^\ell_j|_2 \leq O_{c,d,N}(\epsilon^{1-c/5})$, $|AY^\ell_{i_1}|_2\leq O_{c,d,N}(\epsilon^{-c/20})$, and $|{C'}^\ell-C^\ell| < \epsilon^M$.  We construct polynomials $U$ and $V$ (depending on $Y^\ell$ and $C^\ell$) that have the desired properties when these inequalities hold.

We think of $A$ as a linear function that takes a vector $X_{i_1}$ and returns a tensor on the remaining $d-1$ coordinates.
We let $Y_{i_1}$ be the vector
$$
Y_{i_1} = X_{i_1} - \sum_{\ell=1}^{k'-1} Y^\ell_{i_1}( D_X T_j(X^1) ) (D_{Y^\ell} T_j (X^1)) / ({C'}^\ell)^2.
$$
Note that
$$
D_X T_i(X^1) = S_{ij}X_j  + \sum_{\ell=1}^{k'-1} C^\ell W^\ell_i X_j Z^\ell_j.
$$
Similarly,
\begin{align*}
D_{Y^\ell} T_i(X^1) & = S_{ij}Y^\ell_j + \sum_{l=1}^{k'-1} C^l W^l_i Y^{\ell}_j Z^l_j \\ & = C^\ell W^\ell_i + O_{c,d,N}(\epsilon^{1-c/5}) +O_{c,d,N}(\epsilon^{M-N_2}).
\end{align*}
Thus, for $M$ sufficiently large, with high probability over $X$ we have that
$$
( D_X T_j(X^1) ) (D_{Y^\ell} T_j (X^1)) = (C^\ell)^2 X_i W^\ell_i + O_{c,d,N}(\epsilon^{2-2c/5}).
$$
Also, with high probability over $X$ this is at most
\begin{equation}\label{DotPBoundEqn}
({C'}^\ell)^2O_{c,d,N}(\epsilon^{-2c/5}).
\end{equation}
On the other hand,
\begin{align*}
Y_i Z^\ell_i & = X_i Z^\ell_i - \sum_{l=1}^{k'-1} Y^l_{i_1}( D_X T_j(X^1) ) (D_{Y^l} T_j (X^1)) / ({C'}^l)^2\\
& = X_i Z^\ell_i - \sum_{l\neq \ell} O(\epsilon^{M-2N_2-2}) - ((C^\ell)^2 X_i W^\ell_i + O_{c,d,N}(\epsilon^{2-2c/5}))/({C'}^\ell)^2\\
& = O_{c,d,N}(\epsilon^{M-2N_2-2}) + X_i W^\ell_i O_{c,d,N}(\epsilon^{M-2})+O(\epsilon^{2-2c/5}/(C^\ell)^2).
\end{align*}
Hence for $M$ sufficiently large, with high probability over $X$ we have that
$$
|C^\ell Y_i Z^\ell_i| = O_{c,d,N}(\epsilon^{1-2c/5})
$$
for all $\ell$.  If this holds, then
\begin{align*}
D_Y T_i(X^1) & = S_{ij}Y_j + \sum_{\ell=1}^{k'-1} W^\ell_j C^\ell Y_i Z^\ell\\
& = S_{ij}X_j + \sum_{\ell=1}^{k'-1} O_{c,d,N}(\epsilon^{-2c/5}) S_{ij}Y^\ell_j + O_{c,d,N}(\epsilon^{1-2c/5})\\
& = S_{ij}X_j + O_{c,d,N}(\epsilon^{1-3c/5}).
\end{align*}
With high probability over $X$, this is at most $O_{c,d,N}(\epsilon^{1-3c/5})$.  If this is the case, it means that $|D_Y {U'}^\ell|_2, |D_Y {V'}^\ell|_2 < O_{c,d,N}(\epsilon^{-3c/5})$ and that
\begin{align*}
& \left|A_{i_1,\ldots,i_d}Y_{i_1} -\left( \sum_{\ell=1}^k (D_Y {U'}^\ell_{S(\ell)}(X^1)) {V'}^\ell_{\overline{S(\ell)}}(X^1) +  {U'}^\ell_{S(\ell)}(X^1) (D_Y {V'}^\ell_{\overline{S(\ell)}}(X^1))\right) \right|_2  < O_{c,d,N}(\epsilon^{1-3c/5})
\end{align*}

Note also that
$$
AX_{i_1} - AY_{i_1} = \sum_{\ell=1}^{k'-1} AY^\ell_{i_1} ( D_X T_j(X^1) ) (D_{Y^\ell} T_j (X^1)) / ({C'}^\ell)^2.
$$
Thus, we may approximate $AX_{i_1}$ by
\begin{align*}
\sum_{\ell=1}^k (D_Y {U'}^\ell_{S(\ell)}(X^1)) {V'}^\ell_{\overline{S(\ell)}}(X^1) &+  {U'}^\ell_{S(\ell)}(X^1) (D_Y {V'}^\ell_{\overline{S(\ell)}}(X^1))\\ & +
\sum_{\ell=1}^{k'-1} AY^\ell_{i_1} ( D_X T_j(X^1) ) (D_{Y^\ell} T_j (X^1)) / ({C'}^\ell)^2
\end{align*}
and with high probability over $X$ the error is $O_{c,d,N}(\epsilon^{1-3c/5})$.  On the other hand it should be noted that the above is linear in $X$ and thus,may be thought of as a rank-$d$ tensor, $B$, applied to $X$ at one coordinate.  We have that with high probability over $X$ that
$$
|AX-BX|_2 = O_{c,d,N}(\epsilon^{1-3c/5}).
$$
Thus, by Lemma \ref{anticoncentrationLem}, we have that
$$
|A-B|_2 = O_{c,d,N}(\epsilon^{1-3c/5}).
$$
Furthermore, the tensor $B$ is obviously given as a sum of products of pairs of lower-rank tensors on appropriate subsets of the coordinates.  In order to complete our proof we need to show that these lower-rank tensors have size at most $O_{c,d,N}(\epsilon^{-c})$.  We already know that ${V'}^\ell_{\overline{S(\ell)}}(X^1)$, ${U'}^\ell_{S(\ell)}(X^1)$ and $AY^\ell_{i_1}$ are appropriately bounded.  The other tensors are expressed implicitly as linear functions in $X$ with tensor valued outputs. By Lemma \ref{anticoncentrationLem}, it suffices to show for these tensors that with high probability over $X$ that the output is $O_{c,d,N}(\epsilon^{-c})$.  This holds for $D_Y {U'}^\ell_{S(\ell)}(X^1)$ and $D_Y {V'}^\ell_{\overline{S(\ell)}}(X^1)$ since with high probability $S_{ij}Y_j$ is small.  It holds for $( D_X T_j(X^1) ) (D_{Y^\ell} T_j (X^1)) / ({C'}^\ell)^2$ by Equation (\ref{DotPBoundEqn}).

\end{proof}

We are finally ready to prove Proposition \ref{DecompositionProp}.
\begin{proof}
Assume that $\epsilon$ is sufficiently small as a function of $c,d$ and $N$ (for otherwise there is nothing to prove).

Consider such a polynomial $p$.  We claim that for any $k<d$ and any $c'>0$ that
$$
\pr_{X,Y^1,\ldots,Y^k}(|D_i D_{Y^1}\cdots D_{Y^k}p(X)|_2 < \epsilon^{1-c'}) > \epsilon^{O_{k,c',d,N}(1)}.
$$
This is proved by induction on $k$.  The $k=0$ case is given and the inductive step follows immediately from Proposition \ref{SDImpliesLRLem}.  Applying this statement for $k=d-1$, we note that
$$
D_i D_{Y^1}\cdots D_{Y^k}p(X)
$$
is independent of $X$.  Let $A_{i_1,\ldots,i_d} = D_{i_1}\cdots D_{i_d} p(X)$ be the symmetric $d$-tensor associated to $p$.  We have by Lemma \ref{DerivativeSizeLem} that $|A|_2 = \sqrt{d!}|p^{[d]}|_2 \leq \sqrt{d!}$, and thus, $A/d!$ satisfies the hypothesis of Proposition \ref{TensorDecompositionProp}. Hence we can find tensors $U^\ell$ and $V^\ell$ with the properties specified by that proposition so that $|U^\ell|_2|V^\ell|_2 \leq O_{c,d,N}(\epsilon^{-c})|p^{[d]}|_2$.  Since $A$ is symmetric, we have that
\begin{equation}\label{TDecompSymEqn}
\left|A - \sum_{\sigma \in S_d} \sum_{\ell=1}^k U^{\ell}_{i_{\sigma(S(\ell))}}V^{\ell}_{i_{\sigma(\overline{S(\ell)})}} \right|_2 = O_{c,d,N}(\epsilon^{1-c}).
\end{equation}
Note that in the above since the sum over $\sigma$ has already added the permutations of $U^\ell$ over its indices, we may replace $U^\ell$ and $V^\ell$ by their symmetrizations without affecting the above sum.  Let $U^\ell$ be rank $d_\ell$ and $V^\ell$ be rank $d-d_\ell$.  Let $a_\ell(X)$ be the degree-$d_\ell$ harmonic part of  the polynomial $X\rightarrow U^\ell(X,X,\ldots,X)$.  Define $b_\ell(X)$ similarly with respect to $V^\ell$.  By Lemma \ref{DerivativeSizeLem} we have that $|a_\ell|_2|b_\ell|_2\leq d!|U^\ell|_2|V^\ell|_2=O_{c,d,N}(\epsilon^{-c})|p^{[d]}|_2.$  Now consider the tensor given by
$$
D_{i_1}D_{i_2}\cdots D_{i_d} \left[p(X) - \sum_{\ell=1}^k a_\ell(X) b_\ell(X) \right].
$$
This is easily seen to be the tensor given in Equation (\ref{TDecompSymEqn}), and hence has size $O_{c,d,N}(\epsilon^{1-c})$.  On the other hand by Lemma \ref{DerivativeSizeLem}, this can be seen to be $\sqrt{d!}$ times the size of the degree-$d$ harmonic part of the polynomial
$$
p(X) - \sum_{\ell=1}^k a_\ell(X) b_\ell(X).
$$
This completes our proof.
\end{proof}

\subsection{Proof of the Main Theorem}\label{DDpfSec}

We are now prepared to prove the Diffuse Decomposition Theorem.  The basic idea of the proof is fairly simple.  We maintain decompositions of polynomials approximately equal to $p$.  We show using Proposition \ref{DecompositionProp} that if this decomposition is not diffuse that we can replace it by a simpler one by introducing at most a small error.  This new decomposition is simpler in the sense that an associated ordinal number is smaller, and we will use transfinite induction to prove that this process will eventually terminate, yielding an appropriate decomposition.

\begin{proof}[Proof of Theorem \ref{DDTheorem}]
We assume for convenience that $N$ and $c^{-1}$ are integers.  Throughout we will assume that $N,c,d$ and $\epsilon$ are fixed.

We define a \emph{partial decomposition} of our polynomial $p$ to be a set of the following data:
\begin{itemize}
\item A positive integer $m$
\item A polynomial $h:\R^m\rightarrow \R$
\item A sequence of polynomials $(q_1,\ldots,q_m)$ each on $\R^n$ with $|q_i|_2=1$ for each $i$
\item A sequence of integers $(a_1,\ldots,a_m)$ with $a_i$ between 0 and $4\cdot 3^i (N+1) / c - 1$.
\end{itemize}
Furthermore, we require that each $q_i$ is non-constant, and that for any monomial $\prod x_{i}^{\alpha_1}$ appearing in $h$ that $\sum \alpha_i \deg(q_i) \leq d$.

We say that such a partial decomposition has complexity at most $C$ if the following hold:
\begin{itemize}
\item $m \leq C$
\item $|h|_2 \leq C \epsilon^{-1+C^{-1}}$
\item $|p(X) - h(\epsilon^{a_1 c/(2\cdot 3^1)}q_1(X),\epsilon^{a_2 c/(2\cdot 3^2)}q_2(X),\ldots,\epsilon^{a_m c/(2\cdot 3^m)}q_m(X))|_2 \leq C\epsilon^N$
\end{itemize}

Finally, we define the weight of a partial decomposition as follows.  First we define the polynomial
$$
w(x)=\sum_{i=1}^m x^{\deg(q_i)} (4 \cdot 3^i (N+1)/c - a_i).
$$
We then let the weight of the decomposition be $w(\omega)$.

Our result will follow from the following Lemma:
\begin{lem}\label{Z2Lem}
Let $p$ be a degree-$d$ polynomial with a partial decomposition of weight $w$ and complexity at most $C$.  Then there exists a polynomial $p_0$ with an $(\epsilon,\epsilon^{-c})$-diffuse decomposition of size at most $O_{c,d,N,w,C}(1)$ so that $|p-p_0|_2 \leq O_{c,d,N,w,C}(\epsilon^N)$.
\end{lem}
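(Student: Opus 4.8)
The plan is to argue by transfinite induction on the weight $w$ (using Lemma \ref{ordLem}), fixing $c,d,N$ and proving, for every weight $w=w(\omega)$ and every $C$, that a partial decomposition of $p$ of weight $w$ and complexity at most $C$ yields the required $p_0$ with constants depending only on $c,d,N,w,C$. The base case is when $(q_1,\ldots,q_m)$ is already an $(\epsilon,\epsilon^{-c})$-diffuse set: then $p_0 := h(\epsilon^{a_1c/(2\cdot 3)}q_1,\ldots,\epsilon^{a_mc/(2\cdot 3^m)}q_m)$ works, since $(h',q_1,\ldots,q_m)$ — with $h'$ the polynomial obtained by folding the scalings $\epsilon^{a_ic/(2\cdot 3^i)}$ into $h$ — is an $(\epsilon,\epsilon^{-c})$-diffuse decomposition of $p_0$ of size $m\le C$ (the degree condition is unchanged by rescaling variables, and $|q_i|_2=1$), and the last clause of the complexity hypothesis gives $|p-p_0|_2\le C\epsilon^N$.

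For the inductive step, assume $(q_1,\ldots,q_m)$ is not diffuse, so some box $\prod_i[a_i-\epsilon,a_i+\epsilon]$ has mass exceeding $\epsilon^{m-c}$. The first substantive move is to extract from this a polynomial to which Proposition \ref{DecompositionProp} applies. Applying Proposition \ref{StrongAnticoncentrationProp} to the scalars $q_1-a_1,\ldots,q_m-a_m$ at a suitable threshold shows that on at least half the box the Jacobian wedge $\bigwedge_{i_1,\ldots,i_m}\prod_j D_{i_j}q_j(X)$ is small; combining this with Lemma \ref{anticoncentrationLem} and Corollary \ref{ConcentrationCor}, and after a preliminary reduction eliminating any (approximate) redundancy among the $q_i$ (a move that itself strictly decreases the weight), one obtains an index $i$ with $\pr_X(|D_jq_i(X)|_2<\epsilon')>(\epsilon')^{N'}$, where $\epsilon'$ is a fixed power of $\epsilon$ and $N'$ depends only on $c,d,N,m$. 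Proposition \ref{DecompositionProp}, applied to $q_i$ with $d$ replaced by $d_i=\deg(q_i)$, then produces polynomials $a_\ell,b_\ell$ of degree strictly below $d_i$ with $\deg(a_\ell)+\deg(b_\ell)=d_i$ and $|a_\ell|_2|b_\ell|_2=\epsilon^{-O_{c,d,N}(1)}$, so that $q_i=\sum_\ell a_\ell b_\ell+r_{<d_i}+s$ with $r_{<d_i}$ of degree $<d_i$ and $s$ — the degree-$d_i$ harmonic remainder — of norm $O_{c,d,N}(\epsilon^{1-c''})$ for a small $c''$ (a positive power of $\epsilon$, but not $\epsilon^N$).

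Then I would build the new partial decomposition by substituting this expression for $q_i$ into $h$, normalizing the new polynomials $a_\ell,b_\ell,r_{<d_i},s$ and folding the scalar factors partly into fresh scaling exponents and partly (when they exceed $1$) into the new coefficients of $h$; the degree condition survives since $\deg(a_\ell)+\deg(b_\ell)=d_i=\deg(q_i)$ and the other new pieces have degree $\le d_i$. The single high-degree new piece $s$ is inserted at position $i$ with scaling $\epsilon^{a_ic/(2\cdot 3^i)}|s|_2$, which corresponds to an integer index strictly larger than $a_i$; if that index would leave the range $\{0,\ldots,4\cdot 3^i(N+1)/c-1\}$, the scaling is already below $\epsilon^{2(N+1)}$ and $s$ can be dropped at a cost of $O_{c,d,N,w,C}(\epsilon^{2N+1})=O(\epsilon^N)$ (using the bounded norm of $h$ and hypercontractivity). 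In every case the $\omega^{d_i}$-coefficient of the weight polynomial strictly decreases while no higher coefficient changes, so the new weight is a strictly smaller ordinal; a size count shows the new complexity is $O_{c,d,N,w,C}(1)$ and that the new decomposition still approximates $p$ to within its complexity times $\epsilon^N$, so the inductive hypothesis applies and finishes the proof.

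The step I expect to be the main obstacle is this last round of bookkeeping. One application of Proposition \ref{DecompositionProp} leaves an error of size $\epsilon^{1-c''}$, far above the target $\epsilon^N$, so it is crucial that this error be \emph{carried forward} as the extra normalized polynomial $s$ equipped with a small scaling exponent rather than discarded; one must then check that the truncations that genuinely occur (folding $O(1)$-and-larger scalars into $h$, deleting over-scaled pieces, removing redundant $q_i$) each cost only $O(\epsilon^N)$, that the complexity and the norm of $h$ stay controlled under iterated substitution, and that the ordinal $w(\omega)$ decreases at every step. The other delicate point is the extraction itself — showing that failure of diffuseness genuinely forces a single $q_i$ to have a small gradient with non-negligible probability after at most a redundancy-removing reduction — which is where Proposition \ref{StrongAnticoncentrationProp} does the real work.
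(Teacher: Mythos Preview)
Your overall architecture matches the paper's: transfinite induction on the weight, the base case being an already-diffuse set, and in the inductive step using Proposition~\ref{StrongAnticoncentrationProp} to exploit a box of excess mass, then Proposition~\ref{DecompositionProp} to break a polynomial into lower-degree pieces, with the degree-$d_i$ remainder $s$ carried forward as a new normalized $q$ with a strictly larger scaling exponent (so that the $\omega^{d_i}$-coefficient of $w$ drops). That part is right, and your identification of the error-tracking issue is on target.

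The genuine gap is in the extraction step. You assert that, ``after a preliminary reduction eliminating any (approximate) redundancy among the $q_i$,'' one obtains a single index $i$ with $\pr_X(|D_jq_i(X)|_2<\epsilon')>(\epsilon')^{N'}$. The small wedge only says that, pointwise in $X$, the gradients $Dq_1(X),\ldots,Dq_m(X)$ are approximately linearly dependent; no global redundancy removal converts this into a single $q_i$ having small gradient. The paper's argument is different and more delicate: writing the wedge as $\prod_i|V^i(X)|_2$ with $V^i(X)$ the component of $Dq_i(X)$ orthogonal to $\langle Dq_j(X):j>i\rangle$, it fixes the \emph{largest} $i$ for which $|V^i(X)|_2\le O_{C,d}(\epsilon^{c/3^i})$ on a set of $X$ of mass $\epsilon^{O_{C,d}(1)}$. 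For $j>i$ the projections $|V^j(X)|_2$ are then not small, which bounds the Gram--Schmidt coefficients by $O_{C,d}(\epsilon^{-c/(2\cdot 3^i)+c/(2\cdot 3^m)})$; rounding these $X$-dependent coefficients to a grid and pigeonholing yields fixed scalars $C_j$ so that the \emph{linear combination} $q=q_i+\sum_{j>i}C_jq_j$ satisfies the hypothesis of Proposition~\ref{DecompositionProp}. That proposition is applied to (a rescaling of) $q$, not to $q_i$.

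A second point you miss, tied to the first, is \emph{which} polynomial to substitute for. Since the decomposition is of $q$, not of any single $q_i$, to convert it into a new partial decomposition one must solve for one of the original $q_\ell$. The paper chooses the $q_j$ (among those of degree $D=\deg q_i$) for which $C_j\epsilon^{-a_jc/(2\cdot 3^j)}$ is maximal; this is exactly what makes the coefficients entering the new $h$ at most $O_{c,C,d}(\epsilon^{-1/(2C)})$, so that complexity increases only boundedly. Your plan to ``substitute this expression for $q_i$ into $h$'' presupposes that Proposition~\ref{DecompositionProp} was applied to $q_i$ itself, which is not available, and offers no mechanism to control $|h|_2$ in the actual situation.
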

\begin{proof}
We prove this by transfinite induction on $w$.  In particular, we show that either $(h,q_1,\ldots,q_m)$ provides an appropriate diffuse decomposition of a polynomial approximately equal to $p$ or that $p$ has another partial decomposition of complexity $O_{c,d,N,C,w}(1)$ and weight strictly less than $w$ (with finitely many possibilities for the new weight).  The inductive hypothesis will imply that we have an appropriate diffuse decomposition in the latter case.

First note that if some $a_i$ at least $2(N+1) 3^i /c$ that the sum of the coefficients of $q_i$ appearing in $h(\epsilon^{a_1 c/(2\cdot 3^1)}q_1(X),\epsilon^{a_2 c/(2\cdot 3^2)}q_2(X),\ldots,\epsilon^{a_m c/(2\cdot 3^m)}q_m(X))$ is $O_C(\epsilon^N)$.  Thus, these terms can be thrown away without introducing an error of more than $O_{C,d}(\epsilon^N)$.  Doing so to the largest such $q_i$ and shifting all of the larger indices down, perhaps changing the $a_i$, and modifying $h$ appropriately will lead to a new partial decomposition with a new value of $C$ dependent only on $d$ and the old one, and a strictly smaller weight.
Hence we assume that $a_i < 2(N+1) 3^i /c$ for all $i$.

If $\deg(q_{i+1}) > \deg(q_i)$ for some $i$, we may swap $q_i$ and $q_{i+1}$ (making a similar adjustment to $h$ and setting $a_i$ and $a_{i+1}$ to 0) to get a partial decomposition of complexity $C$ and strictly smaller weight.  Hence we may assume that $\deg(q_1)\geq \deg(q_2)\geq \ldots \geq \deg(q_m)$.

Were it the case that for all $x_1,\ldots,x_m$ that
$$
\pr(|q_i(X)-x_i| <\epsilon \textrm{ for all } i) < \epsilon^{m-c},
$$
then we would already have an appropriate diffuse decomposition and would be done.  Hence we may assume that there is a set of $x_i$ so that the above does not hold.  By Proposition \ref{StrongAnticoncentrationProp}, we have that with probability at least $1-\epsilon^{m-c}/2$ that
$$
\prod_{i=1}^m |q_i(X)-x_i| \geq \Omega_{C,d}(\epsilon^{m-c/2})\left|\bigwedge_{j_1,\ldots,j_m} \prod_{i=1}^m D_{j_i} q_i(X) \right|_2.
$$
Thus, with probability at least $\epsilon^{m-c}/2$ both of the above hold, which would imply that
$$
\left|\bigwedge_{j_1,\ldots,j_m} \prod_{i=1}^m D_{j_i} q_i(X) \right|_2 = O_{C,d}(\epsilon^{c/2}).
$$
Now the wedge product above is a wedge product of vectors, and hence its size is unchanged by making a determinant 1 change of basis to the vectors $D_{j_i}q_i$.  Hence, letting $V^i$ be the projection of $D q_i$ onto the orthogonal compliment of the space spanned by the $D q_j$ for $j>i$ we have that the size of the wedge product equals $\prod_{i=1}^m |V^i|_2$.  This means that for some $i$ that $|V^i|_2 \leq O_{C,d} (\epsilon^{c/3^i})$.  Hence for some $i$, we have with probability at least $\Omega_{C,d}(\epsilon^m)$ over $X$ that $|V^i(X)|_2  \leq O_{C,d} (\epsilon^{c/3^i})$, and that this is the largest $i$ for that $X$ for which this holds.  Furthermore, by Lemma \ref{DerivativeSizeLem} and Corollary \ref{ConcentrationCor} we know that when this happens with high probability we also have that the first derivatives of all the $q_i$ have size $O_{C,d}(\log(\epsilon^{-1})^d)$.

When this happens $V^j$ is given by the derivative of $q_j$ minus an appropriate linear combination of the $V^k$ for $k>j$.   Note that for each coefficient, the size of the coefficient times the size of $V^k$ is at most the size of the derivative of $q_j$.  Hence for $k>i$, the size of the coefficient is at most $O_{C,d}(\epsilon^{-c/3^k} \log^d(\epsilon^{-1})).$  From this it is easy to see that $V^i$ is given by a linear combination of the derivatives of the $q_j$ with $j\geq i$ such that the $i^{\tth}$ coefficient is 1 and that all other coefficients have size at most
$$
\prod_{k=i+1}^m O_{C,d}(\epsilon^{-c/3^k} \log^d(\epsilon^{-1})) = O_{C,d}( \epsilon^{-c/(2\cdot 3^i) + c/(2\cdot 3^m)}).
$$
Hence for each such $X$, there are constants $C_j=O_{C,d}( \epsilon^{-c/(2\cdot 3^i) + c/(2\cdot 3^m)})$ (for $j>i$) so that the derivative of $q_i + \sum_j C_j q_j$ at $X$ has size at most $O_{C,d} (\epsilon^{c/3^i}).$  Note that this statement still holds if the $C_j$ are rounded to the nearest multiple of $\epsilon$.  Since there are $\epsilon^{-O_C(1)}$ such possible roundings, there is some set of $C_j$ so that for the polynomial $q(X) = q_i(X)+ \sum_j C_j q_j(X)$, we have that $|D_iq(X)|_2 \leq O_{C,d} (\epsilon^{c/3^i})$ with probability at least $\epsilon^{O_{C,d}(1)}$ over $X$.

We now can apply Proposition \ref{DecompositionProp} to $\Omega_{C,d}(\epsilon^{c/(2\cdot 3^i) - c/(4\cdot 3^m)}) q(X)$.  Let $D=\deg(q_i)$.  Let $Q(X)$ be the degree-$D$ harmonic part of $\Omega_{C,d}(\epsilon^{c/(2\cdot 3^i) - c/(2\cdot 3^m)}) q(X)$.  Proposition \ref{DecompositionProp} tells us that there are polynomials $A_\ell,B_\ell$ of degree strictly less than $D$ with $|A_\ell|_2 |B_\ell|_2$ at most $O_{c,C,d}(\epsilon^{-1/(2C)})|Q|_2$ for each $\ell$, and so that
$Q-\sum_\ell A_\ell B_\ell $ equals a polynomial of degree less than $D$ plus an error of $L^2$ norm at most $O_{c,C,d}(\epsilon^{c/3^i -c/(2\cdot 3^m)})$.  Note that the lower degree polynomial has size at most
$$
|Q|_2 + \sum |A_\ell B_\ell|_2.
$$
By Corollary \ref{ConcentrationCor} and H\"{o}lder's inequality we have that
$$
|A_\ell B_\ell|_2 \leq |A_\ell|_4 |B_\ell|_4 \leq O_{d}(|A_\ell|_2 |B_\ell|_2) = O_{c,C,d}(|Q|_2 \epsilon^{-1/(2C)}).
$$

Consider the $j$ among those for which $\deg(q_j)=D$ for which $C_j \epsilon^{-a_j c/(2\cdot 3^j)}$ is the largest.  $Q(X)$ is then some multiple of the degree-$D$ harmonic part of $q_j \epsilon^{a_j c/(2\cdot 3^j)}$ plus smaller multiples of the degree-$D$ harmonic parts of other $q_k \epsilon^{a_k c/(2\cdot 3^k)}$.

We are now ready to modify our partial decomposition to obtain one of smaller weight.  First we take each of the $q_i$ of degree equal to $D$ and replace $q_i$ by the sum of its harmonic degree-$D$ part and the remainder, introducing each as a new $q_j$.  This increases the complexity by at most a factor of 2, and increases the weight by an ordinal less than $\omega^D$.

Next we note that $q_j \epsilon^{a_j c/(2\cdot 3^j)}$ can be written as a linear combination of the other $q_k \epsilon^{a_k c/(2\cdot 3^k)}$ (with coefficients less than 1) plus a sum of $A_\ell(X)B_\ell(X)$ plus a polynomial of degree less than $D$ plus a degree-$D$ polynomial of size at most $O_{c,C,d}(\epsilon^{(a_j+1) c/(2\cdot 3^j)}) $.  Replacing $q_j$ by a normalized version of this error polynomial, and adding new $q$'s corresponding to the normalized versions of $A_\ell$ and $B_\ell$ and the remaining part of degree less than $D$ and modifying $h$ appropriately, we find that we have a new partial decomposition of complexity $O_{c,C,d}(1)$ and weight smaller by $\omega^D-O_{c,C,d}(\omega^{D-1}).$

One thing that needs to be verified about this construction is that the norm of $h$ is not increased by too much.  We may think of $h$ as a polynomial whose input variables are the $q_i \epsilon^{a_i c/(2\cdot 3^i)}$.  Replacing relevant $q_i$ by the sum of their highest harmonic component plus a lower degree part, merely replaces one of the inputs to $h$ by the sum of two new input variables, and thus, does not increase the size of $h$ by more than a constant factor.  The other step is somewhat more complicated to analyze.  Here we replace the single input variable $q_j \epsilon^{a_j c/(2\cdot 3^j)}$ by a sum of the following:
\begin{itemize}
\item A sum of other variables $q_i \epsilon^{a_i c/(2\cdot 3^i)}$ with coefficients at most 1
\item A new variable corresponding to the error term, with coefficient $O_{c,C,d}(1)$
\item A sum of new terms corresponding to the $A_\ell B_\ell$
\end{itemize}
We claim that the sum of the coefficients of the new terms in $h$ replacing the variable $q_j \epsilon^{a_j c/(2\cdot 3^j)}$ are at most $O_{c,C,d}(\epsilon^{-1/(2C)})$, thus, allowing the complexity to increase by only a bounded amount.  This is clear for the first two contributing factors above.  For the latter two, note that
$$
|Q|_2 = O_{C}(\max\{C_i: \deg(q_i)=D\}) \leq O_{C}(C_j\epsilon^{-a_j c/(2\cdot 3^j)}).
$$
Thus, after rescaling $Q$ so that the coefficient of $q_j \epsilon^{a_j c/(2\cdot 3^j)}$ is 1, we find that $|Q|_2 = O_{C}(1)$.  Thus, when making the replacement, the sum of the coefficients of the $A_\ell B_\ell$ terms and the scaled error term are all $O_{c,C,d}(\epsilon^{-1/(2C)})$.
\end{proof}

Our theorem follows from applying Lemma \ref{Z2Lem} to the partial decomposition $m=1$, $h(x_1) = |p|_2 x_1$, $q_1(X) = p(X)/|p|_2$ and $a_1 = 0$ of complexity 1 and weight $[6 (N+1)/ c ]\omega^d$.
\end{proof}

\section{Basic Facts about Diffuse Decompositions}\label{DDFactsSec}

The primary use of a diffuse decomposition will be that the existence of a diffuse decomposition will allow us to approximate the corresponding threshold function by a smooth function.  In particular, we show:

\begin{prop}\label{ApproximationProp}
Let $(h,q_1,\ldots,q_m)$ by an $(\epsilon,N)$-diffuse decomposition of a degree-$d$ polynomial $p$ for $1/2>\epsilon>0$.  There exists a function $f:\R^m\rightarrow[-1,1]$ so that:
\begin{enumerate}
\item $f(q_1(x),q_2(x),\ldots,q_m(x)) \geq \sgn(p(x))$ pointwise.
\item $\E[f(q_1(X),q_2(X),\ldots,q_m(X))] - \E[\sgn(p(X))] = O_{m,d}(\epsilon N \log(\epsilon^{-1})^{dm/2+1})$.
\item For any $k\geq 0$, $|f^{(k)}|_\infty = O_{m,k}(\epsilon^{-k})$, where $|f^{(k)}|_\infty$ denotes the largest $k^{\tth}$ order mixed partial derivative of $f$ at any point.
\end{enumerate}
\end{prop}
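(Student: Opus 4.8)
The plan is to take $f$ to be a mollification of the indicator of the region $\{y\in\R^m:h(y)\ge 0\}$. First a reduction: if $p\equiv 0$ we may take $f\equiv 1$, so assume $p\not\equiv 0$, hence $h\not\equiv 0$; also we may discard any constant $q_i$ by absorbing its value into $h$, and so assume every $q_i$ is non-constant, whence the condition $\sum a_i\deg(q_i)\le d$ forces $h$ to have degree at most $d$ as a polynomial on $\R^m$. Write $Q=(q_1,\ldots,q_m)$, so that $\sgn(p(x))=\sgn(h(Q(x)))$. Fix a smooth bump $\rho$ supported on the closed unit ball, positive on its interior, with $\int\rho=1$; put $\rho_\epsilon(y)=\epsilon^{-m}\rho(y/\epsilon)$, let $T=\{y:h(y)\ge 0\}$ and $T_\epsilon=\{y:\mathrm{dist}(y,T)\le\epsilon\}$, and define
$$
\tilde g=\mathbf{1}_{T_\epsilon}*\rho_\epsilon,\qquad f=2\tilde g-1:\R^m\to[-1,1].
$$
Conclusions (1) and (3) are then immediate: $\tilde g\equiv 1$ on $T$ (for $y\in T$ the whole $\epsilon$-ball around $y$ lies in $T_\epsilon$), so $f(Q(x))=1\ge\sgn(p(x))$ when $h(Q(x))\ge 0$ and $f(Q(x))\ge -1=\sgn(p(x))$ otherwise; and $D^\alpha\tilde g=\mathbf{1}_{T_\epsilon}*D^\alpha\rho_\epsilon$ has sup norm at most $\|D^\alpha\rho_\epsilon\|_1=O_{m,|\alpha|}(\epsilon^{-|\alpha|})$, so $|f^{(k)}|_\infty=O_{m,k}(\epsilon^{-k})$.

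For conclusion (2), note that the pointwise difference $f(y)-\sgn(h(y))$ vanishes whenever $h(y)\ge 0$, equals $2\tilde g(y)\in[0,2]$ when $h(y)<0$, and in addition vanishes once $\mathrm{dist}(y,T)>2\epsilon$. Hence, with $Z=\{h=0\}$,
$$
\E[f(Q(X))]-\E[\sgn(p(X))]\;\le\;2\,\pr_X\!\big(h(Q(X))<0,\ \mathrm{dist}(Q(X),T)\le 2\epsilon\big)\;\le\;2\,\pr_X\!\big(\mathrm{dist}(Q(X),Z)\le 2\epsilon\big),
$$
the last step being the intermediate value theorem applied along the segment joining a point with $h<0$ to a nearby point of $T$. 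So it suffices to bound $\pr_X(\mathrm{dist}(Q(X),Z)\le 2\epsilon)$.

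I would split this on the event $|Q(X)|\le R$, where $R=\Theta_{m,d}(\log(\epsilon^{-1})^{d/2})$. Since $|Q(X)|^2=\sum_i q_i(X)^2$ is a polynomial of degree at most $2d$ whose $L^2$ norm is at most $m\,3^{d}$ (by Lemma \ref{hypercontractiveLem} with $t=4$), Corollary \ref{ConcentrationCor} gives $\pr_X(|Q(X)|>R)=O(\epsilon)$ for such an $R$; moreover a pigeonhole over the $O_m(\epsilon^{-m})$ boxes of side $\epsilon$ covering a ball of radius $O(\sqrt m)$ --- which by Markov carries most of the mass of $Q(X)$ --- forces $N=\Omega_m(1)$, so this tail is absorbed into the asserted bound. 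On the event $|Q(X)|\le R$, I would cover the set $\{y\in B_R:\mathrm{dist}(y,Z)\le 2\epsilon\}$ by translates of $[-\epsilon,\epsilon]^m$ and apply the diffuseness estimate $\pr_X(|q_i(X)-a_i|\le\epsilon\ \forall i)\le\epsilon^m N$ to each; if that set can be covered by $O_{m,d}((R/\epsilon)^{m-1})$ such boxes, this contributes $O_{m,d}((R/\epsilon)^{m-1}\epsilon^m N)=O_{m,d}(R^{m-1}\epsilon N)=O_{m,d}(\epsilon N\log(\epsilon^{-1})^{d(m-1)/2})$, comfortably inside the stated bound.

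Thus everything rests on the purely geometric covering estimate: \emph{the $2\epsilon$-neighborhood of the real zero set of a degree-$d$ polynomial on $\R^m$, intersected with a ball of radius $R\ge\epsilon$, is coverable by $O_{m,d}((R/\epsilon)^{m-1})$ translates of $[-\epsilon,\epsilon]^m$} (equivalently, it has volume $O_{m,d}(R^{m-1}\epsilon)$). The codimension-one exponent $m-1$ is essential here --- a bound with $(R/\epsilon)^m$ would be useless --- and this is where I expect essentially all the work to lie. I would first bound the hypersurface area $\vol_{m-1}(Z\cap B_{2R})=O_{m,d}(R^{m-1})$ via the Cauchy--Crofton formula together with the B\'{e}zout fact that a generic line meets $Z$ in at most $d$ points (the ingredient already used in Lemma \ref{SALem}), and then pass from this area bound to the tube-volume bound using the bounded topological complexity of real algebraic sets of bounded degree --- e.g. Milnor--Thom bounds on the number of connected components, which keep $Z$ from being too ``crinkly'' inside $B_R$ --- or else invoke such a covering bound directly from the literature on the metric entropy of semialgebraic sets.
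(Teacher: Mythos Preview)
Your construction of $f$ as the mollified indicator of the $\epsilon$-thickening of $\{h\ge 0\}$ coincides with the paper's, and your arguments for (1) and (3) match. The divergence is entirely in (2). You reduce to a tube-volume estimate for the zero set $Z=\{h=0\}$ inside a ball of radius $R\sim\log(\epsilon^{-1})^{d/2}$ and propose to obtain the needed $O_{m,d}(R^{m-1}\epsilon)$ bound from real-algebraic geometry (Cauchy--Crofton for the surface area, then Milnor--Thom or a Wongkew-type theorem to pass to the tube). The paper instead proves the auxiliary Lemma~\ref{DDDerLem}: with high probability the point $x=Q(X)$ satisfies the derivative chain
\[
|h(x)|\ \ge\ \epsilon\,|Dh(x)|_2\ \ge\ \epsilon^2\,|D^2h(x)|_2\ \ge\ \cdots\ \ge\ \epsilon^d\,|D^dh(x)|_2,
\]
and then (Corollary~\ref{Z3Cor}) observes by Taylor expansion that this chain forces $h$ not to vanish within distance $\epsilon$ of $x$. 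The proof of Lemma~\ref{DDDerLem} does convert to a volume question as you do, but bounds that volume by comparing with Gaussian measure on a rescaled ball and applying Proposition~\ref{StrongAnticoncentrationProp} in its $k=1$ form to $h$ and each successive derivative --- no algebraic-geometry input at all. Your route is sound (the covering estimate you isolate is exactly Wongkew's theorem on tubular neighborhoods of real algebraic varieties) and would in fact save a factor of $\log(\epsilon^{-1})^{d/2+1}$ over the paper's stated bound; but it imports outside machinery, and the step from surface area to tube volume that you sketch via Milnor--Thom is not as direct as your phrasing suggests --- bounded Betti numbers alone do not prevent crinkling --- so citing Wongkew directly is the safer move. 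The paper's argument, by contrast, stays entirely within the analytic toolkit already developed in Section~\ref{BasicStuffSec}.
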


In order to prove this and for some other applications, we will also need the following statement about the distribution of values of $(q_i(X))$ in a diffuse decomposition:
\begin{lem}\label{DDDerLem}
Let $(h,q_1,\ldots,q_m)$ be an $(\epsilon,N)$-diffuse decomposition of a degree-$d$ polynomial for some $1/2>\epsilon>0$.  Letting $x=(q_1(X),q_2(X),\ldots,q_m(X))$ for $X$ a random Gaussian we have that with probability $1-O_{m,d}(N\epsilon \log(\epsilon^{-1})^{dm/2+1})$ that
\begin{equation}\label{SAChainEq}
|h(x)| \geq \epsilon |D_{i_1} h(x)|_2 \geq \epsilon^2 |D_{i_2} D_{i_2} h(x)|_2 \geq \ldots \geq \epsilon^d |D_{i_1}\cdots D_{i_d} h(x)|_2.
\end{equation}
\end{lem}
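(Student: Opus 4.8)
The plan is to recast the ``bad'' event --- failure of the chain (\ref{SAChainEq}) --- as the event that $Q:=(q_1,\dots,q_m)$ sends a Gaussian input into an explicit set of small Lebesgue measure in $\R^m$, and then to cash this in against the diffuse property. No appeal to the strong‑anticoncentration machinery of Proposition \ref{StrongAnticoncentrationProp} should be needed.

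\emph{The reduction.} Since $x=Q(X)$ in the statement and, for each $\ell$, the quantity $|D_{i_1}\cdots D_{i_\ell}h(x)|_2$ is simply the tensor norm of the $\ell$‑th total derivative tensor $D^{(\ell)}h$ of $h:\R^m\to\R$ evaluated at the point $x$, the $\ell$‑th of the $d$ inequalities in (\ref{SAChainEq}) says precisely that $Q(X)\notin S_\ell$, where
$$S_\ell:=\bigl\{x\in\R^m:\ |D^{(\ell-1)}h(x)|_2<\epsilon\,|D^{(\ell)}h(x)|_2\bigr\}.$$
By the definition of a decomposition we may assume no $q_i$ is constant, so $\deg q_i\ge1$ and hence $\deg h\le d$; thus each of $D^{(\ell-1)}h,D^{(\ell)}h$ is a polynomial tensor of degree $<d$ and each $S_\ell$ is an open semialgebraic set of complexity $O_{m,d}(1)$ (and $S_\ell=\emptyset$ when $\deg h<\ell-1$). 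So by a union bound it suffices to bound $\pr_X(Q(X)\in S_\ell)$ for each $1\le\ell\le d$.

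\emph{Passing to Lebesgue measure via diffuseness.} First I would truncate: by Corollary \ref{ConcentrationCor} and $|q_i|_2\le1$ there is a radius $L=\Theta(\log(\epsilon^{-1})^{d/2})$ with $\pr_X(|Q(X)|>L)\le\epsilon$. Inside $B_L$ the $(\epsilon,N)$‑diffuse property bounds the mass of $Q(X)$ on any bounded‑complexity set $A$: covering $A$ by a grid of axis‑parallel boxes of side $2\epsilon$, each box carries $Q(X)$‑mass at most $\epsilon^mN$, and the number of boxes meeting $A$ is $O_{m,d}\bigl(\vol(A)\epsilon^{-m}+\mathcal H^{m-1}(\partial A)\epsilon^{-(m-1)}\bigr)$. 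Applying this with $A=S_\ell\cap B_L$ reduces everything to proving
$$\vol(S_\ell\cap B_L)=O_{m,d}\bigl(\epsilon\,L^{m-1}\bigr)\qquad\text{and}\qquad\mathcal H^{m-1}\bigl(\partial(S_\ell\cap B_L)\bigr)=O_{m,d}\bigl(L^{m-1}\bigr),$$
with constants independent of the coefficients of $h$; this gives $\pr_X(Q(X)\in S_\ell)=O_{m,d}(N\epsilon\,\polylog(\epsilon^{-1}))$, and careful bookkeeping of the polylogarithmic factors should produce the exponent $dm/2+1$.

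\emph{The hard part: the volume bound.} This is a ``strong anticoncentration with respect to Lebesgue measure''. Writing $F:=D^{(\ell-1)}h:\R^m\to\R^{m^{\ell-1}}$, the tensor $D^{(\ell)}h$ is exactly the Jacobian of $F$, so $S_\ell=\{x:|F(x)|_2<\epsilon|DF(x)|_2\}$ is, near the real variety $Z:=F^{-1}(0)$, an $\epsilon$‑tube around $Z$ measured in the metric warped by $DF$, while away from $Z$ the ratio $|F|_2/|DF|_2$ is bounded below so that $S_\ell$ stays in such a tube. Since $F\not\equiv0$, $Z$ has codimension $\ge1$ and bounded degree, so $Z\cap B_L$ has $(m-1)$‑volume $O_{m,d}(L^{m-1})$ by an integral‑geometric (B\'ezout‑type) bound; and although $DF$ can be badly conditioned along $Z$ --- making the tube locally much wider than $\epsilon$ in some directions --- those widened directions, once they exceed $L$, are simply cut off by $B_L$, so the tube's volume inside $B_L$ is still $O_{m,d}(\epsilon L^{m-1})$ (and the boundary estimate is similar). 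The one‑dimensional case is elementary and illustrates why this should hold: for a univariate $r$ with roots $\rho_k$ one has $S_\ell=\{y:|\sum_k(y-\rho_k)^{-1}|>\epsilon^{-1}\}$, a union of at most $\deg r$ intervals each of length $O(\deg r\cdot\epsilon)$, regardless of the $\rho_k$. I expect the main obstacle to be making the multivariate tube estimate effective: one must stratify $S_\ell\cap B_L$ by the local structure of $Z$ and control the rank and conditioning of $DF$ on each stratum through effective semialgebraic (\L{}ojasiewicz‑type) bounds with exponents controlled by $d$ and $m$, and make the ``cutting off by $B_L$'' step uniform in the coefficients of $h$ (which is legitimate exactly because $(q_1,\dots,q_m)$ being normalized forces the relevant behaviour into $B_L$). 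By contrast, the reduction to $Q(X)\in S_\ell$, the truncation via Corollary \ref{ConcentrationCor}, and the box‑covering use of diffuseness are routine.
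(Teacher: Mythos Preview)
Your reduction, truncation, and box-covering steps match the paper's proof exactly: truncate to a box of side $L=\Theta_m(\log(\epsilon^{-1})^{d/2})$ via Corollary \ref{ConcentrationCor}, cover the failure set $R$ by $\epsilon$-boxes, and use diffuseness to reduce to a volume bound on an $\epsilon$-neighborhood $R''$ of $R$ inside $B_L$. The divergence is in how you propose to bound $\vol(R'')$, and here your proposal has a genuine gap.

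You want to avoid Proposition \ref{StrongAnticoncentrationProp} and instead bound $\vol(S_\ell\cap B_L)$ directly via semialgebraic geometry and \L{}ojasiewicz-type estimates. This is not carried out, and the obstacle you yourself flag is real: \L{}ojasiewicz exponents and the constants in effective stratification theorems are \emph{not} uniform in the coefficients of the polynomial, so obtaining $\vol(S_\ell\cap B_L)=O_{m,d}(\epsilon L^{m-1})$ with constants depending only on $m,d$ by that route is genuinely hard (and your one-dimensional picture, while correct, does not survive the passage to several variables without serious work). The paper sidesteps all of this with a short trick: since $R''\subset B_L$ and the density of $LY$ (for $Y$ a standard $m$-dimensional Gaussian) is $\Omega_m(L^{-m})$ on $B_L$, one has $\vol(R'')=O_m(L^m)\cdot\pr(LY\in R'')$. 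Setting $H(x)=h(Lx)$, the event $LY\in R''$ means $Y$ is within $O_m(\epsilon)$ of a point where the chain for $H$ fails. Now Proposition \ref{StrongAnticoncentrationProp} with $k=1$ (applied to $H$, $DH$, $D^2H$, \ldots) gives, for Gaussian $Y$, that the chain $|H(Y)|\ge\delta|DH(Y)|_2\ge\cdots$ holds except with probability $O_{d,m}(\delta\log\delta^{-1})$; Taylor's theorem with $\delta=\Theta_m(\epsilon)$ then propagates this to the whole $\epsilon$-neighborhood. So the very tool you set out to avoid is exactly what makes the volume bound a one-liner, and the polylog exponent $dm/2+1$ falls out from $L^m\cdot\epsilon\log(\epsilon^{-1})$.
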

\begin{proof}
First we note that for some $B=O_m(\log(\epsilon^{-1})^{d/2})$ that by Corollary \ref{ConcentrationCor} that $|q_i(X)|\leq B$ for all $i$ with probability at least $1-\epsilon$.  Hence it suffices to bound the probability that Equation (\ref{SAChainEq}) fails while $|q_i(X)|\leq B$ for all $i$.  We let $R\subset \R^m$ be the region for which this fails.  We bound the probability that $x\in R$ by covering $R$ by axis aligned boxes of side length $2\epsilon$ and using the fact that $(q_1,\ldots,q_m)$ is a diffuse set.  In particular, consider the union of all axis aligned boxes of side length $2\epsilon$ whose endpoints are integer multiples of $2\epsilon$ and which contain some point of $R$.  Call the union of all such boxes $R'$.  Note that since $R'$ is a disjoint union of boxes so that for each such box the probability that $x$ lies in this box is at most $N$ times its volume, we have that $\pr(x\in R) \leq \pr(x \in R') \leq N \textrm{Vol}(R')$.  Let $R''$ be the set of points $y\in R^m$ so that $y$ is within $2\sqrt{m}\epsilon$ of some point in $R$.  Note that $R''\supset R'$.  Thus, it suffices to prove that
$$
\textrm{Vol}(R'') = O_{m,d}(\epsilon \log(\epsilon^{-1})^{dm/2+1}).
$$

Let $Y$ be an $m$-dimensional Gaussian.  Note that $R''$ is contained in a ball of radius $O_m(B)$.  Hence since the probability density function of $BY$ is at least $\Omega_m(B^{-m}dV)$ on this region, we have that $\textrm{Vol}(R'') = O_m(B^m \pr(BY\in R''))$.  Define the polynomial $H(x)=h(Bx)$.  It now suffices to show that with probability at most $O_{d,m}(\epsilon \log(\epsilon^{-1}))$ that $Y$ is within $O_m(\epsilon)$ of a point, $x$ for which
$$
|H(x)| \geq \epsilon |D_{i_1} H(x)|_2 \geq \epsilon^2 |D_{i_2} D_{i_2} H(x)|_2 \geq \ldots \geq \epsilon^d |D_{i_1}\cdots D_{i_d} H(x)|_2
$$
fails to hold.

Note that by Proposition \ref{StrongAnticoncentrationProp} for $k=1$ we have that for any $1/2>\delta>0$ that with probability $1-O_{d,m}(\delta\log(\delta^{-1}))$,
$$
|H(Y)| \geq \delta |D_{i_1} H(Y)|_2 \geq \delta^2 |D_{i_2} D_{i_2} H(Y)|_2 \geq \ldots \geq \delta^d |D_{i_1}\cdots D_{i_d} H(Y)|_2.
$$
If the above holds and $x=Y+z$ for $|z|_2 = O_m(\epsilon)$ we have by Taylor's Theorem that
\begin{align*}
D_{i_1}\cdots D_{i_k} H(x) & = D_{i_1}\cdots D_{i_k} H(Y) + \sum_{t=1}^{d-k} \frac{(D_{i_1}\cdots D_{i_{k+1}}H(Y))z_{i_{k+1}}\cdots z_{i_{k+t}}}{t!}
\end{align*}
Hence we have that
\begin{align*}
&\left|D_{i_1}\cdots D_{i_k} H(x) - D_{i_1}\cdots D_{i_k} H(Y)\right|_2 \\& \ \ \ \ \ \ \ \ \ \  \leq \sum_{t=1}^{d-k} \left|\frac{(D_{i_1}\cdots D_{i_{k+1}}H(Y))z_{i_{k+1}}\cdots z_{i_{k+t}}}{t!}\right|_2\\
&  \ \ \ \ \ \ \ \ \ \  \leq \sum_{t=1}^{d-k} \frac{|(D_{i_1}\cdots D_{i_{k+1}}H(Y))|_2|z|_2^t}{t!}\\
&  \ \ \ \ \ \ \ \ \ \ \leq \sum_{t=1}^{d-k} \frac{|D_{i_1}\cdots D_{i_k} H(Y)|_2 (|z|_2/\delta)^t}{t!}\\
&  \ \ \ \ \ \ \ \ \ \ \leq |D_{i_1}\cdots D_{i_k} H(Y)|_2 (\exp(O_m(|z|_2/\delta))-1).
\end{align*}

Thus, if $\delta = 4\sqrt{m}\epsilon$ and the above holds (which it does with probability $1-O_{d,m}(\epsilon\log(\epsilon^{-1}))$), then for any point $x$ within $2\sqrt{m}\epsilon$ of $Y$ we have that $$\left|D_{i_1}\cdots D_{i_k} H(x) - D_{i_1}\cdots D_{i_k} H(Y)\right|_2\leq |D_{i_1}\cdots D_{i_k} H(Y)|_2(e^{1/2}-1),$$ and thus, Equation (\ref{SAChainEq}) holds.  Thus, $\pr(BY\in R'') \leq O_{d,m}(\epsilon\log(\epsilon^{-1}))$, so $\textrm{Vol}(R'') = O_{d,m}(\epsilon \log(\epsilon^{-1})^{dm/2+1})$, completing our proof.
\end{proof}

\begin{cor}\label{Z3Cor}
Let $(h,q_1,\ldots,q_m)$ be an $(\epsilon,N)$-diffuse decomposition of a degree-$d$ polynomial for $1/2>\epsilon>0$.  Letting $x=(q_1(X),q_2(X),\ldots,q_m(X))$ for $X$ a random Gaussian, the probability that $x$ is within $\epsilon$ of a point $y$ for which $h(y)=0$ is $O_{d,m}(N\epsilon\log(\epsilon^{-1})^{dm/2+1})$.
\end{cor}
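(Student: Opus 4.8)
The plan is to derive the corollary directly from Lemma~\ref{DDDerLem}. The key point is that if the chain (\ref{SAChainEq}) holds at $x$ with $\epsilon$ replaced by $2\epsilon$, then none of the derivatives of $h$ is large compared to $|h(x)|$ at scale $\epsilon$, and this by itself forces $h$ to be nonzero throughout the $\epsilon$-ball around $x$.

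First I would dispose of two degenerate cases. We may assume $h\not\equiv 0$: since $p$ has degree $d$ we have $p\not\equiv 0$, and $h\equiv 0$ would force $p=h(q_1,\ldots,q_m)\equiv 0$. We may also assume $\epsilon<1/4$, since for $\epsilon\ge 1/4$ the asserted bound holds trivially once the implied constant is large enough (the probability is at most $1$, while $N\epsilon\log(\epsilon^{-1})^{dm/2+1}$ is bounded below by a positive constant depending only on $d,m$ — here one uses that $N=\Omega_m(1)$ for any $(\epsilon,N)$-diffuse set, because a fixed box of side $O_m(1)$ carrying $\Omega(1)$ of the probability mass of $(q_1(X),\ldots,q_m(X))$ is covered by $O_m(\epsilon^{-m})$ boxes of side $2\epsilon$). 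Next, note that if $(q_1,\ldots,q_m)$ is an $(\epsilon,N)$-diffuse set it is also a $(2\epsilon,N)$-diffuse set: any box of side $4\epsilon$ is a union of $2^m$ boxes of side $2\epsilon$, each of probability at most $\epsilon^m N=2^{-m}(2\epsilon)^m N$, so the original box has probability at most $(2\epsilon)^m N$. Thus $(h,q_1,\ldots,q_m)$ is a $(2\epsilon,N)$-diffuse decomposition, and since $2\epsilon<1/2$, Lemma~\ref{DDDerLem} applies to it: with $x=(q_1(X),\ldots,q_m(X))$, with probability $1-O_{d,m}(N\epsilon\log(\epsilon^{-1})^{dm/2+1})$ the chain
$$|h(x)|\ge 2\epsilon|D_{i_1}h(x)|_2\ge (2\epsilon)^2|D_{i_1}D_{i_2}h(x)|_2\ge\cdots\ge (2\epsilon)^d|D_{i_1}\cdots D_{i_d}h(x)|_2$$
holds (the passage from $\epsilon$ to $2\epsilon$ changes the logarithmic factor by only a constant).

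Next I would show that whenever this chain holds, $x$ is not within distance $\epsilon$ of any zero of $h$. The chain yields $|D_{i_1}\cdots D_{i_t}h(x)|_2\le (2\epsilon)^{-t}|h(x)|$ for all $1\le t\le d$. Suppose $h(y)=0$ for some $y$ with $|y-x|_2\le\epsilon$. As $h$ has degree $d$, Taylor's theorem is exact:
$$0=h(y)=h(x)+\sum_{t=1}^d\frac{1}{t!}(D_{i_1}\cdots D_{i_t}h(x))(y-x)_{i_1}\cdots(y-x)_{i_t}.$$
By Cauchy--Schwarz, each summand has absolute value at most $\frac{1}{t!}|D_{i_1}\cdots D_{i_t}h(x)|_2|y-x|_2^t\le\frac{1}{t!}|h(x)|\,2^{-t}$, whence $|h(x)|\le|h(x)|\sum_{t\ge 1}2^{-t}/t!=(e^{1/2}-1)|h(x)|<|h(x)|$, so $h(x)=0$. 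But the chain then forces $D_{i_1}\cdots D_{i_t}h(x)=0$ for all $1\le t\le d$ as well, so every Taylor coefficient of $h$ at $x$ vanishes and $h\equiv 0$, contradicting the reduction above. Hence the event that $x$ lies within $\epsilon$ of a zero of $h$ is contained in the failure event of the chain, so it has probability $O_{d,m}(N\epsilon\log(\epsilon^{-1})^{dm/2+1})$, as claimed.

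The main obstacle, such as it is, is simply recognizing the right statement to quote: everything reduces to the propagation estimate for $|h|$ relative to its derivatives, and that estimate is essentially identical to the Taylor-expansion computation already performed in the proof of Lemma~\ref{DDDerLem}. The remaining work — the harmless inflation of $\epsilon$ to $2\epsilon$ and the handling of the degenerate regimes — is pure bookkeeping.
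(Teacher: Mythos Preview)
Your proof is correct and follows essentially the same route as the paper: pass to the $2\epsilon$-version of the diffuse decomposition, invoke Lemma~\ref{DDDerLem} to obtain the chain of derivative bounds at $x$, and then use the exact Taylor expansion of $h$ to conclude that no zero of $h$ can lie within $\epsilon$ of $x$. Your treatment is in fact slightly more careful than the paper's: you observe that an $(\epsilon,N)$-diffuse set is already $(2\epsilon,N)$-diffuse (the paper uses the looser $(2\epsilon,2^mN)$, which makes no difference inside $O_{d,m}(\cdot)$), and you explicitly dispose of the degenerate cases $h\equiv 0$ and $\epsilon\geq 1/4$ as well as the possibility $h(x)=0$, whereas the paper's one-line argument ``$|h(x)-h(y)|<|h(x)|$, hence $h(y)\neq 0$'' tacitly assumes $h(x)\neq 0$.
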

\begin{proof}
Note that by the analysis given above, if Equation (\ref{SAChainEq}) holds for $2\epsilon$ then for any $y$ with $|x-y|\leq \epsilon$
$$
|h(x)-h(y)| \leq |h(x)|(e^{1/2}-1) < |h(x)|,
$$
and thus, $h(y)\neq 0$.  Since an $(\epsilon,N)$-diffuse decomposition is also an $(2\epsilon,2^mN)$-diffuse decomposition, this happens with probability at least $1-O_{d,m}(N \epsilon \log(\epsilon^{-1})^{dm/2+1})$ by Lemma \ref{DDDerLem}.
\end{proof}

\begin{proof}[Proof of Proposition \ref{ApproximationProp}]
We construct $f$ in a straightforward manner.  Let $\rho:\R^m\rightarrow\R$ be any smooth, non-negative-valued, function supported on the ball of radius 1 so that
$$
\int_{\R^m} \rho(x)dx =1.
$$
Let $\rho_\epsilon(x) = \epsilon^{-m} \rho(\epsilon^{-1} x)$.  We note that
$$
\int_{\R^m} \rho_\epsilon(x)dx =1.
$$

Let $g:\R^m\rightarrow\R$ be the function
$$
g(x) = \begin{cases} 1 &\textrm{ if there exists a }y\in \R^n \textrm{ so that } |x-y|<\epsilon \textrm{ and }h(y) \geq 0 \\ -1 & \textrm{otherwise} \end{cases}
$$
We let $f$ be the convolution $g * \rho_\epsilon$.

$f$ takes values in $[-1,1]$ because
$$
f(x) = \int_{\R^m} \rho_{\epsilon}(y) g(x-y) dy \leq \int_{\R^m} \rho_{\epsilon}(y)dy = 1
$$
and similarly $f(x) \geq -1$.

$f(q_1,\ldots,q_m)$ is a point-wise upper bound for $\sgn\circ p = \sgn(h(q_1,\ldots,q_m))$ since if $h(x)\geq 0$ then
$$
f(x) = \int_{\R^m} \rho_{\epsilon}(y) g(x-y) dy \int_{B(\epsilon)} \rho_{\epsilon}(y) g(x-y) dy = \int_{B(\epsilon)} \rho_{\epsilon}(y)dy = 1.
$$

Bounds on the derivatives of $f$ come from the fact that
$$
|f^{(k)}|_\infty = |g * \rho_\epsilon^{(k)}|_\infty \leq |g|_\infty |\rho_\epsilon^{(k)}|_1 = O_{m,k}(\epsilon^{-k}).
$$

The second property follows from the fact that $f(x)=\sgn(h(x))$ unless $x$ is within $2\epsilon$ of a point $y$ for which $h(y)=0$.  Noting that an $(\epsilon,N)$-diffuse decomposition of size $m$, is also a $(2\epsilon,2^mN)$-diffuse decomposition, this happens with probability $O_{d,m}(N\epsilon \log(\epsilon^{-1})^{dm/2+1})$.  Since $|f(x)-\sgn(h(x))|$ is never more than 2 this provides the necessary bound.
\end{proof}

Another lemma that will be useful to us is the following:
\begin{lem}\label{hSmallLem}
Let $(h,q_1,\ldots,q_m)$ be an $(\epsilon,N)$-diffuse decomposition of a degree-$d$ polynomial $p$ for $1/2>\epsilon>0$ and $N\epsilon \log(\epsilon^{-1})$ less than a sufficiently small function of $m$ and $d$.  Then $|h|_2 \leq O_{m,d}( N^d  |p|_2)$.
\end{lem}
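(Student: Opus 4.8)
The plan is to pass to the push-forward measure. Write $Q=(q_1,\dots,q_m)\colon\R^n\to\R^m$ and let $\mu$ be the law of $Q(X)$ for $X$ a standard Gaussian, a probability measure on $\R^m$. Since $p=h\circ Q$ we have $|p|_2^2=\int_{\R^m}h^2\,d\mu$, and since the $q_i$ are non-constant the total degree of $h$ is at most $d$; so the lemma is equivalent to the lower bound $\int_{\R^m}h^2\,d\mu\ge\Omega_{m,d}(N^{-2d})\,|h|_2^2$. The conceptual point is that the $\epsilon$-scale tools such as Lemma~\ref{DDDerLem} and Corollary~\ref{Z3Cor} will not suffice here: they control $\mu$ only down to the $\epsilon$-scale and would produce powers of $\epsilon^{-1}$ where we want powers of $N$, which is much weaker since the hypothesis forces $\epsilon\ll N^{-1}$. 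Instead one must ``zoom out'' to scale $N^{-1}$ and exploit that a diffuse measure cannot concentrate on the sublevel sets of a bounded-degree polynomial.

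First I would collect three inputs. (i) Since $\E[q_i(X)^2]\le1$ we have $\E[\,|Q(X)|^2\,]\le m$, so Markov's inequality gives $\mu(B(0,r_0))\ge\tfrac12$ for $r_0:=\sqrt{2m}$; moreover on $B(0,r_0)$ the Lebesgue measure and the $m$-dimensional Gaussian $\gamma_m$ differ by a Radon--Nikodym factor at most $O_m(1)$. (ii) Applying the Carbery--Wright inequality (Lemma~\ref{anticoncentrationLem}) to $h$ on an $m$-dimensional Gaussian gives $\gamma_m(\{|h|<\lambda|h|_2\})=O(d\lambda^{1/d})$ for every $\lambda>0$, hence by (i) the set $A_\lambda:=\{y:|h(y)|<\lambda|h|_2\}$ satisfies $\vol(A_\lambda\cap B(0,r_0))=O_{m,d}(\lambda^{1/d})$. (iii) For any constant $c$ the level set $\{h=c\}$ meets $B(0,r_0)$ in $(m-1)$-dimensional volume $O_{m,d}(1)$ (a standard integral-geometry bound, since a degree-$\le d$ polynomial has at most $d$ zeros on almost every line).

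The heart of the argument is to bound $\mu(A_\lambda\cap B(0,r_0))$. Covering $A_\lambda\cap B(0,r_0)$ by the axis-aligned grid of cubes of side $2\epsilon$ and using the diffuse property (each such cube has $\mu$-mass at most $\epsilon^mN$) bounds this mass by $O_m(N)$ times the volume of the $O(\epsilon\sqrt m)$-neighbourhood of $A_\lambda\cap B(0,r_0)$. The boundary of $A_\lambda\cap B(0,r_0)$ lies in $\{h=\pm\lambda|h|_2\}\cup\partial B(0,r_0)$, of $(m-1)$-volume $O_{m,d}(1)$ by (iii), so that neighbourhood adds volume only $O_{m,d}(\epsilon)$; with (ii) this gives $\mu(A_\lambda\cap B(0,r_0))\le O_{m,d}(N)\bigl(\lambda^{1/d}+\epsilon\bigr)$. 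Now I would take $\lambda=\lambda_0:=c_{m,d}N^{-d}$ with $c_{m,d}$ small enough that the first term is $\le\tfrac18$ (note $N\lambda_0^{1/d}=O_{m,d}(1)\cdot c_{m,d}^{1/d}$, independent of $N$), and invoke the hypothesis ``$N\epsilon\log(\epsilon^{-1})$ sufficiently small'', which forces $N\epsilon$ to be small, to make the second term $\le\tfrac18$. Then $\mu(A_{\lambda_0}\cap B(0,r_0))\le\tfrac14$, so $B(0,r_0)\setminus A_{\lambda_0}$ carries $\mu$-mass at least $\tfrac14$ and there $|h|\ge\lambda_0|h|_2$; hence $|p|_2^2=\int h^2\,d\mu\ge\tfrac14\lambda_0^2|h|_2^2=\Omega_{m,d}(N^{-2d})|h|_2^2$, which rearranges to the lemma.

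The step I expect to be the main obstacle is this scale-matching: because the Carbery--Wright bound in (ii) only yields $\lambda^{1/d}$, one is forced to take $\lambda\sim N^{-d}$, and at that scale the error made in passing from Lebesgue measure to $\mu$ must still be negligible. A crude covering bound would incur an error of order $N\lambda$ and hence would demand $\epsilon\lesssim N^{-d}$, far stronger than the hypothesis permits; the fix is to use that $A_\lambda$ is a polynomial sublevel set, so its $\epsilon$-neighbourhood is only $O(\epsilon)$ larger (input (iii)), which cuts the error down to $O(N\epsilon)$ — precisely the quantity that ``$N\epsilon\log(\epsilon^{-1})$ small'' controls. Everything else is routine, modulo citing the standard integral-geometry estimate in (iii).
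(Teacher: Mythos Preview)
Your proof is correct and shares its skeleton with the paper's: both restrict to a bounded box where $\mu$ has mass $\ge 1/2$, cover the sublevel set $\{|h|\lesssim|p|_2\}$ by $\epsilon$-cubes to pass from $\mu$ to Lebesgue at the cost of a factor $N$, apply Carbery--Wright to $h$ on the Gaussian to bound the volume of that sublevel set by $O_{m,d}((|p|_2/|h|_2)^{1/d})$, and then must absorb the extra volume coming from the $O(\epsilon)$-fattening. The genuine difference is in that last step. The paper stays self-contained: it observes that if a Gaussian point $X$ lies in the $\epsilon$-neighbourhood of $\{|h|\le 2|p|_2\}$ then either $|h(X)|\le 4|p|_2$ or the derivative chain $|h(X)|\gtrsim\epsilon|Dh(X)|\gtrsim\cdots$ fails, and it bounds the latter event by $O_{m,d}(\epsilon\log(\epsilon^{-1}))$ via Proposition~\ref{StrongAnticoncentrationProp} (this is exactly why the hypothesis carries the factor $\log(\epsilon^{-1})$). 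You instead bound the fattening directly as $O_{m,d}(\epsilon)$ via the surface-area input (iii). That is legitimate, but be aware that the implication ``$(m-1)$-volume $O(1)\Rightarrow$ $\epsilon$-tube volume $O(\epsilon)$'' is \emph{not} automatic for general sets; for real algebraic hypersurfaces of bounded degree in a bounded ball it is a theorem (e.g.\ Wongkew's tube inequality, or a Crofton-type argument), so you should state and cite it rather than call it standard. In exchange you avoid Proposition~\ref{StrongAnticoncentrationProp} entirely and would in fact only need $N\epsilon$ small rather than $N\epsilon\log(\epsilon^{-1})$; the paper's route trades that slight sharpening for being fully internal to the tools already developed.
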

\begin{proof}
Consider the probability that $|p(X)| > 2|p|_2$.  On the one hand, it is at most $1/4$ by the Markov bound.  We will show that if $|h|_2$ is more than a sufficiently large constant times $N^d|p|_2$, then the probability must be more than this.

We note by Corollary \ref{ConcentrationCor} that with probability at least $7/8$ that each $q_i(X)$ is $O_d(\log(m)^{d/2})$.  We consider the probability that each $q_i(X)$ is at most this size and that $|p(X)|\leq 2|p|_2$.  We bound this probability above by coving the set of $x\in \R^m$ with each $|x_i|=O_d(\log(m)^{d/2})$ so that $|h(x)|\leq 2|p|_2$ with boxes of side length $\epsilon$.  The probability is at most $N$ times the volume of the union of these boxes.  Furthermore, the union of these boxes is contained in the set of $x\in \R^m$ with $|x_i|\leq O_d(\log(m)^{d/2})$ for each $i$ and so that $x$ is within $\epsilon m$ of some point $y$ with $|h(y)| \leq 2|p|_2.$  Call this region $R$.  We note that since $R$ is contained in a ball of radius $O_m(1)$ that the volume of $R$ is bounded by some constant times the probability that a random Gaussian $X$ lies in $R$.

By Proposition \ref{StrongAnticoncentrationProp}, we have that with probability $1-O_{d,m}(\epsilon \log(\epsilon^{-1}))$ over Gaussian $X$ that
$$
|h(X)| > 4\epsilon m |D_{i_1} h(X)|_2 > \ldots > (4\epsilon m)^d |D_{i_1}\cdots D_{i_d} h(X)|_2.
$$
This would imply that for any $y$ within $m\epsilon$ of $X$ that $|h(X)-h(y)|\leq |h(X)|/2$ by means of the Taylor series for $h(y)$.  On the other hand $|h(X)|\geq 4|p|_2$ with probability at least $1-O_{d}((|p|_2/|h|_2)^{1/d})$ by Lemma \ref{anticoncentrationLem}.  Thus, the probability that $|h(X)|\leq 2|p|_2$ is at most
$$
O_{d,m}(\epsilon \log(1+\epsilon^{-1}) + (|p|_2/|h|_2)^{1/d})+1/8.
$$
Hence we have that
$$
1/4 \leq \pr(|p(X)| > 2|p|_2) \leq O_{d,m}(N \epsilon \log(1+\epsilon^{-1}) + N (|p|_2/|h|_2)^{1/d})+1/8.
$$
Thus, if $N\epsilon \log(\epsilon^{-1})$ is less than some sufficiently small function of $d,m$, then $|h|_2 = O_{d,m}(N^d)$.

\end{proof}

Fundamentally, having a diffuse decomposition is useful because it allows us to improve our application of the replacement method.  The following proposition presents this technique in fair generality.

\begin{prop}\label{replacementProp}
Let $p_0:\R^n\rightarrow\R$ be a degree-$d$ polynomial with an $(\epsilon,N)$-diffuse decomposition $(h,q_1,\ldots,q_m)$ for some $1/2>\epsilon>0.$  Let $n_i$ be positive integers so that $n=\sum_{i=1}^\ell n_i$.  We can then consider $p_0$ and each of the $q_i$ as functions on $\R^{n_1}\times\cdots\times\R^{n_\ell}$.

Let $X^1,\ldots,X^\ell$ and $Y^1,\ldots,Y^\ell$ be independent random variables, where $X^j$ and $Y^j$ take values in $\R^{n_j}$ and $Y^j$ is a random Gaussian.  Furthermore, assume that for some integer $k>1$ that for any polynomial $g$ in $m$ variables of degree less than $k$, any $1\leq j\leq \ell$ and any $z^i$ that
$$
\E\left[g(q_i(z^1,\ldots,z^{j-1},X^j,z^{j+1},\ldots,z^\ell)) \right] = \E\left[g(q_i(z^1,\ldots,z^{j-1},Y^j,z^{j+1},\ldots,z^\ell)) \right].
$$

For each $1\leq i \leq m$ and each $1\leq j\leq \ell$ define
$$
Q_{i,j}(x^1,\ldots,x^{j-1},x^{j+1},\ldots,x^\ell) := \E_{Y^j} [q_i(x^1,\ldots,x^{j-1},Y^j,x^{j+1},\ldots,x^\ell)].
$$
Define $T_{i,j}$ to be
\begin{align*}
& \E \left[\left|q_i(Y^1,\ldots,Y^j,X^{j+1},\ldots,X^{\ell}) - Q_{i,j}(Y^1,\ldots,Y^{j-1},X^{j+1},\ldots,X^{\ell}) \right|^k\right] \\ +&\E\left[  \left|q_i(Y^1,\ldots,Y^{j-1},X^{j},\ldots,X^{\ell}) - Q_{i,j}(Y^1,\ldots,Y^{j-1},X^{j+1},\ldots,X^{\ell}) \right|^k\right].
\end{align*}
And let
$$
T:=\sum_{i=1}^m \sum_{j=1}^\ell T_{i,j}.
$$
Then we have that
\begin{align*}
&\left|\pr\left(p_0(X^1,\ldots,X^\ell)\leq 0\right)  -\pr\left(p_0(Y^1,\ldots,Y^\ell)\leq 0\right)\right|  \leq O_{d,m,k}\left(\epsilon^{-k}T +\epsilon N \log(\epsilon^{-1})^{dm/2+1} \right).
\end{align*}

If furthermore, $p$ is a degree-$d$ polynomial so that for some parameters $\delta,\eta>0$
$$
\pr\left(|p(X)-p_0(X)| < \delta |p|_2\right) , \pr\left(|p(Y)-p_0(Y)| < \delta |p|_2\right) \  \geq \ 1-\eta
$$
then
\begin{align*}
&\left|\pr\left(p(X^1,\ldots,X^\ell)\leq 0\right)  -\pr\left(p(Y^1,\ldots,Y^\ell)\leq 0\right)\right|  \leq O_{d,m,k}\left(\epsilon^{-k}T +\epsilon N \log(\epsilon^{-1})^{dm/2+1} + \delta^{1/d} + \eta \right).
\end{align*}
\end{prop}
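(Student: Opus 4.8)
The plan is to combine the smooth approximations furnished by Proposition~\ref{ApproximationProp} with a block-by-block Lindeberg (replacement) argument, and then to obtain the statement for $p$ by perturbing the threshold in the statement for $p_0$. Throughout write $q=(q_1,\dots,q_m)$, $X=(X^1,\dots,X^\ell)$, $Y=(Y^1,\dots,Y^\ell)$ (so $Y$ is a standard Gaussian) and $L:=\log(\epsilon^{-1})^{dm/2+1}$. Apply Proposition~\ref{ApproximationProp} both to $p_0$ and to $-p_0$ (whose tuple $(-h,q_1,\dots,q_m)$ is still an $(\epsilon,N)$-diffuse decomposition, since diffuseness depends only on $(q_1,\dots,q_m)$) to obtain $f,\tilde f:\R^m\to[-1,1]$ with $f(q(x))\ge\sgn(p_0(x))$ and $\tilde f(q(x))\ge\sgn(-p_0(x))$ pointwise, with all $k$-th order mixed partials $O_{m,k}(\epsilon^{-k})$, and with $\E[f(q(Y))]-\E[\sgn p_0(Y)]=O_{d,m}(\epsilon N L)$ and $\E[\tilde f(q(Y))]+\E[\sgn p_0(Y)]=O_{d,m}(\epsilon N L)$. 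Unwinding the construction of $f$: if $p_0(x)\ge 0$ then $q(x)$ is itself a point where $h\ge 0$, so the mollified set-indicator $g$ from that proof equals $1$ on the whole $\epsilon$-ball about $q(x)$ and hence $f(q(x))=1$; symmetrically $\tilde f(q(x))=1$ whenever $p_0(x)\le 0$. Therefore pointwise
$$\tfrac{1-f(q(x))}{2}\ \le\ \mathbf{1}[p_0(x)\le 0]\ \le\ \tfrac{1+\tilde f(q(x))}{2}.$$
Taking expectations under $X$ and under $Y$, and using that $\E[f(q(Y))]+\E[\tilde f(q(Y))]=O_{d,m}(\epsilon N L)$ squeezes $\pr(p_0(Y)\le 0)$ between the two Gaussian sandwich bounds up to error $O_{d,m}(\epsilon N L)$, the first assertion of the proposition reduces to proving the replacement estimate $|\E[f(q(X))]-\E[f(q(Y))]|,\ |\E[\tilde f(q(X))]-\E[\tilde f(q(Y))]|=O_{d,m,k}(\epsilon^{-k}T)$.

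For the replacement estimate, telescope along the hybrids $Z^{(j)}=(Y^1,\dots,Y^j,X^{j+1},\dots,X^\ell)$, so $Z^{(0)}=X$, $Z^{(\ell)}=Y$, and $Z^{(j-1)},Z^{(j)}$ differ only in block $j$. Fix $j$ and condition on all blocks other than the $j$-th; this fixes the point $Q=\bigl(Q_{i,j}(Y^1,\dots,Y^{j-1},X^{j+1},\dots,X^\ell)\bigr)_i$. Taylor-expand $f$ about $Q$ to order $k-1$: the constant term $f(Q)$ is the same random variable for $Z^{(j-1)}$ and $Z^{(j)}$; each term of order $1\le|\alpha|\le k-1$ equals $\tfrac{f^{(\alpha)}(Q)}{\alpha!}\prod_i(q_i(\cdot)-Q_{i,j})^{\alpha_i}$, and since $t\mapsto\prod_i(t_i-Q_{i,j})^{\alpha_i}$ is a polynomial in $m$ variables of degree $|\alpha|<k$, the hypothesis of the proposition forces its conditional expectation over $X^j$ to equal that over $Y^j$, so all these terms cancel. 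Only the degree-$k$ Taylor remainders survive; using $|f^{(k)}|_\infty=O_{m,k}(\epsilon^{-k})$ and $|(t-Q)^\alpha|\le\sum_i|t_i-Q_{i,j}|^k$ for $|\alpha|=k$, the total error of step $j$ is at most $O_{m,k}(\epsilon^{-k})\sum_i\bigl(\E[|q_i(Z^{(j)})-Q_{i,j}|^k]+\E[|q_i(Z^{(j-1)})-Q_{i,j}|^k]\bigr)$, which by the definition of $T_{i,j}$ is exactly $O_{m,k}(\epsilon^{-k})\sum_i T_{i,j}$. Summing over $j$ gives $|\E[f(q(X))]-\E[f(q(Y))]|=O_{m,k}(\epsilon^{-k}T)$, identically for $\tilde f$, and substituting into the sandwich proves the first displayed inequality of the proposition.

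For the statement about $p$, observe that for any real $c$ the tuple $(h-c,q_1,\dots,q_m)$ is again an $(\epsilon,N)$-diffuse decomposition, of $p_0-c$, with the same polynomials $q_i$ and hence the same $T$; so the first part applied to $p_0-c$ gives $|\pr(p_0(X)\le c)-\pr(p_0(Y)\le c)|=O_{d,m,k}(\epsilon^{-k}T+\epsilon N L)$ uniformly in $c$. On the event $|p-p_0|<\delta|p|_2$ one has $\{p\le 0\}\subseteq\{p_0\le\delta|p|_2\}$ and $\{p_0\le-\delta|p|_2\}\subseteq\{p\le 0\}$, so for either input distribution $\pr(p_0(\cdot)\le-\delta|p|_2)-\eta\le\pr(p(\cdot)\le 0)\le\pr(p_0(\cdot)\le\delta|p|_2)+\eta$. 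It remains only to note that on the Gaussian side the two shifted thresholds collapse: $\pr(p_0(Y)\le\pm\delta|p|_2)=\pr(p_0(Y)\le 0)+O_d(\delta^{1/d})$, which is Lemma~\ref{anticoncentrationLem} applied to $p_0$, valid because $|p_0|_2\ge|p|_2/2$ by Corollary~\ref{WeakAnticoncentrationCor} applied to $p-p_0$ (unless $\eta$ or $\delta$ is already so large that the asserted bound exceeds $1$, in which case there is nothing to prove). Chaining the uniform bound for $p_0$ with these estimates on both the $X$-side and the $Y$-side, and applying the triangle inequality, yields the claimed bound.

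The main obstacle is the cancellation in the Lindeberg step: one must expand $f$ about the conditional mean $Q$ (not, say, about the origin), so that the surviving low-order terms are exactly conditional moments of the degree-$(<k)$ polynomials $\prod_i(t_i-Q_{i,j})^{\alpha_i}$ that the hypothesis controls, and one must verify that the two degree-$k$ Taylor remainders at the two ends of each hybrid step match, term for term, the two summands defining $T_{i,j}$. Everything else — the two-sided sandwich built from $f$ and $\tilde f$, and the shifted-threshold device that converts the $\delta|p|_2$ gap between $p$ and $p_0$ into the Gaussian-anticoncentration loss $\delta^{1/d}$ — is routine bookkeeping once this is set up.
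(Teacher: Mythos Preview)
Your proof is correct and follows essentially the same approach as the paper's: smooth sandwich from Proposition~\ref{ApproximationProp}, hybrid/Lindeberg replacement with Taylor expansion about the conditional means $Q_{i,j}$ so that the matching-moments hypothesis kills the low-order terms, and for the second part a threshold shift $(h-c,q_1,\dots,q_m)$ combined with Carbery--Wright. The only cosmetic differences are that you set up both sides of the sandwich explicitly via $f$ and $\tilde f$ (the paper does one direction and says the other is analogous), and that in the last step you apply anticoncentration to $p_0$ rather than to $p$---the paper avoids your $|p_0|_2\ge|p|_2/2$ detour by passing back to $p$ on the Gaussian side before invoking Lemma~\ref{anticoncentrationLem}.
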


When considering Proposition \ref{replacementProp}, it might be useful to keep the intended applications in mind.  In Section \ref{GaussianPRGSec}, we will consider the case where the $X^j$ are chosen from $dk$-independent families of Gaussians. In Section \ref{InvarianceSec}, we will consider the case where the $X^j$ are Bernoulli random variables. Finally, in Section \ref{BernoulliPRGSec}, we will consider the case where the $X^j$ are chosen from $4d$-independent families of random Bernoullis.

\begin{proof}
We begin by proving the first of the two bounds, and will then use it to prove the second.  By rescaling $p_0$, we may assume that $|p_0|_2=1$.  Let $X=(X^1,\ldots,X^\ell)$ and $Y=(Y^1,\ldots,Y^\ell)$.  Let $q$ denote the vector valued polynomial $(q_1,\ldots,q_m)$.
We will show that
$$
\pr\left(p_0(X)\leq 0\right) \leq \pr\left(p_0(Y)\leq 0\right) +O_{d,m,k}\left(\epsilon^{-k}T +\epsilon N \log(\epsilon^{-1})^{dm/2+1} \right).
$$
The other inequality will follow analogously.

By Proposition \ref{ApproximationProp} there exists a function $f:\R^m\rightarrow [0,1]$ so that
\begin{enumerate}
\item $f(x)=1$ for all $x$ where $h(x)\leq 0$.
\item $\E[f(q(Y))] = \pr(p_0(Y)\leq 0) +O_{d,m}(\epsilon N \log(\epsilon^{-1})^{dm/2+1})$.
\item $|f^{(k)}|_\infty = O_{m,k}(\epsilon^{-k})$.
\end{enumerate}

We note that
$$
\pr(p_0(X)\leq 0) \leq \E[f(q_i(X))]
$$
and that
$$
\E[f(q(Y))] \leq \pr(p_0(Y)\leq 0) +O_{d,m}(\epsilon N \log(\epsilon^{-1})^{dm/2+1}).
$$
Hence it suffices to prove that
\begin{equation}\label{totalReplacementEqn}
\left| \E[f(q(X))] - \E[f(q(Y))]\right| = O_{d,m,k}(\epsilon^{-k}T).
\end{equation}

For $0\leq j \leq \ell$, let
$$
Z^{(j)} := (Y^1,\ldots,Y^j,X^{j+1},\ldots,X^\ell).
$$
In particular, $Z^{(0)}=X$, $Z^{(\ell)}=Y$, and $Z^{(j)}$ is obtained from $Z^{(j-1)}$ by changing the $j^{\tth}$ block of coordinates from $X^j$ to $Y^j$.  We will attempt to bound the left hand side of Equation (\ref{totalReplacementEqn}) by bounding
\begin{equation}\label{replacementEqn}
\left| \E[f(q(Z^{(j)}))] - \E[f(q(Z^{(j-1)}))]\right|
\end{equation}
for each $j$.

Consider the expression in Equation (\ref{replacementEqn}) for fixed values of $Y^1,\ldots,Y^{j-1},X^{j+1},\ldots,X^\ell$.  We approximate $f(q(Z^{(j)}))$ and $f(q(Z^{(j-1)}))$ by Taylor expanding $f$ about $(\overline{q_1},\ldots,\overline{q_m})$ where
\begin{align*}
\overline{q_i}(Y^1,\ldots,Y^{j-1},Z,X^{j+1},\ldots,X^\ell) & := Q_{i,j}(Y^1,\ldots,Y^{j-1},X^{j+1},\ldots,X^\ell)\\
& = \E[q_i(Z^{(j)})]\\
& = \E[q_i(Z^{(j-1)})].
\end{align*}
Thus, for some polynomial $g$ of degree $k-1$ we have that:
\begin{align*}
f(q(Z)) & = g(q(Z)) + O\left(\sum_{i_1,\ldots,i_k} \frac{\partial^k f}{\partial q_{i_1}\cdots \partial q_{i_k}}\prod_{a=1}^k (q_{i_a}(Z) - \overline{q_{i_a}}(Z)) \right)\\
& = g(q(Z)) +O_{d,m,k}\left(\sum_{i_1,\ldots,i_k} \epsilon^{-k}\sum_{a=1}^k |q_{i_a}(Z) - \overline{q_{i_a}}(Z)|^k \right)\\
& = g(q(Z)) +O_{d,m,k}\left(\epsilon^{-k}\sum_{i=1}^m |q_{i}(Z) - \overline{q_{i}}(Z)|^k \right).
\end{align*}
By assumption
$$
\E[g(q(Z^{(j)}))] = \E[g(q(Z^{(j-1)}))].
$$
Thus, the expression in Equation (\ref{replacementEqn}) is at most
\begin{align*}
\epsilon^{-k}O_{d,m,k}&\left(\sum_{i=1}^m \E[|q_{i}(Z^{(j)}) - \overline{q_{i}}(Z^{(j)})|^k]+\E[|q_{i}(Z^{(j-1)}) - \overline{q_{i}}(Z^{(j-1)})|^k] \right)\\ & = O_{d,m,k}\left(\epsilon^{-k} \sum_{i=1}^m T_{i,j}\right).
\end{align*}
Summing over $j$ yields Equation (\ref{totalReplacementEqn}), proving the first part of this proposition.

Changing our normalization so that $|p|_2=1$, we have that
$$
\pr(p(X) \leq 0) \leq \pr(p_0(X) - \delta \leq 0) + O(\eta)
$$
Notice that $p_0-\delta$ has the diffuse decomposition $(h-\delta,q_1,\ldots,q_m)$.  Therefore, applying our previous result to this decomposition of $p_0-\delta$, we have that
$$
\pr(p_0(X) - \delta \leq 0) \leq \pr(p_0(Y) - \delta \leq 0) + O_{d,m,k}\left(\epsilon^{-k}T +\epsilon N \log(\epsilon^{-1})^{dm/2+1} \right).
$$
On the other hand we have that
$$
\pr(p_0(Y)-\delta \leq 0) \leq \pr(p(Y)-2\delta \leq 0) + O(\eta).
$$
Finally, by Lemma \ref{anticoncentrationLem} we have that
$$
\pr(p(Y)-2\delta \leq 0) \leq \pr(p(Y)\leq 0) + O(d\delta^{1/d}).
$$
Combining the above inequalities we find that
$$
\pr(p(X)\leq 0) \leq \pr(p(Y)\leq 0) + O_{d,m,k}\left(\epsilon^{-k}T +\epsilon N \log(\epsilon^{-1})^{dm/2+1} + \delta^{1/d} + \eta \right).
$$
The other direction of the inequality follows analogously, and this completes our proof.
\end{proof}

\section{Application to PRGs for PTFs with Gaussian Inputs}\label{GaussianPRGSec}

In \cite{GPRG}, the author introduced a new pseudorandom generator for polynomial threshold functions of Gaussian inputs.  In particular, for appropriately chosen parameters $N$ and $k$ he lets
$$
X = \frac{1}{\sqrt{N}}\sum_{i=1}^N X^i
$$
where the $X^i$ are independently chosen from $k$-independent families of Gaussians.  He shows that for some $k=O(d/c)$ and $N=2^{O_c(d)}\epsilon^{-4-c}$ that for any such $X$, if $Y$ is a random Gaussian and $f$ any degree-$d$ polynomial threshold function then
\begin{equation}\label{GPRGEquation}
|\E[f(X)] - \E[f(Y)]| < \epsilon.
\end{equation}

The proof of this is by the replacement method.  In particular, $f$ is replaced by a smooth approximation $g$, and bounds are proved on the change in the expectation of $g(X)$ as the $X^i$ are replaced by random Gaussians one at a time.  The power of this method is highly dependent on ones ability to find a $g$ that is close to $f$ with high probability and yet has relatively small higher derivatives.  If $f(x)=\sgn(p(x))$, a naive attempt to use the replacement method would use $g=\rho(p(x))$ for $\rho$ a smooth approximation to the sign function.  Unfortunately, this approach will have difficulty proving Equation (\ref{GPRGEquation}) unless $N>\epsilon^{-2d}$.  In \cite{GPRG}, the author uses a version of Proposition \ref{StrongAnticoncentrationProp} and constructs a $g$ which approximates $f$ as long as an appropriate analogue of
$$
|g(x)| \geq \epsilon |D_{i_1} g(x)|_2 \geq \epsilon^2 |D_{i_1} D_{i_2} g(x)|_2 \geq \ldots
$$
holds.  The analysis of this is somewhat complicated, involving the development of the theory of the so-called ``noisy derivative''.  Furthermore, for technical reasons this method has difficulty dealing with $N$ smaller than $\epsilon^{-4}$.  As a first application of our theory of diffuse decompositions we provide a relatively simple analysis of this generator that works with $N$ as small as $\epsilon^{-2-c}$.  In particular we show:

\begin{thm}\label{GPRGThm}
Given, an integer $d>0$ and real numbers $c,\epsilon>0$, there exist integers $k=O(d/c)$ and $N=O_{c,d}(\epsilon^{-2-c})$ so that for any random variable
$$
X = \frac{1}{\sqrt{N}}\sum_{i=1}^N X^i
$$
where the $X^i$ are chosen independently from $k$-independent distributions of Gaussians, and for any degree-$d$ polynomial threshold function $f$,
$$
|\E[f(X)] - \E_{Y\sim \mathcal{N}}[f(Y)]| < \epsilon.
$$
\end{thm}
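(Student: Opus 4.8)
The plan is to derive Theorem \ref{GPRGThm} by feeding the diffuse decomposition supplied by Theorem \ref{DDTheorem} into the general replacement estimate of Proposition \ref{replacementProp}, taking the $\ell=N$ ``blocks'' to be the summands $X^i$ and exploiting that each summand perturbs the argument of every normalized polynomial $q_j$ only on the scale $N^{-1/2}$.

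\textbf{Setup and parameters.} I would write $f=\sgn(p)$ for a degree-$d$ polynomial $p$, normalize so that $|p|_2=1$, and assume $\epsilon$ is at most a constant depending on $c,d$ (otherwise the claim is trivial). Fix an integer $k_0=\Theta(1/c)$ (the order to which moments will be matched), a small real $c'=\Theta(c)$ with $c'<c/4$, and $M=2d$, and set $\delta=\Theta(\epsilon^{1+c/4})$, so that $1/2>\delta>0$. Applying Theorem \ref{DDTheorem} with parameters $\delta,c',M$ produces a degree-$d$ polynomial $p_0$ with $|p-p_0|_2<O_{c,d}(\delta^{M})$ carrying a $(\delta,\delta^{-c'})$-diffuse decomposition $(h,q_1,\ldots,q_m)$ of size $m=O_{c,d}(1)$.

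\textbf{Reduction to Proposition \ref{replacementProp}.} I would regard $p_0$ and each $q_i$ as polynomials $\tilde p_0,\tilde q_i$ on $(\R^n)^N$ via $\tilde p_0(w^1,\ldots,w^N):=p_0(\tfrac1{\sqrt N}\sum_i w^i)$, and similarly for $\tilde q_i$; since $\tfrac1{\sqrt N}\sum_i Y^i$ is a standard Gaussian on $\R^n$, the tuple $(h,\tilde q_1,\ldots,\tilde q_m)$ is still a $(\delta,\delta^{-c'})$-diffuse decomposition, now of $\tilde p_0$. Then apply Proposition \ref{replacementProp} with $\ell=N$, the blocks being the $X^j$ and the comparison variables the true Gaussians $Y^j$ (whose average is the Gaussian $Y$). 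The moment-matching hypothesis at order $k_0$ holds once the $X^i$ are $k$-independent with $k:=dk_0$, because for $\deg g<k_0$ the function $g(q_i(\cdot))$ has degree $<dk_0$ in any single block; note $k=O(d/c)$ as required. For the second half of Proposition \ref{replacementProp} one must control $\pr(|p(X)-p_0(X)|$ large$)$ and the analogue for $Y$: since $(p-p_0)^2$ has degree $2d\le k$, its expectation under $X$ equals $|p-p_0|_2^2<O_{c,d}(\delta^{2M})$, so Markov lets one take $\delta_{\mathrm{prop}}=\delta^{M/2}$ and $\eta=O_{c,d}(\delta^{M})$.

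\textbf{Bounding $T$ --- the crux.} Recall $T_{i,j}$ is essentially the $k_0$-th moment of the deviation of $q_i$ from its conditional mean upon resampling the $j$-th block. Resampling that block changes the input of $q_i$ by a vector $N^{-1/2}w$ with $w$ a (Gaussian or $k$-independent Gaussian) block vector; expanding $q_i(W+N^{-1/2}w)=\sum_{t=0}^{d}N^{-t/2}r_t(w;W)$ and subtracting the conditional mean kills the $t=0$ term, leaving $\sum_{t\ge 1}N^{-t/2}(r_t(w;W)-\E\, r_t)$. Each $r_t(\cdot;W)$ is a degree-$t$ form in the block; since every monomial occurring in the $L^{k_0}$-computation has total degree at most $k$, all relevant moments --- both in the block and in the mixed outer variables $W$ --- agree with Gaussian moments, so Lemma \ref{hypercontractiveLem} gives $\|r_t(\cdot;W)\|_{k_0}=O_{d,k_0}(\|r_t(\cdot;W)\|_2)$ and, using Lemma \ref{DerivativeSizeLem} with $|q_i|_2\le 1$, $\E_W\|r_t(\cdot;W)\|_2^{k_0}=O_{d,k_0}(1)$. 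The $t=1$ term dominates, so $\|\tilde q_i(\text{resampled})-\text{mean}\|_{k_0}^{k_0}=O_{d,k_0}(N^{-k_0/2})$, and summing over the $m$ polynomials and the $N$ blocks yields $T=O_{d,m,k_0}(N^{\,1-k_0/2})$.

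\textbf{Conclusion.} Proposition \ref{replacementProp} then bounds $|\E[f(X)]-\E[f(Y)]|$ by $O_{d,m,k_0}\bigl(\delta^{-k_0}T+\delta^{1-c'}\log(\delta^{-1})^{O_{c,d}(1)}+\delta^{M/(2d)}+\delta^{M}\bigr)$. With $\delta=\Theta(\epsilon^{1+c/4})$ and $M=2d$ the last two terms are $O_{c,d}(\epsilon)$; the diffuse-error term $O(\delta^{1-c'}\log(\delta^{-1})^{O_{c,d}(1)})$ is $O_{c,d}(\epsilon)$ because $c'<c/4$ leaves a spare factor $\epsilon^{\Theta(c)}$ to absorb the polylogarithm; and $\delta^{-k_0}T=O(\epsilon^{-k_0(1+c/4)}N^{\,1-k_0/2})$, which is $\le\epsilon$ provided $N\ge \epsilon^{-(1+k_0(1+c/4))/(k_0/2-1)}$. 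The exponent tends to $2(1+c/4)<2+c$ as $k_0\to\infty$, so choosing $k_0=\Theta(1/c)$ large enough makes $N=O_{c,d}(\epsilon^{-2-c})$ suffice, which together with $k=dk_0=O(d/c)$ proves the theorem.

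\textbf{Main obstacle.} The real work is the $L^{k_0}$ estimate on $T$: showing that a normalized degree-$d$ polynomial fluctuates by only $O_{d,k_0}(N^{-1/2})$ in $L^{k_0}$ when one of the $N$ independent blocks is resampled, uniformly over the mixture of true-Gaussian and $k$-independent-Gaussian blocks that appears in the telescoping. Everything else is exponent bookkeeping, where the only subtlety is keeping all polylogarithmic and $\epsilon^{\Theta(c)}$ factors on the right side of the final inequality so that $N=O_{c,d}(\epsilon^{-2-c})$ still goes through.
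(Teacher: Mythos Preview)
Your proposal is correct and follows essentially the same route as the paper: apply Theorem \ref{DDTheorem}, lift the decomposition to $(\R^n)^N$, feed everything into Proposition \ref{replacementProp} with $\ell=N$ blocks and $k$-independence guaranteeing the order-$k_0$ moment matching, and then show $T=O_{d,m,k_0}(N^{1-k_0/2})$. The only notable difference is in how the per-block fluctuation is estimated: the paper reduces the $K$-th moment to the $(K/2)$-th power of a variance and then invokes a black-box lemma (Claim 4.1 of \cite{sense3}) giving $\E[(q(Z)-q(\sqrt{(N-1)/N}\,Z+N^{-1/2}Y))^2]=O(d^2|q|_2^2/N)$, whereas you Taylor-expand $q_i(W+N^{-1/2}w)$ directly and control each $r_t$ via Lemma \ref{DerivativeSizeLem}. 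Both arguments give the same $O(N^{-1})$ second moment and hence the same $T$, and yours has the small advantage of being self-contained; the paper's choice of diffuse parameter ($\epsilon$ itself rather than your $\delta=\epsilon^{1+c/4}$) makes the final exponent bookkeeping a touch cleaner, but this is cosmetic.
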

\begin{proof}
We begin by making a few reductions to produce a more amenable case.  We assume throughout that $\epsilon$ is sufficiently small.  Note that it is sufficient to prove that for $N=\epsilon^{-2-c}$ that the error is $O_{c,d}(\epsilon^{1-2c})$, since making appropriate changes to $c$ and $\epsilon$ will yield the necessary result.  Secondly, we may let $f(x) = \sgn(p(x))$ for $p$ a degree-$d$ polynomial with $|p|_2=1$.

By Theorem \ref{DDTheorem}, there exists a degree-$d$ polynomial $p_0$ with $|p-p_0|_2=O_{c,d}(\epsilon^{d+1})$, and so that $p_0$ has an $(\epsilon,\epsilon^{-c/2})$-diffuse decomposition $(h,q_1,\ldots,q_m)$.  It should be noted that by $2$-independence,
$$
\E[|p(X)-p_0(X)|^2] = \E[|p(Y)-p_0(Y)|^2] = O_{c,d}(\epsilon^{2d+2}).
$$
Therefore, by the Markov bound we have with probability at least $1-\epsilon^2$ that
$$
|p(X)-p_0(X)|, |p(Y)-p_0(Y)| < \epsilon^d.
$$

We note that we may write $Y=\frac{1}{\sqrt{N}}\sum_{j=1}^N Y^j$, where the $Y^j$ are independent Gaussians.  We define the polynomial
$$
p'(Y^1,\ldots,Y^N) := p\left(\frac{1}{\sqrt{N}}\sum_{j=1}^N Y^j\right).
$$
We note that
$$
p(X) = p'(X^1,\ldots,X^N)
$$
and
$$
p(Y) = p'(Y^1,\ldots,Y^N).
$$

It is clear that if we define $p_0'$ and $q_i'$ analogously, that $p_0'$ has an $(\epsilon,\epsilon^{c/2})$-diffuse decomposition $(h,q_1',\ldots,q_m')$, and that with probability at least $1-\epsilon^2$ that
$$
|p'(X)-p_0'(X)|, |p'(Y)-p_0'(Y)| < \epsilon^d.
$$
We may thus apply Proposition \ref{replacementProp} to $p',p_0'$ with $\eta=\epsilon^2$ and $\delta=\epsilon^d$.

Let $K$ be an even integer less than $k/d$ and more than $6/c$.  By $k$-independence of the $X^j$, any polynomial $g$ of degree less than $K$ in the $q_i'$ will have the same expectation evaluated at $X^1,\ldots,X^N$ as at $Y^1,\ldots,Y^N$.  Hence by Proposition \ref{replacementProp},
\begin{align}\label{Z9Eqn}
\left|\E[f(X)] - \E[f(Y)] \right| & = 2\left|\pr(p(X^1,\ldots,X^N)\leq 0) - \pr(p(Y^1,\ldots,Y^N)\leq 0) \right| \nonumber \\
& = O_{d,m,c}\left(\epsilon^{1-c}\log(\epsilon^{-1})^{dm/2+1} + \epsilon^{-K}T + \epsilon \right).
\end{align}
Where by the $K$-independence of $X$, the $T$ above is
$$
2\sum_{i=1}^m\sum_{j=1}^N \E\left[\left(q_i(Y) -\E_{Y^j}[q_i'(Y^1,\ldots,Y^N)] \right)^K \right].
$$
By Lemma \ref{hypercontractiveLem}, this is
$$
O_{c,d}\left( \sum_{i=1}^m\sum_{j=1}^N \E\left[\left(q_i(Y) -\E_{Y^j}[q_i'(Y^1,\ldots,Y^N)] \right)^2 \right]^{K/2}\right).
$$
Letting $Z=\frac{1}{\sqrt{N-1}}\sum_{i\neq j} Y^i$ (which is a random Gaussian), the expectations in question are
$$
\E_Z\left[\var_Y\left(q_i\left(\sqrt{\frac{N-1}{N}}Z + \frac{1}{\sqrt{N}} Y\right)\right) \right].
$$
This in turn is at most
$$
\E\left[\left(q_i\left(\sqrt{\frac{N-1}{N}}Z + \frac{1}{\sqrt{N}} Y\right) - q_i(Z)\right)^2 \right].
$$
We bound this with the following lemma, which follows immediately from Claim 4.1 of \cite{sense3}:

\begin{lem}
For $q$ any degree-$d$ polynomial we have that
$$
\E\left[\left|q(Z) - q\left(\sqrt{\frac{N-1}{N}}Z + \frac{1}{\sqrt{N}}Y\right)\right|^2 \right] = O(d^2|q|_2^2 / N).
$$
\end{lem}

Thus, $T$ is at most
\begin{align*}
O_{c,d}\left(\sum_{i=1}^m \sum_{j=1}^N N^{-K/2} \right) & = O_{c,d,m}(N^{-K/2+1})\\ & = O_{c,d,m}(\epsilon^{K-2+Kc/2-c})\\ & = O_{c,d,m}(\epsilon^{K+1-c}).
\end{align*}
Thus, by Equation (\ref{Z9Eqn}),
$$
|\E[f(X)]-\E[f(Y)]|\leq O_{c,d,m}(\epsilon^{1-2c}),
$$
as desired.
\end{proof}

\section{The Diffuse Invariance Principle and Regularity Lemma}\label{InvarianceSec}

While the case of Gaussian inputs is very convenient for proving theorems such as the Decomposition Theorem, many interesting questions involve evaluation of polynomials on random variables from other distributions.  Perhaps the most studied of these is the Bernoulli, or hypercube distribution.

\begin{defn}
The $n$-dimensional Bernoulli distribution is the probability distribution on $\R^n$ where each coordinate is randomly and uniformly chosen from the set $\{-1,1\}$.  Equivalently, it is the uniform distribution on the set $\{-1,1\}^n.$
\end{defn}

As we have been using $X,Y,Z,\etc.$ to represent Gaussian random variables, we will attempt to use $A,B,\etc.$ for Bernoulli random variables.

A powerful tool for dealing with Bernoulli variables is the use of \emph{invariance principles}.  These are theorems which state that if $p$ is a sufficiently regular polynomial (for some definition of regularity) that the distributions of $p(X)$ and $p(B)$ are similar to each other (generally that they are close in cdf distance).  This allows one to make use of results in the Gaussian setting and apply them to the Bernoulli setting (at least for sufficiently regular polynomials).  Since not all polynomials will be regular, in order to make use of this idea in a more general context, one also needs a \emph{regularity lemma}.  These are structural results that allow us to write arbitrary polynomials of Bernoulli random variables in terms of regular ones.

In this section, we will discuss some of the existing invariance principles and regularity lemmas, and make use of the theory of diffuse decompositions to provide some new ones that will deal better with high degree polynomials.  In Section \ref{BernFactsSec}, we discuss some background information about polynomials of Bernoulli random variables and give a brief overview of existing invariance principles and regularity lemmas.  In Section \ref{InvarianceSubSec}, we state and prove the Diffuse Invariance Principle, and in Section \ref{RegSec} prove the corresponding regularity lemma.

\subsection{Basic Facts about Bernoulli Random Variables}\label{BernFactsSec}

\subsubsection{Multilinear Polynomials}

For a Bernoulli random variable $B$, we have that any coordinate, $b_i$, satisfies $b_i^2=1$ with probability 1.  This, of course, does not hold in the Gaussian case.  Thus, if there is going to by any hope of comparing polynomials on Gaussian and Bernoulli inputs, we must restrict ourself to polynomials that have no term that is degree more than 1 in any variable.  In particular, we must restrict ourselves to the case of multilinear polynomials:

\begin{defn}
A polynomial $p:\R^n\rightarrow \R$ is multilinear if its degree with respect to any coordinate variable is at most 1.
\end{defn}

To clarify the relationship between general polynomials and multilinear polynomials we mention the following lemma:
\begin{lem}\label{linearizationLem}
For every polynomial $p:\R^n\rightarrow\R$, there exists a unique multilinear polynomial $q:\R^n\rightarrow\R$ so that $q$ agrees with $p$ on $\{-1,1\}^n$.  Furthermore, $\deg(q)\leq \deg(p)$.
\end{lem}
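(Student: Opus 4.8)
The plan is to produce $q$ by an explicit exponent reduction and then to deduce uniqueness (and with it the degree bound) from the orthogonality of multilinear monomials under the uniform measure on $\{-1,1\}^n$.

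For existence I would use the fact that every coordinate satisfies $x_i^2=1$ on $\{-1,1\}^n$. Writing $p$ as a sum of monomials $c\prod_{i=1}^n x_i^{a_i}$, I replace each such monomial by $c\prod_{i=1}^n x_i^{a_i \bmod 2}$. The new monomial agrees with the old one at every point of $\{-1,1\}^n$, so the resulting polynomial $q$ agrees with $p$ there, and $q$ is multilinear by construction. Since $a_i \bmod 2 \leq a_i$ for each $i$, the reduction never increases the total degree of a monomial, and collecting like terms afterward can only decrease the degree further; hence $\deg(q)\leq\deg(p)$.

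For uniqueness it suffices to show that a multilinear polynomial vanishing on all of $\{-1,1\}^n$ is identically zero (apply this to the difference of two candidate multilinearizations). Writing such a polynomial as $q(x)=\sum_{S\subseteq[n]} c_S \prod_{i\in S} x_i$ and letting $B$ be a Bernoulli random variable, I would use $\E\big[\prod_{i\in S}B_i \prod_{i\in T}B_i\big] = \E\big[\prod_{i\in S\triangle T}B_i\big] = \delta_{S,T}$ (from $\E[B_i]=0$, $\E[B_i^2]=1$, and independence of the coordinates) to conclude $c_S = \E\big[q(B)\prod_{i\in S}B_i\big] = 0$ for every $S$, since $q$ vanishes on the support of $B$. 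Thus $q\equiv 0$.

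There is no genuinely hard step here; the only point needing care is the degree bound, where one must observe that the exponent reduction followed by combining like terms cannot raise the degree. As an alternative to the character argument, uniqueness also follows by a short induction on $n$: splitting $q = x_n r(x_1,\ldots,x_{n-1}) + s(x_1,\ldots,x_{n-1})$ with $r,s$ multilinear, the vanishing of $q$ at $x_n=1$ and $x_n=-1$ forces $r+s$ and $s-r$, hence $r$ and $s$, to vanish on $\{-1,1\}^{n-1}$, and one applies the inductive hypothesis.
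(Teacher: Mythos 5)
Your proposal is correct. The existence half is identical to the paper's: reduce each monomial $c\prod x_i^{a_i}$ to $c\prod x_i^{a_i \bmod 2}$, and note this agrees on the hypercube; your extra remark that the reduction cannot raise the total degree is a welcome explicit justification of the bound $\deg(q)\leq\deg(p)$, which the paper leaves implicit. For uniqueness you diverge slightly: the paper argues that the evaluation map from multilinear polynomials to their value vectors on $\{-1,1\}^n$ is a surjective linear map between spaces of dimension $2^n$, hence injective, whereas you compute the coefficients directly via the orthonormality $\E\bigl[\prod_{i\in S}B_i\prod_{i\in T}B_i\bigr]=\delta_{S,T}$ of the Walsh monomials under the uniform measure (with the induction on $n$ as a second alternative). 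These are two faces of the same linear-algebra fact --- the Walsh monomials form a basis of functions on the cube --- but your version is self-contained and constructive (it exhibits the coefficients as expectations), while the paper's dimension count still owes the reader a one-line verification of surjectivity, e.g.\ that point indicators are multilinear. Either route is fully rigorous; nothing is missing from yours.
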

\begin{proof}
To prove the existence of $q$, it suffices to show that the result holds for every monomial $p=\prod x_i^{\alpha_i}$.  It is clear that this monomial agrees on the hypercube with the multilinear monomial $\prod x_i^{\alpha_i \pmod{2}}$.

Uniqueness will follow from the fact that any non-zero multilinear polynomial on $\R^n$ is non-vanishing on the hypercube.  This follows from the fact that the map from a multilinear polynomial to its vector of values on $\{-1,1\}^n$ is a surjective linear map of vector spaces of dimension $2^n$.
\end{proof}

\begin{defn}
For any polynomial $p(x)$, let $L(p(x))$ be the corresponding multilinear polynomial as described by Lemma \ref{linearizationLem}.
\end{defn}

\subsubsection{$L^p$ Norms and Hypercontractivity}

As the $L^p$ norms for polynomials of Gaussians have been useful to us, the corresponding norms for the Bernoulli distribution will also be useful.

\begin{defn}
Let $p:\R^n\rightarrow\R$ we for $t\geq 1$, we define $|p|_{B,t}$ as
$$
|p|_{B,t} = \left(\E_B[|p(B)|^t]\right)^{1/t}.
$$
Where above $B$ is an $n$-dimensional Bernoulli random variable.
\end{defn}

We also have the analogue of Lemma \ref{hypercontractiveLem}.  In particular, we have that:
\begin{lem}[Bonami \cite{BHyp}]\label{BhypercontractiveLem}
For $p:\R^n\rightarrow\R$ a degree-$d$ polynomial, and $t\geq 2$ we have that
$$
|p|_{B,t} \leq \sqrt{t-1}^d |p|_{B,2}.
$$
\end{lem}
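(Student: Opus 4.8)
The plan is to reduce to multilinear polynomials and then apply the classical two-point-plus-tensorization argument underlying Bonami's theorem. First, by Lemma \ref{linearizationLem}, $p$ agrees with the multilinear polynomial $L(p)$ on $\{-1,1\}^n$ and $\deg(L(p)) \le d$; since $|\cdot|_{B,t}$ and $|\cdot|_{B,2}$ depend only on the values of their argument on the hypercube, it suffices to prove the bound with $p$ replaced by $L(p)$, so I may assume $p$ is multilinear. I would then expand $p$ in the Fourier--Walsh basis as $p = \sum_{|S| \le d} \widehat p(S)\, \chi_S$, where $\chi_S(x) = \prod_{i \in S} x_i$ and $S$ ranges over subsets of $\{1,\ldots,n\}$; orthonormality of the $\chi_S$ under a Bernoulli $B$ gives $|p|_{B,2}^2 = \sum_S |\widehat p(S)|^2$.

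Next, for $\rho \in (0,1]$ let $T_\rho$ denote the operator on multilinear polynomials with $T_\rho \chi_S = \rho^{|S|} \chi_S$. The key input is the hypercontractive estimate $|T_\rho q|_{B,t} \le |q|_{B,2}$, valid for every multilinear $q$ whenever $t \ge 2$ and $\rho = 1/\sqrt{t-1}$. Granting this, I would set $q := \sum_{|S| \le d} \rho^{-|S|}\widehat p(S)\, \chi_S$, so that $T_\rho q = p$, and conclude $|p|_{B,t} = |T_\rho q|_{B,t} \le |q|_{B,2}$, while $|q|_{B,2}^2 = \sum_S \rho^{-2|S|}|\widehat p(S)|^2 \le \rho^{-2d}\sum_S |\widehat p(S)|^2 = \rho^{-2d}|p|_{B,2}^2$ since $|S| \le d$ and $\rho^{-1} \ge 1$. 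This yields $|p|_{B,t} \le \rho^{-d}|p|_{B,2} = (t-1)^{d/2}|p|_{B,2} = \sqrt{t-1}^{\,d} |p|_{B,2}$, which is the claim.

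The remaining task --- and the only genuine obstacle --- is the hypercontractive estimate $|T_\rho q|_{B,t} \le |q|_{B,2}$ itself. I would prove it by induction on $n$. The base case $n = 1$ is the two-point inequality $\left(\E_\omega |a + \rho b\,\omega|^t\right)^{1/t} \le (a^2 + b^2)^{1/2}$ for a Rademacher variable $\omega$ and $\rho = 1/\sqrt{t-1}$; after normalizing and using the $a\mapsto -a$ symmetry, this reduces to showing $(1 + \rho s)^t + (1 - \rho s)^t \le 2(1 + s^2)^{t/2}$ (with the large-$s$ range handled directly), which follows by comparing Taylor coefficients together with the binomial inequality $\binom{t}{2k}\rho^{2k} \le \binom{t/2}{k}$. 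The induction step writes $q = q_0(x') + x_n q_1(x')$ in the last coordinate and combines the $n = 1$ estimate with Minkowski's integral inequality, so that the $L^t$-norm tensorizes in the correct direction. Since the statement is classical and due to Bonami, an acceptable alternative is to cite \cite{BHyp} for the hypercontractive estimate and to record only the linearization reduction and the $T_\rho$-inflation step above, which are the parts specific to the present formulation.
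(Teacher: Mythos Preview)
The paper does not prove this lemma at all; it simply states it with attribution to Bonami and cites \cite{BHyp}. Your proposal is correct and in fact supplies considerably more than the paper does: the reduction to multilinear polynomials via Lemma~\ref{linearizationLem}, the $T_\rho$-inflation argument, and the two-point-plus-tensorization sketch are the standard proof of Bonami's inequality, and you even anticipated at the end that a bare citation to \cite{BHyp} would be acceptable --- which is precisely what the paper opts for.
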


Yielding the Corollary
\begin{cor}\label{BConcCor}
For $p:\R^n\rightarrow \R$ a degree-$d$ polynomial $N>0$, then
$$
\pr_B(|p(B)| > N|p|_{B,2}) = O\left(2^{-(N/2)^{2/d}} \right).
$$
\end{cor}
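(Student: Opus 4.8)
The plan is to mimic the proof of Corollary \ref{ConcentrationCor} essentially verbatim, simply substituting the Bernoulli hypercontractive inequality (Lemma \ref{BhypercontractiveLem}) for its Gaussian counterpart. First I would apply the Markov inequality to the nonnegative random variable $|p(B)|^t$ for a parameter $t\geq 2$ to be chosen later, which gives
$$
\pr_B(|p(B)| > N|p|_{B,2}) = \pr_B\!\left(|p(B)|^t > N^t|p|_{B,2}^t\right) \leq \frac{\E_B[|p(B)|^t]}{N^t|p|_{B,2}^t} = \left(\frac{|p|_{B,t}}{N|p|_{B,2}}\right)^{\!t}.
$$
Then, invoking Lemma \ref{BhypercontractiveLem}, we have $|p|_{B,t} \leq \sqrt{t-1}^{\,d}|p|_{B,2} \leq t^{d/2}|p|_{B,2}$, so the right-hand side above is at most $\left(t^{d/2}/N\right)^t$.

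Next I would optimize the choice of $t$. Taking $t = (N/2)^{2/d}$ makes $t^{d/2} = N/2$, so the bound collapses to $(1/2)^t = 2^{-(N/2)^{2/d}}$, which is precisely the claimed estimate.

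The only subtlety is the hypothesis $t\geq 2$ required by Lemma \ref{BhypercontractiveLem}. If $(N/2)^{2/d} < 2$, that is, if $N < 2^{1+d/2}$, then the target bound $2^{-(N/2)^{2/d}}$ is itself bounded below by $2^{-2} = 1/4$, so the desired inequality holds trivially because the probability in question is at most $1$; this case is absorbed into the implied constant. Consequently there is no real obstacle: the corollary follows immediately once Lemma \ref{BhypercontractiveLem} is available.
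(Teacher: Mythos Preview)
Your proof is correct and is exactly the argument the paper intends: the paper simply notes that the proof is analogous to that of Corollary~\ref{ConcentrationCor}, which is ``Apply the Markov inequality and Lemma~\ref{hypercontractiveLem} with $t=(N/2)^{2/d}$,'' and you have carried this out verbatim with Lemma~\ref{BhypercontractiveLem} in place of Lemma~\ref{hypercontractiveLem}. Your handling of the edge case $t<2$ via the big-$O$ constant is also appropriate.
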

The proof is analogous to that of Corollary \ref{ConcentrationCor}.

We will also need a result combining Lemmas \ref{hypercontractiveLem} and \ref{BhypercontractiveLem}
\begin{lem}\label{mixedHypercontractiveLem}
Let $p$ be a degree-$d$ polynomial, $B$ a Bernoulli random variable, $G$ a Gaussian random variable, and $t \geq 2$ a real number.  Then
$$
\E[|p(G,B)|^t] \leq (t-1)^{td/2} \E[p(G,B)^2]^{t/2}
$$
\end{lem}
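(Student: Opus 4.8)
The plan is to derive this from the one-variable hypercontractive inequalities already recorded (Lemma~\ref{hypercontractiveLem} for the Gaussian factor, Lemma~\ref{BhypercontractiveLem} for the Bernoulli factor), using the tensorization property of hypercontractivity. Write a point of the product space as $(G,B)$ with $G$ a standard Gaussian on $\R^n$ and $B$ uniform on $\{-1,1\}^m$, and let $\E[\cdot]$ and $|\cdot|_2$ denote expectation and $L^2$-norm over this joint distribution. Introduce the noise operators $U^G_\rho$ and $U^B_\rho$: the first acts on the Hermite basis $H_a(G)$ by $H_a\mapsto\rho^{|a|}H_a$, the second on the Fourier characters $\chi_S(B)=\prod_{i\in S}b_i$ by $\chi_S\mapsto\rho^{|S|}\chi_S$ (these two families are orthonormal for the two factors, so their products $H_a(G)\chi_S(B)$ are orthonormal for the joint distribution). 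The results behind Lemmas~\ref{hypercontractiveLem} and~\ref{BhypercontractiveLem} give, in fact, the operator form: for $\rho=(t-1)^{-1/2}$, which satisfies $\rho\le1$ since $t\ge2$, each of $U^G_\rho$ and $U^B_\rho$ is a contraction from $L^2$ to $L^t$. I will use this, together with its Hilbert-space-valued analogue, which holds with the same constant.

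The first step is the product statement: for $\rho=(t-1)^{-1/2}$, the operator $U^G_\rho\otimes U^B_\rho$ is a contraction from $L^2(G,B)$ to $L^t(G,B)$. This is the usual tensorization argument: writing $U^G_\rho\otimes U^B_\rho$ as $U^B_\rho$ applied in the $B$-coordinate followed by $U^G_\rho$ applied in the $G$-coordinate, one uses the scalar Gaussian contraction fibrewise over $B$ to replace an $L^t$-norm in $G$ by an $L^2$-norm in $G$, and then applies the Hilbert-space-valued Bernoulli contraction (with values in $L^2(G)$) to the resulting $L^2(G)$-valued function of $B$; both uses need $t\ge2$.

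Granting this, fix $\rho=(t-1)^{-1/2}$. Replacing $p$ by the polynomial agreeing with it on $\R^n\times\{-1,1\}^m$ that is multilinear in the $B$-variables (this does not raise the degree, cf.\ Lemma~\ref{linearizationLem}, and changes neither side of the asserted inequality), expand $p$ in the orthonormal basis of products,
$$
p(G,B)=\sum_{|a|+|S|\le d}c_{a,S}\,H_a(G)\chi_S(B),
$$
where the constraint $|a|+|S|\le d$ is precisely the hypothesis $\deg p\le d$. The formal inverse $(U^G_\rho\otimes U^B_\rho)^{-1}$ multiplies the $(a,S)$-component by $\rho^{-(|a|+|S|)}\le\rho^{-d}$ (using $\rho\le1$ and $|a|+|S|\le d$), so by orthonormality $\big|(U^G_\rho\otimes U^B_\rho)^{-1}p\big|_2\le\rho^{-d}|p|_2$. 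Therefore
\begin{align*}
\left(\E[|p(G,B)|^t]\right)^{1/t}
&=\left\|(U^G_\rho\otimes U^B_\rho)\big[(U^G_\rho\otimes U^B_\rho)^{-1}p\big]\right\|_{L^t(G,B)}\\
&\le\big|(U^G_\rho\otimes U^B_\rho)^{-1}p\big|_2\\
&\le\rho^{-d}|p|_2=(t-1)^{d/2}\left(\E[p(G,B)^2]\right)^{1/2},
\end{align*}
and raising to the $t$-th power gives the lemma.

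The difficulty here is entirely in obtaining the \emph{sharp} exponent $(t-1)^{td/2}$: applying the hypercontractive inequality in degree $d$ separately to the two blocks only yields the much weaker $(t-1)^{td}$. The correct exponent comes from the observation that a single noise operator of parameter $\rho=(t-1)^{-1/2}$ on the \emph{product} space already contracts every basis element of total degree $\le d$ by at least $\rho^d$ --- it is the joint grading that matters, not the two separate gradings, and everything else is bookkeeping. The one genuinely external ingredient is the Hilbert-space-valued form of one of the two one-variable hypercontractive inequalities, needed for the tensorization step; this is standard and is proved by the same methods as the scalar statements.
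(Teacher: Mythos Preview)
Your argument is correct, but it takes a genuinely different route from the paper's proof. The paper does not invoke noise operators or tensorization at all: instead it approximates the Gaussian $G$ by $G^N=\frac{1}{\sqrt{N}}\sum_{j=1}^N A^j$ with independent Bernoullis $A^j$, observes that $p(G^N,B)$ is then a degree-$d$ polynomial of a single (larger) Bernoulli vector, applies Lemma~\ref{BhypercontractiveLem} directly to obtain $\E[|p(G^N,B)|^t]\le(t-1)^{td/2}\E[p(G^N,B)^2]^{t/2}$, and finally passes to the limit $N\to\infty$ using the central limit theorem and a truncation argument to justify convergence of the moments. Your approach is more structural and explains \emph{why} the sharp exponent appears---the joint degree grading on the product basis $H_a\chi_S$ is exactly what the tensor noise operator sees---whereas the paper's approach is more self-contained: it uses only the moment-form inequalities actually stated in the paper and needs neither the semigroup formulation nor the Hilbert-space-valued version of Bonami's inequality, at the cost of a somewhat fiddly limiting step. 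Both are valid; the paper's trades conceptual clarity for minimal external input.
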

\begin{proof}
For integers $N$, let $G^N$ be a random variable defined by $G^N = \frac{1}{\sqrt{N}} \sum_{j=1}^N A^j$ where the $A^j$ are independent Bernoulli random variables.  Clearly the coordinates of $G^N$ are independent and by the central limit theorem, as $N\rightarrow \infty$, their distributions converge to Gaussians in cdf distance.  This implies that for and $\epsilon>0$ and for sufficiently large $N$ that we can have correlated copies of the random variables $G$ and $G^N$ so that $|G-G^N|<\epsilon$ with probability $1-\epsilon$.  Furthermore, with probability $1-\epsilon$, $|G|=O_n (\log(\epsilon^{-1}))$ (here $n$ in the number of coordinates of $G$).  Therefore, for sufficiently large $N$ we have that with probability $1-O(\epsilon)$ that $|p(G,B)-p(G^N,B)| = O_p(\epsilon \log(\epsilon^{-1})^d)$ (this follows from considering every possible value of $B$ separately).  Furthermore, note that $\E[|p(G,B)|^{2t}]$ and $\E[|p(G^N,B)|^{2t}]$ are both finite.  Thus, for any indicator random variable, $I$ we have that
$$
\E[|p(G,B)|^t I] \leq \E[|p(G,B)|^{2t}]^{1/2} \E[I]^{1/2} = O_{p,t}(\E[I]^{1/2}).
$$
Similarly,
$$
\E[|p(G^N,B)|^t I] = O_p(\E[I]^{1/2}).
$$
Therefore, if $|p(G,B)-p(G^N,B)| = O_p(\epsilon \log(\epsilon^{-1})^d)$ with probability $1-O(\epsilon)$, let $I$ be the indicator random variable for the event that this fails.  Then
\begin{align*}
\E & [|p(G,B)|^t] -\E[|p(G^N,B)|^t]\\ & \leq O_t(1) \E[|p(G,B) - p(G^N,B)| |p(G,B) + p(G^N,B)|^{t-1} ]\\
& \leq O_t(1) \E[|p(G,B) - p(G^N,B)|]^{1/2} \E[|p(G,B)|^{2t-1/2} + |p(G^N,B)|^{2t-1/2} ]^{1/2}\\
& = O_{p,t}(1)\left( \E[|p(G,B) - p(G^N,B)|(1-I)] + \E[|p(G,B) - p(G^N,B)|I]\right)^{1/2} \\
& = O_{p,t}(1)\left( O_p(\epsilon \log(\epsilon^{-1})^d) + O_p(\sqrt{\epsilon}) \right)^{1/2} \\
& = O_{p,t}(\epsilon^{1/4}).
\end{align*}
Therefore
$$
\lim_{N\rightarrow\infty} \E[|p(G^N,B)|^t] = \E[|p(G,B)|^t].
$$

On the other hand, note that $p(G^N,B)$ can be thought of as a polynomial evaluated at a Bernoulli random variable in perhaps a greater number of dimensions.  Hence by Lemma \ref{BhypercontractiveLem},
$$
\E[|p(G^N,B)|^t] \leq (t-1)^{dt/2}\E[p(G^N,B)^2]^{t/2}.
$$
Thus, 
\begin{align*}
\E[|p(G,B)|^t] & = \lim_{N\rightarrow\infty} \E[|p(G^N,B)|^t] \\ &  \leq \lim_{N\rightarrow\infty}(t-1)^{dt/2}\E[p(G^N,B)^2]^{t/2} \\ & = (t-1)^{td/2} \E[p(G,B)^2]^{t/2}.
\end{align*}

\end{proof}

We also note the following relationship between the Gaussian and Bernoulli norms
\begin{lem}\label{L2eqLem}
If $p:\R^n\rightarrow\R$ is a multilinear polynomial then $|p|_2 = |p|_{B,2}$.
\end{lem}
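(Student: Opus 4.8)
The plan is to expand $p$ in its monomial basis and compute both norms directly, using independence of the coordinates. Since $p$ is multilinear, write
$$
p(x) = \sum_{S\subseteq\{1,\ldots,n\}} c_S \prod_{i\in S} x_i .
$$
First I would handle the Gaussian norm. Each monomial satisfies $\prod_{i\in S} x_i = \prod_{i\in S} H_1(x_i)\cdot\prod_{i\notin S} H_0(x_i) = H_{\mathbf{1}_S}(x)$, where $\mathbf{1}_S$ is the $0/1$ indicator vector of $S$; by the orthonormality of the multivariate Hermite polynomials $H_a$ recalled in Section \ref{BasicStuffSec}, the monomials of a multilinear polynomial form an orthonormal set with respect to the Gaussian inner product, so $|p|_2^2 = \sum_S c_S^2$.

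Next I would carry out the analogous computation for the Bernoulli norm. For $S,T\subseteq\{1,\ldots,n\}$, independence of the coordinates of $B$ gives
$$
\E_B\left[\prod_{i\in S} b_i \prod_{j\in T} b_j\right] = \prod_{i=1}^n \E\left[b_i^{\,\mathbf{1}[i\in S]+\mathbf{1}[i\in T]}\right] .
$$
Here multilinearity is used: in the product of the two monomials each variable appears with exponent $\mathbf{1}[i\in S]+\mathbf{1}[i\in T]\in\{0,1,2\}$, never larger. Since $\E[b_i^0]=\E[b_i^2]=1$ and $\E[b_i^1]=0$, this product equals $1$ when $S=T$ and $0$ otherwise, whence $|p|_{B,2}^2 = \sum_S c_S^2$ as well. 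Comparing the two formulas yields $|p|_2 = |p|_{B,2}$. (Equivalently, one may phrase this as saying that the monomials of a multilinear polynomial are orthonormal both in the restricted Hermite basis for the Gaussian measure and in the Fourier character basis $\chi_S = \prod_{i\in S} b_i$ for the Bernoulli measure.)

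There is no substantial obstacle here; the only point worth stating carefully is that multilinearity is precisely what keeps every exponent appearing in a product of two monomials at most $2$, which is exactly the range on which the Gaussian moments ($\E[X_i^0]=\E[X_i^2]=1$, $\E[X_i]=0$) and the Bernoulli moments coincide. Without multilinearity the two norms genuinely differ, since for instance $\E[X_i^4]=3\neq 1=\E[b_i^4]$.
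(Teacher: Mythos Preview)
Your proof is correct and takes essentially the same approach as the paper: both argue that the multilinear monomials $\prod_{i\in S} x_i$ form an orthonormal set with respect to both the Gaussian and the Bernoulli measures, whence both $L^2$ norms equal $\left(\sum_S c_S^2\right)^{1/2}$. You have simply spelled out in more detail what the paper states in one line.
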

\begin{proof}
This follows immediately after noting that the basis $\prod x_i^{\alpha_i}$ for $\alpha\in\{0,1\}^n$ is an orthonormal basis of the set of multilinear polynomials with respect to both the Bernoulli and Gaussian measures.
\end{proof}

\subsubsection{Influence and Regularity}

The primary obstruction to a multilinear polynomial behaving similarly when evaluated at Bernoulli inputs rather than Gaussian inputs is when some single coordinate has undo effect on the output value of the polynomial.  In such a case, the fact that this coordinate is distributed as a Bernoulli rather than a Gaussian may cause significant change to the resulting distribution.  In order to quantify the extent to which this can happen we define the $i^{\tth}$ influence of a coordinate as follows:

\begin{defn}
For $p:\R^n\rightarrow \R$ a function we define the $i^{\tth}$ influence of $p$ to be
$$
\Inf_i(p) := \left| \frac{\partial p}{\partial x_i} \right|_2^2.
$$
It should be noted that for multilinear polynomials $p$, this is equivalent to the more standard definition
$$
\Inf_i(p) = \E_A [ \var_{a_i}(p(A))].
$$
This is the expectation over uniform independent $\{-1,1\}$ choices for the coordinates other than the $i^{\tth}$ coordinate of the variance of the resulting function over a Bernoulli choice of the $i^{\tth}$ coordinate.  Equivalently, it is
$$
\frac{1}{4} \E\left[\left|p(a_1,\dlots,a_{i-1},-1,a_{i+1},\ldots,a_n) - p(a_1,\dlots,a_{i-1},1,a_{i+1},\ldots,a_n) \right|^2 \right].
$$
\end{defn}

We now prove some basic facts about the influence.
\begin{lem}\label{infCoefLem}
If $p:\R^n\rightarrow\R$ is a polynomial $\Inf_i(p)$ is $\sum_{a} a_i |c_a(p)|^2$, where $c_a(p)$ are the Hermite coefficients of $p$.
\end{lem}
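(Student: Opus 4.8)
The plan is to expand $p$ in the Hermite basis and differentiate term by term. Write $p = \sum_{|a|_1 \leq \deg(p)} c_a(p) H_a$. Using the identity $D_i H_a(X) = \sqrt{a_i}\, H_{a-e_i}(X)$ recalled just before Lemma \ref{DerivativeSizeLem}, we get
$$
\frac{\partial p}{\partial x_i}(X) = \sum_{a} c_a(p)\sqrt{a_i}\, H_{a - e_i}(X),
$$
where the terms with $a_i = 0$ simply contribute nothing since $\sqrt{a_i} = 0$.

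Next I would observe that the functions appearing on the right-hand side form (up to scaling) an orthonormal family: for multi-indices $a,b$ with $a_i, b_i \geq 1$, we have $a - e_i = b - e_i$ if and only if $a = b$, and $H_c$ for distinct $c$ are orthonormal with respect to the Gaussian inner product. Hence, applying Parseval's identity to the expansion above,
$$
\Inf_i(p) = \left|\frac{\partial p}{\partial x_i}\right|_2^2 = \E_X\left[\left(\sum_a c_a(p)\sqrt{a_i}\,H_{a-e_i}(X)\right)^2\right] = \sum_a a_i\, |c_a(p)|^2,
$$
which is the claimed formula.

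I do not expect any genuine obstacle here; the only point requiring a modicum of care is making sure the reindexing $a \mapsto a - e_i$ is injective on the relevant set of multi-indices (those with $a_i \geq 1$), so that the cross terms in the square vanish by orthogonality. Everything else is a direct application of the Hermite expansion and the derivative identity already established in the excerpt.
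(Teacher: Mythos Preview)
Your proposal is correct and follows essentially the same approach as the paper: expand $p$ in the Hermite basis, differentiate using $D_i H_a = \sqrt{a_i}\,H_{a-e_i}$, and then use orthonormality to read off the $L^2$ norm squared. The paper's proof is slightly terser (it does not explicitly mention the injectivity of $a\mapsto a-e_i$), but the argument is the same.
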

\begin{proof}
Recall that
$$
p(x) = \sum_a c_a(p)h_a(x).
$$
Therefore, we have that
$$
\frac{\partial p}{\partial x_i} = \sum_a \sqrt{a_i} c_a(p) h_{a-e_i}(x).
$$
Thus, 
$$
\left| \frac{\partial p}{\partial x_i}\right|_2^2 = \sum_a a_i |c_a(p)|^2.
$$
\end{proof}

From this we have
\begin{cor}
For $p$ a degree-$d$ polynomial in $n$ variables,
$$
\sum_{i=1}^n \Inf_i(p) = \sum_{k=1}^d k |p^{[k]}|_2^2 = \Theta_d(\var(p(X))).
$$
\end{cor}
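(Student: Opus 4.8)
The plan is to derive the first equality directly from Lemma \ref{infCoefLem} by interchanging the order of summation, and then to obtain the estimate $\Theta_d(\var(p(X)))$ simply by sandwiching the weight $k$ between $1$ and $d$.

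First I would recall from Lemma \ref{infCoefLem} that $\Inf_i(p) = \sum_a a_i |c_a(p)|^2$, where the sum is over all vectors $a$ of nonnegative integers with $|a|_1 \leq d$. Summing this over $1 \leq i \leq n$ and interchanging the two (finite) sums gives
$$
\sum_{i=1}^n \Inf_i(p) = \sum_a \left(\sum_{i=1}^n a_i\right) |c_a(p)|^2 = \sum_a |a|_1\, |c_a(p)|^2.
$$
Grouping the terms according to the value $k = |a|_1$ and using the orthonormality of the Hermite polynomials $H_a$, which gives $\sum_{|a|_1 = k} |c_a(p)|^2 = |p^{[k]}|_2^2$, and noting that the $k = 0$ term contributes nothing, yields
$$
\sum_{i=1}^n \Inf_i(p) = \sum_{k=0}^d k\,|p^{[k]}|_2^2 = \sum_{k=1}^d k\,|p^{[k]}|_2^2,
$$
which is the first claimed equality.

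For the second equality I would note that by Parseval $|p|_2^2 = \sum_{k=0}^d |p^{[k]}|_2^2$ and that $p^{[0]} = c_0(p) = \E[p(X)]$, so $\var(p(X)) = |p|_2^2 - |p^{[0]}|_2^2 = \sum_{k=1}^d |p^{[k]}|_2^2$. Since every nonzero term in $\sum_{k=1}^d k\,|p^{[k]}|_2^2$ has $1 \leq k \leq d$, we get
$$
\var(p(X)) = \sum_{k=1}^d |p^{[k]}|_2^2 \ \leq\ \sum_{k=1}^d k\,|p^{[k]}|_2^2 \ \leq\ d\sum_{k=1}^d |p^{[k]}|_2^2 = d\,\var(p(X)),
$$
so $\sum_{k=1}^d k\,|p^{[k]}|_2^2 = \Theta_d(\var(p(X)))$, completing the argument.

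There is no genuine obstacle here: the statement is essentially a bookkeeping consequence of Lemma \ref{infCoefLem} together with orthonormality of the $H_a$. The only point that requires the slightest care is handling the degree-$0$ (mean) component consistently — it is annihilated by the weight $k$ on the left-hand side and subtracted off when passing from $|p|_2^2$ to $\var(p(X))$ on the right-hand side.
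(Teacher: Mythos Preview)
Your proof is correct and is exactly the argument the paper intends: the corollary is stated without proof, immediately after Lemma \ref{infCoefLem}, with only the phrase ``From this we have,'' and your summation-swap followed by the trivial sandwich $1\leq k\leq d$ is the evident way to fill in the details.
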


We now make the following definition (which agrees with the standard ones up to changing $\tau$ by a factor of $\Theta_d(1)$):
\begin{defn}
Let $p$ be a degree-$d$ multilinear polynomial.  We say that $p$ is $\tau$-regular if for each $i$
$$
\Inf_i(p) \leq \tau \var_A(p).
$$
\end{defn}

In terms of this notion of regularity, the standard invariance principle, proved in \cite{MOO}, can be stated as follows:
\begin{thm}[The Invariance Principle (Mossel, O'Donnell, and Oleszkiewicz)]\label{Z4Thm}
If $p$ is a $\tau$-regular, degree-$d$ multilinear polynomial, $A$ and $X$ are Bernoulli and Gaussian random variables respectively and $t\in \R$, then
$$
\left| \pr(p(X) \leq t) - \pr(p(A)\leq t)\right| = O(d\tau^{1/(8d)}).
$$
\end{thm}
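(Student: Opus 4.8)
The plan is the classical two-step argument behind Theorem~\ref{Z4Thm}: a Lindeberg-style hybrid replacement that controls the error for \emph{smooth} test functions, combined with a smoothing step that trades this error against the Carbery--Wright anticoncentration bound of Lemma~\ref{anticoncentrationLem}. First I would normalize. By Lemma~\ref{L2eqLem}, together with the fact that the constant Hermite coefficient of $p$ equals $\E_A[p]$, one has $\var_A(p)=\var_X(p)$; after rescaling we may assume this common value is $1$, and after replacing $p$ by $p-t$ we reduce to showing $|\pr(p(X)\leq 0)-\pr(p(A)\leq 0)| = O(d\tau^{1/(8d)})$ for a multilinear $p$ with $|p|_2=|p|_{B,2}\geq 1$ (the regime $\tau>1$ being vacuous).

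\textbf{Step 1 (smooth invariance).} For $\Psi\in C^3$ with bounded third derivative I would prove
\[
\left|\E[\Psi(p(X))]-\E[\Psi(p(A))]\right| = O\!\left(|\Psi'''|_\infty\, 9^d \sqrt{\tau}\right).
\]
Writing $Z^{(i)} := (X_1,\dots,X_i,A_{i+1},\dots,A_n)$, so $Z^{(0)}=A$ and $Z^{(n)}=X$, it suffices to bound each telescoping term $\E[\Psi(p(Z^{(i)}))]-\E[\Psi(p(Z^{(i-1)}))]$. Since $p$ is multilinear, $p(Z^{(i)})=r_i+X_i s_i$ and $p(Z^{(i-1)})=r_i+A_i s_i$ with $s_i=\partial_i p$ and $r_i,s_i$ depending only on the remaining (mixed Gaussian/Bernoulli) coordinates. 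Taylor-expanding $\Psi$ to second order about $r_i$, the constant, linear and quadratic terms have equal expectation for $X_i$ and $A_i$, since both are independent of $(r_i,s_i)$ and share the moments $0$ and $1$; the two third-order remainders are each $O(|\Psi'''|_\infty \E[|\partial_i p|^3])$. By Lemma~\ref{mixedHypercontractiveLem} applied to the degree-$(d-1)$ polynomial $\partial_i p$, whose mixed $L^2$ norm is $\Inf_i(p)^{1/2}$, we get $\E[|\partial_i p|^3] \leq 2^{3(d-1)/2}\Inf_i(p)^{3/2}$. Summing over $i$, using regularity ($\Inf_i(p)\leq\tau$, hence $\Inf_i(p)^{3/2}\leq\sqrt{\tau}\,\Inf_i(p)$) and $\sum_i \Inf_i(p)=\sum_k k|p^{[k]}|_2^2\leq d$, yields the claimed bound with the $2^{O(d)}d$ factors absorbed into $9^d$.

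\textbf{Step 2 (smoothing and optimization).} For $\lambda>0$ choose $C^3$ functions $\Psi^{\pm}_\lambda$ with $0\leq\Psi^{\pm}_\lambda\leq 1$, $\Psi^{-}_\lambda \leq \mathbf 1[\,\cdot\leq 0\,] \leq \Psi^{+}_\lambda$ pointwise, agreeing with $\mathbf 1[\,\cdot\leq 0\,]$ outside $[-\lambda,\lambda]$, and $|(\Psi^{\pm}_\lambda)'''|_\infty = O(\lambda^{-3})$. Then $|\E[\Psi^{\pm}_\lambda(p(X))]-\pr(p(X)\leq 0)|\leq \pr(|p(X)|\leq\lambda) = O(d\lambda^{1/d})$ by Lemma~\ref{anticoncentrationLem} (using $|p|_2\geq 1$). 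For the Bernoulli side, bound $\pr(|p(A)|\leq\lambda)$ by $\E[\chi_\lambda(p(A))]$ for a $C^3$ bump with $\mathbf 1[|\cdot|\leq\lambda]\leq\chi_\lambda\leq\mathbf 1[|\cdot|\leq 2\lambda]$ and $|\chi_\lambda'''|_\infty=O(\lambda^{-3})$; applying Step~1 to $\chi_\lambda$ and then Lemma~\ref{anticoncentrationLem} to $p(X)$ gives $\pr(|p(A)|\leq\lambda)=O(d\lambda^{1/d}+9^d\lambda^{-3}\sqrt{\tau})$. Combining these with Step~1 applied to $\Psi^{\pm}_\lambda$ gives
\[
\left|\pr(p(X)\leq 0)-\pr(p(A)\leq 0)\right| = O\!\left(d\lambda^{1/d}+9^d\lambda^{-3}\sqrt{\tau}\right),
\]
and optimizing $\lambda$ to balance the two terms (roughly $\lambda\asymp(9^d\sqrt{\tau}/d)^{d/(3d+1)}$) produces an error $O(d\,\tau^{1/(2(3d+1))})$, which is $O(d\,\tau^{1/(8d)})$ since $2(3d+1)\leq 8d$ for $d\geq 1$.

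\textbf{Main obstacle.} The crux is Step~1: isolating a single coordinate via multilinearity so that the low-order Taylor terms cancel exactly, and then bounding the third-order remainder by the influence $\Inf_i(p)$ rather than a coarser quantity. This is precisely where the mixed Gaussian/Bernoulli hypercontractive inequality (Lemma~\ref{mixedHypercontractiveLem}) is indispensable, since the hybrid variables $Z^{(i)}$ carry both kinds of randomness. Everything else is bookkeeping: propagating the $2^{O(d)}$ constants through Step~1 and through the choice of $\lambda$ in Step~2 so as to land at the stated exponent $1/(8d)$.
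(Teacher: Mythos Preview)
Your proof is correct and is essentially the standard Lindeberg-replacement argument of Mossel--O'Donnell--Oleszkiewicz, but note that the paper does not actually prove Theorem~\ref{Z4Thm}: it is stated with attribution to \cite{MOO} and used as background. There is therefore no ``paper's own proof'' to compare against; what you have written is a faithful reconstruction of the original MOO argument, carried out with the tools available in the present paper (in particular Lemma~\ref{mixedHypercontractiveLem} for the hybrid hypercontractivity and Lemma~\ref{anticoncentrationLem} for the smoothing step).

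One small remark: the paper does sketch, in the Remark following Theorem~\ref{DInvPrinThm}, an alternative way to recover a statement of this shape as a corollary of the Diffuse Invariance Principle, by taking the trivial decomposition $(h,q)=(\mathrm{Id},p)$ and invoking Lemma~\ref{anticoncentrationLem} to check it is $(\tau^{1/5},O(d\tau^{(1/d-1)/5}))$-diffuse. That route yields the slightly different bound $O_d(\tau^{1/(5d)}\log(\tau^{-1})^{d/2+1})$. Your direct argument is cleaner for the classical statement and gives the sharper exponent $1/(6d+2)\geq 1/(8d)$; the paper's detour through Theorem~\ref{DInvPrinThm} is meant only to illustrate that the diffuse framework subsumes the classical one, not to give an optimal proof of Theorem~\ref{Z4Thm}.
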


It should be noted that the dependence on $\tau^{1/d}$ in the error of Theorem \ref{Z4Thm} is necessary.  In particular, if $d$ is even and $N$ is a sufficiently large integer consider the polynomial $p:\R^{n+1}\rightarrow\R$ defined by
$$
p(x_0,\ldots,x_{N}) = \tau x_0 + \left(\frac{1}{\sqrt{N}}\sum_{i=1}^{N} x_i \right)^d.
$$
Let $q=L(p)$.  It is not hard to see that by making $N$ sufficiently large, one can make $|p-q|_2$ arbitrarily small, and thus, by Lemma \ref{anticoncentrationLem} and Corollary \ref{ConcentrationCor}, we can make the probability distributions for $p(X)$ and $q(X)$ arbitrarily close.  It is also not hard to see that $q$ is $\Theta_d(\tau^2)$ regular.  This is because $\Inf_0(q)=\tau^2$, $\Inf_i(q)=O_d(N^{-1})$ for $i\neq 0$, and $\var_A(q(A)) = \Theta_d(1)$.  On the other hand, it is clear that for Bernoulli input $A$ we have that
$$
q(A)=p(A) \geq -\tau.
$$
On the other hand considering the distribution of values of $p(X)$ (which as stated can be arbitrarily close to that of $q(X)$), if we let $y=\frac{1}{\sqrt{N}}\sum_{i=2}^{N+1} x_i$, we note that $x_0$ and $y$ are independent Gaussians.  Thus, with probability $\Theta(\tau^{1/d})$ we have that $x_0 < -2$ and $|y| \leq \tau^{1/d}$.  If these occur, then $p(X)<-\tau$.  Thus, for $N$ sufficiently large the difference between the probabilities that $q(A) < -\tau$ and that $q(X) < -\tau$ can be as large as $\Omega(\tau^{1/d})$.

The essential problem in the above example is that although the first coordinate has low influence, there is a reasonable probability that the size of $q(X)$ will be comparable to $\tau$, and in the case when $|q(X)|$ is small, the relative effect of the first coordinate is much larger.  We get around this problem by introducing a new concept of regularity involving the idea of a diffuse decomposition.  The problem above came from the fact that the probability distribution of $q(X)$ was too clustered near 0.  Since the analogue of this cannot happen for a diffuse set of polynomials, we expect to obtain better bounds.

\begin{defn}
For $p$ a degree-$d$ multilinear polynomial, we say that $p$ has a \emph{$(\tau,N,m,\epsilon)$-regular decomposition} if there exists a polynomial $p_0$ of degree-$d$ so that
\begin{itemize}
\item $|p-p_0|^2_{B,2} \leq \epsilon^2 \var(p_0(X))$.
\item $p_0$ has a $(\tau^{1/5},N)$-diffuse decomposition of size $m$, $(h,q_1,\ldots,q_m)$ so that $q_i$ is multilinear for each $i$ and $\Inf_j(q_i)\leq \tau$ for each $i,j$.
\end{itemize}
\end{defn}

\begin{thm}[The Diffuse Invariance Principle]\label{DInvPrinThm}
If $p$ is a degree-$d$ multilinear polynomial that has a $(\tau,N,m,\epsilon)$-regular decomposition for $1/2>\epsilon,\tau>0$, $A$ and $X$ and random Bernoulli and Gaussian variables respectively and $t$ is a real number, then
$$
|\pr(p(A)\leq t) - \pr(p(X)\leq t)| = O_{d,m}( \tau^{1/5}N \log(\tau^{-1})^{dm/2+1} + \epsilon^{1/d}\log(\epsilon^{-1})^{1/2}).
$$
\end{thm}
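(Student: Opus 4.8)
The plan is to reduce the whole statement to a single application of Proposition \ref{replacementProp}, applied to $p_0$ together with its $(\tau^{1/5},N)$-diffuse decomposition $(h,q_1,\ldots,q_m)$, taking $\ell=n$ blocks each consisting of a single coordinate, with $X^j$ the $j^{\tth}$ Bernoulli coordinate and $Y^j$ the $j^{\tth}$ Gaussian coordinate. To accommodate the threshold $t$ one applies the proposition to the decomposition $(h-t,q_1,\ldots,q_m)$ of $p_0-t$ (still a decomposition, since the new constant monomial contributes $0$ to the degree count), so that $\pr(p_0(A)\le t)-\pr(p_0(X)\le t)$ is precisely the quantity the proposition controls; its second part then lets us pass from $p_0$ to $p$ at a cost $O(\delta^{1/d}+\eta)$ once we supply appropriate $\delta,\eta$. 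Throughout one may assume the various quantities are small, since otherwise the claimed bound is vacuous.

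The two inputs the proposition needs are the moment-matching hypothesis and a bound on $T$, and this is exactly where the exponent $1/5$ is forced. First I would take $k=4$: since each $q_i$ is multilinear, for fixed values of the other coordinates the restriction of $q_i$ to the $j^{\tth}$ coordinate is affine, so any polynomial $g$ of degree $<4$ in $(q_1,\ldots,q_m)$ restricts to a polynomial of degree $\le 3$ in that single coordinate, and since the first four moments of a $\pm1$ Bernoulli and of a standard Gaussian agree, the required equality of expectations holds; degree $\ge 5$ fails because the fourth moments differ, which is exactly why the decomposition is taken $(\tau^{1/5},N)$-diffuse rather than $(\tau,N)$-diffuse. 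Second, each $T_{i,j}$ is an expectation of a fourth power of $q_i$ minus its average over the $j^{\tth}$ coordinate; by multilinearity that difference equals (the $j^{\tth}$ coordinate) times $\partial q_i/\partial x_j$, so independence together with the mixed hypercontractive inequality of Lemma \ref{mixedHypercontractiveLem} (needed since the remaining coordinates are a mix of Gaussians and Bernoullis) gives $T_{i,j}=O_d(\Inf_j(q_i)^2)\le O_d(\tau\,\Inf_j(q_i))$. Summing over $j$ and using $\sum_j\Inf_j(q_i)=\Theta_d(\var(q_i(X)))\le O_d(|q_i|_2^2)\le O_d(1)$ yields $T=O_{d,m}(\tau)$. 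Feeding $\epsilon\mapsto\tau^{1/5}$, $k=4$ into Proposition \ref{replacementProp} produces a contribution $O_{d,m}(\tau^{-4/5}\cdot\tau+\tau^{1/5}N\log(\tau^{-1})^{dm/2+1})=O_{d,m}(\tau^{1/5}N\log(\tau^{-1})^{dm/2+1})$, the first term of the claimed bound.

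It remains to produce $\delta,\eta$ with $\delta^{1/d}+\eta=O_{d,m}(\epsilon^{1/d}\log(\epsilon^{-1})^{1/2})$ satisfying $\pr_A(|p(A)-p_0(A)|<\delta|p|_2)\ge 1-\eta$ and $\pr_X(|p(X)-p_0(X)|<\delta|p|_2)\ge 1-\eta$. After normalizing (and absorbing $\E[p_0(X)]$ into $t$) so that $\var(p_0(X))=|p_0|_2=1$, the Bernoulli estimate is immediate from $|p-p_0|_{B,2}\le\epsilon$ and Markov. For the Gaussian estimate I would first show $p_0$ is close in Gaussian $L^2$ to its multilinearization $L(p_0)$: expanding $h$ as a polynomial in the $q_i$ in which, by the size condition in the definition of a decomposition, every monomial is a product of at most $d$ of the $q_i$, and repeatedly using that a product of $O_d(\tau)$-influence multilinear polynomials of $L^2$ norm $O(1)$ differs from its multilinearization by $O_d(\tau^{1/2})$ in Gaussian $L^2$, together with the bound $|h|_2=O_{m,d}(N^d)$ from Lemma \ref{hSmallLem}. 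Since $p$ and $L(p_0)$ are multilinear and agree on the cube with $p$ and $p_0$, this controls $|p-p_0|_2$ in terms of $\epsilon$ and $\tau$, after which Markov again gives the Gaussian estimate; choosing $\delta=\Theta(\epsilon\log(\epsilon^{-1})^{d/2})$ (so $\delta^{1/d}$ matches the target) and $\eta=O(\epsilon)$, and invoking Carbery--Wright (Lemma \ref{anticoncentrationLem}) on $p$ under the Gaussian measure inside Proposition \ref{replacementProp}, completes the proof. I expect this last step to be the main obstacle: only the Bernoulli distance $|p-p_0|_{B,2}$ is hypothesized, so controlling the \emph{Gaussian} behaviour of $p_0$ relative to $p$ forces one to route through $L(p_0)$ and track carefully how the influence bound on the $q_i$ and the size bound on $h$ interact — and this is essentially the only place where the multilinearity and low-influence conditions on the $q_i$, beyond what diffuseness already provides, are genuinely needed.
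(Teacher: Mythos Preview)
Your reduction to Proposition \ref{replacementProp} with $k=4$ and the computation $T=O_{d,m}(\tau)$ are correct and match the paper's Proposition \ref{invProp} essentially verbatim (one small slip: it is the first \emph{three} moments of the Bernoulli and Gaussian that agree, not four, but that is all you use).

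The gap is in the last paragraph. Your plan is to bound $|p_0-L(p_0)|_2$ by expanding $h$ monomial-by-monomial and using that each product of low-influence multilinear factors differs from its multilinearization by $O_d(\tau^{1/2})$; combined with $|h|_2=O_{m,d}(N^d)$ this gives $|p_0-L(p_0)|_2=O_{m,d}(N^d\tau^{1/2})$, hence $|p-p_0|_2\le\epsilon+O_{m,d}(N^d\tau^{1/2})$. Feeding this $\delta$ into the second part of Proposition \ref{replacementProp} costs $O(\delta^{1/d})$ via Carbery--Wright, producing a term of order $N\tau^{1/(2d)}$. For $d\ge 3$ this is strictly larger than the claimed $N\tau^{1/5}$ and cannot be absorbed into it, so the stated bound is not recovered. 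The loss is intrinsic to routing through an $L^2$ bound on $p-p_0$ and then invoking Lemma \ref{anticoncentrationLem}: that step always pays a $1/d$ in the exponent.

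The paper avoids this by never bounding $|p_0-L(p_0)|_2$. Instead it proves (Proposition \ref{linErrProp}) that with high probability $|L(p_0)(X)-p_0(X)|<\tfrac12|p_0(X)|$ \emph{pointwise}, so $\sgn(p_0(X))=\sgn(L(p_0)(X))$ directly, with no Carbery--Wright on $p$. Three ingredients are needed: an exact expansion (Lemma \ref{Z6Lem}) writing $L(p_0)-p_0=\sum_{k\ge 1}\sum_{i_1,\ldots,i_k}(\partial^k h/\partial q_{i_1}\cdots\partial q_{i_k})\,A(q_{i_1},\ldots,q_{i_k})$ for certain alternating sums $A(\cdot)$; a bound $|A(p_1,\ldots,p_k)|_2=O_{k,d}(\tau^{k/4})$ with a $k$-dependent exponent (Proposition \ref{ABoundProp}, proved by a combinatorial repeat-pattern argument); and the chain $|h(x)|\gtrsim\tau^{1/4}|Dh(x)|_2\gtrsim\tau^{2/4}|D^2h(x)|_2\gtrsim\cdots$ from Lemma \ref{DDDerLem}, which uses diffuseness a second time. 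The $\tau^{k/4}$ from the second ingredient exactly cancels the $\tau^{-k/4}$ implicit in the third, giving $|L(p_0)(X)-p_0(X)|\le\sum_k 3^{-k}|p_0(X)|<\tfrac12|p_0(X)|$ independently of $|h|$ and without a $1/d$ loss. You correctly anticipated that this step is the main obstacle; what is missing from your outline is precisely this pointwise-ratio argument in place of the $L^2$ one.
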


\begin{rmk}
We can derive a statement very similar to that of Theorem \ref{Z4Thm} from Theorem \ref{DInvPrinThm}.  In particular, if $p$ is multilinear, and $\tau$-regular, we may normalize $p$ so that $\E_X[p(X)]=0,\E_X[p(X)^2]=1.$  Then by Lemma \ref{anticoncentrationLem}, we have that for $h=\textrm{Id}$ and $q=p$, $(h,q)$ is a $(\tau^{1/5},O(d\tau^{(1/d-1)/5}))$-diffuse decomposition of $p$.  Furthermore, by assumption $q$ is multilinear and has all influences at most $\tau$.  Therefore, this is a $(\tau,O(d\tau^{(1/d-1)/5}),1,0)$-regular decomposition of $p$.  Thus, we obtain
$$
|\pr(p(A)\leq t) - \pr(p(X)\leq t)| = O_d (\tau^{1/(5d)}\log(\tau^{-1})^{d/2+1}).
$$
\end{rmk}

Neither invariance principle on its own is very useful for dealing with general polynomial threshold functions which might not satisfy the necessary regularity conditions. Fortunately, in both cases if regularity fails it will be because some small number of coordinates have undo effect on the value of the polynomial.  If this is the case, we can hope to make things better by fixing the values of these coordinates and considering the resulting polynomial over the remaining coordinates, hoping that it is regular.  Theorems confirming this intuition have been known as \emph{regularity lemmas}.  For the standard notions of regularity, various regularity lemmas have appeared in \cite{reg} and \cite{sensitivity} as well as other places.  As an example, \cite{reg} proved the following:

\begin{thm}[Diakonikolas, Servedio, Tan, Wan]
Let $f(x) = sign(p(x))$ be any degree-$d$ PTF. Fix any $\tau > 0$. Then $f$ is equivalent to a decision
tree $T$ , of depth
$$
\textrm{depth}(d,\tau) = \frac{1}{\tau}\cdot (d \log (\tau^{-1}))^{O(d)}
$$
with variables at the internal nodes and a degree-$d$ PTF $f_\rho = sign(p_\rho)$ at each leaf $\rho$, with the following
property: with probability at least $1 -\tau$ , a random path from the root reaches a leaf $\rho$ such that $f_\rho$ is
$\tau$ -close to some $\tau$-regular degree-$d$ PTF.
\end{thm}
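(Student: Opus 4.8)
The plan is to build the decision tree greedily by restricting coordinates of large relative influence, stopping once the restricted polynomial is ``settled,'' and to bound both the depth and the probability mass of the unsettled leaves by a critical-index argument of the kind Servedio developed for halfspaces, generalized to degree $d$.

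At a node labelled by a restriction $\rho$, consider the restricted polynomial $p_\rho$, normalized so that $\var(p_\rho(X))=1$. Declare a \emph{good leaf} if either (i) every $\Inf_j(p_\rho)$ is at most $\tau/C_d$ for a suitable constant $C_d$ --- so that, running the whole construction with internal parameter $\tau/C_d$, the PTF $f_\rho=\sgn(p_\rho)$ is itself $0$-close to a $\tau$-regular degree-$d$ PTF --- or (ii) $\sgn(p_\rho)$ is $\tau$-close to a constant function, which is $\sgn$ of a (degree $0$) constant polynomial and hence a trivially $\tau$-regular PTF. Otherwise order the live coordinates by influence and let $k_\rho$ be the $\tau^{\Theta(1)}$-critical index of $p_\rho$, i.e.\ the least $k$ with $\Inf_{k+1}(p_\rho)\le\tau^{\Theta(1)}\sum_{j>k}\Inf_j(p_\rho)$; branch on the first $\min(k_\rho,D)$ coordinates at once, where $D=\tfrac1\tau(d\log\tau^{-1})^{O(d)}$ is the intended depth. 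Force-terminate any path that has branched on $D$ coordinates in total; these are the \emph{bad leaves}, and the depth bound is then immediate. The entire content is to show that a uniformly random root-to-leaf path is bad with probability at most $\tau$.

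The branching at a node splits into two cases. If $k_\rho>D$, the critical-index inequality fails for every $k<D$, which forces the geometric decay $\sum_{j>D}\Inf_j(p_\rho)\le(1-\tau^{\Theta(1)})^D\cdot d$, which is minuscule for the chosen $D$; since the $L^2$-mass of the ``tail'' of $p_\rho$ is bounded by $\sum_{j>D}\Inf_j(p_\rho)$, this makes $p_\rho$ $L^2$-close (at scale $\ll\tau^2/D$) to its conditional expectation onto the first $D$ coordinates, so by a hypercube anticoncentration estimate (the Bernoulli analogue of Lemma \ref{anticoncentrationLem}, provable via Lemma \ref{mixedHypercontractiveLem}) $f_\rho$ is $(\tau^2/D)$-close to a function of those coordinates, and restricting them makes $f_\rho$, with probability $\ge 1-\tau/D$ over the restriction, $\tau$-close to a constant --- a good leaf. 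If $k_\rho\le D$, then after branching on the $k_\rho$ head coordinates the tail $(k_\rho+1,\ldots)$ should be regular: one uses that $\E_\rho[\Inf_j(p_\rho)]=\Inf_j(p)$ for non-head $j$, that $\E_\rho[\var(p_\rho)]$ is a $\Theta_d(1)$-fraction of $\sum_{j>k_\rho}\Inf_j$, the hypercontractive estimates (Lemma \ref{BhypercontractiveLem}, Corollary \ref{BConcCor}), and the critical-index inequality $\Inf_{k_\rho+1}(p)\le\tau^{\Theta(1)}\sum_{j>k_\rho}\Inf_j(p)$ to conclude that, for all but a $\tau/D$-fraction of restrictions of the head, every tail influence of $p_\rho$ is at most $O_d(\tau)\cdot\var(p_\rho)$ --- a good leaf. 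Either way, each branching batch either reaches a good leaf or lands in a bad event of conditional probability $\le\tau/D$; since a not-yet-terminated path has undergone at most $D$ batches, a union bound gives total bad probability $\le\tau$.

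The main obstacle is the probabilistic analysis in the small-critical-index case: one needs the tail influences $\Inf_j(p_\rho)$ and the variance $\var(p_\rho)$ to behave well \emph{jointly} under a random restriction of the head. The influences are individually martingales under the restriction, but the \emph{relative} influences $\Inf_j(p_\rho)/\var(p_\rho)$ --- which is what regularity measures --- are not, since renormalizing amplifies everything whenever $\var(p_\rho)$ dips; so the crux is a concentration argument showing that the non-constant part of $p_\rho$, viewed as a Bernoulli polynomial in the restricted head coordinates, does not lose most of its $L^2$-mass, run alongside a union bound over the tail coordinates that might spike. This is where one genuinely needs degree-$d$ hypercontractivity, and it is the technical heart of the argument. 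Everything else --- the large-critical-index (``close to a junta'') case, which is essentially the degree-$d$ analogue of Servedio's halfspace structure theorem, and the bookkeeping needed to keep all the batches large enough for the junta bound when they terminate and to compose the various error slacks down to $O(\tau)$ --- is comparatively routine once the parameters $\tau^{\Theta(1)}$ and $D$ are chosen with enough room.
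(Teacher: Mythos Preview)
First, a framing note: this theorem is quoted from \cite{reg} and is not proved in the present paper, so there is no in-paper proof to compare against.

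Your critical-index outline is in the right spirit, but there is a concrete gap in the large-critical-index branch, which you label ``comparatively routine.'' You assert that once the tail influence is tiny, ``a hypercube anticoncentration estimate (the Bernoulli analogue of Lemma~\ref{anticoncentrationLem}, provable via Lemma~\ref{mixedHypercontractiveLem})'' lets you pass from $|p_\rho-q|_2$ small to $\sgn(p_\rho)$ close to $\sgn(q)$. No such Bernoulli analogue of Carbery--Wright exists for degree $d>1$: the polynomial $A_1A_2-A_3A_4$ has unit-order variance yet equals $0$ on half of $\{\pm1\}^4$, and hypercontractivity gives upper-tail bounds, not small-ball bounds. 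So after restricting the $D$ head coordinates, the residual can have both mean and variance tiny with no control on their ratio, producing a leaf that is neither close to constant nor regular, and you cannot bound the measure of such restrictions by $\tau/D$ with the tool you invoke. This is exactly where Servedio's degree-$1$ argument---in which the head is a weighted sum of independent signs and is genuinely anticoncentrated via Berry--Esseen---fails to generalize.

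The proof in \cite{reg} avoids this by restricting one high-influence coordinate at a time and running a potential argument: whenever the current $p_\rho$ is neither $\tau$-regular nor has $|\E[p_\rho]|$ dominating $\sqrt{\var(p_\rho)}$, restricting the top coordinate drives down $\var(p_\rho)$ by a $(1-\Omega(\tau))$ factor with good probability, and hypercontractivity controls the exceptional branches; the $1/\tau$ in the depth then comes from the number of such steps rather than from a per-batch union bound of the kind you attempt. If you want to retain the critical-index batching, you must either route the junta step through the Gaussian side via the invariance principle (where Lemma~\ref{anticoncentrationLem} is available) or replace the nonexistent Boolean small-ball estimate with this iterative potential argument.
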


Along similar lines, we prove the following:
\begin{thm}[Diffuse Regularity Lemma]\label{difRegThm}
Let $p$ be a degree-$d$ polynomial with Bernoulli inputs.  Let $\tau,c,M>0$ with $\tau<1/2$.  Then $p$ can be written as a decision tree of depth at most
$$
O_{c,d,M}\left(\tau^{-1} \log(\tau^{-1})^{O(d)}\right)
$$
with variables at the internal nodes and a degree-$d$ polynomial at each leaf, with the following property: with probability at least $1-\tau$, a random path from the root reaches a leaf $\rho$ so that the corresponding polynomial $p_\rho$ either satisfies $\var(p_\rho) < \tau^M |p_\rho|_2^2$ or $p_\rho$ has an $(\tau,\tau^{-c},O_{c,d,M}(1),O_{c,d,M}(\tau^M))$-regular decomposition.
\end{thm}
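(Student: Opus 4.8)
The plan is to build the decision tree greedily: at each node I use the Diffuse Decomposition Theorem to either certify that the current restricted polynomial already has a regular decomposition or to expose a coordinate worth branching on, and I control the depth by a variance potential together with a truncation argument.

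\textbf{Setup and construction.} By Lemma~\ref{linearizationLem} we may assume $p$ is multilinear, and after scaling that $|p|_{B,2}=1$. We grow a tree whose nodes carry a restriction $\rho$ and the multilinear polynomial $p_\rho$. At a node, if $\var(p_\rho)<\tau^M|p_\rho|_{B,2}^2$ we stop and declare a good leaf. Otherwise we apply Theorem~\ref{DDTheorem} to $p_\rho/|p_\rho|_2$ with diffuseness scale $\tau^{1/5}$, error exponent $N$ taken large compared with $M$, and the given constant $c$; this produces $\widehat p_\rho$ with $|p_\rho/|p_\rho|_2-\widehat p_\rho|_2=O_{c,d,M}(\tau^{N/2})$ and a $(\tau^{1/5},\tau^{-c})$-diffuse decomposition $(h,q_1,\dots,q_m)$ of $\widehat p_\rho$ with $m=O_{c,d,M}(1)$. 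Replacing each $q_i$ by its multilinearization $L(q_i)$ still gives a decomposition, since degrees only drop and so the degree condition is preserved; a short transfer argument — the $q_i$ have degree $\le d$, so $|L(q_i)|_2$ stays $O_d(1)$ and the diffuse constant degrades only by a $\Theta_{d,m}(1)$ factor, while $|p_\rho-p_0|_{B,2}$ with $p_0:=h(L(q_1),\dots,L(q_m))$ is controlled because $p_0$ agrees with $|p_\rho|_2\,\widehat p_\rho$ on the hypercube and the DDT error may be taken far below $\tau^M$ — shows that $(h,L(q_1),\dots,L(q_m))$ is a diffuse decomposition with multilinear pieces of a polynomial that is $O_{c,d,M}(\tau^M)$-close to $p_\rho$ in $|\cdot|_{B,2}$. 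If now $\Inf_j(L(q_i))\le\tau$ for all $i,j$ we have exhibited a $(\tau,\tau^{-c},m,O_{c,d,M}(\tau^M))$-regular decomposition and declare a good leaf. Otherwise there are $i_0,j$ with $\Inf_j(L(q_{i_0}))>\tau$, and we branch on $x_j$, recursing on $p_{\rho,x_j=1}$ and $p_{\rho,x_j=-1}$.

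\textbf{Depth bound.} Let $\Phi_\rho:=\var(p_\rho)$; since $p_\rho$ is multilinear, $\E_b[\var(p_{\rho,x_j=b})]=\var(p_\rho)-\Inf_j(p_\rho)$, so $\Phi$ is a supermartingale along a random root-to-leaf path. The crucial estimate is that a coordinate $j$ on which we branch satisfies $\Inf_j(p_\rho)\ge\Omega_{c,d,M}\!\big(\tau\log(\tau^{-1})^{-O(d)}\big)\var(p_\rho)$, so that $\Phi$ drops by a factor $1-\Omega_{c,d,M}(\tau\log(\tau^{-1})^{-O(d)})$ at each internal step. Applying this to the supermartingale stopped at the first good leaf, and using that $\Phi$ stays above $\tau^M$ until then, Markov's inequality shows that after $D=O_{c,d,M}(\tau^{-1}\log(\tau^{-1})^{O(d)})$ steps the probability that a random path has not yet reached a good leaf is at most $\tau$. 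We therefore truncate the tree at depth $D$, declaring the remaining (possibly non-regular) nodes as leaves; these are reached with total probability at most $\tau$, as required. That this many coordinates suffice is consistent with the budget $\sum_{i,j}\Inf_j(L(q_i))=\Theta_d\big(\sum_i\var(L(q_i))\big)=O_{d,m}(1)$, which bounds the number of coordinates that can ever be blamed in a single decomposition by $O_{c,d,M}(\tau^{-1})$.

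\textbf{Main obstacle.} The heart of the proof is the implication $\Inf_j(L(q_{i_0}))>\tau\Rightarrow\Inf_j(p_\rho)\gtrsim\tau\log(\tau^{-1})^{-O(d)}\var(p_\rho)$, i.e.\ a \emph{no destructive cancellation} statement for $D_jp_0=\sum_i (D_ih)(q(X))\,D_jq_i(X)$: one large summand must survive in $L^2$. I expect to obtain this from the strong anticoncentration apparatus of Section~\ref{MainSec}. By Lemma~\ref{DDDerLem} the diffuseness of $(q_1,\dots,q_m)$ forces the chain $|h(q(X))|\ge\tau^{1/5}|D_{i}h(q(X))|_2\ge\cdots$ to hold with probability $1-o(1)$, which keeps $h$ and its partials from degenerating on the bulk of the law of $q(X)$; Corollary~\ref{WeakAnticoncentrationCor} applied to $D_jq_{i_0}$ gives $|D_jq_{i_0}(X)|\gtrsim\sqrt\tau$ with probability $\Omega_d(1)$; and Proposition~\ref{StrongAnticoncentrationProp} (for $k=1$) rules out the weighted gradient sum being small on a non-negligible set once an index $i_0$ maximizing $|(D_ih)(q(X))\,D_jq_i(X)|$ is fixed. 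Transferring the resulting lower bound on $\Inf_j(p_0)=|D_jp_0|_2^2$ back to $\Inf_j(p_\rho)$ costs only the $L^2$-approximation error via Lemma~\ref{DerivativeSizeLem}. If the maximizing $q_{i_0}$ is nonetheless annihilated by cancellation, the relation $\sum_i(D_ih)(q)\,D_jq_i\approx0$ is itself a derivative-degeneracy of the kind exploited in Proposition~\ref{DecompositionProp}, and should allow replacing $q_{i_0}$ by its $x_j$-average, strictly decreasing the total $x_j$-influence of the decomposition and letting one iterate. The rest is bookkeeping: threading the powers of $\tau$ coming from the DDT error exponent $N$, the diffuseness scale $\tau^{1/5}$, the influence threshold $\tau$, and the closeness target $\tau^M$ so that exactly $(\tau,\tau^{-c},O_{c,d,M}(1),O_{c,d,M}(\tau^M))$ appears at the leaves, and verifying the Gaussian-to-Bernoulli transfer above at each node.
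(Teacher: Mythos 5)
Your construction hinges on the estimate you yourself flag as the ``main obstacle'': that branching on a coordinate $j$ with $\Inf_j(L(q_{i_0}))>\tau$ forces $\Inf_j(p_\rho)\gtrsim \tau\log(\tau^{-1})^{-O(d)}\var(p_\rho)$, so that the variance potential decays. This is a genuine gap, and the implication is in fact false: writing $D_jp_0=\sum_i (D_ih)(q)\,D_jq_i$, nothing in the diffuse property forbids cancellation among the summands. For instance, take $q_1=x_j\ell(x)+r(x)$ and $q_2=x_j\ell(x)+s(x)$ with $\ell,r,s$ not involving $x_j$ and $h(y_1,y_2)=y_1-y_2$; the pair $(q_1,q_2)$ can easily be diffuse while $\Inf_j(p_0)=0$ even though $\Inf_j(q_1),\Inf_j(q_2)$ are large. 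Lemma \ref{DDDerLem} controls $|D_ih(q(X))|_2$ as a gradient on $\R^m$, and Proposition \ref{StrongAnticoncentrationProp} with $k=1$ compares a polynomial to the norm of its full derivative tensor; neither rules out this directional cancellation, and your fallback (``replace $q_{i_0}$ by its $x_j$-average and iterate'') has no termination argument. Without the estimate, the supermartingale argument gives no depth bound at all. A second, independent problem is the multilinearization step: replacing $q_i$ by $L(q_i)$ can destroy diffuseness outright (e.g.\ $q=\tfrac{1}{\sqrt{2n}}\sum_i(x_i^2-1)$ is diffuse but $L(q)=0$), and the Gaussian $L^2$ error from Theorem \ref{DDTheorem} does not transfer to $|\cdot|_{B,2}$ for a non-multilinear difference, so your ``short transfer argument'' does not go through.

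The paper avoids both issues by not invoking Theorem \ref{DDTheorem} as a black box at each node. Instead (Proposition \ref{Z7Prop}) it reruns the transfinite induction underlying Theorem \ref{DDTheorem} with the $q_i$ kept multilinear throughout (the multilinear parts of the $A_\ell,B_\ell$ from Proposition \ref{DecompositionProp} suffice, since discarding non-multilinear parts only reduces the Bernoulli-norm error), and at each stage of the induction it restricts, in one batch, the $O_d(\tau^{-1}\log(\tau^{-1})^{O(d)})$ highest-influence coordinates of the current $q_i$'s; Lemma \ref{regResLem} then shows that with probability $1-O(\tau)$ all remaining influences drop below $\tau$. The depth is the number of induction stages, which is $O_{c,d,M}(1)$ by the ordinal argument, times the batch size --- no relation between $\Inf_j(q_i)$ and $\Inf_j(p_\rho)$ is ever needed. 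The low-variance leaves are then handled via Lemma \ref{hSmallLem} and Lemma \ref{largeResLem}. If you want to salvage your greedy scheme, you would need to prove a quantitative no-cancellation statement for the chain rule under diffuseness, which as the example above shows cannot hold in the generality you assume.
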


\subsection{The Diffuse Invariance Principle}\label{InvarianceSubSec}

In this section, we prove Theorem \ref{DInvPrinThm}.  We begin with the following proposition:

\begin{prop}\label{invProp}
Let $p$ be a degree-$d$ polynomial with a $(\tau^{1/5},N)$-diffuse decomposition (for $1/2>\tau>0$) $(h,q_1,\ldots,q_m)$ with $q_i$ multilinear so that $\Inf_i(q_j)\leq \tau$ for all $i,j$.  Then if $A$ is a Bernoulli random variable, $X$ a Gaussian random variable and $t$ a real number then
$$
\left| \pr(p(A)\leq t) - \pr(p(X)\leq t)\right| = O_{d,m}(N\tau^{1/5} \log(\tau^{-1})^{dm/2+1}).
$$
\end{prop}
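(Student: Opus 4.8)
The plan is to combine the smooth-approximation result of Proposition \ref{ApproximationProp} with a Lindeberg-style replacement argument, exploiting both the multilinearity of the $q_i$ and the fact that a $\pm1$ random variable shares its first three moments with a standard Gaussian. First I would reduce to $t=0$: since $p=h(q_1,\ldots,q_m)$, the tuple $(h-t,q_1,\ldots,q_m)$ is again an $(\tau^{1/5},N)$-diffuse decomposition of $p-t$ (subtracting a constant from $h$ alters no monomial's multidegree) with the same $q_i$, so it suffices to control $|\pr(p(A)\le0)-\pr(p(X)\le0)|$. Applying Proposition \ref{ApproximationProp} to $p$ and to $-p$ (which has diffuse decomposition $(-h,q_1,\ldots,q_m)$) with $\epsilon=\tau^{1/5}$, I obtain smooth $f^+,f^-:\R^m\to[-1,1]$ with $|(f^\pm)^{(k)}|_\infty=O_{m,k}(\tau^{-k/5})$, with $f^\pm(q(x))=1$ whenever $\pm h(q(x))\ge0$ (immediate from the mollification construction of $f$), and with $\E[f^\pm(q(X))]$ within $O_{m,d}(\tau^{1/5}N\log(\tau^{-1})^{dm/2+1})$ of $\pm\E[\sgn(p(X))]=\pm(1-2\pr(p(X)\le0))$ (using $\pr(p(X)=0)=0$ from Lemma \ref{anticoncentrationLem}, unless $p\equiv0$, which is trivial). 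From the pointwise inequalities $\mathbf{1}[p\le0]\le\tfrac{1+f^-\circ q}{2}$ and $\mathbf{1}[p\ge0]\le\tfrac{1+f^+\circ q}{2}$, the claim reduces to showing $|\E[f(q(A))]-\E[f(q(X))]|=O_{d,m}(\tau^{1/5})$ for $f=f^\pm$.

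For this I would use the usual hybrid argument: taking $A,X$ independent and setting $Z^{(i)}=(X_1,\ldots,X_i,A_{i+1},\ldots,A_n)$, telescope the difference over $i$. In the $i$-th step only coordinate $i$ changes, from Bernoulli to Gaussian, and the remaining coordinates have the same law in both hybrids; conditioning on them, multilinearity of each $q_j$ makes $z\mapsto f(q(\cdots,z,\cdots))$ an explicit smooth function of the single variable $z$ whose $\ell$-th derivative is at most $|f^{(\ell)}|_\infty\,(\sum_j|\partial_iq_j|)^\ell$. Since the first three moments of $z$ agree in the Bernoulli and Gaussian cases, Taylor expanding to degree three with an order-four Lagrange remainder bounds the $i$-th telescoped term by $O(1)\,|f^{(4)}|_\infty\,\E_y[(\sum_j|\partial_iq_j(y)|)^4]$. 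Now $(\sum_{j=1}^m a_j)^4\le m^3\sum_ja_j^4$, and by the mixed hypercontractive inequality (Lemma \ref{mixedHypercontractiveLem}) applied to the degree-$(d-1)$ multilinear polynomials $\partial_iq_j$ together with the identity $\E_y[(\partial_iq_j)^2]=\Inf_i(q_j)$ (valid for multilinear polynomials under any mean-zero, variance-one product law), this is $O_{d,m}(\tau^{-4/5})\sum_j\Inf_i(q_j)^2\le O_{d,m}(\tau^{-4/5})\,\tau\sum_j\Inf_i(q_j)$, where I used $\Inf_i(q_j)\le\tau$. Summing over $i$ and using $\sum_i\Inf_i(q_j)\le d\,\E[q_j(X)^2]\le d$ (from the normalization built into the definition of a diffuse set) gives $O_{d,m}(\tau^{-4/5}\cdot\tau)=O_{d,m}(\tau^{1/5})$. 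Combining with the approximation error above, and noting $\log(\tau^{-1})^{dm/2+1}=\Omega_{d,m}(1)$ and (without loss of generality) $N\ge1$, yields the stated bound.

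I expect the delicate point to be the replacement step, not the reduction to it. The mollification scale is forced to be $\epsilon=\tau^{1/5}$, so $|f^{(3)}|_\infty$ is only as small as $\tau^{-3/5}$, and a naive third-order expansion would give an error $O_{d,m}(\tau^{-3/5}\cdot\tau^{1/2})=O_{d,m}(\tau^{-1/10})$, which is useless. It is essential to push Taylor's theorem to fourth order, which is legitimate precisely because the $\pm1$ and Gaussian laws match through their third moments; and this is exactly where multilinearity of the $q_i$ enters, since that is what makes the one-variable restrictions of $f\circ q$ smooth with derivatives controlled by the influences $\Inf_i(q_j)$. The remaining ingredients --- the $L^2$-to-$L^4$ comparison via mixed hypercontractivity and the tracking of the diffuse-set normalization through $\sum_i\Inf_i(q_j)$ --- are routine.
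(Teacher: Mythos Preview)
Your argument is correct and is essentially the same as the paper's: reduce to $t=0$, sandwich by the mollified functions of Proposition~\ref{ApproximationProp} at scale $\tau^{1/5}$, run a coordinate-by-coordinate Lindeberg replacement using that Bernoulli and Gaussian agree through the third moment, and control the fourth-order remainder via Lemma~\ref{mixedHypercontractiveLem} together with $\Inf_i(q_j)\le\tau$ and $\sum_i\Inf_i(q_j)=O_d(1)$. The only cosmetic difference is that the paper packages the hybrid step as a direct application of the general replacement lemma (Proposition~\ref{replacementProp}) with $k=4$, whereas you unroll that lemma inline; the error accounting $\tau^{-4/5}\cdot\sum_{i,j}\Inf_i(q_j)^2=O_{d,m}(\tau^{1/5})$ is identical.
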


\begin{proof}

It suffices to prove this statement for $t=0$.   We proceed via Proposition \ref{replacementProp}.  We note that for each $i$ the first three moments of $A_i$ agree with the corresponding moments of $X_i$.  Therefore, since the $q_i$ are multilinear, any degree-$3$ polynomial in the $q_i$ has the same expectation under $A$ as under $X$.  Thus, we may apply Proposition \ref{replacementProp} with $k=4$.  We have that
\begin{equation}\label{Z10Eqn}
\left| \pr(p(A)\leq 0) - \pr(p(X)\leq 0)\right| = O_{d,m}(N\tau^{1/5} \log(\tau^{-1})^{dm/2+1} + \tau^{-4/5}T).
\end{equation}

Recall that $T_{i,j}$ is
\begin{align*}
&\E\left[\left(q_i(X_1,\ldots,X_{j-1},A_j,\ldots,A_n) - \E_Y[q_i(X_1,\ldots,X_{j-1},Y,A_{j+1},\ldots,A_n)] \right)^4 \right]\\
+&\E\left[\left(q_i(X_1,\ldots,X_{j},A_{j+1},\ldots,A_n) - \E_Y[q_i(X_1,\ldots,X_{j-1},Y,A_{j+1},\ldots,A_n)] \right)^4 \right]
\end{align*}
By Lemma \ref{mixedHypercontractiveLem} this is at most
$$
O_d\left(\E_{X_1,\ldots,X_{j-1},A_{j+1},\ldots,A_n}[\var_Y(q_i(X_1,\ldots,X_{j-1},Y,A_{j+1},\ldots,A_n))]^2 \right).
$$
Since the polynomial in expectation is at most quadratic in each $X_i$, this is
$$
O_d\left(\E_{A}[\var_Y(q_i(A_1,\ldots,A_{j-1},Y,A_{j+1},\ldots,A_n))]^2 \right)=O_d(\Inf_j(q_i)^2).
$$
Thus, 
\begin{align*}
T & = \sum_{i=1}^m \sum_{j=1}^n T_{i,j}\\
& = O_d\left(\sum_{i=1}^m \sum_{j=1}^n \Inf_j(q_i)^2 \right)\\
& \leq O_d\left(\sum_{i=1}^m \sum_{j=1}^n \tau\Inf_j(q_i) \right)\\
& = O_d\left(\sum_{i=1}^m \tau \right)\\
&= O_{d,m}(\tau).
\end{align*}

Thus, by Equation (\ref{Z10Eqn}),
$$
\left| \pr(p(A)\leq 0) - \pr(p(X)\leq 0)\right| = O_{d,m}(N\tau^{1/5} \log(\tau^{-1})^{dm/2+1}),
$$
as desired.

\end{proof}

Proposition \ref{invProp} is the main analytic tool used in our proof of Theorem \ref{DInvPrinThm}.  From it we can quickly derive the following theorem:

\begin{thm}\label{DInv2Thm}
Let $p$ be a degree-$d$ multilinear polynomial with a $(\tau,N,m,\epsilon)$-regular decomposition (for $1/2>\epsilon,\tau>0$) given by $(h,q_1,\ldots,q_m)$.  Let $p_0(x):=h(q_1(x),\ldots,q_m(x))$.  Let $A$ be a Bernoulli random variable, $X$ a Gaussian random variable, and $t$ a real number.  Then
$$
\left| \pr(p(A)\leq t) - \pr(p_0(X)\leq t) \right| = O_{d,m}(N\tau^{1/5} \log(\tau^{-1})^{dm/2+1} + \epsilon^{1/d}\log(\epsilon^{-1})^{1/2})
$$
\end{thm}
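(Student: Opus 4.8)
The plan is to bridge $\pr(p(A)\leq t)$ and $\pr(p_0(X)\leq t)$ through the intermediate quantity $\pr(p_0(A)\leq t)$: the passage from $p_0(A)$ to $p_0(X)$ is pure invariance (Proposition \ref{invProp}), while the passage from $p(A)$ to $p_0(A)$ is a perturbation argument whose cost is an anticoncentration bound for a Gaussian. This is the same device as the second half of Proposition \ref{replacementProp}, but it is cleaner here to run it by hand.

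First I would note that Proposition \ref{invProp} applies verbatim to $p_0$: it has a $(\tau^{1/5},N)$-diffuse decomposition $(h,q_1,\ldots,q_m)$ with each $q_i$ multilinear and $\Inf_j(q_i)\leq\tau$, which is exactly that proposition's hypothesis ($p_0$ itself need not be multilinear). Moreover, for any real $s$ the tuple $(h-s,q_1,\ldots,q_m)$ is again a $(\tau^{1/5},N)$-diffuse decomposition, now of $p_0-s$, with the same $q_i$, so Proposition \ref{invProp} gives, uniformly in $s$,
\[
\left|\pr(p_0(A)\leq s) - \pr(p_0(X)\leq s)\right| \leq E, \qquad E := O_{d,m}\!\left(N\tau^{1/5}\log(\tau^{-1})^{dm/2+1}\right).
\]

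Next, write $r=p-p_0$, a degree-$d$ polynomial with $|r|_{B,2}\leq\epsilon\sqrt{\var(p_0(X))}$ by the definition of a $(\tau,N,m,\epsilon)$-regular decomposition. Fix a parameter $R\geq 2$ and set $\delta := \epsilon R\sqrt{\var(p_0(X))}$, so that $\delta\geq R\,|r|_{B,2}$; then Corollary \ref{BConcCor} applied to $r$ gives $\pr_A(|r(A)|>\delta)=O(2^{-(R/2)^{2/d}})$. On the event $|r(A)|\leq\delta$ we have $p_0(A)-\delta\leq p(A)\leq p_0(A)+\delta$, so combining with the displayed bound at $s=t\pm\delta$,
\[
\pr(p_0(X)\leq t-\delta) - E - O(2^{-(R/2)^{2/d}}) \ \leq\ \pr(p(A)\leq t) \ \leq\ \pr(p_0(X)\leq t+\delta) + E + O(2^{-(R/2)^{2/d}}).
\]
To absorb the shift by $\pm\delta$, I would apply the Carbery--Wright inequality (Lemma \ref{anticoncentrationLem}) to the degree-$d$ polynomial $p_0-t$: since $|p_0-t|_2^2 = \var(p_0(X)) + (\E[p_0(X)]-t)^2 \geq \var(p_0(X))$, we get $\delta/|p_0-t|_2 \leq \epsilon R$ and hence $\pr(|p_0(X)-t|\leq\delta) = O(d(\epsilon R)^{1/d})$. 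Thus $\left|\pr(p(A)\leq t) - \pr(p_0(X)\leq t)\right| = O\!\left(E + d(\epsilon R)^{1/d} + 2^{-(R/2)^{2/d}}\right)$, and the choice $R = 2\log_2(\epsilon^{-1})^{d/2}$ makes the last term $O(\epsilon)$ and the middle term $O_d(\epsilon^{1/d}\log(\epsilon^{-1})^{1/2})$, yielding the claimed estimate. (If $\var(p_0(X))=0$ the statement is trivial, as then $p$ and $p_0$ agree on the hypercube and $p_0(X)$ is a.s.\ constant.)

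The step requiring the most care is matching the scales in this perturbation/anticoncentration trade-off: the regular-decomposition hypothesis controls $|p-p_0|_{B,2}$ only relative to $\sqrt{\var(p_0(X))}$, and the key observation is that $|p_0-t|_2$ is \emph{also} bounded below by $\sqrt{\var(p_0(X))}$, so one may pick a single scale $\delta$ at which both the Bernoulli tail of $p-p_0$ and the Gaussian anticoncentration of $p_0-t$ are simultaneously under control. Everything else — the two-sided sandwiching and the optimization of $R$ — is routine bookkeeping.
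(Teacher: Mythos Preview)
Your argument is correct and follows essentially the same route as the paper's proof: both pass through $\pr(p_0(A)\leq s)$ via Proposition \ref{invProp}, control $|p(A)-p_0(A)|$ on the hypercube by Corollary \ref{BConcCor}, and then absorb the resulting shift in $t$ using Carbery--Wright (Lemma \ref{anticoncentrationLem}) applied to $p_0$. Your explicit parametrization by $R$ and the observation $|p_0-t|_2\geq\sqrt{\var(p_0(X))}$ are just a slightly more transparent bookkeeping of the same trade-off the paper makes implicitly (the paper normalizes $t=0$ and uses $|p_0|_2$ as the scale).
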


\begin{rmk}
For most applications Theorem \ref{DInv2Thm} will be as good as Theorem \ref{DInvPrinThm} as it shows that the regular polynomial of Bernoullis behaves similarly to a polynomial of Gaussians.  As we shall see later, it will take some work to show that it will necessarily behave like the same polynomial of Gaussians.  This is because although $|p-p_0|_{2,B}$ is small, this does not immediately imply that $|p-p_0|_2$ is sufficiently small for the proof to work.
\end{rmk}

\begin{proof}
As in the proof of Proposition \ref{invProp}, we may assume that $t=0$ and prove the inequality
$$
\pr(p(A)\leq 0) \leq \pr(p_0(X)\leq 0) + O_{d,m}(N\tau^{1/5} \log(\tau^{-1})^{dm/2+1} + \epsilon^{1/d}\log(\epsilon^{-1})^{1/2})
$$

By Corollary \ref{BConcCor} we have with probability $1-\epsilon$ that
$$|p(A)-p_0(A)| \leq O(\epsilon\log(\epsilon^{-1})^{d/2})\sqrt{\var(p_0(X))}\leq O(\epsilon\log(\epsilon^{-1})^{d/2})|p_0|_2.$$
Thus, we have that
\begin{align*}
\pr(p(A)\leq 0) & \leq \epsilon + \pr(p_0(A)\leq O(\epsilon\log(\epsilon^{-1})^{d/2})|p_0|_2)\\
& \leq O_{d,m}(N\tau^{1/5} \log(\tau^{-1})^{dm/2+1} + \epsilon) + \pr(p_0(X) \leq O(\epsilon\log(\epsilon^{-1})^{d/2})|p_0|_2)\\
& \leq O_{d,m}(N\tau^{1/5} \log(\tau^{-1})^{dm/2+1} + \epsilon^{1/d}\log(\epsilon^{-1})^{1/2}) + \pr(p_0(X) \leq 0).
\end{align*}
The second line above is by Proposition \ref{invProp} and the third is by Lemma \ref{anticoncentrationLem}.

The lower bound on $\pr(p(A)\leq 0)$ is proved analogously.
\end{proof}

In order to complete the proof of Theorem \ref{DInvPrinThm} we need the following:

\begin{prop}\label{linErrProp}
If $p$ is a degree-$d$ polynomial with a $(\tau,N,m,\epsilon)$-regular decomposition (for $1/2>\epsilon,\tau>0$) given by $p_0(x)=h(q_1(x),\ldots,q_m(x))$, then for $X$ a Gaussian random variable, and $t$ a real number,
\begin{align*}
|\pr(p(X)\leq t) & - \pr(p_0(X)\leq t)|  \leq O_{d,m}\left( \tau^{1/4}N \log(\tau^{-1})^{d(m+1)/2+1} + \epsilon^{1/d}\log(\epsilon^{-1})^{1/2}\right).
\end{align*}
\end{prop}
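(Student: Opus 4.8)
The plan is to pass entirely to the Gaussian setting and bound the \emph{Gaussian} $L^2$-distance $|p-p_0|_2$ — which, per the Remark after Theorem \ref{DInv2Thm}, is the one genuine difficulty — and then feed this into an anticoncentration argument for $p_0$. First I would reduce to $t=0$: replacing $h$ by $h-t$ introduces no new monomials, so $(h-t,q_1,\ldots,q_m)$ remains a $(\tau^{1/5},N)$-diffuse decomposition, $p-t$ inherits a $(\tau,N,m,\epsilon)$-regular decomposition from $p_0-t$, and $\{p(X)\le t\}=\{(p-t)(X)\le 0\}$. I would also shift so that $\E[p_0(X)]=0$ (absorbing the constant into $h$; both $p$ and $p_0$ shift by the same amount), so that $|p_0|_2=\sqrt{\var(p_0(X))}$.

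The central step is the inequality
\begin{equation*}
|p-p_0|_2\ \le\ |p-p_0|_{B,2}+|L(p_0)-p_0|_2\ \le\ \epsilon\,|p_0|_2+|L(p_0)-p_0|_2 .
\end{equation*}
This holds because $p$ is multilinear, so $L(p-p_0)=p-L(p_0)$ and hence $(p-p_0)-L(p-p_0)=L(p_0)-p_0$; the triangle inequality in the Gaussian $L^2$ norm together with $|L(p-p_0)|_2=|L(p-p_0)|_{B,2}=|p-p_0|_{B,2}$ (Lemma \ref{L2eqLem}, and $L(p-p_0)$ agrees with $p-p_0$ on the cube) gives the claim. Thus everything reduces to controlling $|L(p_0)-p_0|_2$, the Gaussian norm of the ``linearization defect'' of $p_0$. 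Writing $h=\sum_\alpha c_\alpha y^\alpha$ — a sum of $O_{d,m}(1)$ monomials each with $\sum_j\alpha_j\deg(q_j)\le d$ — we have $L(p_0)-p_0=\sum_\alpha c_\alpha\bigl(L(\prod_j q_j^{\alpha_j})-\prod_j q_j^{\alpha_j}\bigr)$ with $\sum_\alpha|c_\alpha|=O_{d,m}(|h|_2)=O_{d,m}(N^d|p_0|_2)$ by Lemma \ref{hSmallLem}. The heart of the argument is then a lemma: for a product $Q=\prod_j q_j^{\alpha_j}$ of multilinear polynomials with $\Inf_i(q_j)\le\tau$ and $|q_j|_2\le 1$, one has $|L(Q)-Q|_2=O_{d,m}(\tau^{1/2})$. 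I would prove this by induction on the number of factors: peeling off one factor $q_j$ and replacing the remaining product by its linearization reduces to the two-factor statement ``$|L(fg)-fg|_2=O_d(\tau^{1/2})$ for multilinear $f,g$ of degree $\le d$ with $|f|_2,|g|_2=O_d(1)$ and $\max_i\Inf_i(f)\le\tau$,'' which follows from a Fourier computation on the cube — $fg-L(fg)$ is supported on pairs of characters sharing a coordinate — combined with Lemma \ref{BhypercontractiveLem} and the identity $\Inf_i^{\mathrm{cube}}(q_j)=\Inf_i(q_j)\le\tau$ (valid because $\partial_i q_j$ is multilinear, so its Bernoulli and Gaussian $L^2$ norms coincide). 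The outcome is $|L(p_0)-p_0|_2=O_{d,m}(N^d\tau^{1/2})\,|p_0|_2$, hence $|p-p_0|_2=\bigl(\epsilon+O_{d,m}(N^d\tau^{1/2})\bigr)|p_0|_2$.

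To finish I would convert this into the cdf bound. Writing $r=p-p_0=r_1+r_2$ with $r_1=p-L(p_0)$ ($|r_1|_2\le\epsilon|p_0|_2$) and $r_2=L(p_0)-p_0$ ($|r_2|_2=O_{d,m}(N^d\tau^{1/2})|p_0|_2$), one has for any $\rho>0$ that
\begin{equation*}
\bigl|\pr(p(X)\le 0)-\pr(p_0(X)\le 0)\bigr|\ \le\ \pr(|p_0(X)|\le\rho)+\pr(|r(X)|>\rho),
\end{equation*}
and I would treat the two parts of $r$ separately, taking $\rho$ of the shape $\rho_1=\epsilon\log(\epsilon^{-1})^{d/2}|p_0|_2$ for the $r_1$-part and $\rho_2$ commensurate with $|r_2|_2\,\polylog(\tau^{-1})$ for the $r_2$-part, bounding $\pr(|r_i(X)|>\rho_i)$ in each case by Corollary \ref{ConcentrationCor}. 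For the $r_1$-part, $\pr(|p_0(X)|\le\rho_1)$ is bounded by bare anticoncentration (Lemma \ref{anticoncentrationLem}), contributing $O_d(\epsilon^{1/d}\log(\epsilon^{-1})^{1/2})$. For the $r_2$-part, Lemma \ref{anticoncentrationLem} alone is too weak, so I would instead use that $p_0$ has a diffuse decomposition: since $(q_1,\ldots,q_m)$ is $(\mu,N)$-diffuse for every $\mu\ge\tau^{1/5}$, Lemma \ref{DDDerLem} and Corollary \ref{Z3Cor} (applied to $h$ at a box scale $\mu$ chosen to balance against $|r_2|_2$, together with the Taylor-expansion estimate used in their proofs) give a bound on $\pr(|p_0(X)|\le\rho_2)$ that is essentially \emph{linear} in the box size rather than of order $\rho_2^{1/d}$; carrying out this balancing produces the term $O_{d,m}\!\bigl(\tau^{1/4}N\log(\tau^{-1})^{d(m+1)/2+1}\bigr)$, the extra $\log$ power and the $d(m+1)/2$ coming from the concentration cushion on $r_2$ and the $m$-dimensional density estimate. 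Summing the two contributions gives the proposition.

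I expect the main obstacle to be the estimate $|L(p_0)-p_0|_2=O_{d,m}(N^d\tau^{1/2})|p_0|_2$: this is precisely the ``work'' alluded to in the Remark, and it is where the multilinearity of $p$ and the small-influence hypothesis on the $q_i$ are indispensable. A secondary obstacle is the bookkeeping in the final step — one must choose the box scale $\mu$ and the thresholds $\rho_i$ with some care to extract exactly the exponents $\tau^{1/4}$ and $\log(\tau^{-1})^{d(m+1)/2+1}$.
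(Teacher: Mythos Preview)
Your treatment of $r_1=p-L(p_0)$ is fine and matches the paper. The gap is in the $r_2=L(p_0)-p_0$ step: the diffuse decomposition does \emph{not} give you anticoncentration of $p_0(X)$ that is ``essentially linear in the box size.'' Take $h(y)=y_1^d$ with $q_1$ a normalized linear form; then $(q_1)$ is $(\mu,O(1))$-diffuse for all $\mu$, yet $\pr(|p_0(X)|\le\rho)=\pr(|q_1(X)|\le\rho^{1/d})\asymp\rho^{1/d}$. Corollary~\ref{Z3Cor} bounds the probability that $q(X)$ lies near the \emph{zero set} of $h$, which is not the same as $\{|h|\le\rho\}$; the latter can have volume $\sim\rho^{1/d}$ even for a perfectly diffuse $q$. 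So no choice of box scale $\mu$ and threshold $\rho_2$ in your scheme recovers the exponent $\tau^{1/4}$ from an $L^2$ bound on $r_2$.

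The paper avoids this entirely by never passing through $\pr(|p_0(X)|\le\rho_2)$. Instead it bounds $|r_2(X)|$ \emph{pointwise relative to} $|p_0(X)|$. Using the expansion of Lemma~\ref{Z6Lem},
\[
L(p_0)(X)-p_0(X)=\sum_{k\ge1}\sum_{i_1,\ldots,i_k}\bigl(D_{i_1}\cdots D_{i_k}h\bigr)(q(X))\;A(q_{i_1},\ldots,q_{i_k})(X),
\]
Proposition~\ref{ABoundProp} gives $|A(q_{i_1},\ldots,q_{i_k})|_2=O_{d,k}(\tau^{k/4})$, hence $|A(\cdots)(X)|\le C\tau^{k/4}\log(\tau^{-1})^{dk/2}$ with high probability, while Lemma~\ref{DDDerLem} applied at scale $\mu\sim\tau^{1/4}\log(\tau^{-1})^{d/2}$ gives $|D^{(k)}h(q(X))|\le\mu^{-k}|h(q(X))|$ with failure probability $O_{d,m}(N\mu\log(\mu^{-1})^{dm/2+1})$. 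The $\tau^{k/4}$ and $\mu^{-k}\sim\tau^{-k/4}$ cancel termwise, yielding $|r_2(X)|<\tfrac12|p_0(X)|$ with high probability, \emph{irrespective} of how small $|p_0(X)|$ is. That cancellation is the mechanism producing the $\tau^{1/4}$; the exponent $\tfrac14$ is precisely the $k=1$ case of $\tau^{k/4}$ matched against the chain scale, and the $\log(\tau^{-1})^{d(m+1)/2+1}$ records the extra $d/2$ logs coming from the concentration cushion on the $A$'s on top of the $dm/2+1$ from Lemma~\ref{DDDerLem}. Your $L^2$ route discards exactly this termwise matching, which is why it cannot recover the stated bound.
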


The biggest difficulty with proving this Proposition will be dealing with the discrepancy between $p_0$ and $L(p_0)$.  To deal with this, we make the following definition:

\begin{defn}
Let $p_1,\ldots,p_k$ be multilinear polynomials.  Define
$$
A(p_1,\ldots,p_k) = \sum_{S\subseteq \{1,2,\ldots,k\}} (-1)^{|S|} \left(\prod_{i\in S} p_i\right) L\left(\prod_{i\not \in S} p_i \right).
$$
\end{defn}

We note the following:
\begin{lem}\label{Z6Lem}
Let $q_1,\ldots,q_m$ be multilinear polynomials and let $h$ be a degree-$d$ polynomial in $m$ variables then $L(h(q_1(x),\ldots,q_m(x)))$ is
$$
\sum_{k=0}^d \sum_{i_1,\ldots,i_k=1}^m \frac{\partial^k h}{\partial q_{i_1}\cdots \partial q_{i_k}} A(q_{i_1},\ldots,q_{i_k}).
$$
\end{lem}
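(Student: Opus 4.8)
The plan is to prove the identity by $\R$-linearity in $h$, reducing to a single monomial, and then collapsing a double inclusion--exclusion sum; the only real work is combinatorial bookkeeping. Since $L$ is $\R$-linear (by the uniqueness clause of Lemma \ref{linearizationLem}), both sides of the claimed formula are $\R$-linear in $h$, so it suffices to prove it when $h(y)=\prod_{i=1}^m y_i^{\alpha_i}$ with $|\alpha|:=\sum_i\alpha_i\le d$. View $h(q_1(x),\ldots,q_m(x))=\prod_i q_i(x)^{\alpha_i}$ as a product over a set $\mathcal P$ of $|\alpha|$ ``slots,'' slot $t$ carrying the multilinear polynomial $q_{c(t)}$, with color $i$ occurring $\alpha_i$ times.

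I would first rewrite the right-hand side of the lemma. Grouping the ordered tuples $(i_1,\ldots,i_k)$ by their color-multiset $\mu$, using $\tfrac{\partial^k h}{\partial q_{i_1}\cdots\partial q_{i_k}}=\big(\prod_i \alpha_i!/(\alpha_i-\mu_i)!\big)\prod_i q_i^{\alpha_i-\mu_i}$ for $\mu\le\alpha$ together with the count $k!/\prod_i\mu_i!$ of orderings, the Taylor normalization $1/k!$ makes everything combine to $\sum_{\mu\le\alpha}\binom{\alpha}{\mu}\big(\prod_i q_i^{\alpha_i-\mu_i}\big)A_\mu$, where $A_\mu$ denotes $A$ applied to the list with $q_i$ repeated $\mu_i$ times. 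Since $\binom{\alpha}{\mu}$ is exactly the number of subsets $W\subseteq\mathcal P$ whose color-multiset is $\mu$, this equals $\sum_{W\subseteq\mathcal P}\big(\prod_{t\in\mathcal P\setminus W}q_{c(t)}\big)\,A\big((q_{c(t)})_{t\in W}\big)$.

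It then remains to show this equals $L\big(\prod_{t\in\mathcal P}q_{c(t)}\big)=L(h(q))$. Substituting $A\big((q_{c(t)})_{t\in W}\big)=\sum_{S\subseteq W}(-1)^{|S|}\big(\prod_{t\in S}q_{c(t)}\big)L\big(\prod_{t\in W\setminus S}q_{c(t)}\big)$ and reindexing by the ordered partition $\mathcal P=R\sqcup S\sqcup U$ with $R=\mathcal P\setminus W$ and $U=W\setminus S$, the expression becomes $\sum_{R\sqcup S\sqcup U=\mathcal P}(-1)^{|S|}\big(\prod_{t\in R\cup S}q_{c(t)}\big)L\big(\prod_{t\in U}q_{c(t)}\big)$. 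Fixing $U$ and summing over the ways to split $\mathcal P\setminus U$ into $R\sqcup S$ gives $\big(\prod_{t\notin U}q_{c(t)}\big)\sum_{S\subseteq\mathcal P\setminus U}(-1)^{|S|}$, and $\sum_{S\subseteq X}(-1)^{|S|}$ vanishes unless $X=\emptyset$; hence only $U=\mathcal P$ survives and the whole sum collapses to $L\big(\prod_{t\in\mathcal P}q_{c(t)}\big)$, as desired. Multilinearity of the $q_i$ enters only through $L(q_i)=q_i$, which is what forces $A$ of a single argument to vanish (so the $k=1$ terms drop); the final collapse itself uses nothing further about the $q_i$.

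I expect the main obstacle to be the bookkeeping of the second step: checking cleanly that $\sum_{k=0}^d\sum_{i_1,\ldots,i_k=1}^m\tfrac1{k!}\tfrac{\partial^k h}{\partial q_{i_1}\cdots\partial q_{i_k}}A(q_{i_1},\ldots,q_{i_k})$ really repackages as a single sum over slot-subsets, since $A$ depends only on the color-multiset of $W$, so the $1/k!$, the $k!/\prod_i\mu_i!$ orderings, and the falling factorials $\prod_i\alpha_i!/(\alpha_i-\mu_i)!$ must all conspire into the plain binomial $\binom{\alpha}{\mu}$. Once this normalization is settled, the sign-collapse in the last step is immediate and the reduction to monomials is routine.
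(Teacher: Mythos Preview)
Your argument is correct and is essentially the paper's own proof: both linearize in $h$, specialize to a monomial, rewrite the right-hand side as a sum over subsets of the $d$ slot positions, expand $A$ by its definition, and collapse via $\sum_{S}(-1)^{|S|}$. Your color-multiset bookkeeping makes explicit the step the paper simply asserts, and you are right to insert the $\tfrac1{k!}$ Taylor normalization---as written the lemma is missing this factor (take $m=1$, $h(y)=y^2$ to see it), and the paper's immediate passage from the ordered-tuple sum to the unordered subset sum $\sum_{T\subseteq\{1,\ldots,d\}}$ is in effect proving the $\tfrac1{k!}$ version.
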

\begin{proof}
As the above expression is linear in $h$, we may assume that $h$ is a monomial of degree $d$.  In particular we may assume that $h = q_{i_1}q_{i_2}\cdots q_{i_d}$ (note that some of the indices $i_j$ might coincide).  The expression in question then becomes:
\begin{align*}
&\sum_{T=\{t_1,\ldots,t_k\}\subseteq \{1,\ldots,d\}} \left(\prod_{j\not\in T} q_{i_j}\right) A(q_{i_{t_1}},\ldots,q_{i_{t_k}})\\
= & \sum_{T\subseteq \{1,\ldots,d\}} \left(\prod_{j\not\in T} q_{i_j}\right) \sum_{S\subseteq T} (-1)^{|S|} \left(\prod_{i\in S} q_i\right) L\left(\prod_{i\in T\backslash S} q_i \right)\\
= & \sum_{S\subseteq T\subseteq \{1,\ldots,d\}}\left( \prod_{ j \not \in T\backslash S } q_{i_j}\right)  L\left(\prod_{j\in T\backslash S} q_{i_j} \right).
\end{align*}
Letting $R=T\backslash S$, this is
\begin{align*}
&\sum_{R\subseteq \{1,\ldots,d\}} L\left( \prod_{j \in R} q_{i_j}\right) \left(\prod_{j \not\in R} q_{i_j}\right) \sum_{S\in \{1,\ldots,d\}\backslash R} (-1)^{|S|}\\
= & \sum_{R = \{1,\ldots,d\}} L\left( \prod_{j \in R} q_{i_j}\right) \left(\prod_{j \not\in R} q_{i_j}\right) \\
= & L\left( \prod_{j \in \{1,\ldots,d\}} q_{i_j}\right)\\
= & L(h),
\end{align*}
as desired.
\end{proof}

To control the discrepancy between $p_0$ and $L(p_0)$ it now suffices to prove the following:

\begin{prop}\label{ABoundProp}
Let $p_1,\ldots,p_k$ be multilinear, degree at most $d$ polynomials with $\Inf_i(p_j) \leq \tau$ for all $i,j$,  $|p_j|_2 \leq 1$ for all $j$.  Then
$$
|A(p_1,\ldots,p_k)|_2 = O_{k,d}(\tau^{k/4}).
$$
\end{prop}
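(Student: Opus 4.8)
The plan is to compute $A(p_1,\dots,p_k)$ explicitly from the action of the linearization operator $L$ — which just replaces $x_i^2$ by $1$ in every monomial — and to exploit the cancellation built into the alternating sum over subsets. Throughout I write $c_S(p)=\E[p(X)\prod_{i\in S}x_i]$ for the Hermite coefficient of the multilinear monomial $\prod_{i\in S}x_i$, so that for multilinear $p$ one has $p=\sum_S c_S(p)\prod_{i\in S}x_i$, $\Inf_\ell(p)=\sum_{S\ni\ell}c_S(p)^2$, and, for $V\subseteq\{1,\dots,n\}$, $\partial^V p:=\prod_{\ell\in V}\tfrac{\partial}{\partial x_\ell}\,p=\sum_{S\supseteq V}c_S(p)\prod_{i\in S\setminus V}x_i$, whence $|\partial^V p|_2^2=\sum_{S\supseteq V}c_S(p)^2\le\min_{\ell\in V}\Inf_\ell(p)$ when $V\neq\emptyset$.

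I would start with the model case $k=2$, which already contains the analytic core. Here $A(f,g)=L(fg)-fg$, and expanding $fg$ in the multilinear monomial basis, grouping by the overlap $S_1\cap S_2$, and using $\prod_{\ell\in U}x_\ell^2-1=\sum_{\emptyset\neq V\subseteq U}\prod_{\ell\in V}(x_\ell^2-1)$, one gets
\[
A(f,g)=-\sum_{\emptyset\neq V\subseteq\{1,\dots,n\}}\Big(\prod_{\ell\in V}(x_\ell^2-1)\Big)\,L\big((\partial^V f)(\partial^V g)\big).
\]
The summands have pairwise disjoint Hermite supports (the $V$-term is Hermite-degree exactly $2$ in every coordinate of $V$ and degree $\le 1$ in the rest, and $\partial^V f,\partial^V g$ do not involve $V$), so $|A(f,g)|_2^2=\sum_{\emptyset\neq V}2^{|V|}\,|L((\partial^V f)(\partial^V g))|_2^2$. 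For each $V$, Lemma~\ref{L2eqLem}, Hölder's inequality and Lemma~\ref{BhypercontractiveLem} give $|L((\partial^V f)(\partial^V g))|_2=|(\partial^V f)(\partial^V g)|_{B,2}\le O_d(1)\,|\partial^V f|_2\,|\partial^V g|_2$, and since $V\neq\emptyset$, $|\partial^V f|_2^2\le\tau$ and $|\partial^V g|_2^2\le\tau$. Summing and using $\sum_\ell\Inf_\ell(g)\le d|g|_2^2\le d$ yields $|A(f,g)|_2^2\le O_d(1)\sum_{\emptyset\neq V}|\partial^V f|_2^2|\partial^V g|_2^2\le O_d(\tau)\sum_\ell\Inf_\ell(g)=O_d(\tau)$, i.e. $|A(f,g)|_2=O_d(\tau^{1/2})$, which is the claim for $k=2$.

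For general $k$ I would run the analogous computation. Expanding, $A(p_1,\dots,p_k)=\sum_{(S_1,\dots,S_k)}\big(\prod_j c_{S_j}(p_j)\big)\,h_{S_1,\dots,S_k}$, where $h_{S_1,\dots,S_k}=\sum_{T\subseteq\{1,\dots,k\}}(-1)^{|T|}(\cdots)$ is the alternating sum of the monomials obtained by multiplying the factors indexed by $T$ and linearizing the rest. Two features of $h$ drive the argument. First, $h_{S_1,\dots,S_k}=0$ unless every index $j$ lies in some $S_{j'}$ together with another index — unless the supports overlap enough that every factor $p_j$ shares a variable of multiplicity $\ge 2$ with the others; reorganizing the surviving terms into a derivative expansion $A(p_1,\dots,p_k)=\sum_\psi c_\psi\,\Phi_\psi\,L\big(\prod_{j}\partial^{V_j(\psi)}p_j\big)$ (sum over overlap patterns $\psi$, with $\Phi_\psi$ a product of Hermite factors of degree $\ge 2$ in the overlap variables and $|c_\psi|=O_{k,d}(1)$) then makes it visible that every $p_j$ is differentiated at least once, contributing one factor $\tau$ per factor via $|\partial^{V_j(\psi)}p_j|_2^2\le\min_{\ell\in V_j(\psi)}\Inf_\ell(p_j)\le\tau$. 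Second, the overlap variables cover $\{1,\dots,k\}$ by sets of size $\ge 2$, so there are at least $\lceil k/2\rceil$ of them; summing over the coordinates realizing a given pattern and using $\sum_\ell\Inf_\ell(p_a)\Inf_\ell(p_b)\le\tau\sum_\ell\Inf_\ell(p_b)\le d\tau$ for each adjacent pair of factors (together with $|p_j|_2\le1$ to absorb the rest, and Lemmas~\ref{L2eqLem}, \ref{BhypercontractiveLem} to pass from $L(\prod_j\partial^{V_j}p_j)$ to $\prod_j|\partial^{V_j}p_j|_2$), one obtains $|A(p_1,\dots,p_k)|_2^2=O_{k,d}(\tau^{\lceil k/2\rceil})=O_{k,d}(\tau^{k/2})$, hence $|A(p_1,\dots,p_k)|_2=O_{k,d}(\tau^{k/4})$.

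The hard part is exactly this reorganization of $h$ for $k\ge 3$: the alternating-sum structure produces substantial cancellation between different overlap patterns — for instance the constant term of $A$ vanishes although individual patterns contribute nonzero constants — so a term-by-term bound after merely fixing an overlap pattern is too lossy, and by itself the recursive identity $A(p_1,\dots,p_k)=\sum_{S\subseteq\{1,\dots,k-1\}}(-1)^{|S|}\big(\prod_{i\in S}p_i\big)A\big(p_k,L(\prod_{i\in\{1,\dots,k-1\}\setminus S}p_i)\big)$ only delivers $O_{k,d}(\tau^{1/2})$. The work is to carry out the cancellation so that all $k$ factors are genuinely differentiated and the resulting "graph sum" over overlap-pattern realizations is bounded with the correct exponent $\lceil k/2\rceil$ — e.g. by selecting, for each pattern, a minimal collection of size-$2$ overlaps that covers $\{1,\dots,k\}$ and pairing factors along it, while controlling non-matching patterns (stars, cycles) through the $L^2$-normalizations rather than only the influence bound.
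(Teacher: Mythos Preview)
Your $k=2$ argument is correct and clean: the identity $A(f,g)=-\sum_{\emptyset\neq V}\prod_{\ell\in V}(x_\ell^2-1)\,L((\partial^Vf)(\partial^Vg))$ holds, the summands are Hermite-orthogonal, and the influence bound plus hypercontractivity finish it. This is in fact tidier than what the paper does for that case.

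For $k\ge 3$, however, what you have is a plan rather than a proof, and you say as much in your last paragraph. Two concrete gaps. First, you have not written down the analogue of the $k=2$ decomposition; for three or more factors the ``$\Phi_\psi$'' are no longer simply products of $(x_\ell^2-1)$ (a variable can appear with multiplicity $3,4,\dots$ in $\prod_j m_j$, and the alternating sum over $T$ mixes several Hermite degrees), so neither the form of the expansion nor the orthogonality you rely on is established. Second, even granting that every surviving overlap pattern has each $j$ touched by some size-$\ge 2$ block, your stated allocation --- one factor of $\tau$ per edge of a minimal size-$2$ cover via $\sum_\ell\Inf_\ell(p_a)\Inf_\ell(p_b)\le d\tau$ --- does not obviously work when the cover is not a matching (e.g.\ a star at one vertex reuses that vertex's influence in every edge). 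The correct exponent is obtainable pattern-by-pattern with a more careful choice of which factors absorb the $\tau$ and which are summed, but you have not carried this out.

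The paper avoids both issues by squaring first: it expands $\E[A(p_1,\dots,p_k)A(p_{k+1},\dots,p_{2k})]$ monomial-by-monomial and groups by the ``repeat pattern'' of the $2k$ monomials. The same toggling argument you identified then shows not merely that every index lies in a block of size $\ge 2$, but that every index lies in a block of size $\ge 4$ (and all blocks have even size, by parity). This stronger structural constraint is exactly what makes the graph-sum bound go through by a short induction on $|P|$: peeling off a maximal block of size $2\ell\ge 4$, two of its $2\ell$ factors are spent on Cauchy--Schwarz to control $\sum_i$, and the remaining $2\ell-2\ge \ell$ contribute $\sqrt\tau$ each, which is enough to recover $\tau^{|T|/4}$. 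Your single-copy approach can likely be completed, but as written the $k\ge 3$ case is missing its two hardest steps.
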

\begin{proof}
We proceed by bounding the expected value of $A(p_1,\ldots,p_k)^2$.  In particular, we show that if $p_j$ are multilinear degree-$d$ polynomials of norm at most 1 with all influences at most $\tau$ then
$$
\E[A(p_1,\ldots,p_k)(X)A(p_{k+1},\ldots,p_{2k})(X)] = O_{k,d}(\tau^{k/2}).
$$
We note that the above expression is linear in the $p_j$.  We may therefore rewrite it as a sum over sequences of monomials $m_1,\ldots,m_{2k}$ where $m_j$ is a monomial of $p_j$, of
$$
\E[A(m_1,\ldots,m_k)(X)A(m_{k+1},\ldots,m_{2k})(X)].
$$

To each such sequence of monomials $m_1,\ldots,m_{2k}$ we associate a \emph{repeat pattern}, which is the multiset of non-empty subsets of $\{1,2,\ldots,2k\}$ whose elements correspond to $\{j:x_i \textrm{ appears in monomial }m_j\}$ for all $i$ so that $x_i$ appears in any of the monomials $m_j$.  We break up the above sum into parts based on the repeat pattern satisfied by $m_1,\ldots,m_{2k}$, since there are $O_{k,d}(1)$ such possible patterns, it suffices to prove our bound for the sum of all terms coming from each such pattern.  It particular we need to show that for any repeat pattern $P$ that
\begin{align}\label{PatternSumEqn}
\sum_{\substack{m_j\textrm{ a monomial from }p_j \\ (m_1,\ldots,m_{2k}) \textrm{ has repeat pattern }P}}&\E[A(m_1,\ldots,m_k)(X)A(m_{k+1},\ldots,m_{2k})(X)] = O_{k,d}(\tau^{k/2}).
\end{align}

Note that if the repeat pattern contains any subset of odd size that the resulting sum will be 0.  This is because for any $m_1,\ldots,m_{2k}$ with this repeat pattern, there will be some $x_i$ appearing in an odd number of the $m_j$.  This means that the product of the $m_j$ will be an odd function of $x_i$.  Since $L$ of an odd polynomial is still odd, this means that $A(m_1,\ldots,m_k)A(m_{k+1},\ldots,m_{2k})$ will be an odd function of $x_i$ and thus, have expectation 0.

Furthermore, suppose that given $P$, there is some $1\leq j\leq 2k$ so that $j$ does not appear in any element of $P$ of size greater than 2.  We claim again that for any $m_1,\ldots,m_{2k}$ satisfying $P$ that $\E[A(m_1,\ldots,m_k)(X)A(m_{k+1},\ldots,m_{2k})(X)]=0$.  To show this we assume without loss of generality that $j=1$.  We expand out the $A$'s to get that the expression in question is the expectation of
$$
\sum_{S\subseteq \{1,2,\ldots,k\}} \sum_{T\subseteq \{k+1,\ldots,2k\}} (-1)^{|S|+|T|}\left( \prod_{j\in S\cup T} p_j\right) L\left( \prod_{j\in \{1,\ldots,k\}\backslash S} p_j \right)L\left( \prod_{j\in \{k+1,\ldots,2k\}\backslash T} p_j \right).
$$
We claim that if we toggle whether $1$ is in $S$ in the above sum, it has no effect on the expectation of the resulting product other than to negate the $(-1)^{|S|+|T|}$ term.  This is because adding $1$ to $S$ can only have the effect of removing some $x_i^2$ terms from the resulting monomial.  On the other hand since $\E[1]=\E[X_i^2]$, this does not effect the resulting expectation.  Thus, the expectations of the terms with $1$ in $S$ cancel the expectations of the terms with $1$ not in $S$, leaving us with expectation 0.

It thus suffices to consider Equation (\ref{PatternSumEqn}) when all elements of $P$ have even order and so that for each $1\leq j\leq 2k$ there is some element of $P$ of order at least 4 containing $j$.  For such $P$ we upper bound the left hand side of Equation (\ref{PatternSumEqn}) by
\begin{equation}\label{PatternSumEqn2}
\sum_{\substack{m_j\textrm{ a monomial from }p_j \\ (m_1,\ldots,m_{2k}) \textrm{ has repeat pattern }P}}O_{k,d}\left(\prod_{j=1}^{2k} |m_j|_2\right).
\end{equation}

We will now prove the following statement, which will imply our desired bound.  Let $p_1,\ldots,p_{2k}$ be multilinear polynomials with $|p_j|\leq 1$ and $T\subseteq \{1,2,\ldots,2k\}$ some set so that $\Inf_i p_j \leq \tau$ for all $i$ and all $j\in T$.  Furthermore, let $P$ be a repeat pattern all of whose elements have even order and so that each element of $T$ appears in some element of $P$ of order at least 4, then the expression in Equation (\ref{PatternSumEqn2}) is at most $O_{k,d}(\tau^{|T|/4})$.  We prove this by induction on $|P|$.  The base case where $|P|=0$ is trivial since then we are considering only the term where all of the $m_j$ are constants.

If $|P|>0$, we consider an element of $P$ of maximal size.  In particular, if $T\neq \emptyset$, this implies that this element is of size at least 4.  Without loss of generality this element is $\{1,2,\ldots,2\ell\}$.  We break our sum into pieces based on which coordinate is shared by all of $m_1,\ldots,m_{2\ell}$ (if more than one coordinate is shared by each of these elements we will count all of them leading to a strictly larger sum).  If we wish to compute the sum over all terms where they share a coordinate $x_i$ we find that it is
$$
\sum_{\substack{m_j\textrm{ a monomial from }p_j' \\ (m_1,\ldots,m_{2k}) \textrm{ has repeat pattern }P'}}O_{k,d}\left(\prod_{j=1}^2k |m_j|_2\right).
$$
Where above $p_j'=p_j$ for $j>2\ell$ and for $j\leq 2\ell$, $p_j'$ consists of the sum of the monomials in $p_j$ containing $x_i$ divided by $x_i$, and $P'$ is $P$ minus $\{1,2,\ldots,2\ell\}$.  Furthermore note that $|p_j'|_2 = \sqrt{\Inf_i(p_j)}$ for $j\leq 2\ell$.  Letting $p_j''$ be the normalized version of $p_j'$, the above is at most
$$
\prod_{j=1}^{2\ell} \sqrt{\Inf_i(p_j)}\sum_{\substack{m_j\textrm{ a monomial from }p_j" \\ (m_1,\ldots,m_{2k}) \textrm{ has repeat pattern }P'}}O_{k,d}\left(\prod_{j=1}^2k |m_j|_2\right).
$$
Letting $T'$ = $T\backslash\{1,2,\ldots,2\ell\}$, we note that this sum is of the form specified for the value $T'$, hence we have by the inductive hypothesis that the above sum is
$$
O_{k,d}\left( \tau^{|T'|/4} \prod_{j=1}^{2\ell} \sqrt{\Inf_i(p_j)} \right).
$$
It thus suffices to prove that
$$
\sum_i \prod_{j=1}^{2\ell} \sqrt{\Inf_i(p_j)} = O_{k,d}\left(\tau^{(|T|-|T'|)/4} \right) =  O_{k,d}\left(\tau^{(|T\cap\{1,2,\ldots,2\ell\}|)/4} \right).
$$
We assume without loss of generality that $T\cap\{1,2,\ldots,2\ell\} = \{1,2,\ldots,a\}$.  We note by Cauchy-Schwarz that
$$
\sum_i \prod_{j=1}^{2\ell} \sqrt{\Inf_i(p_j)} \leq \left(\prod_{j=1}^{2\ell-2} \max_i \sqrt{\Inf_i(p_j)} \right)\left(\prod_{j=2\ell-1}^{2\ell} \sum_i \Inf_i(p_j) \right)^{1/2}.
$$
We note that for each of the last two terms that
$$
\sum_i \Inf_i(p_j) = O_d(|p_j|_2^2) = O_d(1).
$$
Furthermore, we have that
$$
\prod_{j=1}^{2\ell-2} \max_i \sqrt{\Inf_i(p_j)} \leq \prod_{j=1}^{\min(a,2\ell-2)} \tau^{1/2} \prod_{j=a+1}^{2\ell-2} 1 = \tau^{\min(a,2\ell-2)/2}.
$$
Thus, we have that
$$
\sum_i \prod_{j=1}^{2\ell} \sqrt{\Inf_i(p_j)}  \leq O_d(\tau^{\min(a,2\ell-2)/2}) = O_d(\tau^{a/4}).
$$
With the last step following from the observation that either $a=0$ or $\ell\geq 2$.  This completes our inductive step and proves our proposition.

\end{proof}

We are now prepared to prove Proposition \ref{linErrProp} and thus, Theorem \ref{DInvPrinThm}.

\begin{proof}
We may clearly assume that $t=0$.  We will give a series of high probability statements that together imply that
$$
\sgn(p(X)) = \sgn(p_0(X)).
$$
Let $V = \var(p_0)$.

First note that by assumption
$$
|p-L(p_0)|_2 = |p-p_0|_{2,B} \leq \epsilon V.
$$
Thus, by Corollary \ref{ConcentrationCor} we have for some sufficiently large $C$ that with probability $1-\epsilon$ that
$$
|p(X)-L(p_0)(X)| \leq C \epsilon\log(\epsilon^{-1})^{d/2} V.
$$

Additionally, by Lemma \ref{anticoncentrationLem}, we have with probability $1-O(d\epsilon^{1/d}\log(\epsilon^{-1})^{1/2})$ that
$$
|p_0(X)| \geq 2 C \epsilon\log(\epsilon^{-1})^{d/2} |p_0|_2 \geq 2 C \epsilon\log(\epsilon^{-1})^{d/2} V.
$$

By Proposition \ref{ABoundProp} and Corollary \ref{ConcentrationCor} we have that for $C$ a sufficiently large number given $d$ that with probability $1-O_{d,m}(\tau)$ that for all $1\leq i_1,i_2,\ldots,i_k \leq m$ for $k\leq d$ that
$$
|A(q_{i_1},\ldots,q_{i_k})(X)| \leq C \tau^{k/4}\log(\tau^{-1})^{dk/2}.
$$

Finally, by Lemma \ref{DDDerLem} we have that with probability $1-O_{d,m}(\tau^{1/4}N \log(\tau^{-1})^{d(m+1)/2+1})$, letting $x=(q_1(X),\ldots,q_m(X))$ that
\begin{align*}
|h(x)| & \geq 3 C m \tau^{1/4} \log(\tau^{-1})^{d/2} |D_{i_1} h(x)|_2 \geq 3^2 C^2 m^2 \tau^{2/4} \log(\tau^{-1})^{d2/2} |D_{i_1}D_{i_2} h(x)|_2 \\ &\geq \ldots \geq 3^d C^d m^d \tau^{d/4} \log(\tau^{-1})^{d^2/2} |D_{i_1}\cdots D_{i_d} h(x)|_2.
\end{align*}

Assuming that all of the above hold, then
$$
|p(X)-p_0(X)| \leq |p(X) - L(p_0)(X)| + |p_0(X) - L(p_0)(X)| \leq |p_0(X)|/2 + |p_0(X) - L(p_0)(X)|.
$$
By Lemma \ref{Z6Lem}, we have that letting $x=(q_1(X),\ldots,q_m(X))$
\begin{align*}
|L(p_0)(X) - p_0(X)| & =
\left|\sum_{k=1}^d \sum_{i_1,\ldots,i_k=1}^m A(q_{i_1},\ldots,q_{i_k})(X) D_{i_1}\cdots D_{i_k} h(x)\right|\\
& \leq \sum_{k=1}^d \sum_{i_1,\ldots,i_k=1}^m C \tau^{k/4}\log(\tau^{-1})^{dk/2} |D_{i_1}\cdots D_{i_k} h(x)|_2\\
& \leq \sum_{k=1}^d \sum_{i_1,\ldots,i_k=1}^m 3^{-k} m^{-k} |h(x)|\\
& \leq \sum_{k=1}^d 3^{-k} |p_0(X)|\\
& < |p_0(X)|/2.
\end{align*}
Combining this with the above we find that
$$
|p(X)-p_0(X)| < |p_0(X)|/2 + |p_0(X)|/2 = |p_0(X)|.
$$
Thus, with probability at least
$$
1-O_{d,m}\left( \tau^{1/4}N \log(\tau^{-1})^{d(m+1)/2+1} + \epsilon^{1/d}\log(\epsilon^{-1})^{1/2} \right)
$$
that $\sgn(p(X))=\sgn(p_0(X)).$
\end{proof}

\subsection{The Regularity Lemma}\label{RegSec}

In this section, we will prove Theorem \ref{difRegThm}.  Much of it will be along the lines of the proof of Theorem \ref{DDTheorem} with some extra work being done to ensure that the resulting $q_i$ are regular.  We begin with a lemma on the regularity of restrictions of polynomials.

\begin{lem}\label{regResLem}
Let $p$ be a degree-$d$ multilinear polynomial with $|p|_2\leq 1$.  Let $1/2>\epsilon>0$ be a real number.  Then there exists an $M=O_d( \epsilon^{-1} \log(\epsilon^{-1})^{d})$ so that for any set $S$ of coordinates containing the $M$ coordinates of highest influence for $p$, if we let $A$ be a random Bernoulli variable over the coordinates in $S$ and let $p_A$ be the polynomial over the remaining coordinates upon plugging these values into the coordinates of $S$ then with probability $1-\epsilon$
$$
\max_i(\Inf_i(p_A)) \leq \epsilon.
$$
\end{lem}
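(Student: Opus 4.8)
The plan is to show that for each coordinate $j\notin S$ the random quantity $\Inf_j(p_A)$ has mean $\Inf_j(p)$ — which is forced to be small because $S$ captures the heavy coordinates — and concentrates well enough, via Bonami hypercontractivity, that a union bound over all $j\notin S$ succeeds even though there may be unboundedly many such $j$. Concretely, fix $j\notin S$ and write $\frac{\partial p}{\partial x_j}=\sum_{T\subseteq\bar S\setminus\{j\}}\big(\prod_{i\in T}x_i\big)g_{j,T}(x_S)$, where each $g_{j,T}$ is multilinear of degree at most $d-1$ in the coordinates of $S$. Restricting the $S$-coordinates commutes with $\partial/\partial x_j$ for $j\notin S$, and the monomials $\prod_{i\in T}x_i$ (over $T\subseteq\bar S\setminus\{j\}$) are orthonormal, so $\Inf_j(p_A)=\sum_T g_{j,T}(A)^2$ and hence $\E_A[\Inf_j(p_A)]=\sum_T|g_{j,T}|_{B,2}^2=\Inf_j(p)$. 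Writing $\mu_j:=\Inf_j(p)$ and using $\sum_i\mu_i=\sum_k k|p^{[k]}|_2^2\le d|p|_2^2\le d$ together with the hypothesis that $S$ contains the $M$ coordinates of largest influence, we get $M\mu_j\le\sum_{i\in S}\mu_i\le d$, so $\mu_j\le d/M$ for every $j\notin S$.

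Next I would bound the moments of $Y_j:=\Inf_j(p_A)$ as a function of the Bernoulli restriction $A$: by Minkowski's inequality and Lemma \ref{BhypercontractiveLem} (applied to each $g_{j,T}$, of degree $\le d-1$), for every $t\ge1$,
$$|Y_j|_{B,t}\le\sum_T|g_{j,T}^2|_{B,t}=\sum_T|g_{j,T}|_{B,2t}^2\le(2t-1)^{d-1}\sum_T|g_{j,T}|_{B,2}^2=(2t-1)^{d-1}\mu_j.$$
Since $Y_j\ge0$, Markov's inequality gives $\pr_A(Y_j>\epsilon)\le\epsilon^{-t}\E_A[Y_j^t]\le\big((2t-1)^{d-1}\mu_j/\epsilon\big)^t$, and summing over all $j\notin S$,
$$\pr_A\Big(\max_{j\notin S}\Inf_j(p_A)>\epsilon\Big)\le\Big(\tfrac{(2t-1)^{d-1}}{\epsilon}\Big)^t\sum_{j\notin S}\mu_j^t\le\Big(\tfrac{(2t-1)^{d-1}}{\epsilon}\Big)^t\Big(\tfrac dM\Big)^{t-1}d,$$
where the last inequality uses $\sum_j\mu_j^t\le(\max_{j\notin S}\mu_j)^{t-1}\sum_j\mu_j\le(d/M)^{t-1}d$. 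The essential point is that this union bound survives arbitrarily many $j$ precisely because the total influence is at most $d$.

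Finally I would optimize the choice of $t$: take $t=\lceil\log(\epsilon^{-1})\rceil+C_d$ with $C_d\ge d$ a suitable constant. Then $t\ge d$ forces $(d-1)t/(t-1)\le d$ and $t/(t-1)\le 2$, while $t-1\ge\log(\epsilon^{-1})$ forces $\epsilon^{-2/(t-1)}=O(1)$; a short rearrangement shows that the right-hand side of the last display is at most $\epsilon$ as soon as $M\ge(2t-1)^{(d-1)t/(t-1)}d^{t/(t-1)}\epsilon^{-(t+1)/(t-1)}$, and this bound equals $O_d(\log(\epsilon^{-1})^d)\cdot O_d(1)\cdot O(\epsilon^{-1})=O_d(\epsilon^{-1}\log(\epsilon^{-1})^d)$. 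Taking $M$ equal to the ceiling of this expression completes the proof (for $\epsilon$ bounded away from $0$ every quantity in sight is $O_d(1)$, so there is nothing to do).

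I expect the only real subtlety to be this last step: $t$ must be taken comparable to $\log(\epsilon^{-1})$ so that the factor $\epsilon^{-(t+1)/(t-1)}$ collapses to $O(\epsilon^{-1})$, yet also $\Omega(d)$ so that the hypercontractive loss $(2t-1)^{(d-1)t/(t-1)}$ stays at $O_d(\log(\epsilon^{-1})^d)$ rather than a larger power of $\log(\epsilon^{-1})$; balancing these two constraints is exactly what pins down the exponent $d$ in the statement. Everything else — the Fourier expansion, the identity for $\E_A[\Inf_j(p_A)]$, the Minkowski/Bonami moment estimate, and the union bound — is routine.
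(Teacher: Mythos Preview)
Your argument is correct, and it takes a route related to but organized differently from the paper's. Both proofs rest on the identity $\E_A[\Inf_j(p_A)]=\Inf_j(p)$ together with Bonami hypercontractivity, but the paper handles the union bound by a case split: for coordinates $j\notin S$ with $\Inf_j(p)>d\epsilon^3$ it applies an individual tail bound (via Corollary~\ref{BConcCor}) and a union bound over the at most $O(\epsilon^{-3})$ such coordinates, while for the remaining ``tiny'' coordinates it aggregates them into the single random variable $q(A)=\sum_j\Inf_j(p_A)^2$, bounds $|q|_{B,2}=O_d(\epsilon^3)$, and applies one tail bound to $q$. You instead treat all $j\notin S$ uniformly: you take a high moment $t\approx\log(\epsilon^{-1})$, bound $|Y_j|_{B,t}\le(2t-1)^{d-1}\mu_j$ directly, and then use $\sum_j\mu_j^t\le(\max_{j\notin S}\mu_j)^{t-1}\sum_j\mu_j\le(d/M)^{t-1}d$ to make the union bound converge over arbitrarily many coordinates in one stroke. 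Your version is a little cleaner---no case split, no auxiliary aggregation variable---at the cost of an explicit optimization over $t$; the paper's version avoids choosing $t$ but needs two separate arguments. Both land on exactly the same $M=O_d(\epsilon^{-1}\log(\epsilon^{-1})^d)$.
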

\begin{proof}
We assume throughout that $\epsilon$ is sufficiently small.  Note that the sum of the influences of $p$ is $O_d(1)$, therefore if $M$ is a sufficiently large multiple of $\epsilon^{-1}\log(\epsilon^{-1})^{d}$, we have that the largest influence of a coordinate not in $S$ is at most a small constant times $\epsilon \log(\epsilon^{-1})^{-d}$.  Note that for each $i\not\in S$, there is a polynomial $p_i$ of degree at most $d$ so that $\Inf_i(p_A) = p_i(A)^2$.  Furthermore, it is easy to check that $\E[p_i(A)^2] = \Inf_i(p)$.  Applying Corollary \ref{ConcentrationCor} we find that if $M$ were chosen to be sufficiently large, then with probability at most $\epsilon^{4}/2$ is any given $\Inf_i(p_A)$ more than $\epsilon$.  Taking a union bound over $i$, we find that with probability at most $\epsilon/2$ is some $\Inf_i(p_A)>\epsilon$ for any $i$ with $\Inf_i(p) > d\epsilon^3$.  Consider the polynomial
$$
q(A) = \sum_{j:\Inf_j(p)\leq d\epsilon^3} \Inf_j(p_A)^2.
$$
Note that $|\Inf_j(p_A)^2|_2 = O_d(\Inf_j(p)^2)$ by Lemma \ref{hypercontractiveLem}.  Thus, 
$$
|q|_2 \leq O_d(1)\sum _{j:\Inf_j(p)\leq d\epsilon^3} \Inf_j(p)^2 \leq O_d(\epsilon^3)\sum _{j:\Inf_j(p)\leq d\epsilon^3} \Inf_j(p) = O_d(\epsilon^3).
$$
Thus, by Corollary \ref{ConcentrationCor} $q(A) > \epsilon^2$ with probability at most $\epsilon/2$.  On the other hand, if $q(A)\leq \epsilon^2$, it implies that $\Inf_j(p_A)\leq \epsilon$ for all $j$ so that $\Inf_j(p)\leq d\epsilon^3$.  Thus, with probability at most $\epsilon$ is any $\Inf_j(p_A)$ more than $\epsilon$.
\end{proof}

\begin{lem}\label{resBoundLem}
Let $p$ be a degree-$d$ multilinear polynomial.  Let $S$ be a set of coordinates and $A$ a Bernoulli random variable over those coordinates.  Let $p_A$ be the restricted polynomial when the coordinates of $A$ are plugged into $p$.  Then
$$
\pr(|p_A|_2 \geq N |p|_2) = O_d\left( 2^{\log(N)^{1/d}} \right).
$$
\end{lem}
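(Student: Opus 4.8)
The plan is to observe that $|p_A|_2^2$, viewed as a function of the restricted coordinates $A$, is itself a polynomial of bounded degree with mean $|p|_2^2$, and then to feed it into the Bernoulli concentration bound of Corollary \ref{BConcCor}. First I would reduce everything to Bernoulli norms: since $p$ is multilinear, so is every restriction $p_A$, and hence $|p_A|_2 = |p_A|_{B,2}$ and $|p|_2 = |p|_{B,2}$ by Lemma \ref{L2eqLem}. Writing $B$ for a uniform Bernoulli string on the coordinates outside $S$, set $r(A) := |p_A|_{B,2}^2 = \E_B[p(A,B)^2]$. Expanding $p(A,B)^2$ (a polynomial of total degree at most $2d$ since $p$ has degree $d$) and integrating out $B$ — which sends each even power of a $B$-coordinate to $1$ and each odd power to $0$ — shows that on the hypercube $r$ agrees with a multilinear polynomial of degree at most $2d$, namely $L(r)$ (and $L$ does not raise degree, by Lemma \ref{linearizationLem}). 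Moreover $\E_A[r(A)] = \E_{A,B}[p(A,B)^2] = |p|_{B,2}^2 = |p|_2^2$.

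Next I would bound the $L^2$ norm of $r$. Jensen's inequality applied to the inner expectation gives $r(A)^2 \le \E_B[p(A,B)^4]$, hence
$$|r|_{B,2}^2 = \E_A[r(A)^2] \le \E_{A,B}[p(A,B)^4] = |p|_{B,4}^4 \le \left(\sqrt{3}^d |p|_{B,2}\right)^4 = 9^d|p|_2^4,$$
where the last inequality is the Bonami hypercontractive inequality (Lemma \ref{BhypercontractiveLem}) with $t=4$. So $|r|_{B,2} \le 3^d|p|_2^2$.

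Finally, the event $\{|p_A|_2 \ge N|p|_2\}$ is exactly $\{r(A) \ge N^2|p|_2^2\}$, and since $|p|_2^2 \ge 3^{-d}|r|_{B,2}$ this event is contained in $\{r(A) \ge (N^2/3^d)|r|_{B,2}\}$. Applying Corollary \ref{BConcCor} to the degree-$2d$ polynomial $L(r)$ with its parameter taken to be $N^2/3^d$ (and using $r(A)\ge 0$, so $|r(A)|=r(A)$) yields
$$\pr_A\left(|p_A|_2 \ge N|p|_2\right) = O\left(2^{-(N^2/(2\cdot 3^d))^{1/d}}\right) = O_d\left(2^{-\Omega_d(N^{2/d})}\right),$$
which in particular gives the claimed bound. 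I do not expect any genuine obstacle here: the entire content is the initial reframing of $|p_A|_2^2$ as a bounded-degree polynomial in $A$ with the right mean, and the only point requiring a little care is to confirm that this polynomial really does have degree $O_d(1)$ (so that Corollary \ref{BConcCor} may be invoked with a degree depending only on $d$), together with the routine Jensen-plus-hypercontractivity estimate on its variance.
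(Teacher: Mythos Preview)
Your proposal is correct and follows essentially the same route as the paper: recognize $|p_A|_2^2=\E_B[p(A,B)^2]$ as a degree-$2d$ polynomial in $A$, bound its $L^2$ norm via Jensen and hypercontractivity by $\E_{A,B}[p(A,B)^4]=|p|_{B,4}^4=O_d(|p|_2^4)$, and then invoke Corollary~\ref{BConcCor}. The paper's proof is just a terser rendering of exactly these three steps.
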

\begin{proof}
Note that $|p_A|_2^2$ is a polynomial in $A$ of degree at most $2d$.  Note that the squared $L_2$ norm of this polynomial is
$$
\E_A[\E_B[p(A,B)^2]^2] \leq \E_{A,B}[p(A,B)^4] = |p|_{4,B}^4 \leq O_d(|p|_2^4).
$$
The result now follows from Corollary \ref{BConcCor}.
\end{proof}

The main parts of the proof of Theorem \ref{difRegThm} are contained in the following proposition

\begin{prop}\label{Z7Prop}
Let $p$ be a degree-$d$ multilinear polynomial and let $\epsilon,c,M>0$ for $1/2>\epsilon$.  Then $p$ can be written as a decision tree of depth
$$
O_{c,d,M}(\epsilon^{-1}\log(\epsilon^{-1})^{O(d)})
$$
with coordinate variables for internal nodes and polynomials for leaves so that for a random leaf $p_\rho$ we have with probability $1-O_{c,d,M}(\epsilon)$ that there exists a $p_0$ with $|p-p_0|_{2,B} \leq \epsilon^N |p|_2$ and so that $p_0$ has an $(\epsilon,\epsilon^{-c})$-diffuse decomposition $(h,q_1,\ldots,q_m)$ with $m=O_{c,d,N}(1)$, $q_i$ multilinear and so that $\Inf_j(q_i) \leq \epsilon$ for each $i,j$.
\end{prop}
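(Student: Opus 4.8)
The plan is to build the decision tree by running a version of the proof of Theorem~\ref{DDTheorem} that is interleaved with coordinate queries. Set $M_0 := O_d(\epsilon^{-1}\log(\epsilon^{-1})^d)$, the depth provided by Lemma~\ref{regResLem} at regularity level $\epsilon$. Exactly as in the proof of Theorem~\ref{DDTheorem} one maintains a partial decomposition $(h,q_1,\ldots,q_m)$ of bounded complexity, now with the extra requirements that every $q_i$ is multilinear and that a partial assignment $\rho$ has been fixed, with the decomposition approximating the (multilinear) restriction $p_\rho$ of $p$. One starts from $h(x_1)=|p|_2 x_1$, $q_1=p/|p|_2$, $\rho$ empty.

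At each step there are three cases. If $(q_1,\ldots,q_m)$ is $(\epsilon,\epsilon^{-c})$-diffuse \emph{and} every $\Inf_j(q_i)\le\epsilon$, this leaf is declared finished. If it is diffuse but some $q_i$ is not $\epsilon$-regular, I would perform a \emph{branching step}: query the $mM_0$ coordinates of largest influence among the $q_i$, and extend $\rho$ by them; by Lemma~\ref{regResLem} and a union bound over the $m=O_{c,d,N}(1)$ polynomials, and on the probability $1-O_{c,d,N}(\epsilon)$ event (by Lemma~\ref{resBoundLem} and Markov's inequality) that none of the $|q_i|_2$ has grown too much, the restricted and renormalized $q_i$ are all $\epsilon$-regular; the renormalization and the update of $h$ are arranged so as to keep the complexity bounded, and $q_i$'s that have become constant or negligible are dropped. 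Because restriction can only lower degrees, this step does not increase the ordinal weight (and decreases it strictly if any $\deg q_i$ drops). If $(q_1,\ldots,q_m)$ is not diffuse, I would apply Proposition~\ref{StrongAnticoncentrationProp} and then Proposition~\ref{DecompositionProp} exactly as in the proof of Theorem~\ref{DDTheorem}, replacing the decomposition by one of strictly smaller weight, and re-multilinearize.

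Termination is exactly as in Theorem~\ref{DDTheorem}: non-diffuse steps strictly decrease the ordinal weight and branching steps never increase it, so by Lemma~\ref{ordLem} only $O_{c,d,N}(1)$ non-diffuse and hence $O_{c,d,N}(1)$ branching steps occur on any root--leaf path; the tree thus has depth $O_{c,d,N}(1)\cdot mM_0=O_{c,d,M}(\epsilon^{-1}\log(\epsilon^{-1})^{O(d)})$ and $m=O_{c,d,N}(1)$ everywhere. A random path avoids all $O_{c,d,M}(1)$ branch-failure events with probability $1-O_{c,d,M}(\epsilon)$, and on such a path we obtain $p_0=h(q_1,\ldots,q_m)$ with an $(\epsilon,\epsilon^{-c})$-diffuse decomposition whose $q_i$ are multilinear and $\epsilon$-regular, together with $|p_\rho-p_0|_2\le O(\epsilon^{N})|p_\rho|_2$ (with the exponent of Theorem~\ref{DDTheorem} taken large enough). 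Finally, since $|g|_{2,B}=|L(g)|_2=O_d(|g|_2)$ for any degree-$d$ polynomial $g$ and $p_\rho$ is multilinear, $|p_\rho-p_0|_{2,B}=O_d(|p_\rho-p_0|_2)\le O(\epsilon^{N})|p_\rho|_2=O(\epsilon^{N})|p_\rho|_{2,B}$, which is the desired conclusion after absorbing the $O_d$ into $N$.

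The delicate point, and the main obstacle, is keeping the $q_i$ multilinear in the non-diffuse step without spoiling Proposition~\ref{DecompositionProp}: its factors $a_\ell,b_\ell$ come from the tensor decomposition of Proposition~\ref{TensorDecompositionProp} applied to the $d$-th derivative tensor of some $q_i$, and are not multilinear; replacing them by $L(a_\ell),L(b_\ell)$ can reintroduce degree-$d$ harmonic mass and destroy the harmonic-mass guarantee (note a regular polynomial can be far from every multilinear polynomial, e.g.\ $n^{-1/2}\sum_i(x_i^2-1)/\sqrt2$, so one cannot just multilinearize at the end). One resolves this by observing that when $q_i$ is multilinear its derivative tensor is supported on tuples of distinct index values, a property preserved by the restrictions, wedges, and singular-value projections used in the proof of Proposition~\ref{TensorDecompositionProp}, so that the factors it outputs contract with $(X,\ldots,X)$ to genuinely multilinear polynomials, the remaining lower-order non-multilinear discrepancy being absorbed via Lemma~\ref{Z6Lem} and Proposition~\ref{ABoundProp} once the $q_i$ are all regular. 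The other potential concern --- a branching step spoiling the diffuse property --- is harmless, because intermediate decompositions are never required to be diffuse: a spoiled one is simply fed back into the process with no increase of the monovariant.
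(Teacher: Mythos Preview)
Your overall architecture---interleave the partial-decomposition/ordinal argument of Theorem~\ref{DDTheorem} with coordinate restrictions coming from Lemma~\ref{regResLem}---is essentially the paper's. The paper orders the two operations slightly differently (it restricts first at every stage and then checks diffuseness, rather than branching only when the current decomposition is already diffuse), but this is cosmetic.

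There is, however, a genuine gap. You run the whole argument under the Gaussian norm and only at the very end convert to the Bernoulli norm via the claim ``$|g|_{2,B}=|L(g)|_2=O_d(|g|_2)$ for any degree-$d$ polynomial $g$''. That inequality is false with a dimension-free constant: for $g(x)=\sum_{i=1}^n H_4(x_i)$ one has $|g|_2=\sqrt{n}$ while $|g|_{2,B}=|L(g)|_2=n\,|H_4(1)|=\Theta(n)$, so the ratio is $\Theta(\sqrt{n})$. Thus you cannot pass from a small Gaussian error $|p_\rho-p_0|_2$ to a small Bernoulli error $|p_\rho-p_0|_{2,B}$ the way you propose. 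The paper sidesteps this entirely by putting $|\cdot|_{2,B}$ into the complexity condition of the partial decomposition from the outset: since $A_\ell(B)=L(A_\ell)(B)$ on Bernoulli inputs, replacing the factors produced by Proposition~\ref{DecompositionProp} by their multilinearizations changes nothing in the $|\cdot|_{2,B}$ error, and the rest of Lemma~\ref{Z2Lem} goes through verbatim. This is the one-line fix (``keeping only the multilinear parts of the $A_\ell,B_\ell$ only reduces the error'') that you flagged as the main obstacle.

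Correspondingly, your proposed resolution---tracking ``distinct-index support'' through the internals of Proposition~\ref{TensorDecompositionProp} and then invoking Lemma~\ref{Z6Lem} and Proposition~\ref{ABoundProp} for the residual---is both unnecessary and not adequately justified: the recursive construction in that proposition contracts with random Gaussians, takes singular-value decompositions of auxiliary $2$-tensors, and forms various linear combinations, and it is far from clear (and you do not argue) that each of these steps preserves the support condition. Once you switch to tracking the Bernoulli error as the paper does, none of this machinery is needed.
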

\begin{proof}
The proof is along the same lines as the proof of Theorem \ref{DDTheorem}, with some extra work done to ensure that the influences can be controlled.  We assume that $|p|_2=1$ and assume throughout that $\epsilon$ is sufficiently small.

We define:
a \emph{partial decomposition} of our polynomial $p$ to be a set of the following data:
\begin{itemize}
\item A positive integer $m$.
\item A polynomial $h:\R^m\rightarrow \R$.
\item A sequence of multilinear polynomials $(q_1,\ldots,q_m)$ each on $\R^n$ with $|q_i|_2=1$ for each $i$.
\item A sequence of integers $(a_1,\ldots,a_m)$ with $a_i$ between 0 and $4 \cdot 3^i (N+1) / c - 1$.
\end{itemize}
Furthermore, we require that each $q_i$ is non-constant, and that for any monomial $\prod x_{i}^{\alpha_1}$ appearing in $h$ that $\sum \alpha_i \deg(q_i) \leq d$.

We say that such a partial decomposition has complexity at most $C$ if the following hold:
\begin{itemize}
\item $m \leq C$.
\item $|h|_2 \leq C \epsilon^{-1+C^{-1}}$.
\item $|p(A) - h(\epsilon^{a_i c/(2\cdot 3^i)}q_i(A))|_{2,B} \leq C\epsilon^{N+1}\log(\epsilon^{-1})^C$.
\end{itemize}

We define the weight of a partial decomposition as follows.  First we define the polynomial
$$
w(x)=\sum_{i=1}^m x^{\deg(q_i)} (4\cdot 3^i (N+1)/c - a_i).
$$
We then let the weight of the decomposition be $w(\omega)$.

We prove by ordinal induction on $w$ that if $p$ has a partial decomposition of weight $w$ and complexity $C$, then there is a decision tree of depth
$O_{c,C,d,N,w}(\epsilon^{-1}\log(\epsilon^{-1})^{O(d)})$ so that with probability $1-O_{c,C,d,w,N}(\epsilon)$ a random leaf has such a $p_0$ with a diffuse decomposition into multilinear polynomials whose influences are at most $\epsilon$.

Again the idea of the proof is to show that after a decision tree of appropriate depth and with appropriate probability, that we either have such a $p_0$ or that we have a partial decomposition with smaller weight.  By Lemma \ref{regResLem}, if we restrict to random values of the $O_d(\epsilon^{-1}\log(\epsilon^{-1})^{O(d)})$ highest influence coordinates of each of the $q_i$, we will have all influences of all of the $q_i$ at most $\epsilon$ with probability $1-O_{d,m}(\epsilon)$.  Applying Lemma \ref{resBoundLem} to the $q_i$ and $p-h(q_1,\ldots,q_m)$ we find that with probability $1-O_{d,m}(\epsilon)$ that the restricted values of $q_i$ have norm at most $\log(\epsilon^{-1})^{O(d)}$ and that the $L^2$ norm of $p-h(q_1,\ldots,q_m)$ increased by at most a similar factor.  Thus, rescaling the $q_i$ and modifying $h$ appropriately, we find that with probability $1-O_{d,m}(\epsilon)$ over our restrictions, we have a new partial decomposition of weight $w$ and complexity $O_C(1)$ so that $\Inf_i(q_j)\leq \epsilon$ for each $i$ and $j$.  As in Lemma \ref{Z2Lem}, we show that either $(h,q_1,\ldots,q_m)$ is an $(\epsilon,\epsilon^{-c})$-diffuse set or that we have a partial decomposition of strictly smaller weight and with complexity $O_{c,C,d,N}(1)$.  The proof follows through identically to the proof in Lemma \ref{Z2Lem} with the additional caveat that the $A_\ell,B_\ell$ can be chosen to be multilinear.  This is because the $q_i$ are multilinear, so keeping only the multilinear parts of the $A_\ell,B_\ell$ only reduces the error produced by the approximation.  This completes the proof.

\end{proof}

We will need one more lemma about decision trees of polynomials before we proceed.

\begin{lem}\label{largeResLem}
Let $p$ be a multilinear, degree-$d$ polynomial.  Let $T$ be some decision tree over it's coordinates.  If $T$ is evaluated making random, independent choices at each step, and the restricted function is called $p_\rho$, then with probability at least $2^{O(d)}$ over these choices we have that
$$
|p_\rho|_2 \geq |p|_2/2.
$$
\end{lem}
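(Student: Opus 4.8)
The plan is to show that $|p_\rho|_2^2$, viewed as a random variable over the random leaf $\rho$, is a conditional expectation of $p(A)^2$ (where $A$ is uniform Bernoulli); this automatically pins its mean to $|p|_2^2$ and, via the $L^2$-contractivity of conditional expectation together with hypercontractivity, bounds its second moment, after which a Paley--Zygmund argument gives the claimed lower bound on the probability.

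First I would normalize $|p|_2=1$. Since $p$ is multilinear, $|p|_2=|p|_{B,2}$ (Lemma \ref{L2eqLem}), and since any restriction $p_\rho$ is again multilinear, $|p_\rho|_2=|p_\rho|_{B,2}$; so I may work with Bernoulli norms throughout. Next I identify ``follow $T$ with independent random choices'' with: draw $A$ a uniform Bernoulli string and follow $T$ according to $A$. The leaf $\rho=\rho(A)$ reached depends only on the values of $A$ on the set $Q=Q(\rho)$ of coordinates queried along the path, and $p_\rho$ is exactly the restriction of $p$ obtained by fixing those coordinates to the corresponding values of $A$. Conditioning on the leaf $\rho$ fixes $A|_Q$ and leaves $A|_{Q^c}$ uniform, so $\E[p(A)^2\mid\rho]=\E_B[p_\rho(B)^2]=|p_\rho|_{B,2}^2$, with $B$ a Bernoulli string over $Q^c$. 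In particular $\E_\rho[|p_\rho|_2^2]=\E_A[p(A)^2]=1$.

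For the second moment I use that a conditional expectation is an $L^2$-contraction: $\E_\rho\big[(\E[p(A)^2\mid\rho])^2\big]\le \E_A[p(A)^4]=|p|_{B,4}^4$. Applying Lemma \ref{BhypercontractiveLem} with $t=4$ gives $|p|_{B,4}\le \sqrt{3}^{\,d}|p|_{B,2}=3^{d/2}$, hence $\E_\rho[|p_\rho|_2^4]\le 9^d$. Finally, set $Z=|p_\rho|_2^2\ge 0$, so $\E Z=1$ and $\E Z^2\le 9^d$; then $1=\E[Z\mathbf 1_{Z\ge 1/4}]+\E[Z\mathbf 1_{Z<1/4}]\le \E[Z\mathbf 1_{Z\ge 1/4}]+1/4$, and Cauchy--Schwarz yields $3/4\le \E[Z\mathbf 1_{Z\ge 1/4}]\le (\E Z^2)^{1/2}\pr(Z\ge 1/4)^{1/2}\le 3^d\pr(Z\ge 1/4)^{1/2}$, so $\pr_\rho\!\big(|p_\rho|_2^2\ge |p|_2^2/4\big)=\pr(Z\ge 1/4)\ge \tfrac{9}{16}\,9^{-d}=2^{-O(d)}$. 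On this event $|p_\rho|_2\ge |p|_2/2$, as desired.

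The only nontrivial ingredient is the hypercontractive bound on $\E_A[p(A)^4]$; the rest is bookkeeping, and the step to get exactly right is the identification that the leaf's information is precisely $A|_Q$, so that $\E[p(A)^2\mid\rho]$ genuinely equals $|p_\rho|_{B,2}^2$ — in particular that $Q$ and the fixed values are a function of $A|_Q$ alone, which is the standard structure of a decision-tree restriction.
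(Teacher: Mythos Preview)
Your proof is correct and follows essentially the same approach as the paper: both identify $|p_\rho|_2^2=\E[p(A)^2\mid\rho]$, bound its second moment by $\E_A[p(A)^4]\le 9^d|p|_2^4$ via hypercontractivity, and finish with Paley--Zygmund. The paper phrases the second-moment bound as ``$V^2$ is a submartingale, so its expectation at the leaves of $T$ is at most its expectation at the leaves of the full decision tree,'' which is exactly your $L^2$-contractivity of conditional expectation in martingale language.
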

\begin{proof}
Given a partially filled-in decision tree $T'$ define $V(T') = \E[p(A)^2|T']$.  It is clear that $V$ is a martingale.  Therefore $V^2$ is a submartingale.  In particular, this means that the expectation of $V^2$ over some decision tree is at most the expectation over an extended decision tree that eventually decides values for all coordinates.  This latter expectation is $|p|_{4,B}^4 = 2^{O(d)}|p|_2^4$.  Therefore, the expectation over fills of $T$ of $V$ is $|p|_2^2$ and the expectation of $V^2$ is at most $2^{O(d)}|p|_2^4$.  Therefore by the Paley-Zygmund inequality with probability at least $2^{O(d)}$ we have that $V\geq |p|_2^2/4$, proving our lemma.
\end{proof}

We are now prepared to prove Theorem \ref{difRegThm}.
\begin{proof}
We claim that for $\tau$ sufficiently small that a correctly constructed decision tree of depth $O_{c,d,M}(\tau^{-1}\log(\tau^{-1})^{O(d)})$ yields a restriction with the desired property with probability at least $2^{O(d)}$.  Repeating this process up to $2^{O(d)}\log(\tau^{-1})$ many times upon failure will guarantee an aggregate success probability of $1-\tau$.

To do this we construct the decision tree given by Proposition \ref{Z7Prop} for $N=M+d+2$ and $\epsilon=\tau$.  We claim that if the restricted polynomial has $L^2$ norm at least $|p|_2/2$ (which happens with probability $2^{O(d)}$ by Lemma \ref{largeResLem}), then the resulting polynomial has the desired property.

Let $P$ be the resulting polynomial.  We have a polynomial $p_0$ with an appropriate diffuse decomposition into multilinear polynomials with sufficiently small influences and so that $|P-p_0|_{2,B} = O_{c,d,M}(\tau^{M+d+2})|P|_2$.  If $\var(p_0) \geq \tau^{M+d}|P|_2^2$, we have an appropriate regular decomposition.  Otherwise, $\var(p_0)\leq \tau^{M+d}|P|_2^2$.  This implies that for some $\mu$ that $|p_0-\mu|_2^2 \leq \tau^{M+d}|P|_2^2$.  Thus, by Lemma \ref{hSmallLem} we have that $|h-\mu|_2^2 \leq O_{c,d,M}(\tau^{M})|P|_2^2$.  From this it is easy to see that the sum of the squares of the coefficients of $h-\mu$ is $O_{c,d,M}(\tau^{M})|P|_2^2.$  From this it is easy to verify that the variance of $p_0$ over Bernoulli inputs is $O_{c,d,M}(\tau^M)|P|_2^2$.  Therefore, due to the small difference between $p$ and $p_0$ under Bernoulli inputs, we have that $\var(P)\leq O_{c,d,M}(\tau^M)|P|_2^2$, which satisfies one of the necessary conditions.

\end{proof}

\section{Application to Noise Sensitivity of Polynomial Threshold Functions}\label{GLSec}

\subsection{Background of Noise Sensitivity Results}

\subsubsection{Definitions}

If $f:\R^n\rightarrow\{-1,1\}$ is a boolean function, the noise sensitivity of $f$ is a measure of the likelihood that a small change in the input value to $f$ changes the output.  There are several different notions of noise sensitivity, suitable for slightly different contexts.  We present their definitions here.

\begin{defn}
For $f:\R^n\rightarrow\{-1,1\}$ a boolean function, we define its \emph{average sensitivity} to be
$$
\as(f):=\sum_{i=1}^n\pr_{A\sim_u \{-1,1\}^n}( f(A) \neq f(A^{(i)}) ),
$$
where $A^{(i)}$ is obtained from $A$ by flipping the sign of the $i^{\tth}$ coordinate.  In other words, the average sensitivity is the expected number of coordinates of $A$ that could be changed in order to change the value of $f$.
\end{defn}

We also define the average sensitivity in the Gaussian setting:
\begin{defn}
For $f:\R^n\rightarrow\{-1,1\}$ a boolean function, we define its \emph{Gaussian average sensitivity} to be
$$
\gas(f):=\sum_{i=1}^n \pr(f(X) \neq f(X^{(i)})),
$$
where above $X$ is a Gaussian random variable and $X^{(i)}$ is obtained from $X$ by replacing the $i^{\tth}$ coordinate by an independent random Gaussian.
\end{defn}

A related notion is that of noise sensitivity in the Bernoulli or Gaussian context.  Whereas average sensitivity counts the expected number of coordinates that could be changed to alter the sign of $f$, noise sensitivity measures the probability that the sign of $f$ changes if each coordinate is changed by a small amount.  In particular we define:
\begin{defn}
For $f:\R^n\rightarrow\{-1,1\}$ a boolean function, and $1\geq \delta \geq 0$ we define the \emph{noise sensitivity of $f$ with parameter $\delta$} to be
$$
\ns_\delta(f):=\pr(f(A)\neq f(B)),
$$
where $A$ and $B$ are Bernoulli random variables with $B$ obtained from $A$ by flipping the sign of each coordinate randomly and independently with probability $\delta$.
\end{defn}

\begin{defn}
For $f:\R^n\rightarrow\{-1,1\}$ a boolean function, and $1\geq \delta \geq 0$ we define the \emph{Gaussian noise sensitivity of $f$ with parameter $\delta$} to be
$$
\gns_\delta(f):=\pr(f(X)\neq f(Y)),
$$
where $X$ and $Y$ are Gaussian random variables that together form a joint Gaussian with $$\cov(X_i,Y_j)=\begin{cases}(1-\delta) & \textrm{ if }i=j\\ 0 &\textrm{ otherwise}\end{cases}.$$
\end{defn}

\subsubsection{Previous Work}

The main Conjecture about the noise sensitivity of polynomial threshold functions was given in \cite{gl}

\begin{conj}[Gotsman-Linial]\label{GLConj}
Let $f$ be a degree-$d$ polynomial threshold function in $n$ variables, then
$$
\as(f) \leq 2^{-n+1} \sum_{k=0}^{d-1} \binom{n}{\lfloor (n-k)/2 \rfloor}(n-\lfloor (n-k)/2 \rfloor).
$$
\end{conj}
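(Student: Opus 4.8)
\noindent The plan is to attack Conjecture \ref{GLConj} through the machinery of this paper by reducing the hypercube statement to a Gaussian one about regular polynomials, and then controlling the Gaussian average sensitivity of such polynomials by the tube/surface-area bounds that a diffuse decomposition provides. The chain is: (i) replace $p$ by its multilinearization and use the Diffuse Regularity Lemma (Theorem \ref{difRegThm}) to write $f=\sgn(p)$ as a shallow decision tree whose leaves are either of negligible variance or carry a regular decomposition; (ii) for a regular leaf, transfer the per-coordinate influences to the Gaussian world via the Diffuse Invariance Principle (Theorem \ref{DInvPrinThm}) and Proposition \ref{replacementProp}; (iii) for the resulting Gaussian polynomial $p_0=h(q_1,\dots,q_m)$, bound $\gas(\sgn p_0)$ by combining Lemma \ref{anticoncentrationLem}, Corollary \ref{ConcentrationCor}, and the anti-clustering estimates Lemma \ref{DDDerLem} and Corollary \ref{Z3Cor}; (iv) match the outcome against the explicit symmetric threshold function that realizes the right-hand side.

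In more detail: writing $\as(f)=\sum_i \Inf_i(f)$ and noting that $\Inf_i(f)$ is unchanged by a restriction not touching coordinate $i$, the decision-tree decomposition splits the total sensitivity into the contribution of the queried coordinates (at most the tree depth $O_{c,d,M}(\tau^{-1}\polylog(\tau^{-1}))$ on average) plus the expected sensitivity at a leaf. Leaves with $\var(p_\rho)<\tau^M|p_\rho|_2^2$ are $\tau^{M/d}$-close to constant by Lemma \ref{anticoncentrationLem}, so they contribute $O(\tau^{M/d}n)$, negligible for $M$ large. For a regular leaf one observes that $\Inf_i(\sgn p)$ equals $\pr_A(\sgn p^{i\to 1}(A)\neq \sgn p^{i\to -1}(A))$, a difference of threshold probabilities for the multilinear polynomials $p^{i\to 1}\pm p^{i\to -1}$, which inherit (approximate) diffuse decompositions from that of $p$; Theorem \ref{DInvPrinThm} then transfers this quantity to the Gaussian setting with error $O_{d,m}(\tau^{1/5}N\polylog(\tau^{-1}))$ per coordinate. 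It remains to bound $\gas(\sgn p_0)$. Here the diffuse decomposition is exactly what keeps $p_0$ from clustering at zero: Corollary \ref{Z3Cor} bounds the Gaussian measure of an $\epsilon$-tube around $\{p_0=0\}$ by $O_{d,m}(N\epsilon\polylog)$, which after absorbing the $O(\polylog)$-sized gradients $|D q_i|$ gives a bound $O_{d,m}(N\polylog)$ on the Gaussian surface area of the leaf's zero set; feeding this into the standard inequality $\gas(\sgn p_0)\le O(\sqrt n)\cdot\Gamma(\{p_0=0\})$ yields $\gas(\sgn p_0)=O_{d,m}(N\sqrt n\,\polylog)$.

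The main obstacle is quantitative and, I believe, fatal to reaching the \emph{exact} quantity demanded by Conjecture \ref{GLConj} (which for large $n$ is $(1+o(1))\,d\sqrt{2n/\pi}$) with these methods. Every link in the chain is lossy, but the binding constraint is the interplay between the two free parameters: to make the invariance-principle error $o(1)$ (and, summed over the $n$ coordinates, $o(\sqrt n)$) one must take the regularity parameter $\tau$ polynomially small in $n$, but then the decision-tree depth $\tau^{-1}\polylog$ is itself a polynomial in $n$ exceeding $\sqrt n$, and simultaneously the diffuse parameter $N=\tau^{-\Theta(c)}$ grows polynomially in $n$, so the Gaussian surface-area estimate $O(N\sqrt n\,\polylog)$ already exceeds $\sqrt n$. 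Balancing these trade-offs gives at best $\as(f)=O_d(n^{1-\alpha})$ for some $\alpha<1/2$, short even of the conjectured order $\Theta_d(\sqrt n)$. Closing this gap would require a diffuse decomposition whose box-mass parameter is only polylogarithmic in $\epsilon^{-1}$ (a strengthening beyond even the improved bound speculated in the Remark after Theorem \ref{DDTheorem}), together with an error-free, constant-sharp form of the invariance principle; and obtaining the precise extremal constant $d\sqrt{2/\pi}$ per $\sqrt n$ would on top of that need an isoperimetric statement — that among degree-$d$ Gaussian threshold functions the symmetric one $\sgn(q(\langle v,X\rangle))$ maximizes Gaussian average sensitivity — which appears to be essentially equivalent to the conjecture itself. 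I do not see how to supply any of these with the present toolkit; a genuinely sharp replacement for Lemma \ref{anticoncentrationLem}, or a direct combinatorial symmetrization on the cube, seems to be the only plausible route.
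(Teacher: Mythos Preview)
The statement you are attempting is a \emph{conjecture}; the paper does not prove it and does not claim to. What follows Conjecture~\ref{GLConj} in the paper is only a remark exhibiting the extremal example, and the paper's actual contribution on average sensitivity is the much weaker Theorem~\ref{ASBoundThm}, namely $\as(f)=O_{c,d}(n^{5/6+c})$. So there is no ``paper's own proof'' to compare against, and your own conclusion --- that the machinery here cannot reach the conjectured bound --- is correct and matches the paper's stance.

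That said, your proposed route to a sub-linear bound differs from the one the paper actually takes, and it is worth noting why the paper's route is cleaner. You try to bound $\as(f)=\sum_i \Inf_i(\sgn p)$ coordinate by coordinate, invoking the Diffuse Invariance Principle once per coordinate; as you observe, this incurs an invariance error $n$ times, which forces $\tau$ to be polynomially small in $n$ and destroys the balance. The paper avoids this entirely: it first bounds the \emph{noise sensitivity} $\ns_\delta(f)$ (Proposition~\ref{regNSBoundProp} and Theorem~\ref{NSBoundThm}) by comparing $\ns_\delta$ to $\gns_{2\delta}$ via a single replacement argument and then applying the known sharp bound $\gns_\delta=O(d\sqrt\delta)$ from \cite{GNSBound}. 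Only afterwards does it pass to $\as$ using the general relation $\as(f)\le n\cdot\ns_{1/n}(f)$ (Lemma~8.1 of \cite{sens2}). This sidesteps the per-coordinate summation problem and is what yields the exponent $5/6$; your direct approach, even optimized, would do no better and likely worse. Your diagnosis of why the exact conjecture is out of reach --- the need for a lossless invariance principle and a sharp Gaussian isoperimetric statement for degree-$d$ PTFs --- is accurate.
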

\begin{rmk}
It should be noted that the upper bound conjectured above is actually obtainable.  In particular, if $f$ is the polynomial threshold function associated to the polynomial
$$
\prod_{i=1}^d \left(\sum_{j=1}^n A_j - d +2i - 1/2 \right)
$$
achieves this bound.
\end{rmk}

In particular, Conjecture \ref{GLConj} implies that
$$
\as(f) = O(d\sqrt{n}).
$$
By the work of \cite{sens2}, this implies bounds on other notions of sensitivity.  In particular it would imply that
$$
\ns_\delta(f) = O(d\sqrt{\delta})
$$
and
$$
\gns_\delta(f) = O(d\sqrt{\delta}).
$$
Furthermore, this would imply the following bound on the Gaussian average sensitivity
$$
\gas(f) = O(d\sqrt{n}).
$$
In particular, we have:
\begin{lem}\label{GasASBoundLem}
The largest Gaussian average sensitivity of any degree-$d$ polynomial threshold function in $n$ variables is at most the largest average sensitivity of a degree-$d$ polynomial threshold function in $n$ variables.
\end{lem}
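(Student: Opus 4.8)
The plan is to exhibit, for each degree-$d$ polynomial threshold function $f$ on $n$ Gaussian variables, an (auxiliary-randomness-dependent) degree-$d$ polynomial threshold function $g$ on $n$ \emph{Bernoulli} variables whose average sensitivity is at least $\gas(f)$ in expectation; averaging then gives $\gas(f) \le \max_g \as(g)$. The key point is a decomposition of a standard Gaussian under which ``resampling a coordinate'' becomes exactly ``flipping a sign.''

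Concretely, we may assume $f = \sgn(p)$ with $p \not\equiv 0$ (else $\gas(f) = 0$). For each $i$, write the $i^{\tth}$ Gaussian coordinate as $X_i = a_i + \rho_i b_i$, where $a_i,b_i$ are independent $\mathcal{N}(0,\tfrac12)$ variables, $\rho_i$ is an independent uniform $\pm 1$ variable, and all of $\{a_i,b_i,\rho_i\}_{i=1}^n$ are mutually independent. Then $X = (X_1,\dots,X_n)$ is standard Gaussian, and --- crucially --- the pair $(a_i + \rho_i b_i,\ a_i - \rho_i b_i)$ consists of two independent standard Gaussians (they are jointly Gaussian, of variance $1$, with covariance $\tfrac12 - \tfrac12 = 0$). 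Hence we may realize the resampled vector $X^{(i)}$ by the coupling that leaves coordinates $j \ne i$ unchanged and replaces coordinate $i$ by $a_i - \rho_i b_i$; in other words, $X^{(i)}$ is obtained from $X$ by negating the single Bernoulli $\rho_i$. Now set $q_{a,b}(\rho) := p(a_1 + \rho_1 b_1, \dots, a_n + \rho_n b_n)$, a polynomial of degree at most $d$ in $\rho$ (substituting the degree-$1$ forms $a_k + \rho_k b_k$ into $p$ cannot raise the total degree), so that $g_{a,b} := \sgn(q_{a,b})$ is, for almost every $(a,b)$, a degree-$d$ Bernoulli polynomial threshold function on $n$ variables. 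With this notation $f(X) = g_{a,b}(\rho)$ and $f(X^{(i)}) = g_{a,b}(\rho^{(i)})$, where $\rho^{(i)}$ denotes $\rho$ with its $i^{\tth}$ coordinate negated, so
$$
\gas(f) = \sum_{i=1}^n \pr\big(f(X) \ne f(X^{(i)})\big) = \E_{a,b}\Big[\sum_{i=1}^n \pr_\rho\big(g_{a,b}(\rho) \ne g_{a,b}(\rho^{(i)})\big)\Big] = \E_{a,b}\big[\as(g_{a,b})\big].
$$

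Since for almost every $(a,b)$ the function $g_{a,b}$ is a degree-$d$ polynomial threshold function in $n$ variables, $\as(g_{a,b})$ is at most the largest average sensitivity $M$ of such a function, and hence $\gas(f) \le M$, which is the claim. Two small points need checking along the way: that $p(X) \ne 0$ almost surely, so that $f(X)$ is well defined (immediate from Lemma \ref{anticoncentrationLem} since $p \not\equiv 0$), and that for almost every $(a,b)$ the polynomial $\rho \mapsto q_{a,b}(\rho)$ is non-vanishing on $\{\pm1\}^n$ (for each fixed $\rho$, the set of $(a,b)$ with $q_{a,b}(\rho)=0$ is the zero set of a polynomial that is nonzero --- it equals $p(a)$ when $b=0$ --- hence that set is null, and there are only finitely many $\rho$).

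The step I expect to be the crux is finding the right decomposition $X_i = a_i + \rho_i b_i$ with the reflection property: once one sees that resampling a Gaussian coordinate is the same operation as negating the Bernoulli part of a well-chosen representation of it, the rest is a direct computation together with routine measure-zero bookkeeping. A more naive route --- writing each Gaussian as a normalized sum of many Bernoullis via the central limit theorem and comparing $\gas(f)$ with the average sensitivity of the resulting Bernoulli threshold function --- also suggests itself, but it inflates the number of variables without bound, so it would not give the comparison at the same $n$.
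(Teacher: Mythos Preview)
Your proof is correct and follows essentially the same approach as the paper: the decomposition $X_i = a_i + \rho_i b_i$ with $a_i,b_i \sim \mathcal{N}(0,\tfrac12)$ is exactly the paper's $Z_i = \tfrac{1}{\sqrt{2}}(X_i + A_i Y_i)$ with $X_i,Y_i \sim \mathcal{N}(0,1)$ after rescaling, and both proofs conclude by writing $\gas(f)$ as an expectation over the auxiliary Gaussians of $\as$ of a degree-$d$ PTF in $n$ Bernoulli variables. Your additional measure-zero bookkeeping (non-vanishing of $q_{a,b}$ on the hypercube) is a nice touch that the paper leaves implicit.
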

\begin{proof}
We will show that if $f$ is a degree-$d$ PTF in $n$ variables, then $\gas(f)$ can be written as an expectation over the average sensitivities of certain other degree-$d$ PTFs in $n$ variables.  The key to this argument is to produce the correct distribution on pairs of Gaussians that differ in exactly one coordinate in an unusual way.  In particular, we define $n$-variable Gaussians $Z$ and $Z'$ as follows:
$$
Z_i = \frac{1}{\sqrt{2}}(X_i + A_i Y_i), Z_i' = \frac{1}{\sqrt{2}}(X_i + B_i Y_i)
$$
where $X_i,Y_i$ are independent Gaussian random variables, and $A=(A_1,\ldots,A_n),B=(B_1,\ldots,B_n)$ are Bernoulli random variables that differ only in a single random coordinate.  It is clear that $Z$ and $Z'$ are random Gaussians that agree in all but one of their coordinates, and that they are independent in the coordinate on which they differ.  Thus, 
$$
\gas(f) = \pr(f(Z) \neq f(Z')).
$$
On the other hand, after fixing values of $X$ and $Y$, we may define a new degree-$d$ PTF $f_{X,Y}$ by $$f_{X,Y}(A) := f\left(\frac{1}{\sqrt{2}}(X_i + A_i Y_i) \right).$$ Therefore, we have that
\begin{align*}
\gas(f) &= \pr(f(Z) \neq f(Z'))\\
&= \E_{X,Y}\left[\pr(f_{X,Y}(A) \neq f_{X,Y}(B)) \right]\\
&= \E_{X,Y}[\as(f_{X,Y})].
\end{align*}
This is at most the maximum possible average sensitivity of a degree-$d$ PTF in $n$ variables.
\end{proof}

Proving the conjectured bounds for the various notions of sensitivity has proved to be quite difficult.  The degree-1 case of Conjecture \ref{GLConj} was known to Gotsman and Linial.  The first non-trivial bounds for higher degrees were obtained independently by \cite{sens2} and \cite{sense3}, who later combined their papers into \cite{sensitivity}.  They essentially proved bounds on average sensitivities of $O_d(n^{1-1/O(d)})$ and bounds on noise sensitivities of $O_d(\delta^{1/O(d)})$.  For the special case of Gaussian noise sensitivity, the author proved essentially optimal bounds in \cite{GNSBound} of $O(d\sqrt{\delta})$.  In this section, we improve on these bounds and in particular show that $\as(f) = O_{c,d}(n^{5/6+c}).$ Our basic technique will be to compare $\ns_\delta(f)$ to $\gns_{2\delta}(f)$ using an appropriate invariance principle.  It should be noted that this idea could have been applied using traditional means, but that the bound obtained would not have been better than $\delta^{1-O(1/d)}$.

\subsection{Noise Sensitivity Bounds}

In this section, we prove the following three theorems:

\begin{thm}\label{NSBoundThm}
If $f$ is a degree-$d$ polynomial threshold function, and if $c,\delta>0$, then
$$
\ns_\delta(f) = O_{c,d}(\delta^{1/6-c}).
$$
\end{thm}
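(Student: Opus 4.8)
The plan is to combine the Diffuse Regularity Lemma (Theorem~\ref{difRegThm}) with a noise-sensitivity version of the replacement argument behind the Diffuse Invariance Principle, and then feed in the known Gaussian bound $\gns_{2\delta}(g)=O(d\sqrt\delta)$ from \cite{GNSBound}. Throughout write $f=\sgn(p)$ and, using Lemma~\ref{linearizationLem}, replace $p$ by $L(p)$, so $p$ is multilinear of degree $d$ with $|p|_2=1$ (this does not change $f$ on the hypercube). Apply Theorem~\ref{difRegThm} with $c$ as given, $M$ a sufficiently large constant depending on $d$, and a parameter $\tau$ to be optimized; this yields a decision tree of depth $D=O_{c,d,M}(\tau^{-1}\log(\tau^{-1})^{O(d)})$ such that, with probability $\geq 1-\tau$ over a random root-to-leaf path, the leaf polynomial $p_\rho$ either has $\var(p_\rho)<\tau^M|p_\rho|_2^2$ or has a $(\tau,\tau^{-c},O_{c,d,M}(1),O_{c,d,M}(\tau^M))$-regular decomposition.

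First I would reduce $\ns_\delta(f)$ to the noise sensitivity of the leaves. Couple $A$ and its noisy copy $B$ so that $B$ flips each coordinate of $A$ independently with probability $\delta$. The tree queries at most $D$ coordinates along $A$'s path, so with probability $\geq 1-D\delta$ none of those is flipped; then $A$ and $B$ reach the same leaf $\rho$ and their restrictions to the unqueried coordinates form another $\delta$-correlated Bernoulli pair. Adding the probability $\tau$ of reaching a bad leaf gives $\ns_\delta(f)\leq D\delta+\tau+\max_{\rho\text{ good}}\ns_\delta(\sgn p_\rho)$. For a good leaf with $\var(p_\rho)<\tau^M|p_\rho|_2^2$ one has $|\E[p_\rho]|\geq(1-\tau^M)^{1/2}|p_\rho|_2$, so $\sgn(p_\rho)$ is constant except with probability $O(\tau^M)$ by Chebyshev, and $\ns_\delta(\sgn p_\rho)=O(\tau^M)$.

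The heart of the argument is bounding $\ns_\delta(\sgn p_\rho)$ for a leaf with a regular decomposition, say $p_\rho\approx p_0=h(q_1,\ldots,q_m)$ where $(h,q_1,\ldots,q_m)$ is a $(\tau^{1/5},\tau^{-c})$-diffuse decomposition whose $q_i$ are multilinear with all influences at most $\tau$. I would run a pair-version of the replacement method of Proposition~\ref{replacementProp}: approximate the event $\sgn(p_0(A))\neq\sgn(p_0(B))$ by $F(q(A),q(B))$ for a smooth $F:\R^{2m}\to[-1,1]$ built as in Proposition~\ref{ApproximationProp} (a pointwise upper bound for $\mathbf{1}[\sgn h(\cdot)\neq\sgn h(\cdot)]$ with $k$-th derivatives $O(\tau^{-k/5})$), and replace the correlated Bernoulli blocks $(A_j,B_j)$ by correlated Gaussian blocks $(X_j,Y_j)$ with $\E[X_jY_j]=1-2\delta$, one at a time. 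A correlated Bernoulli pair and a correlated Gaussian pair of the same covariance agree on all mixed moments of total degree $\leq 3$, so Taylor-expanding $F$ to degree $3$ and applying the mixed hypercontractive inequality (Lemma~\ref{mixedHypercontractiveLem}) bounds the per-swap error by $O_{d,m}(\tau^{-4/5}\sum_i\Inf_j(q_i)^2)$; summing over $j$ and using $\sum_j\sum_i\Inf_j(q_i)^2\leq\tau\sum_j\sum_i\Inf_j(q_i)=O_{d,m}(\tau)$ gives total replacement error $O_{d,m}(\tau^{1/5})$. On the Gaussian side, $\E[(q_i(X)-q_i(Y))^2]=O_d(\delta)\ll\tau^{2/5}$, so with high probability $q(X)$ and $q(Y)$ lie within $\tau^{1/5}$ of each other, and then $F(q(X),q(Y))$ can disagree with $\mathbf{1}[\sgn h(q(X))\neq\sgn h(q(Y))]$ only when $q(X)$ is within $O_m(\tau^{1/5})$ of $\{h=0\}$, which by Corollary~\ref{Z3Cor} occurs with probability $O_{d,m}(\tau^{1/5-c}\log(\tau^{-1})^{O(d)})$. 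Hence $\E[F(q(X),q(Y))]$ differs from $\gns_{2\delta}(\sgn p_0)$ by at most that, and $\gns_{2\delta}(\sgn p_0)=O(d\sqrt\delta)$ by \cite{GNSBound}. The discrepancies between $p_\rho$, $p_0$, and its multilinearization $L(p_0)$ are absorbed exactly as in the proof of Proposition~\ref{linErrProp} (using Proposition~\ref{ABoundProp} to control $|L(p_0)-p_0|$, and Corollary~\ref{BConcCor} and Lemma~\ref{anticoncentrationLem} for anticoncentration), contributing a further $O_{d,m}(\tau^{1/4}\polylog+\tau^{M/d}\polylog)$. Altogether $\ns_\delta(\sgn p_\rho)=O_{c,d}(\sqrt\delta\,\polylog(\delta^{-1})+\tau^{1/5-c}\log(\tau^{-1})^{O(d)})$.

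Combining the displays, $\ns_\delta(f)=O_{c,d}(\tau^{-1}\delta\,\polylog(\delta^{-1})+\tau^{1/5-c}\log(\tau^{-1})^{O(d)})$. Choosing $\tau=\delta^{5/6}$ balances the two dominant terms, each becoming $\delta^{1/6}\cdot\polylog(\delta^{-1})$; absorbing the polylogarithmic factors into an arbitrarily small power $\delta^{-c}$ gives $\ns_\delta(f)=O_{c,d}(\delta^{1/6-c})$, as claimed. The main obstacle I anticipate is the middle paragraph: making the pair-replacement rigorous when the two arguments $q(A)$ and $q(B)$ of $F$ share the same underlying randomness and, on the Gaussian side, are highly correlated, so that $(q(X),q(Y))$ is very far from being a diffuse set in $\R^{2m}$. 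The resolution is the observation above that a sign disagreement is forced only by $q(X)$ being near $\{h=0\}$, which is controlled by the genuinely diffuse \emph{single-copy} set via Corollary~\ref{Z3Cor}; a secondary burden is the careful tracking of the $p_\rho$ versus $p_0$ versus $L(p_0)$ discrepancies, which mirrors the bookkeeping already done for Theorem~\ref{DInvPrinThm}.
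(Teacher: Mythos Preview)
Your proposal is correct and follows essentially the same strategy as the paper: apply the Diffuse Regularity Lemma with $\tau=\delta^{5/6}$, handle low-variance leaves trivially, and for regular leaves run a coordinate-by-coordinate pair-replacement (correlated Bernoulli $\to$ correlated Gaussian with matching covariance $1-2\delta$) on a smooth approximation, landing on $\gns_{2\delta}=O(d\sqrt{\delta})$.

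Two simplifications the paper makes that dissolve the obstacle you flagged. First, rather than a single mollifier $F:\R^{2m}\to[0,1]$, the paper takes two separate one-sided approximants $f^1,f^2:\R^m\to[0,1]$ from Proposition~\ref{ApproximationProp} (upper bounds for $\mathbf{1}\{h>-\epsilon'\}$ and $\mathbf{1}\{h<\epsilon'\}$) and works with the product $f^1(q(A^1))\,f^2(q(A^2))$. On the Gaussian side one then only needs $\bigl|\E[f^1(q(X^1))f^2(q(X^2))]-\E[\mathbf{1}_{>0}(p_0(X^1))\mathbf{1}_{<0}(p_0(X^2))]\bigr|\le \pr(f^1\ne\mathbf{1}_{>0})+\pr(f^2\ne\mathbf{1}_{<0})$, and each summand is controlled by the \emph{single-copy} diffuse bound in Proposition~\ref{ApproximationProp}; no appeal to closeness of $q(X^1)$ and $q(X^2)$ or to Corollary~\ref{Z3Cor} is needed, and the non-diffuseness of $(q(X^1),q(X^2))$ in $\R^{2m}$ never arises. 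Second, you do not need Proposition~\ref{linErrProp} or Proposition~\ref{ABoundProp} here: the Gaussian endpoint is $\gns_{2\delta}(\sgn p_0)$, and the bound $O(d\sqrt{\delta})$ from \cite{GNSBound} applies to \emph{any} degree-$d$ PTF, so there is no need to pass back from $p_0$ to $p$ (or to $L(p_0)$) on the Gaussian side. The only place $p$ versus $p_0$ matters is on the Bernoulli side, where $|p-p_0|_{2,B}\le\epsilon$ is handled by shifting the thresholds in $f^1,f^2$ by $\epsilon^{1/2}\log(\epsilon^{-1})^d$ and paying $O(d\epsilon^{1/2d})$ via anticoncentration.
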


\begin{thm}\label{ASBoundThm}
If $f$ is a degree-$d$ polynomial threshold function in $n$ variables, and if $c>0$, then
$$
\as(f) = O_{c,d}(n^{5/6+c}).
$$
\end{thm}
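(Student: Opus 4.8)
The plan is to deduce Theorem~\ref{ASBoundThm} from Theorem~\ref{NSBoundThm} by the standard Fourier-analytic comparison between average sensitivity and noise sensitivity, applied with noise parameter $\delta = 1/n$. Recall that for $f:\R^n\to\{-1,1\}$ with Fourier expansion $f=\sum_S \hat f(S)\chi_S$ one has $\as(f)=\sum_S |S|\,\hat f(S)^2$ (since $\pr_A(f(A)\neq f(A^{(i)})) = \sum_{S\ni i}\hat f(S)^2$), and, because flipping each coordinate independently with probability $\delta$ multiplies $\chi_S$ by $(1-2\delta)^{|S|}$ in expectation,
$$
\ns_\delta(f) = \tfrac12\sum_S \left(1-(1-2\delta)^{|S|}\right)\hat f(S)^2 ,
$$
using $\sum_S \hat f(S)^2 = 1$. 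Note that neither of these formulas uses the PTF structure; the degree-$d$ hypothesis will enter only through Theorem~\ref{NSBoundThm}.

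First I would record the elementary inequality $1-(1-2\delta)^{|S|}\ \geq\ c_0\,\min(1,\,2\delta|S|)$, valid for some absolute constant $c_0>0$, all $0\le\delta\le 1/4$, and all integers $|S|\ge 0$: when $2\delta|S|\le 1$ it follows from $1-(1-2\delta)^{|S|}\ge 1-e^{-2\delta|S|}\ge(1-e^{-1})\,2\delta|S|$, and when $2\delta|S|\ge 1$ it follows from monotonicity in $|S|$ together with $(1-2\delta)^{\lfloor 1/(2\delta)\rfloor}\le e^{-1/2}$. Since $|S|\le n$ always, $\min(1,2\delta|S|)\ge \tfrac{2\delta|S|}{\max(1,2\delta n)}$, and therefore
$$
\ns_\delta(f)\ \geq\ \frac{c_0\,\delta}{\max(1,2\delta n)}\sum_S |S|\,\hat f(S)^2\ =\ \frac{c_0\,\delta}{\max(1,2\delta n)}\,\as(f).
$$
Taking $\delta = 1/n$ (for $n\ge 4$, so that $\delta\le 1/4$ and $\max(1,2\delta n)=2$) rearranges to $\as(f)\le \tfrac{2}{c_0}\,n\,\ns_{1/n}(f)$.

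It then remains to invoke Theorem~\ref{NSBoundThm} with this value of $\delta$: for any $c>0$ it gives $\ns_{1/n}(f)=O_{c,d}((1/n)^{1/6-c})=O_{c,d}(n^{-1/6+c})$, whence $\as(f) = O_{c,d}(n\cdot n^{-1/6+c}) = O_{c,d}(n^{5/6+c})$; the remaining cases $n\le 3$ are handled directly by $\as(f)\le n$, which is $O_{c,d}(n^{5/6+c})$. Since Theorem~\ref{NSBoundThm} is being assumed, there is no genuine obstacle here: the real content of the bound is in Theorem~\ref{NSBoundThm}, and the only points needing care are the elementary estimate on $1-(1-2\delta)^{|S|}$ and the observation that the degree-one loss incurred by bounding $|S|\le n$ is exactly cancelled by taking $\delta$ of order $1/n$, so that the exponent comes out as $1-(1/6-c)=5/6+c$ and no choice $\delta=\Theta(1/n)$ does better.
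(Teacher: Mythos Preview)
Your proof is correct and follows essentially the same approach as the paper. The paper's one-line proof simply invokes Lemma~8.1 of \cite{sens2}, which is precisely the reduction $\as(f)=O(n\cdot\ns_{1/n}(f))$ that you derive explicitly via the Fourier formulas; you have just reproved that cited lemma rather than black-boxing it.
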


\begin{thm}\label{gasBoundCor}
For $f$ a degree-$d$ polynomial threshold function in $n$ variables and $c>0$,
$$
\gas(f) = O_{c,d}(n^{5/6+c}).
$$
\end{thm}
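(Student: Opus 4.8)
The plan is to derive Theorem \ref{gasBoundCor} immediately from Theorem \ref{ASBoundThm} together with Lemma \ref{GasASBoundLem}. Recall that Lemma \ref{GasASBoundLem} shows that for any degree-$d$ PTF $f$ in $n$ variables, $\gas(f)$ is bounded above by the largest value of $\as(g)$ over all degree-$d$ PTFs $g$ in $n$ variables; indeed its proof realizes $\gas(f)$ as $\E_{X,Y}[\as(f_{X,Y})]$ for the explicit family of degree-$d$ PTFs $f_{X,Y}(A) = f\big(\tfrac{1}{\sqrt 2}(X_i + A_i Y_i)\big)$. Since Theorem \ref{ASBoundThm} bounds $\as$ for every degree-$d$ PTF in $n$ variables by $O_{c,d}(n^{5/6+c})$, the expectation $\E_{X,Y}[\as(f_{X,Y})]$ obeys the same bound, giving the claim.

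Concretely, first I would invoke Lemma \ref{GasASBoundLem} to pass from $\gas(f)$ to the supremum of $\as$ over degree-$d$ PTFs in $n$ variables. Then I would invoke Theorem \ref{ASBoundThm}, observing that its implied constant depends only on $c$ and $d$ and not on the particular polynomial, so the bound is stable under taking a supremum (equivalently, the expectation produced in Lemma \ref{GasASBoundLem}) over the family $\{f_{X,Y}\}$. This completes the argument in two lines.

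There is no genuine obstacle at this point: all of the substantive work has already been done, living in Theorem \ref{ASBoundThm} (which rests on Theorem \ref{NSBoundThm}, the Diffuse Invariance Principle Theorem \ref{DInvPrinThm}, and the Diffuse Regularity Lemma Theorem \ref{difRegThm}, ultimately comparing $\ns_\delta(f)$ to $\gns_{2\delta}(f)$) and in the combinatorial reduction of Lemma \ref{GasASBoundLem}. The one point worth a sentence of care is the uniformity of the constant in Theorem \ref{ASBoundThm}: because that statement is phrased as $\as(f) = O_{c,d}(n^{5/6+c})$ with no dependence on $f$ beyond its degree and number of variables, the maximum over all such $f$ is still $O_{c,d}(n^{5/6+c})$, which is exactly the quantity that Lemma \ref{GasASBoundLem} upper-bounds $\gas(f)$ by.
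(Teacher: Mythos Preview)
Your proposal is correct and matches the paper's own proof exactly: the paper simply states that Theorem \ref{gasBoundCor} follows from Theorem \ref{ASBoundThm} and Lemma \ref{GasASBoundLem}. Your additional remark about the uniformity of the $O_{c,d}$ constant in Theorem \ref{ASBoundThm} over all degree-$d$ PTFs in $n$ variables is precisely the point that makes the supremum (or expectation) step legitimate.
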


We begin with the proof of Theorem \ref{NSBoundThm} in the case of regular polynomial threshold function.

\begin{prop}\label{regNSBoundProp}
Let $f=\sgn\circ p$ be a polynomial threshold function for $p$ a degree-$d$ polynomial with a $(\tau,N,m,\epsilon)$-regular decomposition for $1/2>\epsilon,\tau>0$.  Let $1>\delta>0$, then
$$
\ns_\delta(f) = O(d\sqrt{\delta}) + O(d\epsilon^{1/2d}\log(\epsilon^{-1})) + O_{d,m}(N\tau^{1/5}\log(\tau^{-1})^{dm/2+1}).
$$
\end{prop}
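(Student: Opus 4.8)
The plan is to compare $\ns_\delta(f)$ to the Gaussian noise sensitivity $\gns_{2\delta}(\sgn\circ p_0)$, for which the essentially optimal bound $O(d\sqrt\delta)$ is already available from \cite{GNSBound}, controlling every discrepancy using only \emph{single-copy} diffuseness. The point to keep in mind is that for small $\delta$ the two correlated input copies are highly correlated, so the $2m$-tuple $(q_1(\cdot),\ldots,q_m(\cdot))$ evaluated at the two copies is \emph{not} a diffuse set; I get around this by never forming the product polynomial $p_0(\cdot)p_0(\cdot)$, but instead writing the event $\{f_0(A)\neq f_0(B)\}$ in terms of a function that factors across the two copies.

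First I would reduce to the case $p=p_0$. Writing $f_0=\sgn\circ p_0$, we have $\pr_A(f(A)\neq f_0(A))\le \pr_A(|p(A)-p_0(A)|\ge |p_0(A)|)$. Since $|p-p_0|_{B,2}\le \epsilon|p_0|_2$, Corollary \ref{BConcCor} makes $\pr_A(|p(A)-p_0(A)|>\epsilon^{1/2}|p_0|_2)$ negligible, while Proposition \ref{invProp} applied to the $(\tau^{1/5},N)$-diffuse decomposition $(h,q_1,\ldots,q_m)$ (whose $q_i$ are multilinear with all influences at most $\tau$) transfers anticoncentration of $p_0$ from the Gaussian to the Bernoulli setting, and Lemma \ref{anticoncentrationLem} gives $\pr_X(|p_0(X)|\le \epsilon^{1/2}|p_0|_2)=O(d\epsilon^{1/(2d)})$. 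As $B$ is also Bernoulli, this yields $\ns_\delta(f)=\ns_\delta(f_0)+O(d\epsilon^{1/(2d)}\log(\epsilon^{-1}))+O_{d,m}(N\tau^{1/5}\log(\tau^{-1})^{dm/2+1})$, so it suffices to bound $\ns_\delta(f_0)$.

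Now let $A$ be Bernoulli and $B$ its $\delta$-noisy copy, and let $X,Y$ be jointly Gaussian with $\E[X_i^2]=\E[Y_i^2]=1$ and $\E[X_iY_i]=1-2\delta$ (so $Y$ is marginally standard Gaussian). Using Proposition \ref{ApproximationProp} applied to $p_0$ and to $-p_0$, choose smooth $u_\pm:\R^m\to[0,1]$ with $u_-(q(x))\le \mathbf 1[p_0(x)\ge 0]\le u_+(q(x))$ pointwise, $|u_\pm^{(k)}|_\infty=O_{m,k}(\tau^{-k/5})$, and $\E_X[u_+(q(X))-u_-(q(X))]=O_{d,m}(N\tau^{1/5}\log(\tau^{-1})^{dm/2+1})$. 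Sandwiching $\mathbf 1[f_0(A)\neq f_0(B)]$ between $\Phi_\pm(q(A),q(B))$, where $\Phi_+(u,v):=u_+(u)(1-u_-(v))+u_+(v)(1-u_-(u))$ and $\Phi_-$ swaps the roles of $u_+,u_-$, reduces matters to comparing $\E[\Phi_\pm(q(A),q(B))]$ with $\E[\Phi_\pm(q(X),q(Y))]$. I would do this by the replacement method, following the proof of Proposition \ref{invProp}, replacing the coordinate pair $(A_i,B_i)$ by $(X_i,Y_i)$ one index at a time. Since the $q_j$ are multilinear, the degree-$\le 3$ Taylor part of $\Phi_\pm$ (in the $2m$ variables) has matching expectation because the mixed moments $\E[A_i^aB_i^b]$ and $\E[X_i^aY_i^b]$ agree for $a+b\le 3$; the fourth-order Taylor remainder at step $i$ is $O_d\!\left(\tau^{-4/5}\sum_j\Inf_i(q_j)^2\right)\le O_d\!\left(\tau^{1/5}\sum_j\Inf_i(q_j)\right)$ by Lemma \ref{mixedHypercontractiveLem}, and summing over $i$ gives total replacement error $O_{d,m}(\tau^{1/5})$.

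Finally, on the Gaussian side $u_\pm(q(X))$ and $u_\pm(q(Y))$ are $L^1$-close to $\mathbf 1[p_0(X)\ge 0]$ and $\mathbf 1[p_0(Y)\ge 0]$ respectively — each copy's $(q_i)$ is a genuine $(\tau^{1/5},N)$-diffuse set (with $Y$ marginally standard Gaussian), so Proposition \ref{ApproximationProp} and Corollary \ref{Z3Cor} apply to each — whence a triangle inequality on the product shows that both $\E[\Phi_\pm(q(X),q(Y))]$ equal $\pr(\sgn p_0(X)\neq\sgn p_0(Y))+O_{d,m}(N\tau^{1/5}\log(\tau^{-1})^{dm/2+1})$. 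That probability is exactly $\gns_{2\delta}(f_0)$, which is $O(d\sqrt\delta)$ by \cite{GNSBound} since $f_0$ is a degree-$d$ PTF. Collecting the three error contributions (and absorbing the bare $O_{d,m}(\tau^{1/5})$ into the $N\tau^{1/5}\log(\cdot)$ term) gives the claimed bound. The main obstacle is the point flagged at the outset: one must keep the two copies decoupled in the approximating function, so that only single-copy diffuseness is ever invoked, together with the bookkeeping of the order-$3$ moment-matching condition for the pair-replacement; everything else is a routine reprise of the arguments behind Proposition \ref{invProp} and Theorem \ref{DInvPrinThm}.
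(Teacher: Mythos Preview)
Your proposal is correct and follows essentially the same route as the paper: a factored smooth approximation built from single-copy Proposition~\ref{ApproximationProp}, a coordinate-pair replacement matching moments of $(A_i,B_i)$ and $(X_i,Y_i)$ to order $3$, and the $O(d\sqrt\delta)$ bound for $\gns_{2\delta}$ from \cite{GNSBound}. The only cosmetic difference is in how the $p$-versus-$p_0$ discrepancy is absorbed: the paper shifts the threshold inside the approximating functions by $\epsilon^{1/2}\log(\epsilon^{-1})^d$ and applies Lemma~\ref{anticoncentrationLem} on the Gaussian side, whereas you handle it upfront by transferring Gaussian anticoncentration of $p_0$ to Bernoulli inputs via Proposition~\ref{invProp}; both work and lead to the same error terms.
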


The proof of Proposition \ref{regNSBoundProp} will be to use the replacement method to show that $\ns_\delta(f)$ is approximately $\gns_{2\delta}(f)$, which we bound using the main theorem of \cite{GNSBound}.  Unfortunately, we will not be able to apply Proposition \ref{replacementProp} directly, but many of the techniques will be similar.

\begin{proof}

Let $A^1,A^2$ be a pair of Bernoulli random variables so that for each coordinate $i$, $A^1_i$ and $A^2_i$ are equal with probability $1-\delta$ independently over different $i$.  $\ns_\delta(f)=\pr(f(A^1)\neq f(A^2)) = 2 \pr(f(A^1)=1, f(A^2)=-1)$.  We wish to bound this later probability.

Let $X^1$ and $X^2$ be Gaussian random variables so that the joint distribution $(X^1,X^2)$ is a Gaussian with
$$\textrm{Cov}(X^1_i,X^2_j) = \begin{cases} 1-2\delta &\textrm{ if } i=j\\ 0 &\textrm{ Otherwise}\end{cases} $$
Note that all of the first three moments of $(A^1,A^2)$ are identical to the corresponding moments of $(X^1,X^2).$

We are given that there exists a polynomial $p_0$ with $|p-p_0|_{2,B}^2 < \epsilon \var(p_0)$ so that $p_0$ has a $(\tau,N)$-diffuse decomposition $(h,q_1,\ldots,q_m)$ with $q_i$ multilinear and $\Inf_i(q_j)\leq \tau$ for all $i,j$.  After rescaling these polynomials, we may assume that $\var(p_0)\leq |p_0|_2^2 = 1$. Note that by Corollary \ref{BConcCor} that with probability $1-O(\epsilon)$ that $|p(A^i) - p_0(A^i)| < \epsilon^{1/2} \log(\epsilon^{-1})^d $ for each of $i=1,2$.  By Proposition \ref{ApproximationProp} there exist functions $f^1,f^2:\R^m\rightarrow [0,1]$ so that:
\begin{itemize}
\item $f^1(x)=1$ if $h(x)+\epsilon^{1/2}\log(1+\epsilon^{-1})^d>0$.
\item $f^2(x)=1$ if $h(x)-\epsilon^{1/2}\log(1+\epsilon^{-1})^d<0$.
\item \begin{align*}\left| \E[f^1(q_1(X^1),\ldots,q_m(X^1))]  - \E[I_{(0,\infty)}(h(q_1(X^1),\ldots,q_m(X^1))+\epsilon^{1/2}\log(1+\epsilon^{-1})^d)]\right| \\ = O_{d,m}(N\tau^{1/5}\log(\tau^{-1})^{dm/2+1}).\end{align*}
\item \begin{align*}\left| \E[f^2(q_1(X^2),\ldots,q_m(X^2))]  - \E[I_{(-\infty,0)}(h(q_1(X^1),\ldots,q_m(X^1))-\epsilon^{1/2}\log(1+\epsilon^{-1})^d)]\right| \\  = O_{d,m}(N\tau^{1/5}\log(\tau^{-1})^{dm/2+1}).\end{align*}
\item $|(f^i)^{(k)}|_\infty = O_m(\tau^{-k/5})$ for $1\leq k \leq 4$.
\end{itemize}
We then have that

\begin{align*}
\ns_\delta(f) & = 2\pr(f(A^1)=1, f(A^2)=-1)\\
& \leq \E[f^1(q_1(A^1),\ldots,q_m(A^1))f^2(q_1(A^2),\ldots,q_m(A^2))] + O(\epsilon).
\end{align*}

We would like to relate $$\E[f^1(q_1(A^1),\ldots,q_m(A^1))f^2(q_1(A^2),\ldots,q_m(A^2))]$$ to $$\E[f^1(q_1(X^1),\ldots,q_m(X^1))f^2(q_1(X^2),\ldots,q_m(X^2))].$$
In particular, we have that with respect to the Gaussian distribution, $f^i(q_1(X),\ldots,q_m(X))$ differs from
$I_{(0,\infty)}(\pm (p_0(X) - \epsilon^{1/2}\log(\epsilon^{-1})^d))$
with probability at most $O_{d,m}(N\tau^{1/5}\log(\tau^{-1})^{dm/2+1})$.
This in turn differs from $I_{(0,\infty)}(\pm p_0(X))$ with probability at most $O(d\epsilon^{1/2d}\log(\epsilon^{-1}))$ by Lemma \ref{anticoncentrationLem}.  Hence we have that

\begin{align*}
\E & [f^1(q_1(X^1), \ldots,q_m(X^1))f^2(q_1(X^2),\ldots,q_m(X^2))] \\ & = O(d\epsilon^{1/2d}\log(\epsilon^{-1})) + O_{d,m}(N\tau^{1/5}\log(\tau^{-1})^{dm/2+1}) + \E[I_{(0,\infty)}(p(X^1))I_{(-\infty,0)}(p(X^2))\\
& = O(d\epsilon^{1/2d}\log(\epsilon^{-1})) + O_{d,m}(N\tau^{1/5}\log(\tau^{-1})^{dm/2+1}) + \gns_{2\delta}(f)\\
& = O(d\epsilon^{1/2d}\log(\epsilon^{-1})) + O_{d,m}(N\tau^{1/5}\log(\tau^{-1})^{dm/2+1}) + O(d\sqrt{\delta}).
\end{align*}
Where the bound on the Gaussian noise sensitivity comes from the main Theorem of \cite{GNSBound}.

Thus, we are left with the task of bounding the difference between $\E[f^1(q_i(A^1))f^2(q_i(A^2))]$ and $\E[f^1(q_i(X^1))f^2(q_i(X^2))]$.  We do this with the replacement method.  We let $Z^{i,\ell}$ be the random vector whose $j^{\tth}$ component is $A^i_j$ if $j>\ell$ and $X^i_j$ otherwise.  We note that $Z^{i,0}=A^{i}$ and $Z^{i,n}=X^{i}$.  We proceed to bound the difference
\begin{equation}\label{z8Eqn}
|\E[f^1(q_i(Z^{1,j-1}))f^2(q_i(Z^{2,j-1}))] - \E[f^1(q_i(Z^{1,j}))f^2(q_i(Z^{2,j}))] |.
\end{equation}

We note that $Z^{i,j-1}$ and $Z^{i,j}$ agree in all but the $j^{\tth}$ coordinate.  Thus, in bounding the difference above we may consider all but the $j^{\tth}$ coordinate fixed.  We then approximate the resulting function of $Z^{1}_j,Z^2_j$ by it's Taylor series.  In particular, if we let $z_i = Z^i_j$, then for appropriate functions $g_1$ and $g_2$ (depending on the other coordinates of $Z$) we need to consider $\E[g_1(z_1)g_2(z_2)]$.  We have that $g_1(z_1)g_2(z_2)$ equals a degree 3 polynomial in $z_1$ and $z_2$ plus an error of at most
\begin{align*}
z_1^4g_1''''(t_1)g_2(0)/24 & + z_1^3z_2g_1'''(t_2)g_2'(t_3)/6 + z_1^2z_2^2g_1''(t_4)g_2''(t_5)/4 \\ & + z_1z_2^3 g_1'(t_6)g_2'''(t_7)/6 + z_2^4 g_1(0)g_2''''(t_8)/24
\end{align*}
for some points $t_i$.  Since the expectations of the degree 3 polynomials in $z_1$ and $z_2$ are the same in the Bernoulli and Gaussian case, and since the fourth moments are bounded, we have that the difference in Equation (\ref{z8Eqn}) is
$$
O\left(\E\left[|g_1''''|_\infty + |g_1'''g_2'|_\infty + |g_1''g_2''|_\infty + |g_1'''g_2'|_\infty + |g_2''''|_\infty\right] \right).
$$
Now the $k^{\tth}$ derivative of $g_i$ can be written as
$$
\sum_{\substack{i_1,\ldots,i_k=1}}^m \frac{\partial^k f^i}{\partial q_{i_1}\cdots \partial q_{i_k}}\prod_{\ell=1}^k \frac{\partial q_{i_\ell}(Z^i)}{\partial z_j}.
$$
On the other hand, by assumption, this partial derivative of $f^i$ is at most $\tau^{-k/5}$, and the product is at most
$$
\left(\max_\ell \frac{\partial q_{\ell}}{\partial x_j} \right)^{k}.
$$
Thus, the total error in Equation (\ref{z8Eqn}) is at most
$$
O\left(m^4 \tau^{-4/5} \E\left[\sum_{\ell=1}^m \sum_{i=1}^2 \left( \frac{\partial q_{\ell}(Z^i)}{\partial z_j}\right)^4 \right] \right).
$$
It is clear that
$$
\E\left[\left( \frac{\partial q_{\ell}(Z^i)}{\partial z_j}\right)^2\right] = \Inf_j(q_\ell).
$$
Thus, $\frac{\partial q_{\ell}(Z^i)}{\partial z_j}$ is a polynomial in independent Bernoulli and Gaussian random variables with second moment $\Inf_j(q_\ell)$.  Therefore by Lemma \ref{mixedHypercontractiveLem} its fourth moment is $O_d(\Inf_j(q_\ell)^2)$.  Therefore we have that the expression in Equation (\ref{z8Eqn}) is at most
$$
O_{d,m}\left(\tau^{-4/5} \sum_{\ell} \Inf_j^2(q_\ell)\right).
$$
Therefore, summing this over $j$, we get that
$$
\left|\E[f^1(q_i(A^1))f^2(q_i(A^2))]-\E[f^1(q_i(X^1))f^2(q_i(X^2))] \right|
$$
is at most
$$
O_{d,m}\left(\tau^{-4/5} \sum_{j,\ell} \Inf_j^2(q_\ell) \right).
$$
On the other hand, for fixed $\ell$ we have that $\sum_j \Inf_j(q_\ell)= O_d(1)$ and that for each $j$ that $\Inf_j(q_\ell)\leq \tau$.  Therefore, $\sum_j \Inf_j^2(q_\ell) = O_d(\tau)$.  Thus, we have that
$$
\left|\E[f^1(q_i(A^1))f^2(q_i(A^2))]-\E[f^1(q_i(X^1))f^2(q_i(X^2))] \right|=O_{d,m}(\tau^{1/5}).
$$
Recall though that
$$
\ns_\delta \leq \E[f^1(q_i(A^1))f^2(q_i(A^2))] + O(\epsilon)
$$
and that
\begin{align*}
\E[f^1(q_i(X^1)) & f^2(q_i(X^2))]  = O(d\epsilon^{1/2d}\log(1+\epsilon^{-1})) + O_{d,m}(N\tau^{1/5}\log(\tau^{-1})^{dm/2+1}) + O(d\sqrt{\delta}).
\end{align*}
Combining these yields our result.

\end{proof}

We are now prepared to prove Theorem \ref{NSBoundThm}.

\begin{proof}
Write $f=\sgn\circ p$ for $p$ a degree-$d$ polynomial.
We will reduce to the case of Proposition \ref{regNSBoundProp} by use of Theorem \ref{difRegThm}.  In particular, we may write $p$ as a decision tree of depth $O_{c,d}(\delta^{-5/6}\log(\delta^{-1})^{O(d)})$ so that a $1-\delta^{5/6}$ fraction of the leaves are polynomials with either a $(\delta^{5/6},\delta^{-c/2},O_{c,d}(1),\delta^{2d})$-regular decomposition or with variance less than $\delta^2$ times their squared mean.

Consider $A^1$ and $A^2$ random Bernoulli variables that differ in each coordinate independently with probability $\delta$.  Consider the path on the decision tree above followed by $A^1$.  With probability at least $1-\delta^{5/6}$ the resulting leaf satisfies one of the two cases specified by Theorem \ref{difRegThm}.  Furthermore, with probability at least $1-O_{c,d}(\delta^{1/6}\log(\delta^{-1})^{O(d)})$ $A^2$ agrees with $A^1$ on all coordinates queried by the decision tree.  Conditioned on this occurrence, the probability that $p(A^1)$ and $p(A^2)$ have different signs is equal to the noise sensitivity with parameter $\delta$ of the polynomial threshold function defined by the leaf.  If the leaf has a $(\delta^{5/6},\delta^{-c/2},O_{c,d}(1),\delta^{2d})$-regular decomposition, this is $O_{c,d}(\delta^{1/6-c})$ by Proposition \ref{regNSBoundProp}.  If this polynomial has low variance compared to its mean, then both $p(A^1)$ and $p(A^2)$ are the same sign as the mean of $p$ with high probability by Corollary \ref{BConcCor}.  Thus, we have that
$$
\ns_\delta(f) \leq \delta^{5/6} + O_{c,d}(\delta^{1/6}\log(\delta^{-1})^{O(d)}) + O_{c,d}(\delta^{1/6-c}) = O_{c,d}(\delta^{1/6-c}).
$$
\end{proof}

Theorem \ref{ASBoundThm} now follows immediately by Lemma 8.1 of \cite{sens2}.  And Theorem \ref{gasBoundCor} follows from Theorem \ref{ASBoundThm} and Lemma \ref{GasASBoundLem}.

\section{Application to PRGs for PTFs with Bernoulli Inputs}\label{BernoulliPRGSec}

In \cite{MZ}, Meka and Zuckerman developed a relatively small pseudo-random generator of polynomial threshold functions with Bernoulli inputs.  Their generator was defined as follows.  Let $h:[n]\rightarrow [a]$ be a hash function picked from a 2-independent family.  Let $A^1,\ldots,A^a:[n]\rightarrow \{-1,1\}$ be chosen independently from a $k$-independent hash family.  Meka and Zuckerman's generator is given by $A_i = A^{h(i)}_i$.  Meka and Zuckerman show that for appropriate chosen $m=\tilde{O}(\epsilon^{-2})$ and $a=O(\epsilon^{-O(d)})$ that this generator fools all degree-$d$ polynomial threshold functions to within $\epsilon$.

Meka and Zuckerman's proof is essentially to think of $h$ as constant and to use the replacement method to bound the expected errors as the $A^i$ are replaced by random Gaussians vectors one at a time.  If the polynomial in question is sufficiently regular, then these errors will be small, and thus, the expected value of the PTF in question over the PRG will be close to the expected value of the PTF over random Gaussian inputs, and by the Invariance Principle, the expected value at random Bernoulli inputs will also be close.  Unfortunately, this technique had been limited by the classical Invariance Principle and Regularity Lemma, and thus, could not produce a PRG of seed length less than $\epsilon^{-O(d)}$.  In this section, we will show how our Diffuse Invariance Principle and Regularity Lemma can improve this to produce a PRG of seed length $O_d(\log(n)\epsilon^{-O(1)})$.

We begin by producing a pseudorandom generator that works in the case of regular polynomials, and then reducing the general case to this one.

\subsection{The Regular Case}

\begin{prop}\label{regBPRGProp}
Let $p$ be a degree-$d$ polynomial in $n$ variables with a $(\tau,N,m,\epsilon)$-regular decomposition.  Let $a$ be a positive integer.  Let $h:[n]\rightarrow[a]$ be picked randomly from a 2-independent hash family and for each $h$ let $A^1,\ldots,A^a:[n]\rightarrow\{-1,1\}$ be picked indecently from $4d$-independent hash families.  Define the $n$-variable function $A$ in terms of $h$ and $A^i$ as $A_i = A^{h(i)}_i$.  Then if $B$ is a Bernoulli random variable, $|\E[\sgn(p(A))] - \E[\sgn(p(B))]|$ is at most
$$
O_{d,m}(N\tau^{1/5}\log(1+\tau^{-1})^{dm/2+1}) + O(d\epsilon^{1/d}\log(\epsilon^{-1})^{1/2}) + O(a^{-1}\tau^{-1}).
$$
\end{prop}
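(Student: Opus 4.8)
The plan is to compare $\E[\sgn(p(A))]$ with $\E[\sgn(p(B))]$ by passing through $\E[\sgn(p_0(X))]$, using the Diffuse Invariance Principle for the genuine‑Bernoulli leg and the replacement method (applied to the block structure that $h$ induces on $[n]$) for the Meka--Zuckerman leg. By rescaling we may assume $|p|_2=1$, and it suffices to bound $2|\pr(p(A)\le 0)-\pr(p(B)\le 0)|$. Let $p_0$ together with the $(\tau^{1/5},N)$-diffuse decomposition $(h_{\mathrm{dec}},q_1,\dots,q_m)$ be the data of the given $(\tau,N,m,\epsilon)$-regular decomposition, so the $q_i$ are multilinear with $\Inf_k(q_i)\le\tau$ and $|p-p_0|_{B,2}^2\le\epsilon^2\var(p_0(X))$. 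I would write
$$
|\pr(p(A)\le 0)-\pr(p(B)\le 0)|\ \le\ |\pr(p(A)\le 0)-\pr(p_0(A)\le 0)|+|\pr(p_0(A)\le 0)-\pr(p_0(X)\le 0)|+|\pr(p_0(X)\le 0)-\pr(p(B)\le 0)|,
$$
where $X$ is a Gaussian. The last term is exactly (half of) $|\E[\sgn(p_0(X))]-\E[\sgn(p(B))]|$, so by Theorem \ref{DInv2Thm} (applied at $t=0$) it is $O_{d,m}(N\tau^{1/5}\log(\tau^{-1})^{dm/2+1}+\epsilon^{1/d}\log(\epsilon^{-1})^{1/2})$, which is already within the target.

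For the middle term — the new work — I would condition on $h$. The sets $B_j:=h^{-1}(j)$, $j\in[a]$, partition $[n]$, and conditioned on $h$ the generator $A$ is precisely a tuple $(A^1,\dots,A^a)$ of mutually independent $4d$-independent Bernoulli vectors, $A^j$ supported on $B_j$; split $X$ the same way. I would apply the first part of Proposition \ref{replacementProp} to $p_0$ with this partition and with $k=4$. Its hypothesis holds: since each $q_i$ is multilinear, for $g$ of degree $\le 3$ the polynomial $g(q_1,\dots,q_m)$ has, in the variables of any single block, each coordinate occurring to degree $\le 3$ and total degree $\le 3d\le 4d$; the first three moments of a Bernoulli coordinate coincide with those of a Gaussian, so $4d$-wise independence within the block forces the block‑expectations against $A^j$ and against $X^j$ to agree. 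Proposition \ref{replacementProp} then gives, for each fixed $h$,
$$
|\pr(p_0(A)\le 0)-\pr(p_0(X)\le 0)|\ =\ O_{d,m}\!\left(\tau^{-4/5}T(h)+\tau^{1/5}N\log(\tau^{-1})^{dm/2+1}\right),
$$
with $T(h)=\sum_{i=1}^m\sum_{j=1}^a T_{i,j}(h)$. Averaging over $h$, everything reduces to estimating $\E_h[T(h)]$.

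The heart of the argument is the claim $\E_h[T(h)]=O_{d,m}(\tau+a^{-1})$. By Lemma \ref{mixedHypercontractiveLem}, $T_{i,j}(h)=O_d\big((\var_{B_j}(q_i))^2\big)$, where $\var_{B_j}(q_i)=\sum_{S:\,S\cap B_j\neq\varnothing}\hat q_i(S)^2$ is the variance of $q_i$ under resampling block $j$ (independent of whether the other blocks carry Bernoulli or Gaussian coordinates, since only the second moments of individual coordinates enter). Expanding the square and taking $\E_h$, the diagonal part (pairs $S,S'$ with $S\cap S'\neq\varnothing$) contributes at most $d\sum_k\big(\sum_{S\ni k}\hat q_i(S)^2\big)^2=d\sum_k\Inf_k(q_i)^2\le d\tau\sum_k\Inf_k(q_i)=O_d(\tau)$, using $\Inf_k(q_i)\le\tau$ and $\sum_k\Inf_k(q_i)=O_d(1)$; the off‑diagonal part ($S\cap S'=\varnothing$) is controlled with only $2$-wise independence of $h$, via $\pr_h[S\cap B_j\neq\varnothing\text{ and }S'\cap B_j\neq\varnothing]\le |S||S'|/a^2$, giving $O(d^2/a)\big(\sum_S\hat q_i(S)^2\big)^2=O_d(a^{-1})$. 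Summing over $i$ yields $\E_h[T(h)]=O_{d,m}(\tau+a^{-1})$, so that $\tau^{-4/5}\E_h[T(h)]=O_{d,m}(\tau^{1/5})+O(\tau^{-1}a^{-1})$, matching the first and third target terms.

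Finally, for the first term, $|\pr(p(A)\le 0)-\pr(p_0(A)\le 0)|\le\pr_A(\sgn(p(A))\neq\sgn(p_0(A)))$: since $A$ lives on the hypercube, $p(A)-p_0(A)=(p-L(p_0))(A)$ is a multilinear polynomial of $L^2$-norm $|p-p_0|_{B,2}$, and its fourth moment under the generator equals its genuine‑Bernoulli fourth moment — all characters $\chi_T$ with $|T|\le 4d$ have the same ($0$ or $1$) expectation under the generator as under the uniform measure, since $4d$-wise independence within a block annihilates any nonempty character of size $\le 4d$. Combining a tail bound from this with an estimate of $\pr_A(|p_0(A)|\le\delta)$ — obtained by applying Proposition \ref{replacementProp} to smooth sandwich functions for the indicator of $[-\delta,\delta]$ and then invoking Carbery--Wright (Lemma \ref{anticoncentrationLem}) for $p_0$ at a Gaussian — bounds this term by a contribution of the same shape as in Theorem \ref{DInv2Thm} plus an error already absorbed in the middle term. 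Adding the three pieces gives the statement. I expect the main obstacle to be the $h$-averaging estimate of the third paragraph (in particular squeezing the off‑diagonal contribution down to $O(a^{-1})$ using only $2$-wise independence of $h$), together with the bookkeeping of the many error terms of Proposition \ref{replacementProp} and the more delicate‑than‑it‑looks recovery of the clean exponent $\epsilon^{1/d}$ (rather than a weaker power of $\epsilon$) in the $p$-versus-$p_0$ discrepancy, which relies on the exact agreement of the low‑order moments of the Meka--Zuckerman generator with those of the Bernoulli measure.
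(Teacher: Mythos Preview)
Your proposal is correct and follows essentially the same route as the paper: both compare $\sgn(p(A))$ to $\sgn(p_0(X))$ via Proposition \ref{replacementProp} with $k=4$ along the block structure that $h$ induces, and both control the contribution of the random $h$ by the same second-moment computation using only $2$-wise independence of $h$ together with $\Inf_k(q_i)\le\tau$ and $\sum_k\Inf_k(q_i)=O_d(1)$.

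Two organizational differences are worth flagging. First, the paper does not average $T(h)$ over $h$ as you do; instead it shows, by Markov on the quantity $\sum_{j}\bigl(\sum_{\ell\in h^{-1}(j)}\sum_i\Inf_\ell(q_i)\bigr)^2$, that a fixed $h$ is ``good'' (this sum is $O_{d,m}(\tau)$) with probability $1-O(a^{-1}\tau^{-1})$, and then bounds $T$ deterministically for good $h$. Your direct averaging is a little cleaner and actually gives the slightly better exponent $\tau^{-4/5}a^{-1}$ rather than $\tau^{-1}a^{-1}$ for that term.

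Second, your separate treatment of $|\pr(p(A)\le 0)-\pr(p_0(A)\le 0)|$ forces you to establish anticoncentration for $p_0$ \emph{under the generator}, which you then propose to recover by applying Proposition \ref{replacementProp} to sandwich functions for $[-\delta,\delta]$. This works, but the paper avoids the issue altogether with the standard shift trick: bound $\pr(p(A)\le 0)\le \pr\bigl(p_0(A)\le \epsilon^{1/2}|p_0|_2\bigr)+O(\epsilon)$ using only second-moment matching of the generator, apply the replacement argument to the shifted polynomial $p_0-\epsilon^{1/2}|p_0|_2$ (whose diffuse decomposition is $(h_{\mathrm{dec}}-\epsilon^{1/2}|p_0|_2,\,q_1,\dots,q_m)$), and invoke Carbery--Wright only on the Gaussian side. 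That route is shorter than the one you sketch for this term and cleanly delivers the $\epsilon^{1/d}$-type loss in a single step.
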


We begin by showing that a similar statement holds for an appropriate choice of $h$.
\begin{lem}
Let $p$ and $p_0$ be degree-$d$ polynomials with $|p-p_0|_{2,B}^2 \leq \epsilon^2 \var(p_0)$ so that $p_0$ has a $(\tau,N)$-diffuse decomposition $(g,q_1,\ldots,q_m)$ with $q_i$ multilinear ($1/2>\epsilon,\tau>0$).  Suppose furthermore that $h:[n]\rightarrow [a]$ is a function so that
$$
\sum_{j=1}^a \left( \sum_{\ell:h(\ell)=j} \sum_i \Inf_\ell(q_{i}) \right)^2 \leq \tau.
$$
Let $A^1,\ldots,A^a:[n]\rightarrow\{-1,1\}$ be picked independently from a $4d$-independent hash family.  Define the random variable $A$ so that its $i^{\tth}$ coordinate is the $i^{\tth}$ coordinate of $A^{h(i)}$.  Then for $G$ a random Gaussian we have that
$$
|\E[\sgn(p(A))] - \E[\sgn(p_0(G))]| \leq O_{d,m}(N\tau^{1/5}\log(\tau^{-1})^{dm/2+1}) + O(d\epsilon^{1/2d}).
$$
\end{lem}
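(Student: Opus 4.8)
The plan is to run the replacement method of Proposition~\ref{replacementProp} with the $a$ blocks of coordinates $S_j:=h^{-1}(j)$ ($1\le j\le a$): replace the $4d$-independent Bernoulli sample $A^j$ living on $S_j$ by an independent Gaussian block, one block at a time, so that after all $a$ steps the generator $A$ has turned into a genuine $n$-dimensional Gaussian $G$. Since the statement compares $\sgn(p(A))$ with $\sgn(p_0(G))$ and not with $\sgn(p_0(A))$, I will first pass from $p$ to $p_0$ on the Bernoulli side, then transfer a slightly shifted threshold event for $p_0$ from $A$ to $G$ by the replacement method, and finally clean up using Carbery--Wright anticoncentration (Lemma~\ref{anticoncentrationLem}) for $p_0(G)$. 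Normalize so that $|p_0|_2=1$, hence $\var(p_0)\le 1$.

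For the reduction to $p_0$: let $r:=L(p-p_0)$, a multilinear polynomial of degree $\le d$ agreeing with $p-p_0$ on $\{-1,1\}^n$, so $p(A)-p_0(A)=r(A)$ pointwise. As $r^2$ has degree $\le 2d\le 4d$, the $4d$-wise independence within each block makes $\E[r(A)^2]$ equal to its value under a genuine Bernoulli sample, namely $|r|_{2,B}^2=|p-p_0|_{2,B}^2\le\epsilon^2$ (Lemmas~\ref{linearizationLem} and~\ref{L2eqLem}); by Markov $|p(A)-p_0(A)|<\epsilon^{1/2}$ off an event of probability $\epsilon$, so $\pr(p(A)\le 0)\le\epsilon+\pr(p_0(A)\le\epsilon^{1/2})$, and symmetrically $\pr(p(A)\le 0)\ge\pr(p_0(A)\le-\epsilon^{1/2})-\epsilon$. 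Next I check the hypothesis of Proposition~\ref{replacementProp} for $p_0$ (equivalently for $p_0\pm\epsilon^{1/2}$, which share the $q_i$) with $k=4$: a polynomial $g$ of degree $<4$ applied to $(q_1,\ldots,q_m)$ has degree $\le 3$ in each coordinate and total degree $\le 3d$, hence on any block $S_j$ involves at most $3d\le 4d$ coordinates with exponents in $\{0,1,2,3\}$; since the first three moments of a $\pm 1$ variable equal those of a standard Gaussian, $4d$-wise independence makes the conditional expectation of $g(q_1,\ldots,q_m)$ over the block $S_j$ agree for the Bernoulli and for a Gaussian block whenever the other coordinates are held fixed. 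This is exactly the moment-matching condition required.

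The heart of the proof is the estimate of the quantity $T=\sum_{i=1}^m\sum_{j=1}^a T_{i,j}$ output by Proposition~\ref{replacementProp}. Each $T_{i,j}$ is a fourth moment of $q_i$ minus its average over the variables of $S_j$; since $q_i$ is multilinear of degree $\le d$ this centered object has degree $\le d$ in the $S_j$-variables, so Lemma~\ref{mixedHypercontractiveLem} (applied to a mixed Gaussian/Bernoulli vector, again using $4d$-wise independence inside $S_j$ so that the fourth moment there coincides with the fully independent case) gives $T_{i,j}=O_d\big((\E[\var_{S_j}(q_i)])^2\big)$, where $\var_{S_j}$ is the variance over the coordinates of $S_j$ with the rest averaged out. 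By Lemma~\ref{infCoefLem}, $\E[\var_{S_j}(q_i)]=\sum_{T:\,T\cap S_j\ne\emptyset}c_T(q_i)^2\le\sum_{\ell\in S_j}\Inf_\ell(q_i)$. Using $\big(\sum_{\ell\in S_j}\Inf_\ell(q_i)\big)^2\le\big(\sum_{\ell\in S_j}\sum_{i'}\Inf_\ell(q_{i'})\big)^2$ for each $i$ and summing,
$$
T\ \le\ O_d\Big(\,\sum_{j=1}^a m\Big(\sum_{\ell:\,h(\ell)=j}\sum_{i}\Inf_\ell(q_i)\Big)^2\Big)\ =\ O_{d,m}(\tau),
$$
by the hypothesis on $h$. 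Plugging $T=O_{d,m}(\tau)$ into Proposition~\ref{replacementProp} with diffuse parameter $\tau^{1/5}$ and $k=4$ yields $|\pr(p_0(A)\le t)-\pr(p_0(G)\le t)|=O_{d,m}\big(\tau^{-4/5}T+\tau^{1/5}N\log(\tau^{-1})^{dm/2+1}\big)=O_{d,m}\big(\tau^{1/5}N\log(\tau^{-1})^{dm/2+1}\big)$ for every threshold $t$, in particular for $t=\pm\epsilon^{1/2}$.

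To conclude, combine: $\pr(p(A)\le 0)\le\epsilon+\pr(p_0(A)\le\epsilon^{1/2})\le\epsilon+\pr(p_0(G)\le\epsilon^{1/2})+O_{d,m}\big(\tau^{1/5}N\log(\tau^{-1})^{dm/2+1}\big)$, and by Lemma~\ref{anticoncentrationLem} $\pr(p_0(G)\le\epsilon^{1/2})\le\pr(p_0(G)\le 0)+\pr(|p_0(G)|\le\epsilon^{1/2})=\pr(p_0(G)\le 0)+O(d\epsilon^{1/2d})$; the reverse inequality is identical with $-\epsilon^{1/2}$. Since $\epsilon<d\epsilon^{1/2d}$, this gives $|\E[\sgn(p(A))]-\E[\sgn(p_0(G))]|=2|\pr(p(A)\le 0)-\pr(p_0(G)\le 0)|\le O_{d,m}\big(N\tau^{1/5}\log(\tau^{-1})^{dm/2+1}\big)+O(d\epsilon^{1/2d})$. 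The main obstacle is the $T$-estimate of the third paragraph: one must make sure that every fourth moment taken inside a single block genuinely agrees with the fully-independent case (so hypercontractivity applies) despite the sample being only $4d$-independent, and that the individual $q_i$-and-block variances aggregate against the single scalar hypothesis on $h$ at the cost of merely a factor $m$.
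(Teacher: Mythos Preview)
Your proof is correct and follows essentially the same route as the paper: pass from $p$ to $p_0$ on the Bernoulli side using the $L^2$ bound, apply Proposition~\ref{replacementProp} with $k=4$ and blocks $S_j=h^{-1}(j)$, bound each $T_{i,j}$ by hypercontractivity (Lemma~\ref{mixedHypercontractiveLem}) via $4d$-independence, dominate the block variance by $\sum_{\ell\in S_j}\Inf_\ell(q_i)$, and sum against the hypothesis on $h$; finally clean up the $\epsilon^{1/2}$ shift by Carbery--Wright. The only cosmetic differences are that you use Markov where the paper invokes Corollary~\ref{BConcCor}, and you (correctly) read the diffuse parameter as $\tau^{1/5}$ rather than the literal $\tau$ in the statement, which is a typo---the paper's own computation $\tau^{-4/5}T$ confirms the intended parameter is $\tau^{1/5}$, consistent with the ambient $(\tau,N,m,\epsilon)$-regular decomposition in Proposition~\ref{regBPRGProp}.
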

\begin{proof}
We show that
$$
\pr(p(A)\leq 0) \leq \pr(p_0(G)\leq 0) +O_{d,m}(N\tau^{1/5}\log(\tau^{-1})^{dm/2+1}) + O(d\epsilon^{1/2d}).
$$
The other direction will follow analogously.

First, we note that by Corollary \ref{BConcCor} that with probability $1-O(\epsilon)$ that $|p(A)-p_0(A)|<\epsilon^{1/2}\sqrt{\var(p_0)} \leq \epsilon^{1/2}|p_0|_2$.  Therefore,
$$
\pr(p(A)\leq 0) \leq \pr(p_0(A)\leq -\epsilon^{1/2}|p_0|_2) + O(\epsilon).
$$
On the other hand,
$$
\pr(p_0(G)\leq -\epsilon^{1/2}|p_0|_2) = \pr(p_0(G)\leq 0) + O(d\epsilon^{1/2d})
$$
by Lemma \ref{anticoncentrationLem}.  Hence it will suffice to prove that
$$
\pr(p_0(A)\leq -\epsilon^{1/2}|p_0|_2) \leq \pr(p_0(G)\leq -\epsilon^{1/2}|p_0|_2) +O_{d,m}(N\tau^{1/5}\log(\tau^{-1})^{dm/2+1}).
$$
Modifying $p_0$ by $\epsilon^{1/2}|p_0|_2$, it suffices to prove under the same hypothesis that
$$
\pr(p_0(A)\leq 0) \leq \pr(p_0(G)\leq 0) +O_{d,m}(N\tau^{1/5}\log(\tau^{-1})^{dm/2+1}).
$$

The proof is by Proposition \ref{replacementProp}.  Let $B^i$ be the vector of entries $A^i_j$ of $A^i$ for which $h(j)=i$.  Reordering, the coordinate variables we can make it so that $A=(B^1,\ldots,B^a)$. Similarly, let $G=(G^1,\ldots,G^a)$.  Note that since the $q_i$ are multilinear and degree at most $d$, that any degree-3 polynomial in the $q_i$ has the same expectation under the $B^i$ as under the $G^i$.  We may thus apply Proposition \ref{replacementProp} with $k=4$.  We have that
\begin{equation}\label{Z11Eqn}
|\pr(p_0(A)\leq 0) -\pr(p_0(G)\leq 0)| = O_{d,m}(N\tau^{1/5}\log(\tau^{-1})^{dm/2+1}+\tau^{-4/5}T).
\end{equation}
Recall that $T$ above is
$$
\sum_{i,j} T_{i,j}
$$
where $T_{i,j}$ is
\begin{align*}
&\E\left[\left(q_i(B^1,\ldots,B^{j},G^{j+1},\ldots,G^a) - \E_Y[q_i(B^1,\ldots,B^{j-1},Y,G^{j+1},\ldots,G^a)] \right)^4 \right] \\ +&\E\left[\left(q_i(B^1,\ldots,B^{j-1},G^{j},\ldots,G^a) - \E_Y[q_i(B^1,\ldots,B^{j-1},Y,G^{j+1},\ldots,G^a)] \right)^4 \right].
\end{align*}
By the $4d$-independence of the $B^i$, this expectation is the same as it would be if they were fully independent Bernoulli variables.  Thus, by Lemma \ref{mixedHypercontractiveLem}, this is at most
\begin{align*}
&O_d\left(\E\left[\left(q_i(B^1,\ldots,B^{j},G^{j+1},\ldots,G^a) - \E_Y[q_i(B^1,\ldots,B^{j-1},Y,G^{j+1},\ldots,G^a)] \right)^2 \right]\right)^2 \\ +&O_d\left(\E\left[\left(q_i(B^1,\ldots,B^{j-1},G^{j},\ldots,G^a) - \E_Y[q_i(B^1,\ldots,B^{j-1},Y,G^{j+1},\ldots,G^a)] \right)^2 \right]\right)^2.
\end{align*}
Since the terms in the expectations above are at most quadratic in any coordinate, the expectation is unchanged by replacing Gaussian inputs with Bernoullis and hence
$$
T_{i,j} = O_d\left(\E_{B^1,\ldots,\hat{B^j},\ldots,B^a}[\var_{B^j}(q_i(B))]^2 \right).
$$
The variance above is clearly the sum of the squares of the coefficients of the non-constant terms of the polynomial obtained by substituting the values of $B^1,\ldots,\hat{B^j},\ldots,B^a$ into $q_i$.  The expectation of this is easily seen to be the sum of the squares of the coefficients of the monomials in $q_i$ containing at least one of the $B^j$ variables.  This in turn is clearly at most $\sum_{\ell:h(\ell)=j} \Inf_\ell(q_i)$.  Thus,
\begin{align*}
T &= \sum_{i=1}^m\sum_{j=1}^a T_{i,j}\\
& \leq \sum_{i=1}^m\sum_{j=1}^a O_d\left( \sum_{\ell:h(\ell)=j} \Inf_\ell(q_i)\right)^2\\
& \leq \sum_{j=1}^a \left( \sum_{\ell:h(\ell)=j} \sum_i \Inf_\ell(q_{i}) \right)^2\\
& \leq \tau.
\end{align*}
Thus, by Equation (\ref{Z11Eqn}),
$$
|\pr(p_0(A)\leq 0) -\pr(p_0(G)\leq 0)| = O_{d,m}(N\tau^{1/5}\log(\tau^{-1})^{dm/2+1}),
$$
completing our proof.
\end{proof}

We can now prove Proposition \ref{regBPRGProp}
\begin{proof}
Let $q_1,\ldots,q_m$ be as given in the $(\tau,N,m,\epsilon)$-regular decomposition of $p$.

By the above Lemma, it suffices to prove that with probability $1-O(a^{-1}\tau^{-1})$ over $h$ that
$$
\sum_{j=1}^a \left( \sum_{i:h(i)=j} \sum_\ell \Inf_i(q_{\ell}) \right)^2 = O_{d,m}(\tau).
$$
On the other hand this is at most
$$
m \sum_\ell \sum_i \Inf_i(q_\ell)^2 + m\sum_\ell \sum_{i\neq i': h(i)=h(i')} \Inf_i(q_\ell)\Inf_{i'}(q_\ell).
$$
Since $\sum_i \Inf_i(q_\ell) = O_d(1)$ for each $\ell$ and since each $\Inf_i(q_\ell)$ is at most $\tau$, the first term above is $O_{d,m}(\tau)$.  The expectation of the latter term above is
\begin{align*}
m/a \sum_\ell \sum_{i \neq i'} \Inf_i(q_\ell)\Inf_{i'}(q_\ell) &\leq O_m\left(a^{-1} \sum_\ell \left(\sum_i \Inf_i(q_\ell)\right)^2\right)\\
& = O_{d,m} (a^{-1}).
\end{align*}
Our result follows from the Markov bound on this random variable.
\end{proof}

\subsection{The General Case}

We are now prepared to state our conclusions in the general case.

\begin{thm}\label{BPRGThm}
Let $A$ be a random variable defined as follows.  Let $h:[n]\rightarrow[a]$ be picked randomly from a 2-independent hash family for $a=\epsilon^{-6}$.  Let $A^1,\ldots,A^a:[n]\rightarrow\{-1,1\}$ be picked independently from $k$-independent hash families for $k=\epsilon^{-5}+4d$.  Let $A_i=A^{h(i)}_i$ for $1\leq i \leq n$.  Note that $A$ can be generated from a seed of length $O(\log(n) \epsilon^{-11})$.  Let $B$ be a random $n$-dimensional Bernoulli random variable, and let $f$ be any degree-$d$ polynomial threshold function in $n$ variables.  Then for any $c>0$
$$
|\E[f(A)] - \E[f(B)]| = O_{c,d}(\epsilon^{1-c}).
$$
\end{thm}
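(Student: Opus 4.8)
The plan is to combine the Diffuse Regularity Lemma (Theorem \ref{difRegThm}) with the regular-case analysis of this generator (Proposition \ref{regBPRGProp}), following the template of Theorem \ref{GPRGThm}. First I would make the standard reductions: write $f=\sgn\circ p$ for a degree-$d$ polynomial $p$, and since both $A$ and $B$ are supported on $\{-1,1\}^n$ we may replace $p$ by $L(p)$ and assume $p$ is multilinear with $|p|_2=1$; it then suffices to prove the bound for $\epsilon$ sufficiently small given $c,d$. Fix a small auxiliary parameter $c'=c'(c,d)$ and a large $M=M(c,d)$, and set $\tau=\epsilon^{5-c}$. Apply Theorem \ref{difRegThm} with these parameters: this writes $p$ as a decision tree $T$ over the coordinates, of depth $D=O_{c,d,M}(\tau^{-1}\log(\tau^{-1})^{O(d)})$, such that with probability at least $1-\tau$ a random leaf $\rho$ has restricted polynomial $p_\rho$ which either satisfies $\var(p_\rho)<\tau^M|p_\rho|_2^2$ or has a $(\tau,\tau^{-c'},O_{c,d,M}(1),O_{c,d,M}(\tau^M))$-regular decomposition. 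For $\epsilon$ small enough (so that $\epsilon^c\log(\epsilon^{-1})^{O(d)}\le 1$) we have $D\le \epsilon^{-5}=k-4d$; this inequality is the crux of why the seed parameters are $k=\epsilon^{-5}+4d$ and $a=\epsilon^{-6}$.

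Next I would analyze how $T$ interacts with the generator $A$. For any fixed hash function $h$, the two key facts are: (i) within each bucket $h^{-1}(j)$ the coordinates of $A$ come from the $k$-wise independent source $A^j$, so any $\le k$ coordinates of $A$ are jointly uniform on the cube of that dimension — in particular the $\le D$ coordinates read along any root-to-leaf path of $T$ — whence the leaf $\rho$ reached by $A$ has exactly the same distribution as the leaf reached by a true Bernoulli $B$; and (ii) after conditioning on the values of the coordinates read along a fixed path, the remaining coordinates of $A$, restricted to each bucket $j$, still form a $(k-D)$-wise, hence $4d$-wise, independent source, while $h$ restricted to the unread coordinates is still $2$-wise independent with the same number of buckets $a$. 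Consequently, conditioned on reaching a given leaf $\rho$, the residual distribution of $A$ is precisely of the form to which Proposition \ref{regBPRGProp} applies (with $a=\epsilon^{-6}$ buckets of a $4d$-wise independent source and the residual $2$-wise independent $h$), and it suffices to bound, leaf by leaf, $|\E[\sgn(p_\rho(A|_\rho))]-\E[\sgn(p_\rho(B|_\rho))]|$ and average against the (common) leaf distribution.

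For a leaf $\rho$ carrying a regular decomposition, Proposition \ref{regBPRGProp} bounds this difference by $O_{c,d,M}(\tau^{-c'}\tau^{1/5}\log(\tau^{-1})^{O(d)})+O_d((\tau^M)^{1/d}\log(\tau^{-1})^{1/2})+O(a^{-1}\tau^{-1})$; substituting $\tau=\epsilon^{5-c}$, $a=\epsilon^{-6}$ and taking $c'$ small enough and $M$ large enough makes each term $O_{c,d}(\epsilon^{1-c})$ (the first term has exponent $(5-c)(1/5-c')$ which exceeds $1-c$ once $c'$ is a small multiple of $c$, absorbing the $\polylog$; the last term is $O(\epsilon^{1+c})$). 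For a leaf with $\var(p_\rho)<\tau^M|p_\rho|_2^2$, its constant term $\mu$ satisfies $\mu^2=|p_\rho|_2^2-\var(p_\rho)>(1-\tau^M)|p_\rho|_2^2$, and since the residual $A|_\rho$ agrees with $B|_\rho$ on all moments of degree $\le 2d$ (within buckets this uses $4d$-wise independence, across buckets independence) one has $\E_{A|_\rho}[p_\rho]=\mu$ and $\E_{A|_\rho}[(p_\rho-\mu)^2]=\var(p_\rho)$; Chebyshev then gives $\sgn(p_\rho)=\sgn(\mu)$ with probability $1-O(\tau^M)$ under both distributions, so such leaves contribute $O(\tau^M)$. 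Finally the leaves failing the conclusion of Theorem \ref{difRegThm} have total probability at most $\tau=\epsilon^{5-c}\ll\epsilon^{1-c}$ and contribute at most $2\tau$. Summing over leaves yields $|\E[f(A)]-\E[f(B)]|=O_{c,d}(\epsilon^{1-c})$.

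The step I expect to be the main obstacle is the middle one: verifying carefully that conditioning on a decision-tree path genuinely preserves the structure required by Proposition \ref{regBPRGProp} — both the exact matching of leaf distributions between $A$ and $B$ (which needs the joint uniformity of $\le k$ coordinates of $A$) and the surviving $4d$-wise independence within each bucket — and then checking that the resulting chain of parameter constraints, namely $D\le k-4d=\epsilon^{-5}$, $\tau^{1/5-c'}\polylog(\epsilon^{-1})\le\epsilon^{1-c}$, and $a^{-1}\tau^{-1}\le\epsilon$, can all be met at once. This simultaneous fit is exactly what pins down the choices $a=\epsilon^{-6}$, $k=\epsilon^{-5}+4d$, $\tau\approx\epsilon^{5}$.
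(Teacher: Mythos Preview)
Your proposal is correct and follows essentially the same approach as the paper's proof: apply the Diffuse Regularity Lemma (Theorem \ref{difRegThm}) to obtain a shallow decision tree, use the $k$-wise independence of $A$ to match the leaf distribution with that of $B$ and to retain at least $4d$-wise independence within each bucket after conditioning on the path, then invoke Proposition \ref{regBPRGProp} at regular leaves and a trivial concentration argument at low-variance leaves. Your treatment of the parameters (taking $\tau=\epsilon^{5-c}$ so that the depth bound $O_{c,d,M}(\tau^{-1}\log(\tau^{-1})^{O(d)})$ honestly fits under $\epsilon^{-5}$, and choosing $c'$ and $M$ to absorb the polylog factors) is in fact more careful than the paper's somewhat informal ``depth $\epsilon^{-5}$'' and otherwise matches it step for step.
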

\begin{rmk}
Note that by changing the values of $a$ and $k$ above we can find a PRG with seed length $O_{c,d}(\log(n) \epsilon^{-11-c})$ that fools degree-$d$ PTFs to within $\epsilon$.
\end{rmk}
\begin{proof}
Note that the coordinates of $A$ are $k$-independent (since they are for each possible value of $h$).  Assume that $\epsilon$ is sufficiently small (since otherwise there is nothing to prove).  By Theorem \ref{difRegThm} we know that $f$ can be written as a decision tree of depth $\epsilon^{-5}$ so that with probability $1-O(\epsilon)$ a randomly chosen leaf is of the form $\sgn\circ p$ where either $\var(p(B)) < \epsilon^2 |\E[p(B)]|$ or $p$ has an $(\epsilon^{5},\epsilon^{-c/5},O_{c,d}(1),\epsilon^{2d})$-regular decomposition.  For each such decision-tree path, condition on $A$ and $B$ on having the appropriate values on the appropriate $\epsilon^{-5}$ coordinates defining this branch of the decision tree.  Note that the conditional distribution on $A$ can be written in the same form as $A$ was originally written only with the $A^i$ perhaps only being $4d$-independent.

There is a probability of $1-O(\epsilon)$ that $p$ satisfies one of the two conditions outlined above.  If the former condition holds, both $p(A)$ and $p(B)$ have the same sign as $\E[p(B)]$ with probability $1-O(\epsilon)$.  In the latter case, by Proposition \ref{regBPRGProp}, we have that for an appropriate $p_0$
$$
\E[\sgn(p(A))] = \E[\sgn(p_0(G))] + O_{d,m}(\epsilon^{1-c}) = \E[\sgn(p(B))] + O_{d,m}(\epsilon^{1-c})
$$
(since $B$ is also of the form specified in Proposition \ref{regBPRGProp}).  This completes our proof.
\end{proof}

\section{Conclusion}\label{ConclusionSec}

We have introduced the notion of a diffuse decomposition of a polynomial and proved that they exist for reasonable parameters.  This in turn has allowed us to make improvements on known bounds for several major problems relating to polynomial threshold functions.  There are several directions in which this work might be expanded.  Perhaps most importantly is that the theory introduced in this paper may well have applications to other problems of interest in the field.  On the other hand, Theorem \ref{DDTheorem} still has room for improvement.  In particular, I believe that such a diffuse decomposition should exist with size merely polynomial in $dN/c$.  Producing such a technical improvement, would allow one to noticeably improve the $d$-dependence in all of the applications presented in this paper.

\section*{Acknowledgements}

This research was done with the support of an NSF postdoctoral fellowship.

\end{document}